\documentclass[a4paper, 12pt]{amsart}

\usepackage{amsmath}
\usepackage{amssymb,esint}
\usepackage{colortbl}
\usepackage{mathtools}
\usepackage{aligned-overset}
\usepackage{hyperref} 
\usepackage[top=3cm, bottom=2.75cm, left=3cm, right=3cm]{geometry}
\newtheorem{theorem}{Theorem}[section]

\newtheorem{corollary}[theorem]{Corollary}

\newtheorem{lemma}[theorem]{Lemma}

\newtheorem{proposition}[theorem]{Proposition}
\theoremstyle{definition}
\newtheorem{definition}[theorem]{Definition}
\theoremstyle{remark}
\newtheorem{remark}[theorem]{Remark}

\numberwithin{equation}{section}

\newcommand{ \R }{ \mathbb{R} }
\def\tail{\operatorname{Tail}}
\DeclareMathOperator*{\osc}{osc}

\begin{document}
\title{Wolff potentials and nonlocal equations of Lane--Emden type}
\thanks{The first author was supported by the Academy of Mathematics and Systems Science, Chinese Academy of Sciences startup fund; CAS Project for Young Scientists in Basic Research, Grant No. YSBR-031; the National Natural Science Foundation of China (No. 12288201);  and the National Key R$\&$D Program of China under grant 2021YFA1000800. The second author was supported by the National Research Foundation of Korea(NRF) grant funded by the Korea government(MSIT) (RS-2025-24533680). The third author was supported by a KIAS individual grant (MG091702) at Korea Institute for Advanced Study.}

\author{Quoc-Hung Nguyen}
\address{Academy of Mathematics and Systems Science, Chinese Academy of Sciences, Beijing 100190, China}
\email{qhnguyen@amss.ac.cn}

\author{Jihoon Ok}
\address{Department of Mathematics, Institute for Mathematical and Data Science, Sogang University, Seoul 04107, Republic of Korea}
\email{jihoonok@sogang.ac.kr}

\author{Kyeong Song}
\address{School of Mathematics, Korea Institute for Advanced Study, Seoul 02455, Republic of Korea}	
\email{kyeongsong@kias.re.kr}

\subjclass[2020]{Primary 35R06, 35R09, 31C15, 31C45, 46E30}


\keywords{Nonlocal equation, measure data, Wolff potential, Orlicz capacity}

\begin{abstract}
We consider nonlocal equations of the type
\[ (-\Delta_{p})^{s}u = \mu \quad \text{in}\;\; \Omega, \]
where $\Omega \subset \mathbb{R}^{n}$ is either a bounded domain or the whole $\mathbb{R}^{n}$, $\mu$ is a Radon measure on $\Omega$, $0 < s < 1$ and $1 < p < n/s$. In particular, we extend the existence, regularity and Wolff potential estimates for SOLA (Solutions Obtained as Limits of Approximations), established by Kuusi, Mingione, and Sire (Comm. Math. Phys. 337(3):1317--1368, 2015), to the strongly singular case $1 < p \le 2-s/n$. 
Moreover, using Wolff potentials and Orlicz capacities, we present both a sufficient condition and a necessary condition for the existence of SOLA to nonlocal equations of the type 
\[ (-\Delta_{p})^{s}u = P(u) + \mu \quad \text{in}\;\; \Omega, \]
where $P(\cdot)$ is either a power function or an exponential function.
\end{abstract}

\maketitle

\setcounter{tocdepth}{1}
\tableofcontents

\section{Introduction and main results}
In this paper, we study the existence, regularity and Wolff potential estimates for SOLA (Solutions Obtained as Limits of Approximations) to the following nonlocal elliptic problem:
\begin{equation}\label{EQp}
\left\{
\begin{aligned}
-\mathcal{L}_\Phi u&=P(u)+\mu &\text{in }&\Omega,\\
u&=0& \text{in }&\mathbb{R}^n\setminus\Omega,
\end{aligned}
\right.
\end{equation}
where $\Omega\subset\mathbb{R}^n$ is either a bounded domain or the whole $\mathbb{R}^n$ with $n\geq 2$, $\mu$ is a Radon measure defined on $\Omega$ with finite total mass (we can always assume that $\mu$ is defined on the whole $\mathbb{R}^n$ by letting $|\mu|(\mathbb{R}^n\setminus \Omega)=0$) and $P(\cdot)$ is either  the zero function, a power function or an exponential function. 
The nonlinear integro-differential operator $-\mathcal L_\Phi$ is defined by
\begin{equation}\label{def.L}
\langle  -\mathcal{L}_\Phi u,\varphi\rangle = \int_{\mathbb{R}^n}\int_{\mathbb{R}^n}\Phi(u(x)-u(y))(\varphi(x)-\varphi(y))K(x,y)\,dx\,dy
\end{equation}
for every $\varphi \in C^{\infty}_{c}(\Omega)$. Here, $\Phi: \mathbb{R}\to \mathbb{R}$ is a continuous function satisfying the growth/coercivity properties
\begin{equation}\label{mono}
\Lambda^{-1}|t|^{p} \le \Phi(t)t \le \Lambda|t|^{p}
\end{equation} 
for all $t \in \mathbb{R}$. 
For the comparison principle argument in Proposition \ref{pro1}, which will be used in Section \ref{sec:LE}, we additionally assume the monotonicity property
\begin{equation}\label{mono2}
(\Phi(t)-\Phi(t'))(t-t') > 0
\end{equation}
for all $t, t' \in \mathbb{R}$ with $t \neq t'$. Moreover, $K:\mathbb{R}^n\times\mathbb{R}^n\to \mathbb{R}$ is a measurable kernel satisfying the ellipticity/coercivity properties
\begin{equation} \label{c2}
\frac{ \Lambda^{-1} }{|x-y|^{n+sp}}\leq K(x,y)\leq \frac{ \Lambda }{|x-y|^{n+sp}} 
\end{equation} 
for all $(x,y)\in \mathbb{R}^n\times\mathbb{R}^n$ with $x\neq y$, where $\Lambda\geq 1$ is a fixed constant. In this paper we assume that
\begin{equation}\label{s.p.range} 
0<s<1 \quad \text{and} \quad 1< p < \frac{n}{s}.
\end{equation}
Note that when $\Phi(t) = |t|^{p-2}t$ and $K(x,y) = |x-y|^{-(n+sp)}$, the operator $-\mathcal{L}_{\Phi}$ reduces to the ($s$-)fractional $p$-Laplacian $(-\Delta_{p})^{s}$. 
In particular, we obtain Wolff potential estimates for SOLA to the following nonlocal problem
\begin{equation}\label{EQ}
\left\{
\begin{aligned}
-\mathcal{L}_\Phi u&= \mu &\text{in }&\Omega,\\
u&=0&\text{in }&\mathbb{R}^n\setminus\Omega.
\end{aligned}
\right.
\end{equation}

In the case of local measure data problems involving the classical $p$-Laplacian $\Delta_{p}$, 
Boccardo and Gallou\"et \cite{BG89,BG92} introduced the notion of SOLA to the following Dirichlet problem
\begin{equation}\label{2hvEQ5}
\left\{
\begin{aligned}
-\Delta_{p}u & = \mu &\text{in }&\Omega, \\
u & = 0 & \text{in }& \partial\Omega
\end{aligned}
\right.
\end{equation}
with $p>2-1/n$, and proved its existence. 
Note that the lower bound $p > 2-1/n$ is unavoidable when considering SOLA to the local problem \eqref{2hvEQ5}, since such solutions are required to be $W^{1,1}$-distributional solutions. On the other hand, Dal Maso, Murat, Orsina and Prignet  \cite{22DMOP} systematically developed the notion of renormalized solution to \eqref{2hvEQ5}  with $p>1$ and a general measure $\mu$. For other notions of solutions to \eqref{2hvEQ5}, see \cite{BBGGPV95,BGO96,CiMa,Lin86}. 
In the context of nonlinear potential theory, Kilpel\"ainen and Mal\'y \cite{22KiMa1,22KiMa2} obtained pointwise estimates for solutions to  \eqref{2hvEQ5} in terms of the (truncated) Wolff potential ${\bf W}^{T}_{1,p}[\mu]$ of $\mu$, see \eqref{2hvEQ8} below for the definition. We further refer to \cite{DM10,DM11,KK10,KM12,KM13,KM14,Min11,22TW4} for potential estimates for solutions, and their gradient, to local problems of the type \eqref{2hvEQ5}.

Subsequently, the existence theory for the Lane--Emden type problem 
\begin{equation}\label{EQ:pcase}
\left\{
\begin{aligned}
-\Delta_p u&= P(u) +  \mu &\text{in }&\Omega,\\
u&=0&\text{on }&\partial\Omega
\end{aligned}
\right.
\end{equation}
was established in \cite{HV1,22PhVe}, where the necessary conditions and sufficient conditions for the existence of renormalized solutions were obtained in terms of Bessel capacities and Wolff potentials. For instance, when $P(u)= u^\gamma$ and $\Omega$ is bounded, Phuc and Verbitsky \cite{22PhVe} showed that for a nonnegative measure $\mu$ with compact support in $\Omega$, 
it is equivalent to solve  
\begin{equation*}
\left\{
\begin{aligned}
-\Delta_pu&=u^\gamma+\mu&\text{in }&\Omega,\\
u&=0&\text{on }&\partial\Omega,
\end{aligned}
\right.
\qquad u\ge 0,
\end{equation*}
or to have
\begin{equation*}
\mu(E)\leq c\mathrm{Cap}_{\mathbf{G}_p,\frac{\gamma}{\gamma+1-p}}(K) \quad \text{for any compact set $K\subset\Omega$,}
\end{equation*}
where $c>0$ is a universal constant and $\mathrm{Cap}_{\mathbf{G}_p,\gamma/(\gamma+1-p)}$ is the Bessel capacity, or to have
\begin{equation*}
\int_B\left({\bf W}^{2\mathrm{diam}(\Omega)}_{1,p}[\chi_{B}\mu](x)\right)^{\gamma}\,dx\leq c\mu (B) \quad \text{for any ball $B$ with  $B\cap\mathrm{supp}\, \mu\neq\emptyset$},
\end{equation*}
where $\mathrm{diam}(\Omega)$ is the diameter of $\Omega$ and $\mathrm{supp}\,\mu$ is the support of $\mu$. On the other hand, when $P(u)$ is an exponential function given in \eqref{2hvEQ11} below, V\'eron and the first author of this paper \cite{HV1} obtained a sufficient condition expressed in terms of fractional maximal functions and a necessary condition expressed in terms of Orlicz capacities. The constructions in both papers \cite{HV1,22PhVe} are based upon the sharp pointwise estimates for \eqref{2hvEQ5} with a nonnegative measure $\mu$, obtained in \cite{22KiMa1,22KiMa2}. For more related results, see \cite{22Bi2,22Bi3,22VHV,23VH} and also \cite{22V}.
 
In recent years, nonlocal problems have attracted considerable attention.  
The growing interest is  motivated not only by various models in the applied sciences, but also by the analysis of fractional Sobolev spaces and related variational problems. We refer to \cite{DiPaVa1,palatucci2018,Si2} and references therein for some early works in this direction. 
In particular, much attention has been devoted to the nonlocal counterparts of regularity results for classical local equations. In this paper, we deal with nonlocal Dirichlet problems of the type
\begin{equation}\label{EQg}
\left\{
\begin{aligned}
 -\mathcal{L}_\Phi u&=\mu &\text{in }&\Omega,\\
 u&=g& \text{in }&\mathbb{R}^n\setminus\Omega.
\end{aligned}
\right.
\end{equation}
Consider first the case when $s \in (0,1)$, $p \in (1,\infty)$, $g\in \mathbb{W}^{s,p}(\Omega)$ and $\mu \in W^{-s,p'}(\Omega)$, where $p'=p/(p-1)$ and the space $\mathbb{W}^{s,p}(\Omega)$ is defined by
\[ \mathbb{W}^{s,p}(\Omega) \coloneqq \left\{ f:\mathbb{R}^{n}\rightarrow \mathbb{R} : f|_{\Omega} \in L^{p}(\Omega), \ \iint_{\mathcal{C}_{\Omega}}\frac{|f(x)-f(y)|^{p}}{|x-y|^{n+sp}}\,dx\,dy < \infty \right\} \]
with $\mathcal{C}_{\Omega} \coloneqq (\mathbb{R}^{n}\times\mathbb{R}^{n})\setminus((\mathbb{R}^{n}\setminus\Omega)\times(\mathbb{R}^{n}\setminus\Omega))$. 
Recalling \eqref{def.L}, \eqref{mono} and \eqref{c2}, we say that $u\in \mathbb{W}^{s,p}(\Omega)$ is a weak solution to \eqref{EQg}  if 
$u$ satisfies
\[
\langle -\mathcal L_\Phi u,\varphi \rangle = \langle \mu, \varphi \rangle
\]
for every $\varphi \in C^{\infty}_{c}(\Omega)$, and  $u=g$ a.e. in $\mathbb R^n\setminus \Omega $. 
Note that, in this setting, $u$ is a weak solution to \eqref{EQg} if and only if it is a minimizer of the following functional
\[  w \mapsto \iint_{\mathcal{C}_{\Omega}}\Psi(w(x)-w(y))K(x,y)\,dx\,dy - \langle \mu, w \rangle \]
over $\{w \in \mathbb{W}^{s,p}(\Omega): w=g \text{ a.e. in } \mathbb{R}^{n}\setminus \Omega \}$, where $\Psi(t) = \int_{0}^{|t|}\Phi(\tau)\,d\tau$. In turn, under assumptions \eqref{mono} and \eqref{c2}, the existence and uniqueness of weak solutions to \eqref{EQg} in the case $\mu\in W^{-s,p'}(\Omega)$ follow from direct methods of the calculus of variations, see for instance \cite[Theorem 2.3]{DicaKuPa2}. For various results concerning such fractional energy functionals, see \cite{BLS18,Co1, DicaKuPa1,DicaKuPa2,IaLiPeSqua1,IaMoSqua1,KoKuPa1,KoKuPa2,KuMiSi2,Va1} and references therein.

On the other hand, in this paper we deal with the situation when $\mu$ is merely a measure, and so $\mu$ does not belong to $W^{-s,p'}(\Omega)$ in general. Thus, we need to consider a weaker notion of solution than the usual weak solution. As mentioned above, there are various notions of solutions to local measure data problems modeled on \eqref{2hvEQ5}, such as entropy solution, renormalized solution, approximable solution, and SOLA. Among these, the notion of SOLA was extended to nonlocal measure data problems of the type \eqref{EQg} by Kuusi, Mingione, and Sire \cite{KuMiSi,KuMiSi3} when $p > 2-s/n$. They proved the existence of SOLA to \eqref{EQg} and obtained pointwise and oscillation estimates via $\mathbf{W}^{T}_{s,p}[\mu]$. Later, similar pointwise estimates were obtained for $-\mathcal{L}_{\Phi}$-superharmonic functions with $1 <p < n/s$ and corresponding nonnegative measures in \cite{KLL23}.  Recently, Gkikas \cite{Gk24} extended the notion of approximable solution to \eqref{EQ} and obtained a sufficient condition for the existence of nonnegative distributional solutions to \eqref{EQp} when $\mu$ is nonnegative, $P(u)=u^{\gamma}$ and $\Omega$ is a bounded domain. Indeed, when $p>2-s/n$, the existence of approximable solutions implies that of SOLA, see \cite[Proposition 2.8]{Gk24}. However, it seems unclear when $1<p\le 2-s/n$ due to the lack of compactness for the fractional Sobolev space $W^{h,q}$ with $h,q\in (0,1)$. 
We also mention that gradient regularity and first-order potential estimates for SOLA to nonlocal measure data problems of the type \eqref{EQg} were recently established in the linear case \cite{KuNoSi} and the nonlinear nondegenerate ($p=2$) case \cite{DKLN}. However, the validity of such gradient regularity results in the case $p\neq2$ is not known, and only fractional estimates are obtained in the case $p > 2$, see \cite{DiNo}.

The aim of this paper is twofold. First, we extend the notion of SOLA to \eqref{EQ} with $1< p< n/s$ and then prove the existence and Wolff potential estimates for SOLA to \eqref{EQ}, thereby completing the low-order regularity theory for nonlocal measure data problems.  
Second, by using such potential estimates, we establish both a necessary condition and a sufficient condition for the existence of nonnegative SOLA to a large class of nonlocal Lane--Emden type equations with nonnegative measure data. Namely, we deal with both power and exponential reaction terms, and both bounded domains and the whole $\mathbb{R}^{n}$. These results are the nonlocal analogs of those obtained for the local Lane--Emden type equation \eqref{EQ:pcase} in \cite{HV1,22PhVe}, and they address a question raised in \cite[Section 8.1]{KuMiSi3}.

\subsection{Wolff potential estimates for nonlocal equations}
We obtain pointwise upper and lower bounds for SOLA to \eqref{EQ} in terms of the Wolff potential of $\mu$ when $1<p<n/s$. 
Note that such estimates were obtained in \cite{KuMiSi} when $p>2-s/n$. In the present paper, we cover the remaining range $1<p \le 2-s/n$. 
To the best of our knowledge, this range has not been treated in the literature on nonlocal equations with general measure data. 

We denote by $\mathcal{M}(\Omega)$ (resp. $\mathcal{M}^{+}(\Omega)$) the set of all Radon measures (resp. nonnegative Radon measures) with finite total mass on $\Omega$. 
Given a measurable function $f:\mathbb{R}^{n}\rightarrow \mathbb{R}$, we denote its nonlocal tail by
\begin{equation*}
\tail(f;x,r) \coloneqq\left(r^{sp}\int_{\mathbb{R}^n\setminus B_r(x)}\frac{|f(y)|^{p-1}}{|y-x|^{n+sp}}\,dy\right)^{1/(p-1)}
\end{equation*} 
whenever $x \in \mathbb{R}^n$ and $r>0$. 
For simplicity, we write $\tail(f;r)=\tail(f;x,r)$ if the point $x$ is not important or clear from the context.

Our first result is a pointwise upper bound for SOLA, which extends \cite[Theorem 1.2]{KuMiSi} to the range $1 < p \le 2-s/n$. 

\begin{theorem}\label{thmupper1}
Let $\mu\in\mathcal{M}(\Omega)$ and let $-\mathcal{L}_{\Phi}$ be defined in \eqref{def.L} under assumptions \eqref{mono}, \eqref{c2} and \eqref{s.p.range}. Let $u$ be a SOLA to \eqref{EQ}. 
Then there exists a constant $c=c(n,s,p,\Lambda)>0$ such that
\begin{equation*}
|u(x)|\leq c{\bf W}^{R}_{s,p}[\mu](x) + c\left(\fint_{B_{R}(x)}|u|^{p-1}\,d\tilde{x}\right)^{1/(p-1)} + c\tail(u;x,R)
\end{equation*}
holds whenever $B_{R}(x)\subset\Omega$ and the right-hand side is finite.
\end{theorem}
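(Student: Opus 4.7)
My plan is a dyadic iteration on concentric balls, extending the scheme of \cite{KuMiSi} to the strongly singular range by replacing their energy-level comparison with one valid below the natural duality exponent. Fix $x$ with $B_R(x) \subset \Omega$, set $r_j = \sigma^j R$ and $B_j = B_{r_j}(x)$ for some $\sigma \in (0,1/4)$ to be chosen later, and write $(u)_{B_j} = \fint_{B_j} u\,dy$. On each $B_j$ introduce the homogeneous comparison function $v_j$, the unique weak solution of $-\mathcal L_\Phi v_j = 0$ in $B_j$ with $v_j = u$ in $\mathbb R^n \setminus B_j$. The bound on $u(x)$ will follow by controlling the increments $|(u)_{B_{j+1}} - (u)_{B_j}|$ summably by the Wolff potential and then passing to the limit; this works at Lebesgue points of $u$ and extends, by a standard density/selection argument for SOLA, to every $x$ where the right-hand side is finite.

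Two ingredients drive the iteration. First, a comparison estimate
$$\left(\fint_{B_j}|u-v_j|^q\,dy\right)^{1/q} \le c\left[\frac{\mu(B_j)}{r_j^{n-sp}}\right]^{1/(p-1)}$$
for some $q\in(0,p-1]$ depending admissibly on $p$: when $p>2-s/n$ one can take $q=p-1$ as in \cite{KuMiSi}, but in the strongly singular range $1 < p \le 2-s/n$ one must allow $q$ strictly below $p-1$, mirroring the Boccardo--Gallou\"et sub-natural growth framework. Second, the interior H\"older oscillation decay for homogeneous $-\mathcal L_\Phi$-solutions, available in the full range $1 < p < n/s$: there exist constants $c$ and $\alpha = \alpha(n,s,p,\Lambda)\in (0,1)$ such that
$$\osc_{B_{j+1}} v_j \le c\,\sigma^{\alpha}\Bigl[\bigl((|v_j|^{p-1})_{B_j}\bigr)^{1/(p-1)} + \tail(v_j;x,r_j)\Bigr].$$
Splitting $(u)_{B_{j+1}} - (u)_{B_j}$ into its $(u-v_j)$- and $v_j$-parts, applying H\"older to pass from $L^q$ averages to $L^1$ averages, and translating $v_j$-quantities back to $u$-quantities via the comparison and the definition of the tail, yields the recursion
$$|(u)_{B_{j+1}} - (u)_{B_j}| \le c\Bigl[\tfrac{\mu(B_j)}{r_j^{n-sp}}\Bigr]^{1/(p-1)} + c\sigma^{\alpha}\bigl(M_j + \tail(u;x,r_j)\bigr),$$
where $M_j$ is the running $L^{p-1}$-average of $u$ on $B_j$. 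Choosing $\sigma$ small so that the $\sigma^\alpha$ factor is absorbed across iterations, summing in $j$, recognizing $\sum_j [\mu(B_j)/r_j^{n-sp}]^{1/(p-1)}$ as a discretization of ${\bf W}^R_{s,p}[\mu](x)$, and controlling the $j=0$ terms by the stated $L^{p-1}$ average and tail, gives the theorem.

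The principal obstacle is the comparison estimate when $1 < p \le 2-s/n$. In this regime the natural test function $u - v_j$ does not enjoy the $\mathbb{W}^{s,p}$ regularity needed to apply monotonicity directly, and $|u|^{p-1}$ need not be locally integrable against $\mu$ in the energy sense. My plan is to insert truncations $T_k(u - v_j)$ (or a suitable smoothing thereof) as test functions in the weak formulations for $u$ and $v_j$ simultaneously, extract a Marcinkiewicz-type level-set bound for $u - v_j$ from the monotonicity inequality on $\{|u-v_j|\le k\}$, and upgrade it by interpolation to the desired $L^q$ estimate for $q$ strictly less than $n(p-1)/(n-s)$. The genuinely nonlocal difficulty---absent in the local theory---is that $T_k(u-v_j)$ has a nontrivial long-range interaction, so the tail of $u$ enters the test-function computation; controlling that contribution, using SOLA regularity in $W^{h,q}$ for $h$ slightly below $s$ together with the nonlocal tail already on the right-hand side of the statement, is the technically delicate point. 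Once this comparison estimate is established in the full range $1 < p < n/s$, the dyadic iteration proceeds along essentially the same lines as in \cite{KuMiSi}.
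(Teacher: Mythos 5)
Your overall architecture (comparison with $-\mathcal{L}_{\Phi}$-harmonic replacements on dyadic balls, oscillation decay, discretization of the Wolff potential) is the right skeleton, but as written the scheme breaks down exactly in the range $1<p\le 2-s/n$ that the theorem is meant to cover, for three concrete reasons. First, your iterated quantity is the sequence of integral means $(u)_{B_j}$, but a SOLA only lies in $W^{h,q}$ with $q<\bar q=n(p-1)/(n-s)\le 1$ in this range (see \eqref{SOLA1}), so $u$ need not be locally integrable and the means $(u)_{B_j}$ need not exist; moreover H\"older's inequality lets you pass from $L^{q}$ to $L^{1}$ averages only when $q\ge 1$, so "applying H\"older to pass from $L^q$ averages to $L^1$ averages" goes the wrong way when $q\le p-1<1$. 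This is precisely why the paper abandons means and works with the $L^{p-1}$-deviation minimizers $\mathcal{P}_{B_r}(u)$ of \eqref{loc.exc}--\eqref{def.P} and the modified excess functional \eqref{excess.functional}, defining the value $u(x)$ as $\lim_{\rho\to0}\mathcal{P}_{B_\rho(x)}(u)$ rather than as a limit of averages. Second, the comparison estimate you postulate, $\bigl(\fint_{B_j}|u-v_j|^{q}\,dy\bigr)^{1/q}\le c\bigl[\mu(B_j)/r_j^{n-sp}\bigr]^{1/(p-1)}$, is too strong for $p<2$: the monotonicity inequality only yields a lower bound with the degenerate weight $(|a|+|b|)^{p-2}$, and the correct estimate (Lemmas~\ref{basic.comp.2} and \ref{final.comp}) necessarily carries the additional term $c\,[E(u;2r)]^{2-p}\bigl[|\mu|(B_r)/r^{n-sp}\bigr]$, which in the paper is absorbed by Young's inequality inside the excess-decay iteration (Lemma~\ref{excess.decay}). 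Your truncation/Marcinkiewicz plan does not explain how to remove this $u$-dependent factor, and without accounting for it your recursion is not the one that actually holds.

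Third, the tail bookkeeping does not close. Your recursion keeps $\tail(u;x,r_j)$ at every scale, but $\tail(u;x,r_j)$ does not decay as $j\to\infty$ (its near-field part is comparable to a maximal average of $|u|^{p-1}$), so $\sum_j\sigma^{\alpha}\tail(u;x,r_j)$ cannot be summed or "absorbed" unless the iterated quantity itself dominates the tails at every scale. The paper resolves this by building the tail of $u-k$ into the excess $E(u;x,r)$ and proving a genuine decay estimate for this composite quantity for homogeneous solutions (Lemma~\ref{thm.ed.v}, via the splitting into far-field and intermediate-annulus contributions and the H\"older estimate), after which the measure-data excess decay \eqref{ed.u} and its integrated form \eqref{int.result} give the theorem. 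In short: replace means by $\mathcal{P}_{B_r}(u)$, replace the pure comparison bound by the singular-case one with the $[E]^{2-p}$ term and absorb it by Young's inequality, and iterate the tail-inclusive excess rather than carrying $\tail(u;x,r_j)$ as a passive error term; without these changes the proof does not go through in the range it is supposed to extend.
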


We next introduce a pointwise lower bound for nonnegative SOLA, for which we recall  \cite[Theorem 1.3]{KuMiSi}. We remark that the paper \cite{KuMiSi} primarily assumes $p > 2- s/n$ in the definition of SOLA \cite[Definition 2]{KuMiSi}. This assumption is crucial for the proof of \cite[Theorem 1.2]{KuMiSi}, but it is not a prerequisite for the proof of \cite[Theorem 1.3]{KuMiSi}. 
In fact, an inspection of the proof of \cite[Theorem 1.3]{KuMiSi} shows that if $u$ is a nonnegative weak solution to \eqref{EQ}, then estimate \eqref{lower.wolff} holds for any $1<p<n/s$. 
Therefore, once we show the existence of SOLA to \eqref{EQ} (see Theorem \ref{thm.ex.bdd}), the argument in the proof \cite[Theorem 1.3]{KuMiSi} remains valid for the full range $1< p < n/s$ without any modification. 

\begin{theorem}\label{thmlower}
Let $\mu\in\mathcal{M}^{+}(\Omega)$ and let $-\mathcal{L}_{\Phi}$ be defined in \eqref{def.L} under assumptions \eqref{mono}, \eqref{c2} and \eqref{s.p.range}. 
Let $u$ be a nonnegative SOLA to \eqref{EQ} such that the approximating sequence $\{\mu_{j}\}$ for $\mu$ as described in Definition \ref{defsola} or Definition \ref{defsola2} is made of nonnegative functions. 
Then there exists a constant $c=c(n,s,p,\Lambda) \ge 1$ such that  
\begin{equation}\label{lower.wolff}
c^{-1}{\bf W}_{s,p}^{R/8}[\mu](x)\leq u(x)
\end{equation}
holds whenever $B_{R}(x)\subset\Omega$ and ${\bf W}^{R/8}_{s,p}[\mu](x)$ is finite. 
\end{theorem}

We will obtain Theorem \ref{thmupper1} as a consequence of another new result, which is an oscillation estimate for SOLA. Note that, when $\mu \equiv 0$, the following theorem gives back the H\"older estimates for the homogeneous equation $-\mathcal{L}_{\Phi}v=0$ obtained in \cite{Co1,DicaKuPa2}.
\begin{theorem}\label{thmosc}
Let $\mu \in \mathcal{M}(\Omega)$ and let $-\mathcal{L}_{\Phi}$ be defined in \eqref{def.L} under assumptions \eqref{mono}, \eqref{c2} and \eqref{s.p.range}. Let $u$ be a SOLA to \eqref{EQ}, and let $B_{R}(x_{0}) \subset \Omega$ be a ball. Then there exists a constant $c>0$ such that
\begin{equation}\label{univ.wolff}
\begin{aligned}\hspace{-0.5mm}
|u(x)-u(y)| & \le c\left[\mathbf{W}^{R}_{s-\alpha(p-1)/p,p}[\mu](x) + \mathbf{W}^{R}_{s-\alpha(p-1)/p,p}[\mu](y)\right]|x-y|^{\alpha} \\
& \quad + c\left[\left(\fint_{B_{R}(x_{0})}|u|^{p-1}\,d\tilde{x}\right)^{1/(p-1)} + \tail(u;x_{0},R)\right] \left(\frac{|x-y|}{R}\right)^{\alpha}
\end{aligned}
\end{equation}
holds for a.e. $x,y \in B_{R/8}(x_{0})$, provided the right-hand side is finite and $0 \le \alpha < \alpha_{0}$. Here, $\alpha_{0} = \alpha_{0}(n,s,p,\Lambda) \in (0,\min\{sp/(p-1),1\})$ is the H\"older continuity exponent for the homogeneous equation 
\[ -\mathcal{L}_{\Phi}v = 0 \quad \text{in}\;\; \Omega, \]
see Lemma \ref{lem.hol} below. Moreover, whenever $\tilde{\alpha} \in [0,\alpha_{0})$ is fixed, the dependence of the constant $c$ is uniform for $\alpha \in [0,\tilde{\alpha}]$, in the sense that $c$ depends only on $n$, $s$, $p$, $\Lambda$ and $\tilde{\alpha}$.
\end{theorem}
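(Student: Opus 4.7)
The strategy is the classical Campanato-type iteration adapted to nonlocal equations: on each ball of a dyadic sequence we compare $u$ with the solution of the homogeneous equation, for which Lemma~\ref{lem.hol} provides a $C^{\alpha_0}$ estimate, and then we iterate the resulting excess decay across all scales so that the dyadic $\mu$-contributions reassemble into a Wolff potential of shifted smoothness. The novelty is to push this scheme through the full singular range $1<p<n/s$.

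The first building block is a nonlocal comparison estimate. Fix $B_\rho(z)\subset B_R(x_0)\subset\Omega$ and let $v$ be the weak solution of $-\mathcal L_\Phi v=0$ in $B_\rho(z)$ with $v=u$ in $\R^n\setminus B_\rho(z)$. Working at the approximation level of Definition~\ref{defsola}, where the data $\mu_j$ are bounded and $u_j$ is a genuine weak solution, one derives scale-invariant bounds that pass to the SOLA limit and yield
\[
\left(\fint_{B_\rho(z)}|u-v|^{p-1}\,dx\right)^{\!1/(p-1)}\le c\left(\frac{|\mu|(B_\rho(z))}{\rho^{n-sp}}\right)^{\!1/(p-1)}.
\]
Combining this with the $C^{\alpha_0}$ estimate for $v$ from Lemma~\ref{lem.hol} and introducing the excess
\[
E(z,\rho):=\left(\fint_{B_\rho(z)}|u-(u)_{B_\rho(z)}|^{p-1}\,dx\right)^{\!1/(p-1)}+\tail(u-(u)_{B_\rho(z)};z,\rho),
\]
one obtains, for every $\sigma\in(0,\tfrac12]$, the decay inequality
\[
E(z,\sigma\rho)\le c\sigma^{\alpha_0}E(z,\rho)+c\left(\frac{|\mu|(B_\rho(z))}{\rho^{n-sp}}\right)^{\!1/(p-1)}.
\]

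To introduce the prescribed exponent $\alpha\in[0,\alpha_0)$, I would pick $\sigma$ with $c\sigma^{\alpha_0}\le\tfrac12\sigma^\alpha$ (the choice depending only on $\alpha_0-\alpha$, hence uniform for $\alpha\in[0,\tilde\alpha]$ whenever $\tilde\alpha<\alpha_0$), and iterate the decay inequality along the dyadic sequence $\rho_k=\sigma^k R$. Multiplying by $\sigma^{-k\alpha}$ before summing telescopically produces
\[
E(z,\rho)\le c\left(\frac{\rho}{R}\right)^{\!\alpha}\!\!\left[\left(\fint_{B_R(z)}|u|^{p-1}\right)^{\!1/(p-1)}\!\!+\tail(u;z,R)\right]+c\rho^\alpha\!\int_0^{2R}\!\!r^{-\alpha}\!\left(\frac{|\mu|(B_r(z))}{r^{n-sp}}\right)^{\!1/(p-1)}\!\frac{dr}{r}.
\]
Using the identity $r^{-\alpha}(|\mu|(B_r)/r^{n-sp})^{1/(p-1)}=(|\mu|(B_r)/r^{n-sp+\alpha(p-1)})^{1/(p-1)}$, the last integral is precisely $\mathbf W^{2R}_{s-\alpha(p-1)/p,p}[\mu](z)$, which is the Wolff potential with the shifted parameter appearing in \eqref{univ.wolff}.

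A standard Campanato-to-H\"older upgrade then converts the decay $E(z,\rho)\lesssim\rho^\alpha$ into the pointwise estimate \eqref{univ.wolff}. Given $x,y\in B_{R/8}(x_0)$, one applies the preceding bound at $z=x$ and $z=y$ with radius $\rho\sim|x-y|$; Lebesgue differentiation identifies the limits of the mean values with $u(x)$ and $u(y)$ at a.e.\ point, and a telescoping comparison of mean values over successive dyadic balls yields the claim. The tails and averaged norms centered at $x$ or $y$ are dominated by the $x_0$-centered ones since $B_{7R/8}(z)\subset B_R(x_0)$, which explains the final form of \eqref{univ.wolff}. The one genuinely new difficulty is the comparison estimate in the strongly singular range $1<p\le 2-s/n$: here SOLAs belong only to fractional spaces $W^{h,q}$ with small $h,q$, so the usual energy testing of differences is not directly available, and one must instead rely on the approximation scheme together with the existence/stability machinery built earlier in the paper to extract uniform-in-$j$ bounds that survive the limit---this is the analytical core extending \cite[Theorem~1.2]{KuMiSi} to the strongly singular regime.
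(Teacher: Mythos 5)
There is a genuine gap, and it sits exactly at the point you yourself flag as ``the analytical core''. The comparison estimate you take as the first building block,
\[
\left(\fint_{B_\rho(z)}|u-v|^{p-1}\,dx\right)^{1/(p-1)}\le c\left(\frac{|\mu|(B_\rho(z))}{\rho^{n-sp}}\right)^{1/(p-1)},
\]
is not available in the singular range $1<p<2$: testing the difference of the two equations only controls quantities weighted by $(|u(x)-u(y)|+|v(x)-v(y)|)^{p-2}$, and removing this degenerate weight inevitably produces an interpolation term. In the paper (Lemmas~\ref{basic.comp.2}--\ref{final.comp}) the correct estimate carries the extra summand $c\,[E(u;2r)]^{2-p}\,|\mu|(B_r)/r^{n-sp}$, and a further nontrivial step (a Caccioppoli bound for intermediate comparison functions, an iteration in the radius, and then Young's inequality with exponents $1/(2-p)$ and $1/(p-1)$, with the smallness parameter chosen as $\sigma^{\alpha_0+n/(p-1)}$ in Lemma~\ref{excess.decay}) is needed to absorb that term into the excess before one can even write the decay inequality you propose to iterate. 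Asserting the clean estimate and attributing it to ``uniform-in-$j$ bounds that survive the limit'' does not address this structural obstruction; without the extra term and its absorption the scheme does not close for $1<p<2$, which is the very range the theorem is meant to cover.

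A second, related problem is your choice of excess: it is built on the mean value $(u)_{B_\rho(z)}$, and your Campanato upgrade invokes Lebesgue differentiation of mean values. For $1<p\le 2n/(n+s)$ a SOLA need not be locally integrable (it lies only in $W^{h,q}$ with $q<n(p-1)/(n-s)$, possibly $q<1$, and in weak-$L^{q_0}$ with $q_0<1$), so $(u)_{B_\rho}$ may not be defined and this excess cannot be used. This is precisely why the paper replaces it by the infimum-based functional \eqref{excess.functional}, $E(f;x_0,r)=\inf_k\bigl[(\fint_{B_r}|f-k|^{p-1}\,dx)^{1/(p-1)}+\tail(f-k;x_0,r)\bigr]$, works with the quasi-minimizing constants $\mathcal P_{B_r(x)}(u)$, and shows via \eqref{ave.cauchy} that these form a Cauchy net defining the precise representative of $u$; in addition, the excess-decay estimate is first proved for the approximating weak solutions $u_j$ and then transferred to $u$ by the convergence argument in Step~2 of Lemma~\ref{excess.decay}, a step absent from your outline. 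These are not cosmetic adjustments but the devices that make the argument work in the strongly singular regime; with them in place, the remaining iteration and Wolff-potential assembly in your plan does match the paper's proof.
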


Theorems \ref{thmupper1} and \ref{thmosc}, along with the well-known mapping properties of Wolff potentials, yield the following corollary.

\begin{corollary}
Let $\mu \in \mathcal{M}(\Omega)$ and let $-\mathcal{L}_{\Phi}$ be defined in \eqref{def.L} under assumptions \eqref{mono}, \eqref{c2} and \eqref{s.p.range}. Let $u$ be a SOLA to \eqref{EQ}.
\begin{itemize}
\item[(1)] If $1<q<n/(sp)$, then
\[ \mu \in L^{q}_{\mathrm{loc}}(\Omega) \;\; \Longrightarrow \;\; u \in L^{nq(p-1)/(n-spq)}_{\mathrm{loc}}(\Omega). \]
\item[(2)] If $0 < \alpha < \alpha_{0}$, then
\[ \mu \in L^{n/(sp-\alpha(p-1)),\infty}_{\mathrm{loc}}(\Omega) \;\; \Longrightarrow \;\; u \in C^{0,\alpha}_{\mathrm{loc}}(\Omega). \]
\end{itemize}
\end{corollary}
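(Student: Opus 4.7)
The corollary follows by feeding the pointwise estimates of Theorems~\ref{thmupper1} and~\ref{thmosc} into well-known $L^{q}$ and weak-$L^{q}$ mapping properties of the nonlinear Wolff potential. I would treat the two items in parallel.

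For part~(1), fix $B_{R}(x_{0}) \Subset \Omega$ and apply Theorem~\ref{thmupper1} at every $x \in B_{R/2}(x_{0})$ with radius $R/2$, obtaining
\[
|u(x)| \le c\, \mathbf{W}^{R/2}_{s,p}[\mu](x) + c\left(\fint_{B_{R/2}(x)}|u|^{p-1}\,dy\right)^{1/(p-1)} + c\,\tail(u;x,R/2).
\]
The last two terms are bounded on $B_{R/2}(x_{0})$ by the local $L^{p-1}$-integrability inherent to SOLA and by the global finiteness of the tail built into Definition~\ref{defsola}, so they belong to $L^{q^{*}}_{\mathrm{loc}}(\Omega)$ with $q^{*} = nq(p-1)/(n-spq)$. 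The conclusion then reduces to the Hedberg--Wolff mapping property $\mathbf{W}_{s,p}: L^{q}_{\mathrm{loc}}(\Omega) \to L^{q^{*}}_{\mathrm{loc}}(\Omega)$ for $1 < q < n/(sp)$. This is classical potential theory: bound $|\mu|(B_{r}(x)) \lesssim r^{n/q'}\|\mu\|_{L^{q}(B_{r}(x))}$ via H\"older and combine with Hedberg's interpolation between Riesz potentials and the Hardy--Littlewood maximal function, as in Adams--Hedberg.

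For part~(2), fix $B_{R}(x_{0}) \Subset \Omega$ and apply Theorem~\ref{thmosc} to every pair $x,y \in B_{R/8}(x_{0})$. The non-Wolff terms on the right-hand side of \eqref{univ.wolff} are plainly $O(|x-y|^{\alpha})$ with local constants, so the task reduces to showing $\mathbf{W}^{R}_{\beta,p}[\mu] \in L^{\infty}_{\mathrm{loc}}(\Omega)$ with $\beta \coloneqq s - \alpha(p-1)/p$. Setting $q = n/(\beta p) = n/(sp-\alpha(p-1))$, the weak-type assumption yields, via layer-cake, the Morrey-type estimate $|\mu|(B_{r}(x)) \le c\, r^{n-\beta p}$ locally uniformly in $x$, from which the required pointwise boundedness of $\mathbf{W}^{R}_{\beta,p}[\mu]$ follows by direct integration together with a Lorentz-scale sharpening of the Hedberg--Wolff inequality. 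The genuine delicacy lies precisely here: exactly at the scaling exponent $q = n/(\beta p)$, the naive layer-cake bound produces a logarithmic divergence at $r = 0$, and the cleanest way to absorb it is to exploit the uniform dependence of the constant in Theorem~\ref{thmosc} on $\alpha \in [0,\tilde{\alpha}]$ with $\tilde{\alpha} < \alpha_{0}$, which is precisely what was carefully recorded at the end of that theorem.
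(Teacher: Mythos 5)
Part (1) of your proposal is essentially fine: Theorem~\ref{thmupper1} plus the local boundedness of the average and tail terms (recall $q\ge p-1$ in \eqref{SOLA1}, and the tail is finite by definition) reduces the claim to the classical Sobolev-type mapping property of the truncated Wolff potential, $\mathbf{W}^{R}_{s,p}\colon L^{q}\to L^{nq(p-1)/(n-spq)}_{\mathrm{loc}}$ for $1<q<n/(sp)$, which is exactly the ``well-known mapping property'' the paper invokes.

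Part (2), however, has a genuine gap at the very point you flag. Your reduction requires $\mathbf{W}^{R}_{\beta,p}[\mu]\in L^{\infty}_{\mathrm{loc}}$ with $\beta=s-\alpha(p-1)/p$, and this is simply false at the borderline Marcinkiewicz scale: for $\mu(x)=|x|^{-(sp-\alpha(p-1))}\chi_{B_{1}}(x)\in L^{n/(sp-\alpha(p-1)),\infty}$ one has $|\mu|(B_{t}(0))\approx t^{\,n-sp+\alpha(p-1)}$, so $\mathbf{W}^{R}_{\beta,p}[\mu](x)\approx\log(1/|x|)$ near the origin; no ``Lorentz-scale sharpening of the Hedberg--Wolff inequality'' can produce a bound that does not hold. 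Nor does the suggested fix via the uniformity of the constant in $\alpha$ work: applying \eqref{univ.wolff} with a smaller exponent $\alpha''<\alpha$ only yields $C^{0,\alpha''}_{\mathrm{loc}}$, while a larger exponent $\alpha'>\alpha$ makes the potential diverge like a power. The way to get the exact exponent $\alpha$ is to bypass the Wolff potential altogether and use the intermediate estimate \eqref{nonlocal.max.est} from Step~1 of the proof of Theorem~\ref{thmosc}: there only the fractional maximal function $\mathbf{M}_{sp-\alpha(p-1),R}[\mu]$ appears, and this \emph{is} locally bounded under the hypothesis, since for $m=n/(sp-\alpha(p-1))>1$ one has $|\mu|(B_{t}(x))\le \tfrac{m}{m-1}\|\mu\|_{L^{m,\infty}(B_{t}(x))}\,t^{\,n-sp+\alpha(p-1)}$. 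This gives $E(u;x,r)\le C r^{\alpha}$ for all small $r$, locally uniformly in $x$, and then the telescoping argument of Step~2 of that proof (the estimates for $|\mathcal{P}_{B_{\rho}(x)}(u)-\mathcal{P}_{B_{\tilde\rho}(x)}(u)|$ and $|\mathcal{P}_{B_{r}(x)}(u)-\mathcal{P}_{B_{r}(y)}(u)|$ with $r=|x-y|/2$, which is in effect a Campanato-type characterization valid for the integrability exponent $p-1$, possibly below $1$) yields $|u(x)-u(y)|\le C|x-y|^{\alpha}$, i.e.\ $u\in C^{0,\alpha}_{\mathrm{loc}}(\Omega)$, with no loss in the exponent.
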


In the proof of Theorem \ref{thmosc}, we further have a borderline continuity criterion, which extends \cite[Theorem 1.5]{KuMiSi} and \cite[Theorem 8.15]{KuMiSi3} to the case $1<p\le2-s/n$. Its proof is completely similar to that of \cite[Theorem 8.15]{KuMiSi3} once we obtain the excess decay estimate in Lemma \ref{excess.decay} below.

\begin{corollary}
Let $\mu \in \mathcal{M}(\Omega)$ and let $-\mathcal{L}_{\Phi}$ be defined in \eqref{def.L} under assumptions \eqref{mono}, \eqref{c2} and \eqref{s.p.range}. Let $u$ be a SOLA to \eqref{EQ}. If 
\[ \lim_{R\rightarrow0}\mathbf{W}^{R}_{s,p}[\mu](x) = 0 \quad \text{locally uniformly in $\Omega$ with respect to $x$,} \]
then $u$ is continuous in $\Omega$. In particular, if one of the following two conditions holds:
\begin{itemize}
\item[(i)] $\mu \in L^{n/(sp),1/(p-1)}_{\mathrm{loc}}(\Omega)$,
\item[(ii)] $|\mu|(B_{r}) \le h(r)r^{n-sp}$ for any ball $B_{r}\subset \Omega$, with $h(\cdot)$ satisfying 
\[ \int_{0} [h(r)]^{1/(p-1)}\frac{dr}{r} < \infty,\]
\end{itemize}
then $u$ is continuous in $\Omega$.
\end{corollary}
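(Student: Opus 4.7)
The strategy is to follow the proof of \cite[Theorem~1.5]{KuMiSi}, whose only new ingredient in the strongly singular regime is the excess decay estimate furnished by the forthcoming Lemma~\ref{excess.decay}. Fix $x_0 \in \Omega$ and $R_0$ such that $B_{R_0}(x_0) \Subset \Omega$. The excess decay lemma, which is essentially a localized and iterated form of Theorem~\ref{thmosc}, supplies, for a fixed $\alpha \in (0,\alpha_0)$ and any $0 < r \le R_0$, a bound of the form
\begin{equation*}
E_\alpha(x_0; \theta r) \le c\theta^{\alpha} E_\alpha(x_0; r) + c\mathbf{W}^{r}_{s,p}[\mu](x_0),
\end{equation*}
where $\theta \in (0,1)$ is a free parameter and $E_\alpha(x_0;\rho)$ is an excess functional combining the rescaled $L^{p-1}$-mean of $u-(u)_{B_\rho(x_0)}$ on $B_\rho(x_0)$ with the corresponding nonlocal tail.

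Choosing $\theta$ so that $c\theta^{\alpha} \le 1/2$ and iterating along the dyadic sequence $r_k := \theta^k R_0$ yields
\begin{equation*}
E_\alpha(x_0; r_k) \le 2^{-k} E_\alpha(x_0; R_0) + c \sum_{j=0}^{k-1} 2^{-j}\, \mathbf{W}^{r_{k-1-j}}_{s,p}[\mu](x_0).
\end{equation*}
The hypothesis that $\mathbf{W}^{R}_{s,p}[\mu](x) \to 0$ locally uniformly in $x$ as $R \to 0$, combined with a standard Abel-summation argument, forces the right-hand side to tend to $0$ as $k \to \infty$, uniformly for $x_0$ in compact subsets of $\Omega$. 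In particular $\osc_{B_r(x_0)} u \to 0$ as $r \to 0$ locally uniformly, which yields continuity of $u$ on $\Omega$.

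It remains to verify that each of (i) and (ii) guarantees the local uniform decay of $\mathbf{W}^{R}_{s,p}[\mu]$. Under (ii), a direct computation gives
\begin{equation*}
\mathbf{W}^{R}_{s,p}[\mu](x) = \int_0^R \left(\frac{|\mu|(B_r(x))}{r^{n-sp}}\right)^{\frac{1}{p-1}}\frac{dr}{r} \le \int_0^R h(r)^{\frac{1}{p-1}}\frac{dr}{r},
\end{equation*}
and the Dini-type integrability of $h$ makes this vanish as $R \to 0$, uniformly in $x$. Under (i), a Lorentz-space variant of the classical Hedberg-type pointwise bound for $\mathbf{W}^{R}_{s,p}$, analogous to the corresponding step in \cite[Section~4]{KuMiSi}, yields the same conclusion. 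The only real difficulty of the corollary is concentrated in Lemma~\ref{excess.decay}, whose proof in the strongly singular range $1 < p \le 2-s/n$ must circumvent the compactness failure for $W^{h,q}$ with $h,q \in (0,1)$ mentioned in the introduction; once that lemma is at hand, the continuity iteration above is routine.
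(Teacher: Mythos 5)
Your argument coincides with the paper's: the paper proves this corollary precisely by observing that the proof of \cite[Theorem~1.5]{KuMiSi} carries over once the excess decay estimate of Lemma~\ref{excess.decay} is available, which is exactly the iteration you perform, and your verifications of (i) and (ii) are the intended standard Lorentz/Dini computations. One point to correct in the write-up: for $1<p\le 2n/(n+s)$ a SOLA need not be locally integrable, so the average $(u)_{B_\rho(x_0)}$ may be undefined; the excess you iterate must therefore be the modified functional $E(u;x_0,\rho)$ from \eqref{excess.functional}, defined via an infimum over constants $k$, which is the form in which Lemma~\ref{excess.decay} is actually stated and proved, and with that replacement your iteration and the passage to oscillation decay (through the precise representative, as in the proof of Theorem~\ref{thmosc}) go through as claimed.
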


The next two corollaries are concerned with global pointwise estimates for nonnegative SOLA without tail terms. They are based on a comparison principle argument under the additional assumption \eqref{mono2}, and will be used in the existence results for nonlocal Lane--Emden type problems. 
The first one deals with \eqref{EQ} in a bounded domain, while the second one considers the same equation in the whole $\mathbb{R}^n$.

\begin{corollary}\label{2hvTH4}
Let $\Omega \subset \mathbb{R}^{n}$ be a bounded domain, $\mu \in \mathcal{M}^{+}(\Omega)$ and let $-\mathcal{L}_{\Phi}$ be defined in \eqref{def.L} under assumptions \eqref{mono}--\eqref{s.p.range}. 
Let $u$ be a nonnegative SOLA to \eqref{EQ} such that the approximating sequence $\{\mu_{j}\}$ for $\mu$ as described in Definition \ref{defsola} is made of nonnegative functions.  Then there exists a constant $C_{0}\geq1$, depending only on $n$, $s$, $p$ and $\Lambda$, such that 
\begin{equation}\label{global.wolff}
\frac{1}{C_{0}}{\bf W}^{\mathrm{dist}(x,\partial\Omega)/8}_{s,p}[\mu](x) \le u(x) \le C_{0} {\bf W}^{2\mathrm{diam}(\Omega)}_{s,p}[\mu](x) 
\end{equation}
holds for a.e. $x \in \Omega$, whenever ${\bf W}^{2\mathrm{diam}(\Omega)}_{s,p}[\mu](x)$ is finite.
\end{corollary}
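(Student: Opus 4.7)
The \emph{lower bound} in \eqref{global.wolff} is immediate from Theorem~\ref{thmlower}: for $x \in \Omega$, apply it with $R = \mathrm{dist}(x,\partial\Omega)$ so that $B_{R}(x) \subset \Omega$, which yields $\mathbf{W}^{R/8}_{s,p}[\mu](x) \le C_{0}\, u(x)$ directly.

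For the \emph{upper bound}, I would apply Theorem~\ref{thmupper1} at $x$ with the maximal admissible scale $R := \mathrm{dist}(x,\partial\Omega)/2$, producing
\[
u(x) \le c\,\mathbf{W}^{R}_{s,p}[\mu](x) + c\left(\fint_{B_{R}(x)} |u|^{p-1}\,dy\right)^{1/(p-1)} + c\,\tail(u;x,R),
\]
and observing that the first term is trivially bounded by $c\,\mathbf{W}^{2\mathrm{diam}(\Omega)}_{s,p}[\mu](x)$. The plan for controlling the remaining two terms rests on the elementary observation
\[
\mathbf{W}^{2\mathrm{diam}(\Omega)}_{s,p}[\mu](x) \ge c(n,s,p)\,\mu(\Omega)^{1/(p-1)}\,\mathrm{diam}(\Omega)^{-(n-sp)/(p-1)} \quad \text{for every } x \in \Omega,
\]
which holds because $\mu(B_{r}(x)) = \mu(\Omega)$ whenever $r \ge \mathrm{diam}(\Omega)$. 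Hence any bound on the two remaining terms by a constant multiple of the above right-hand side translates automatically into a bound by $\mathbf{W}^{2\mathrm{diam}(\Omega)}_{s,p}[\mu](x)$.

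The local $L^{p-1}$ average I would then control through the standard Boccardo–Gallou\"et-type integrability bound for SOLA (an $\|u\|_{L^{p-1}(\Omega)}$ estimate in terms of $\mu(\Omega)$ and $\mathrm{diam}(\Omega)$) that emerges from the approximation scheme defining SOLA. For the nonlocal tail, I would invoke the Dirichlet condition $u \equiv 0$ outside $\Omega$ to restrict the integral to $\Omega \setminus B_{R}(x)$, dyadically decompose that annular region, and iteratively apply Theorem~\ref{thmupper1} at scales $R_{y} = \mathrm{dist}(y,\partial\Omega)/2$ for $y$ in each dyadic layer; summing the resulting iterated Wolff potentials and using $L^{p-1}$ mapping properties of $\mathbf{W}^{2\mathrm{diam}(\Omega)}_{s,p}[\mu]$ should collapse them into the desired constant multiple of $\mu(\Omega)^{1/(p-1)}\mathrm{diam}(\Omega)^{-(n-sp)/(p-1)}$.

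The \emph{main obstacle} is the nonlocal tail: whereas in local problems the Dirichlet condition automatically kills every boundary contribution, here $\tail(u;x,R)$ retains a genuine integral of $u^{p-1}$ over $\Omega \setminus B_{R}(x)$ weighted by $|y-x|^{-(n+sp)}$. The delicate technical step will be carrying out the dyadic chaining uniformly as $x$ approaches $\partial\Omega$ — where $R \to 0$ — keeping track of how constants depend on scales and ensuring that the iterated Wolff potentials at the chaining points telescope cleanly into a single Wolff potential centered at $x$.
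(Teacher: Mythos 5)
Your lower bound and your closing observation that $\mathbf{W}^{2\mathrm{diam}(\Omega)}_{s,p}[\mu](x) \ge c\,[\mu(\Omega)/\mathrm{diam}(\Omega)^{n-sp}]^{1/(p-1)}$ for all $x\in\Omega$ are correct and indeed appear in the paper's argument. The gap is in the upper bound: applying Theorem~\ref{thmupper1} at the interior scale $R=\mathrm{dist}(x,\partial\Omega)/2$ leaves terms that degenerate as $x\to\partial\Omega$, while the target quantity stays bounded. Concretely, the global Boccardo--Gallou\"et-type bound only gives $\int_\Omega u^{p-1}\,dy \le c\,\mu(\Omega)\,\mathrm{diam}(\Omega)^{sp}$ up to constants, so $\bigl(\fint_{B_R(x)}u^{p-1}\,dy\bigr)^{1/(p-1)} \lesssim R^{-n/(p-1)}\|u\|_{L^{p-1}(\Omega)}$ and, using $u\equiv 0$ outside $\Omega$ and $|y-x|\ge R$, also $\tail(u;x,R)^{p-1}=R^{sp}\int_{\Omega\setminus B_R(x)}u^{p-1}|y-x|^{-n-sp}\,dy \lesssim R^{-n}\|u\|_{L^{p-1}(\Omega)}^{p-1}$; both blow up like $R^{-n/(p-1)}$ near the boundary, so the crude global norm cannot close the estimate uniformly in $x$ --- and note this afflicts your treatment of the local average, not only the tail. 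The dyadic chaining you sketch for the tail is exactly where the proof would have to happen, and as described it is circular: each application of Theorem~\ref{thmupper1} at a chaining point $y$ reintroduces an average and a tail at the small scale $\mathrm{dist}(y,\partial\Omega)$, and nothing in the proposal shows that this recursion telescopes with summable constants.

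The paper sidesteps the boundary issue with a different device. By the comparison principle (Proposition~\ref{pro1}), the nonnegative SOLA $u$ is dominated a.e.\ in $\Omega$ by a nonnegative SOLA $\tilde u$ of $-\mathcal{L}_\Phi\tilde u=\mu$ on the ball $B=B_{R_0}(x)$ with $R_0=\mathrm{diam}(\Omega)$, centered at the very point $x$ under consideration (so that $\Omega\subset B$). Theorem~\ref{thmupper1} is then applied to $\tilde u$ at the full scale $R_0$: since $\tilde u\equiv 0$ outside $B_{R_0}(x)$, the tail term vanishes identically, and the average term $\bigl(\fint_{B}\tilde u^{p-1}\,dy\bigr)^{1/(p-1)}$ is controlled by $c\,[\mu(\Omega)/R_0^{n-sp}]^{1/(p-1)}$ via \eqref{global} and \eqref{sobo0}, which your elementary observation converts into $c\,\mathbf{W}^{2R_0}_{s,p}[\mu](x)$. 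If you wish to keep your route you must actually supply the decay/chaining argument near $\partial\Omega$; with the domain-enlargement plus comparison trick it is unnecessary.
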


\begin{corollary}\label{2hvTH4-}
Let $\mu \in \mathcal{M}^{+}(\mathbb{R}^{n})$ and let $-\mathcal{L}_{\Phi}$ be defined in \eqref{def.L} under assumptions \eqref{mono}--\eqref{s.p.range}. 
Let $u$ be a nonnegative SOLA to \eqref{EQ} with $\Omega=\mathbb{R}^{n}$ such that the approximating sequence $\{\mu_{j}\}$ for $\mu$ as described in Definition \ref{defsola2} is made of nonnegative functions.  Then, with the constant $C_{0}\geq1$ determined in Corollary \ref{2hvTH4},
\begin{equation}\label{belowesRn}
\frac{1}{C_{0}}{\bf W}_{s,p}[\mu](x) \leq u(x) \leq C_{0}{\bf W}_{s,p}[\mu](x) 
\end{equation} 
holds for a.e. $x \in \mathbb{R}^{n}$, whenever ${\bf W}_{s,p}[\mu](x)$ is finite. 
\end{corollary}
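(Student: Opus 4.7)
The plan is to reduce the $\mathbb{R}^{n}$ statement to the bounded-domain estimate in Corollary~\ref{2hvTH4} via an exhaustion argument. By Definition~\ref{defsola2}, $u$ arises as an a.e.\ limit of a sequence $\{u_{j}\}$ of nonnegative SOLA to approximate Dirichlet problems on a sequence $\Omega_{j}\nearrow\mathbb{R}^{n}$ of bounded domains, with nonnegative approximating data $\{\mu_{j}\}$ tending to $\mu$. Applying Corollary~\ref{2hvTH4} to each pair $(u_{j},\mu_{j})$ on $\Omega_{j}$ gives
\[
\frac{1}{C_{0}}{\bf W}^{\mathrm{dist}(x,\partial\Omega_{j})/8}_{s,p}[\mu_{j}](x)\le u_{j}(x)\le C_{0}{\bf W}^{2\mathrm{diam}(\Omega_{j})}_{s,p}[\mu_{j}](x)
\]
for a.e.\ $x\in\Omega_{j}$, and both truncation radii tend to $+\infty$ as $j\to\infty$.

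For the lower bound, fix any $R>0$; for $j$ large we have $\mathrm{dist}(x,\partial\Omega_{j})/8>R$, so monotonicity of the truncated potential in its radius gives $\frac{1}{C_{0}}{\bf W}^{R}_{s,p}[\mu_{j}](x)\le u_{j}(x)$. Combining $u_{j}(x)\to u(x)$ a.e.\ with the convergence $\mu_{j}(B_{r}(x))\to\mu(B_{r}(x))$ for a.e.\ $r\in(0,R]$ and Fatou's lemma applied to the defining radial integral of ${\bf W}^{R}_{s,p}$, one obtains $\liminf_{j}{\bf W}^{R}_{s,p}[\mu_{j}](x)\ge{\bf W}^{R}_{s,p}[\mu](x)$, whence $\frac{1}{C_{0}}{\bf W}^{R}_{s,p}[\mu](x)\le u(x)$. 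Sending $R\to\infty$ and invoking monotone convergence then yields the left-hand inequality in \eqref{belowesRn}.

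For the upper bound we crucially exploit the standing finiteness assumption ${\bf W}_{s,p}[\mu](x)<\infty$. By the standard construction underlying Definition~\ref{defsola2} (for instance, $\mu_{j}=\varrho_{1/j}\ast(\chi_{\Omega_{j}}\mu)$ for a standard mollifier $\varrho_{1/j}$), the approximating measures satisfy a uniform scale-wise domination of the form $\mu_{j}(B_{r}(x))\le c\,\mu(B_{2r}(x))$ for $r\ge 1/j$, while $\mu_{j}(B_{r}(x))\to\mu(B_{r}(x))$ for a.e.\ $r>0$. Dominated convergence applied to the radial integral defining ${\bf W}^{2\mathrm{diam}(\Omega_{j})}_{s,p}[\mu_{j}](x)$ then gives $\lim_{j}{\bf W}^{2\mathrm{diam}(\Omega_{j})}_{s,p}[\mu_{j}](x)={\bf W}_{s,p}[\mu](x)$, and passing to the limit in the upper bound from Corollary~\ref{2hvTH4} yields $u(x)\le C_{0}{\bf W}_{s,p}[\mu](x)$. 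The main technical obstacle is precisely this last convergence: since the truncation radius $2\mathrm{diam}(\Omega_{j})$ itself grows to $+\infty$, weak convergence of $\mu_{j}$ alone is insufficient, and one must read off the appropriate pointwise ball-mass dominations from the concrete form of the approximation scheme prescribed in Definition~\ref{defsola2}.
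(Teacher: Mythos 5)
Your lower-bound argument is correct and is essentially the paper's own: apply Corollary~\ref{2hvTH4} to $(u_j,\mu_j)$ on $\Omega_j$, freeze a truncation radius, use weak convergence of the nonnegative $\mu_j$ (lower semicontinuity on open balls) together with Fatou and $u_j\to u$ a.e., and then let the radius tend to infinity by monotone convergence.

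The upper bound, however, has a genuine gap. Definition~\ref{defsola2} does \emph{not} prescribe any concrete approximation scheme: the only information about $\{\mu_j\}$ is that it lies in $L^\infty\cap L^1$, converges weakly to $\mu$, and satisfies \eqref{limsupmuj} (plus, here, nonnegativity). The corollary is stated for an arbitrary such approximating sequence, so the scale-wise domination $\mu_j(B_r(x))\le c\,\mu(B_{2r}(x))$ for $r\ge 1/j$, which you propose to ``read off from the concrete form of the approximation scheme'', is simply not available; your argument proves the upper estimate only for SOLA produced by the particular mollification construction of Theorem~\ref{thm.ex.rn}, not for the class in the statement. The paper closes this step using only the two defining properties of the sequence, namely weak convergence and \eqref{limsupmuj}, by invoking the approximation argument of \cite[Theorem~1.3]{KuMiSi} to pass from $u_j\le C_0\mathbf{W}_{s,p}[\mu_j]$ to $u\le C_0\mathbf{W}_{s,p}[\mu]$ a.e.; this is exactly the point where your proposal substitutes an extra structural hypothesis for the actual hypotheses. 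In addition, even if one grants the mollification structure, your dominated convergence step is incomplete: the domination is asserted only for $r\ge 1/j$, so the contribution of the radii $r<1/j$ to $\mathbf{W}^{2\mathrm{diam}(\Omega_j)}_{s,p}[\mu_j](x)$ needs a separate estimate (it does vanish as $j\to\infty$ when $\mathbf{W}_{s,p}[\mu](x)<\infty$, using $\mu_j\le c\,j^n\mu(B_{2/j}(x))$ near $x$ and absolute continuity of the convergent radial integral, but this is missing from the proposal). Finally, the full limit $\lim_j\mathbf{W}^{2\mathrm{diam}(\Omega_j)}_{s,p}[\mu_j](x)=\mathbf{W}_{s,p}[\mu](x)$ you assert is neither needed nor, in general, what one gets; a $\limsup$ inequality suffices for \eqref{belowesRn}.
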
 

As for the oscillation estimate, by letting $R\rightarrow\infty$ in \eqref{univ.wolff}, we have the following:

\begin{corollary}
Let $\mu \in \mathcal{M}(\mathbb{R}^{n})$ and let $-\mathcal{L}_{\Phi}$ be defined in \eqref{def.L} under assumptions \eqref{mono}, \eqref{c2} and \eqref{s.p.range}. Let $u$ be a SOLA to \eqref{EQ} with $\Omega = \mathbb{R}^{n}$. Then, with the same constants $c$ and $\alpha_0$ as in Theorem \ref{thmosc},
\begin{equation*}
|u(x)-u(y)| \le c\left[\mathbf{W}_{s-\alpha(p-1)/p,p}[\mu](x) + \mathbf{W}_{s-\alpha(p-1)/p,p}[\mu](y)\right]|x-y|^{\alpha} 
\end{equation*}
holds for a.e. $x,y \in \mathbb{R}^{n}$, provided the right-hand side is finite and $0 \leq \alpha < \alpha_{0}$.
\end{corollary}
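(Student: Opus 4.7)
The plan is to derive the claim by letting $R\to\infty$ in the oscillation bound of Theorem~\ref{thmosc}. Fix Lebesgue points $x,y\in\mathbb{R}^{n}$ of $u$ and put $x_{0}\coloneqq(x+y)/2$. For every $R\geq 16|x-y|$ one has $x,y\in B_{R/8}(x_{0})$, so Theorem~\ref{thmosc} applied on $B_{R}(x_{0})$ yields
\begin{equation*}
|u(x)-u(y)|\leq c\bigl[\mathbf{W}^{R}_{s-\alpha(p-1)/p,p}[\mu](x)+\mathbf{W}^{R}_{s-\alpha(p-1)/p,p}[\mu](y)\bigr]|x-y|^{\alpha}+E(R),
\end{equation*}
where
\begin{equation*}
E(R)\coloneqq c\left[\left(\fint_{B_{R}(x_{0})}|u|^{p-1}\,d\tilde{x}\right)^{1/(p-1)}+\tail(u;x_{0},R)\right]\left(\frac{|x-y|}{R}\right)^{\alpha}.
\end{equation*}
Since $\mathbf{W}^{R}_{s-\alpha(p-1)/p,p}[\mu](\cdot)$ is monotone in $R$, the first bracket converges by monotone convergence to $\mathbf{W}_{s-\alpha(p-1)/p,p}[\mu](x)+\mathbf{W}_{s-\alpha(p-1)/p,p}[\mu](y)$, which is finite by hypothesis. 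The corollary therefore reduces to showing $E(R)\to 0$ as $R\to\infty$, and this is the main obstacle.

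To handle $E(R)$, I would establish the global pointwise decay $|u(z)|\leq c\,\mathbf{W}_{s,p}[|\mu|](z)$ a.e.\ on $\mathbb{R}^{n}$, which one can extract from the approximating sequence $\{u_{j}\}$ provided by Definition~\ref{defsola2}: Theorem~\ref{thmupper1} applied to $u_{j}$ with radii tending to infinity, combined with the nonnegative global bound of Corollary~\ref{2hvTH4-} applied to the problems driven by $|\mu_{j}|$, gives the desired $L^{\infty}$-bound at the approximating level, and then one passes to the limit. Since $|\mu|$ has finite total mass, $\mathbf{W}_{s,p}[|\mu|](z)\leq c\,|\mu|(\mathbb{R}^{n})^{1/(p-1)}|z|^{-(n-sp)/(p-1)}$ for $|z|$ large, hence $|u(z)|^{p-1}\leq c\,|z|^{-(n-sp)}$ at infinity. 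A direct computation in polar coordinates then gives
\begin{equation*}
\left(\fint_{B_{R}(x_{0})}|u|^{p-1}\,d\tilde{x}\right)^{1/(p-1)}\leq c\,R^{(sp-n)/(p-1)}\quad\text{and}\quad\tail(u;x_{0},R)\leq c\,R^{(sp-n)/(p-1)}
\end{equation*}
for $R$ large. Because $n>sp$ by \eqref{s.p.range}, both quantities decay to zero, and the extra factor $(|x-y|/R)^{\alpha}$ only reinforces this; thus $E(R)\to 0$ and the estimate follows.

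The delicate step is producing the pointwise bound $|u|\leq c\,\mathbf{W}_{s,p}[|\mu|]$ for \emph{signed} $\mu$, since Corollary~\ref{2hvTH4-} is stated only in the nonnegative case; this forces one to work with $|\mu_{j}|$ at the level of the approximants in Definition~\ref{defsola2} and transfer the resulting bound to $u$ via the stability of SOLA. Once this global decay of $u$ is secured, the passage $R\to\infty$ outlined above completes the proof.
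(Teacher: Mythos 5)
Your skeleton—apply Theorem~\ref{thmosc} on $B_{R}(x_{0})$ with $x_{0}=(x+y)/2$, use monotonicity of $R\mapsto\mathbf{W}^{R}_{s-\alpha(p-1)/p,p}[\mu]$, and reduce everything to showing $E(R)\to0$—is exactly what the paper means by ``letting $R\to\infty$ in \eqref{univ.wolff}''. The gap is in your mechanism for $E(R)\to0$. The pointwise decay you invoke is false for a general finite measure: the bound $\mathbf{W}_{s,p}[|\mu|](z)\le c\,|\mu|(\mathbb{R}^{n})^{1/(p-1)}|z|^{-(n-sp)/(p-1)}$ for large $|z|$ holds only when $z$ stays far from the support of $\mu$ (e.g.\ compactly supported data), not in general. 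For instance, if $\mu=\sum_{k}2^{-k}\delta_{x_{k}}$ with $|x_{k}|\to\infty$, then $\mathbf{W}_{s,p}[\mu](x_{k})=\infty$ for every $k$ (since $sp<n$), so neither the Wolff potential nor $u$ decays pointwise at infinity, and the asserted estimate $|u(z)|^{p-1}\le c|z|^{-(n-sp)}$ at infinity cannot be extracted. Hence the whole detour—comparing $u_{j}$ with problems driven by $|\mu_{j}|$ to get $|u|\le c\,\mathbf{W}_{s,p}[|\mu|]$ and then using pointwise decay—does not deliver $E(R)\to0$; it also relies on a limit passage $\mathbf{W}_{s,p}[|\mu_{j}|]\to\mathbf{W}_{s,p}[|\mu|]$ that you do not justify.

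The correct (and simpler) mechanism is integral rather than pointwise, and it is the one already used in the paper's construction. By Lemma~\ref{es-L1}(1), the approximants of Definition~\ref{defsola2} satisfy $\|u_{j}\|_{L^{q_{0},\infty}(\mathbb{R}^{n})}\le c\,[|\mu_{j}|(\Omega_{j})]^{1/(p-1)}$ with $q_{0}=n(p-1)/(n-sp)$, and this weak-type bound passes to $u$ by a.e.\ convergence. It then gives, for every ball, $\left(\fint_{B_{R}(x_{0})}|u|^{p-1}\,d\tilde{x}\right)^{1/(p-1)}\le c\,R^{-(n-sp)/(p-1)}$, and, decomposing $\mathbb{R}^{n}\setminus B_{R}(x_{0})$ into dyadic annuli and applying the same weak-type bound on each, $\tail(u;x_{0},R)\le c\,R^{-(n-sp)/(p-1)}$ as well; note that the cruder tail estimate \eqref{u.tail} would \emph{not} suffice here for small $\alpha$ (in particular $\alpha=0$), since it allows the tail to grow in $R$. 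Because $n>sp$, both terms in the bracket of $E(R)$ vanish as $R\to\infty$ for every $\alpha\in[0,\alpha_{0})$, and your monotone-convergence step then yields the corollary with the same constant $c$ as in Theorem~\ref{thmosc}. So keep your frame, but replace the pointwise-decay argument by the weak-$L^{q_{0}}$/annuli estimates as in the proof of Theorem~\ref{thm.ex.rn} (cf.\ \eqref{eq:j_energy}, \eqref{eq:Rn_energy}).
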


\subsection{Nonlocal equations of Lane--Emden type}\label{sec:LE}

\noindent We now introduce the existence results for the nonlocal Lane--Emden type problem \eqref{EQp} under the additional assumption \eqref{mono2}, where $\mu$ is a nonnegative measure and $\Omega$ is either a bounded domain or the whole $\mathbb{R}^{n}$.

We start with  existence results for the case $P(u) = u^{\gamma}$, see \eqref{capa1} and \eqref{capa2} for the definitions of $\mathrm{Cap}_{\mathbf{G}_{s p},\gamma/(\gamma-p+1)}$ and $\mathrm{Cap}_{\mathbf{I}_{s p},\gamma/(\gamma-p+1)}$, respectively.

\begin{theorem}\label{2hvMT3}
Let $-\mathcal{L}_{\Phi}$ be defined in \eqref{def.L} under assumptions \eqref{mono}--\eqref{s.p.range},  and let $\gamma>p-1$. Let $\Omega\subset \mathbb{R}^n$ be a bounded domain and $\mu\in \mathcal M^{+}(\Omega)$.
\begin{itemize}
\item[ (1)] 	If the following problem
\begin{equation}\label{2hvMT3-2}
\left\{
\begin{aligned}
- \mathcal{L}_\Phi u& = u^\gamma+\mu &\text{in }&\Omega,  \\ 
u & =  0 &\text{in }&\mathbb{R}^n\setminus\Omega
\end{aligned}
\right. 
\end{equation} admits a nonnegative  SOLA $u$ such that the approximating sequence $\{\mu_{j}\}$ for $\mu$ as described in Definition \ref{defsola} is made of nonnegative functions, then for any compact set $K\subset \Omega$, there exists a constant $C>0$, depending only on $n$, $s$, $p$, $\Lambda$, $\gamma$ and $\mathrm{dist}(K,\partial\Omega)$, such that 
\begin{equation}\label{2hvMT3-4}
\int_{E} u^\gamma \,dx  + \mu (E) \le C \mathrm{Cap}_{\mathbf{G}_{s p},\frac{\gamma}{\gamma-p+1}}(E) \quad \text{for any Borel set $E\subset K$.}
\end{equation}
\item[ (2)] Conversely, there exists a small constant $\delta>0$, depending only on $n$, $s$, $p$, $\Lambda$, $\gamma$ and $\mathrm{diam}(\Omega)$, such that if the inequality 
\begin{equation}\label{2hvMT3-1}
\mu(K)\leq \delta\mathrm{Cap}_{\mathbf{G}_{sp},\frac{\gamma}{\gamma-p+1}}(K)
\end{equation}
holds for any compact set $K\subset\mathbb{R}^n$, then  problem \eqref{2hvMT3-2} admits a nonnegative SOLA $u$ which satisfies  
\begin{equation}\label{2hvMT3-3}
u(x) \le \frac{\gamma \max\left\{2^{\frac{2-p}{p-1}},1\right\}}{\gamma-p+1}C_{0} {\bf W}_{s ,p}^{2\mathrm{diam}(\Omega)}[\mu ](x) \quad \text{for a.e. $x\in \Omega$,}
\end{equation}
where $C_{0} \ge 1$ is the constant determined in Corollary \ref{2hvTH4}.
\end{itemize}
\end{theorem}

\begin{theorem}\label{2hvMT4}
Let $-\mathcal{L}_{\Phi}$ be defined in \eqref{def.L} under assumptions \eqref{mono}--\eqref{s.p.range}, and let $\gamma>p-1$.  Let  $\mu \in \mathcal{M}^{+}(\mathbb{R}^{n})$.	
\begin{itemize}
\item[ (1)] If the following problem
\begin{equation}\label{2hvMT4-2}
\left\{
\begin{aligned}
- \mathcal{L}_\Phi u &=  u^\gamma+\mu & \text{in }& \mathbb{R}^n,\\ 
\inf_{\mathbb{R}^n}u & =0 & & 
\end{aligned} 
\right.
\end{equation} 
admits a nonnegative SOLA $u$ such that the approximating sequence $\{\mu_{j}\}$ for $\mu$ as described in Definition \ref{defsola2} is made of nonnegative functions, then  there exists a constant $C>0$, depending only on $n$, $s$, $p$, $\Lambda$ and $\gamma$, such that 
\begin{equation}\label{2hvMT4-4}
\int_{E}u^{\gamma}\,dx  + \mu (E) \le C \mathrm{Cap}_{\mathbf{I}_{s p},\frac{\gamma}{\gamma-p+1}}(E) \quad \text{for any Borel set $E\subset \mathbb{R}^n$.}
\end{equation}
\item[ (2)] Conversely, there exists a small constant $\delta>0$, depending only on $n$, $s$, $p$, $\Lambda$ and $\gamma$,  such that if the inequality 
\begin{equation}\label{2hvMT4-1}
\mu(K)\leq \delta\mathrm{Cap}_{\mathbf{I}_{s p},\frac{\gamma}{\gamma-p+1}}(K)
\end{equation}
holds for any compact set $K\subset\mathbb{R}^n$, then problem \eqref{2hvMT4-2} admits a nonnegative SOLA $u$ which satisfies  
\begin{equation}\label{2hvMT4-3}
{u(x)} \le \frac{\gamma \max\left\{2^{\frac{2-p}{p-1}},1\right\}}{\gamma-p+1}C_{0}{\bf W}_{s ,p}[\mu ](x) \quad \text{for a.e. $x\in \mathbb R^n$,}
\end{equation}
where $C_{0} \ge 1$ is the constant determined in Corollary \ref{2hvTH4}.
\end{itemize}
\end{theorem}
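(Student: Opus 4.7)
The plan is to prove Part (1) directly from the pointwise lower bound in Corollary~\ref{2hvTH4-} combined with the nonlinear Wolff-potential characterization of capacitary inequalities, and to prove Part (2) via an iteration scheme whose closure at the level of Wolff potentials follows from the same characterization applied in the reverse direction.

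\textbf{Part (1).} I set $\nu := u^{\gamma}\,dx + \mu \in \mathcal{M}^{+}(\mathbb{R}^{n})$, so that $u$ is a nonnegative SOLA to $-\mathcal{L}_{\Phi}u = \nu$ on $\mathbb{R}^{n}$ with a nonnegative approximating sequence. The lower bound in Corollary~\ref{2hvTH4-} applied to $\nu$ yields $\mathbf{W}_{s,p}[\nu] \le C_{0}u$ a.e. For a Borel set $E\subset \mathbb{R}^{n}$, using $\mathbf{W}_{s,p}[\chi_{E}\nu]\le \mathbf{W}_{s,p}[\nu]$ and integrating the $\gamma$-th power over $E$ gives the testing inequality
\[ \int_{E}\bigl(\mathbf{W}_{s,p}[\chi_{E}\nu]\bigr)^{\gamma}\,dx \;\le\; C_{0}^{\gamma}\int_{E}u^{\gamma}\,dx \;\le\; C_{0}^{\gamma}\nu(E). \]
This is exactly the Cascante--Ortega--Verbitsky / Kalton--Verbitsky testing condition, which in the range $\gamma>p-1$ is equivalent to $\nu(K) \le C\,\mathrm{Cap}_{\mathbf{I}_{sp},\gamma/(\gamma-p+1)}(K)$ for compact $K$. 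Inner regularity of Radon measures then yields \eqref{2hvMT4-4} for every Borel set.

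\textbf{Part (2).} The same equivalence, read in reverse, upgrades \eqref{2hvMT4-1} (for small $\delta$) to the pointwise self-improvement bound
\[ \mathbf{W}_{s,p}\bigl[(\mathbf{W}_{s,p}[\mu])^{\gamma}\bigr](x) \;\le\; K(\delta)\,\mathbf{W}_{s,p}[\mu](x)\quad\text{a.e.}, \qquad K(\delta)\to 0 \text{ as } \delta \to 0. \]
I then let $u_{0}\equiv 0$ and define $u_{k+1}$ inductively as a nonnegative SOLA to $-\mathcal{L}_{\Phi}u_{k+1}=u_{k}^{\gamma}+\mu$ on $\mathbb{R}^{n}$, normalized by $\inf u_{k+1}=0$ and built from a nonnegative approximating sequence of the finite nonnegative measure $u_{k}^{\gamma}+\mu$. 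The upper bound in Corollary~\ref{2hvTH4-}, together with the subadditivity $\mathbf{W}_{s,p}[\mu_{1}+\mu_{2}]\le A(\mathbf{W}_{s,p}[\mu_{1}]+\mathbf{W}_{s,p}[\mu_{2}])$ (with $A = \max\{2^{(2-p)/(p-1)},1\}$ coming from $(a+b)^{1/(p-1)} \le A(a^{1/(p-1)}+b^{1/(p-1)})$) and the homogeneity $\mathbf{W}_{s,p}[c\nu] = c^{1/(p-1)}\mathbf{W}_{s,p}[\nu]$, turn the inductive hypothesis $u_{k}\le M\mathbf{W}_{s,p}[\mu]$ into
\[ u_{k+1} \;\le\; C_{0}A\bigl(M^{\gamma/(p-1)}K(\delta)+1\bigr)\mathbf{W}_{s,p}[\mu]. \]
Choosing $M := \gamma A C_{0}/(\gamma-p+1)$ reduces the closure condition to $M^{\gamma/(p-1)}K(\delta) \le (p-1)/(\gamma-p+1)$, which holds if $\delta$ is sufficiently small. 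Thus $u_{k}\le M\mathbf{W}_{s,p}[\mu]$ for every $k$, matching \eqref{2hvMT4-3} along the sequence.

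\textbf{Limit passage and main obstacle.} The final task is to extract from $\{u_{k}\}$ a SOLA limit $u$ of \eqref{2hvMT4-2}. The uniform pointwise bound and the integral inequality coming from the same nonlinear capacity characterization applied to $\mu$ itself show that $u_{k}^{\gamma}+\mu$ stays in a uniform class of nonnegative measures on $\mathbb{R}^{n}$ (uniformly finite total mass on bounded sets, with uniform tail control through $\mathbf{W}_{s,p}[\mu]$). This is exactly the input required by the global SOLA construction on $\mathbb{R}^{n}$ underlying Corollary~\ref{2hvTH4-}, and a diagonal extraction gives a subsequential a.e.\ limit $u$ that is a nonnegative SOLA of \eqref{2hvMT4-2}, with \eqref{2hvMT4-3} preserved. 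The principal obstacle is this last compactness step on the whole space: unlike the bounded-domain case of Theorem~\ref{2hvMT3}, there is no Rellich compactness, and because the fractional $p$-Laplacian is not order-preserving in the measure datum when $p\neq 2$, the classical monotone iteration of Phuc--Verbitsky \cite{22PhVe} is unavailable; one must instead lean on the uniform Wolff-potential control, together with the SOLA stability theory developed to prove Theorem~\ref{thmupper1}, to pass to the limit.
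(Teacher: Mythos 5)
Your Part (2) has a genuine gap at exactly the point you flag as ``the principal obstacle'': the passage to the limit in your iteration $-\mathcal{L}_{\Phi}u_{k+1}=u_{k}^{\gamma}+\mu$ on $\mathbb{R}^{n}$ is never carried out, and as set up it cannot be closed by ``diagonal extraction''. Since you renounce monotonicity, all you can get from the uniform bound $u_{k}\le M\mathbf{W}_{s,p}[\mu]$ and $W^{h,q}_{\mathrm{loc}}$ compactness is a.e.\ convergence of a \emph{subsequence} $u_{k_i}\to u$; but the equation satisfied by $u_{k_i}$ has right-hand side $u_{k_i-1}^{\gamma}+\mu$, and the indices $k_i-1$ are not controlled, so the limit equation cannot be identified. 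Moreover, your stated reason for abandoning monotone iteration --- that the operator ``is not order-preserving in the measure datum when $p\neq2$'' --- is contradicted by Proposition~\ref{pro1}, which is precisely the comparison principle the paper uses to make the iterates nondecreasing (in \textbf{Step~1} of the proof of Theorem~\ref{2hvMT3}). The paper in fact avoids iterating on $\mathbb{R}^{n}$ altogether: it mollifies $\mu$, solves the Lane--Emden problem on expanding balls $B_{j}(0)$ via the already-proved bounded-domain result (whose smallness hypothesis for $\mu_{j}$ is secured through Propositions~\ref{241020147}--\ref{241020148}, Remark~\ref{rmk26} and Lemma~\ref{apro-le}), obtains $u_{j}\le c\,\mathbf{W}_{s,p}[\mu_{j}]$, and then passes $j\to\infty$ using the uniform $L^{1}$ bound of Lemma~\ref{th1b}, compactness on balls (Lemma~\ref{lemcompact}), the equi-integrability of $\{(\mathbf{W}_{s,p}[\mu_{j}])^{\gamma}\}$ from Lemma~\ref{equi-integrable}, Vitali's theorem, and the tail/limit machinery of Theorem~\ref{thm.ex.rn}; this is what produces an object satisfying all the requirements of Definition~\ref{defsola2} (approximating weak solutions with $L^{\infty}\cap L^{1}$ data, $u_{j}^{\gamma}\to u^{\gamma}$ in $L^{1}_{\mathrm{loc}}$, tail control). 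None of this is supplied or replaced in your proposal.

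In Part (1) your first step coincides with the paper's (the lower bound of Corollary~\ref{2hvTH4-} applied to $\nu=u^{\gamma}dx+\mu$), but the reduction to the capacity bound is weaker than what is needed. Integrating $u^{\gamma}$ only over $E$ gives $\int_{E}(\mathbf{W}_{s,p}[\chi_{E}\nu])^{\gamma}dx\le C\nu(E)$, whereas the equivalence available in the paper (Proposition~\ref{241020147}, item \textbf{(3)}) requires the integral over all of $\mathbb{R}^{n}$, and the duality/absorption argument that converts a testing inequality into $\nu(K)\le C\,\mathrm{Cap}_{\mathbf{I}_{sp},\gamma/(\gamma-p+1)}(K)$ (the proof of Lemma~\ref{2hvTH1a}, via Young's inequality against test functions $f$ with $\mathbf{I}_{sp}\ast f\ge\chi_{K}$) genuinely uses the global integral, since $\mathbf{I}_{sp}[\chi_{K}\nu]$ must be paired with $f$ off $K$. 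The paper bridges this gap with the centered maximal-function argument ($M_{\omega}\chi_{E}$ bounded on $L^{t}(d\omega)$) in Lemma~\ref{2hvTH1a}, which upgrades the pointwise bound to the global testing inequality for arbitrary Borel sets; your appeal to a local testing equivalence skips this step, so even Part (1) as written rests on a citation that does not match the inequality you actually derived.
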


We next state the results for the case when $P=P_{l,a,\beta}$ is an exponential function given in \eqref{2hvEQ11}, see \eqref{2hvEQ9}, \eqref{2hvEQ14} and \eqref{2hvEQ15} for the definitions of ${\bf M}_{sp,R}$, $\mathrm{Cap}_{{\mathbf{G}_{sp}},Q_p^*}$ and $\mathrm{Cap}_{{\mathbf{I}_{sp}},Q_p^*}$, respectively. In this case, we need an extra assumption that $\mu$ has compact support in $\R^{n}$.

\begin{theorem}\label{2hvMT1}
Let $-\mathcal{L}_{\Phi}$ be defined in \eqref{def.L} under assumptions \eqref{mono}--\eqref{s.p.range}. Let $l\in \mathbb{N}$, $a>0$,  $\beta\geq 1$ with $l\beta>p-1$. Let $\Omega\subset \mathbb{R}^n$ be a bounded domain and $\mu\in \mathcal M^{+}(\Omega)$.
\begin{itemize}
\item[ (1)]
If the following problem
\begin{equation}\label{2hvMT1a}
\left\{
\begin{aligned}
- \mathcal{L}_\Phi u & =  P_{l,a,\beta}(u)+\mu &\text{in } & \Omega,  \\ 
u  & =  0 & \text{in }& \mathbb{R}^n\setminus\Omega
\end{aligned}
\right.
\end{equation} 
admits a nonnegative SOLA  $u$ such that the approximating sequence $\{\mu_{j}\}$ for $\mu$ as described in Definition \ref{defsola} is made of nonnegative functions, then for any compact set $K\subset \Omega$, there exists a constant $C>0$, depending only on $n$, $s$, $p$, $\Lambda$, $l$, $a$, $\beta$ and $\mathrm{dist}(K,\partial\Omega)$, such that 
\begin{equation}\label{2hvMT1c}
\int_{E} P_{l,a,\beta}(u)\,dx  + \mu (E) \le C \mathrm{Cap}_{{\mathbf{G}_{ sp}},Q_p^*}(E) \quad \text{for any Borel set $E\subset K$.}
\end{equation}
\item[ (2)]
Conversely, there exists a small constant $\delta>0$, depending only on $n$, $s$, $p$, $\Lambda$, $l$, $a$, $\beta$ and $\mathrm{diam}(\Omega)$, such that if 
\begin{equation}\label{2hvMT3-1-}
\left\|{\bf M}_{sp,2\mathrm{diam}(\Omega)}^{\frac{{(p - 1)(\beta  - 1)}}{\beta }}[\mu ]\right\|_{L^\infty({\mathbb{R}^n})} \le \delta,
\end{equation}
then, with $c_p=\max\{1, 4^{(2-p)/(p-1)}\}$, $C_{0} \ge 1$ determined in Corollary \ref{2hvTH4} and
 \[ \omega_{1} \coloneqq \delta\left\|{\bf M}_{ sp,2\mathrm{diam}(\Omega)}^{\frac{(p-1)(\beta-1)}{\beta}} [1] \right\|_{{L^\infty }(\mathbb{R}^n)}^{-1}+\mu, \]  
we have 
\begin{equation*}
\int_\Omega P_{l,a,\beta}(4c_pC_{0}{\bf W}_{s ,p}^{2\mathrm{diam}(\Omega)}[\omega_{1}])\, dx\leq C
\end{equation*}
for a constant $C=C(n,s,p,\beta, \mathrm{diam}(\Omega))>0$, 
and problem \eqref{2hvMT1a} admits a nonnegative SOLA $u$ satisfying
\begin{equation}\label{2hvMT1b}
{u(x)} \le 2c_pC_{0}{\bf W}_{s ,p}^{2\mathrm{diam}(\Omega)}[\omega_{1}](x) \quad \text{for a.e. $x\in \Omega$.}
\end{equation}
\end{itemize}
\end{theorem}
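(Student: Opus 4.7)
The plan is to follow the Phuc--Verbitsky / Nguyen--V\'eron strategy \cite{22PhVe,HV1} adapted to the nonlocal setting, handling parts (1) and (2) separately.

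For part (1), I would regard a nonnegative SOLA $u$ to \eqref{2hvMT1a} simultaneously as a nonnegative SOLA to $-\mathcal{L}_\Phi u = \nu$, where $\nu \coloneqq P_{l,a,\beta}(u)\,dx + \mu \in \mathcal{M}^+(\Omega)$. The global lower bound in Corollary \ref{2hvTH4} then gives
\[
u(x) \ge C_0^{-1}\, \mathbf{W}_{s,p}^{\mathrm{dist}(x,\partial\Omega)/8}[\nu](x) \quad \text{for a.e. } x\in\Omega,
\]
so for each compact $K\subset\Omega$ the truncation radius is uniformly bounded below by $\mathrm{dist}(K,\partial\Omega)/8$. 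Monotonicity of $P_{l,a,\beta}$ then upgrades this to an integrated inequality between $\nu$ and the Wolff potential of $\chi_E\nu$ on Borel subsets $E\subset K$. The capacity bound \eqref{2hvMT1c} is then read off from a Wolff-potential/Orlicz-capacity duality: one tests against nonnegative measures supported in $E$, applies Wolff's inequality, and uses the duality between the Young function $Q_p^*$ and its complementary function to pass to $\mathrm{Cap}_{\mathbf{G}_{sp},Q_p^*}(E)$.

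For part (2), I would construct the SOLA by monotone iteration. Setting $u_0\equiv 0$, define $u_{k+1}$ as the nonnegative SOLA to
\[
-\mathcal{L}_\Phi u_{k+1} = P_{l,a,\beta}(u_k) + \mu \quad\text{in }\Omega,\qquad u_{k+1}=0 \quad\text{in }\mathbb{R}^n\setminus\Omega,
\]
whose existence (with a nonnegative approximating sequence) is furnished by the SOLA existence theory for \eqref{EQ} developed earlier in the paper. The inductive claim is the pointwise bound $u_k \le 2c_p C_0\, \mathbf{W}_{s,p}^{2\mathrm{diam}(\Omega)}[\omega_1]$ for every $k$. Applying the upper bound of Corollary \ref{2hvTH4} to $u_{k+1}$, together with the subadditivity of the Wolff potential (which is where the factor $c_p$ appears), reduces the induction step to an inequality of the form
\[
c_p C_0\, \mathbf{W}_{s,p}^{2\mathrm{diam}(\Omega)}\!\left[P_{l,a,\beta}\bigl(2c_p C_0\, \mathbf{W}_{s,p}^{2\mathrm{diam}(\Omega)}[\omega_1]\bigr)\right] \le c_p C_0\, \mathbf{W}_{s,p}^{2\mathrm{diam}(\Omega)}[\omega_1],
\]
which closes provided $\delta$ is taken small enough; the constant density portion of $\omega_1$ is precisely what is needed to absorb the exponential nonlinearity. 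Once this uniform pointwise bound is in hand, equi-integrability of $\{P_{l,a,\beta}(u_k)\,dx + \mu\}$ and the standard compactness framework for SOLA allow passage to the limit and yield \eqref{2hvMT1b}.

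The main obstacle is the exponential integrability estimate
\[
\int_\Omega P_{l,a,\beta}\!\left(4c_p C_0\, \mathbf{W}_{s,p}^{2\mathrm{diam}(\Omega)}[\omega_1]\right) dx \le C,
\]
which I would extract from the $L^\infty$ smallness \eqref{2hvMT3-1-} of the fractional maximal function via a Hedberg-type decomposition of $\mathbf{W}_{s,p}^{2\mathrm{diam}(\Omega)}[\omega_1]$: a short-range piece controlled by $\mathbf{M}_{sp}^{(p-1)(\beta-1)/\beta}[\omega_1]$ that supplies the exponential factor $\exp(a|\cdot|^\beta)$ with prefactor proportional to $\delta$, and a long-range piece controlled by $\mathrm{diam}(\Omega)$. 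The heart of the argument is to track constants so that the coefficient in front of $|\cdot|^\beta$ in the exponent stays below the appropriate Adams--Moser--Trudinger threshold on the bounded domain $\Omega$; this is precisely what dictates the exponent $(p-1)(\beta-1)/\beta$ appearing in \eqref{2hvMT3-1-} and in the definition of $\omega_1$.
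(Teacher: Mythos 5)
Your part (1) and your two analytic ingredients for part (2) are in line with the paper: the necessity statement is exactly the lower bound of Corollary~\ref{2hvTH4} applied (after an approximation step) to the measure $P_{l,a,\beta}(u)\,dx+\mu$, followed by the Wolff-potential/Orlicz-capacity duality, which the paper delegates to Lemma~\ref{2hvTH1}(2) (i.e.\ \cite[Theorem~2.7]{HV1}); and your induction-closing inequality and the exponential integrability of $P_{l,a,\beta}\bigl(4c_pC_0\mathbf{W}^{2\mathrm{diam}(\Omega)}_{s,p}[\omega_1]\bigr)$ are precisely the content of Lemma~\ref{2hvTH3-B} (parts (2) and (1), taken from \cite[Theorems~2.5 and~2.2]{HV1}), so re-deriving them by a Hedberg-type decomposition is acceptable in principle.

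The genuine gap is the architecture of the iteration in part (2). You iterate directly with the measure datum $\mu$, so each $u_{k+1}$ is only a SOLA of a ``frozen'' problem $-\mathcal{L}_\Phi u_{k+1}=P_{l,a,\beta}(u_k)+\mu$. First, monotonicity of $\{u_k\}$ is not free at this level: SOLA are not known to be unique and no comparison principle between two \emph{given} SOLA is available; Proposition~\ref{pro1} only produces \emph{some} ordered SOLA for the larger datum, so each iterate must be chosen through that proposition, which already makes ``the nonnegative SOLA to\dots'' ill-defined as stated. Second, and more seriously, the limit of your iterates does not come equipped with the approximation structure required by Definition~\ref{defsola}: a SOLA of \eqref{2hvMT1a} must be an a.e./$L^q$ limit of \emph{weak} solutions $u_j\in\mathbb{W}^{s,p}(\Omega)$ of $-\mathcal{L}_\Phi u_j=P_{l,a,\beta}(u_j)+\mu_j$ with $\mu_j\in L^\infty(\Omega)$, i.e.\ with the nonlinearity evaluated at $u_j$ itself, whereas your iterates solve linearized problems and are themselves only limits of approximations; ``the standard compactness framework'' therefore does not deliver the required approximating sequence (at best a nontrivial diagonal argument would have to be supplied). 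The paper avoids exactly this by a two-step scheme: Step~1 runs the monotone iteration for $0\le\mu\in L^\infty(\Omega)$ entirely within weak solutions (where uniqueness and Proposition~\ref{pro1} give monotonicity), applies Lemma~\ref{2hvTH3-B} for the uniform bound, and shows via an energy estimate that the monotone limit is a genuine weak solution of the full Lane--Emden problem with that $L^\infty$ datum; Step~2 mollifies, $\mu_j=\mu*\rho_j$, notes that \eqref{2hvMT3-1-} is preserved under mollification, applies Step~1 to each $\mu_j$, and passes to the limit using the uniform pointwise bound together with Lemma~\ref{equi-integrable2} and Vitali's theorem, so that the limit is a SOLA by construction and satisfies \eqref{2hvMT1b}. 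You would need to restructure your part (2) along these lines (or supply the missing diagonal/approximation argument) for the proof to be complete.
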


\begin{theorem}\label{2hvMT2}
Let $-\mathcal{L}_{\Phi}$ be defined in \eqref{def.L} under assumptions \eqref{mono}--\eqref{s.p.range}. Let $l\in \mathbb{N}$, $a>0$,  $\beta\geq 1$ with $l\beta>\frac{n (p-1)}{n-sp}$.  
Let $\mu \in \mathcal{M}^{+}(\mathbb{R}^n)$ with $\mathrm{supp}\,\mu\subset B_{R}(0)$ for some $R>1$.
\begin{itemize}
\item[(1)] If  the following problem 
\begin{equation}\label{2hvMT2a}
\left\{
\begin{aligned}
- \mathcal{L}_\Phi u & = P_{l,a,\beta}(u)+\mu & \text{in }&  \mathbb{R}^n,\\
\inf_{\mathbb{R}^n}u & = 0 &&
\end{aligned}
\right. 
\end{equation} 
admits a nonnegative SOLA $u$ such that the approximating sequence $\{\mu_{j}\}$ for $\mu$ as described in Definition \ref{defsola2} is made of nonnegative functions, then there exists a constant $C>0$, depending only on $n$, $s$, $p$, $\Lambda$, $l$, $a$ and $\beta$, such that 
\begin{equation}\label{2hvMT2c}
\int_{E} P_{l,a,\beta}(u)\,dx  + \mu (E) \le C \mathrm{Cap}_{{\mathbf{I}_{ s p}},Q_p^*}(E) \quad \text{for any Borel set $E \subset \mathbb{R}^n$.}  
\end{equation}
\item[(2)] Conversely, there exists a small constant $\delta>0$, depending only on $n$, $s$, $p$, $\Lambda$, $l$, $a$, $\beta$ and $R$, such that if 
\begin{equation}\label{2hvMT3-1-+}
\left\|{\bf M}_{ sp}^{\frac{{(p - 1)(\beta  - 1)}}{\beta }}[\mu ]\right\|_{L^\infty (\mathbb{R}^n)} \le \delta,
\end{equation}
then, with $c_p=\max\{1, 4^{(2-p)/(p-1)}\}$, $C_{0} \ge 1$ determined in Corollary \ref{2hvTH4} and
\[ \omega_{2} \coloneqq \delta\left\|{\bf M}_{s p}^{\frac{(p-1)(\beta-1)}{\beta}}[\chi_{B_R}]\right\|_{L^\infty(\mathbb{R}^n)}^{-1}\chi_{B_R}+\mu, \] 
we have 
\begin{equation*}
\int_{\mathbb{R}^n}	P_{l,a,\beta}\left(4c_pC_{0}{\bf W}_{s ,p}[\omega_{2}]\right)\, dx\leq C
\end{equation*}
for a constant $C=C(n,s,p,\beta,R)>0$, 
and problem \eqref{2hvMT2a} admits a nonnegative SOLA $u$ satisfying
\begin{equation}\label{2hvMT2b}
{u(x)} \le 2c_pC_{0}{\bf W}_{s ,p}[\omega_{2}](x) \quad \text{for a.e. $x\in \mathbb{R}^n.$}
\end{equation}
\end{itemize}
\end{theorem}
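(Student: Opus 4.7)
The proof follows the scheme of Theorem~\ref{2hvMT1}, with the truncated Wolff potential replaced by the full Wolff potential $\mathbf{W}_{s,p}$ and the localized Bessel capacity replaced by the global Riesz capacity $\mathrm{Cap}_{\mathbf{I}_{sp},Q_p^*}$. For part~(1), I would view the SOLA $u$ as solving $-\mathcal{L}_\Phi u = \nu$ with $\nu \coloneqq P_{l,a,\beta}(u)\,dx + \mu$, whose approximating sequence is nonnegative by assumption, and apply the global lower bound in Corollary~\ref{2hvTH4-} to obtain
\[ \mathbf{W}_{s,p}[\nu](x) \le C_0\,u(x) \qquad \text{for a.e. } x \in \mathbb{R}^n. \]
Inequality~\eqref{2hvMT2c} then follows from the exponential-Orlicz analogue of the standard duality between Wolff-potential inequalities and capacity inequalities, adapted from \cite{HV1} to the Riesz kernel $\mathbf{I}_{sp}$ on $\mathbb{R}^n$ in place of the Bessel kernel $\mathbf{G}_{sp}$ used in the bounded case.

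For part~(2), I would run a monotone Picard iteration. Let $\{\mu_j\}$ be a nonnegative approximating sequence of $\mu$ as in Definition~\ref{defsola2}. Starting from $u_0^j \equiv 0$, define $u_{k+1}^j$ as a nonnegative SOLA to
\[ -\mathcal{L}_\Phi w = P_{l,a,\beta}(u_k^j) + \mu_j \quad \text{in } \mathbb{R}^n, \qquad \inf_{\mathbb{R}^n} w = 0, \]
whose existence is guaranteed by the linear measure-data existence theory developed earlier in the paper. Corollary~\ref{2hvTH4-}, combined with the $c_p$-sublinearity
\[ \mathbf{W}_{s,p}[\sigma_1+\sigma_2](x) \le c_p\bigl(\mathbf{W}_{s,p}[\sigma_1](x) + \mathbf{W}_{s,p}[\sigma_2](x)\bigr), \qquad c_p = \max\{1,4^{(2-p)/(p-1)}\}, \]
gives
\[ u_{k+1}^j(x) \le c_pC_0\bigl(\mathbf{W}_{s,p}[P_{l,a,\beta}(u_k^j)](x) + \mathbf{W}_{s,p}[\mu_j](x)\bigr). \]
Induction on $k$ then closes with the uniform target bound $u_k^j \le 2c_pC_0\mathbf{W}_{s,p}[\omega_2]$ as soon as one verifies the self-improvement estimate
\[ \mathbf{W}_{s,p}\Bigl[P_{l,a,\beta}\bigl(2c_pC_0\mathbf{W}_{s,p}[\omega_2]\bigr)\Bigr](x) \le \mathbf{W}_{s,p}[\omega_2](x), \]
after which monotonicity of the iterates in $k$ permits passing to a pointwise limit, and the stability of SOLA under approximation yields the sought SOLA on letting $j\to\infty$.

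The principal obstacle, and the place where the smallness assumption~\eqref{2hvMT3-1-+} is consumed, is the exponential integrability estimate
\[ \int_{\mathbb{R}^n} P_{l,a,\beta}\bigl(4c_pC_0\mathbf{W}_{s,p}[\omega_2]\bigr)\,dx \le C. \]
The plan is to expand $P_{l,a,\beta}$ as a power series in $t^\beta$, bound each term $\int (\mathbf{W}_{s,p}[\omega_2])^{k\beta}\,dx$ by interpolating between the fractional maximal function $\mathbf{M}_{sp}^{(p-1)(\beta-1)/\beta}[\omega_2]$ (whose $L^\infty$ norm is controlled by the choice of $\delta$ in the definition of $\omega_2$) and the Riesz potential $\mathbf{I}_{sp}[\omega_2]$, and to exploit the compact support of $\omega_2$ in $B_R$ to make the $L^{k\beta}$ norms of $\mathbf{I}_{sp}[\omega_2]$ tractable. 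Choosing $\delta$ sufficiently small turns the resulting series into a geometric one with finite sum, which simultaneously delivers the self-improvement bound above and the finiteness of $P_{l,a,\beta}(u)$ needed to interpret~\eqref{2hvMT2a}. This is the exact nonlocal analogue of the Adams--Hedberg-type reasoning used in the proof of Theorem~\ref{2hvMT1}, the only genuinely new difficulty being the absence of a bounded ambient domain, which forces us to work with $\mathbf{I}_{sp}$ and with compactly supported $\omega_2$ throughout.
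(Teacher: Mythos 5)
Your part (1) is essentially the paper's argument: the global lower Wolff bound $u\ge C_0^{-1}\mathbf{W}_{s,p}[P_{l,a,\beta}(u)+\mu]$ obtained from Corollary~\ref{2hvTH4} via the approximation of Corollary~\ref{2hvTH4-}, followed by the Orlicz-capacity duality, which is exactly part {\bf(1)} of Lemma~\ref{2hvTH1} (quoted from \cite{HV1}). The real problem is your part (2). You run a Picard iteration directly in $\mathbb{R}^n$, where each iterate $u_{k+1}^j$ is \emph{a} nonnegative SOLA to a measure-data problem in the whole space, and you then invoke ``monotonicity of the iterates in $k$'' to pass to the limit. That monotonicity is not available: the only comparison statement in the paper is Proposition~\ref{pro1}, which is formulated for bounded domains and, even there, only asserts the \emph{existence} of some ordered SOLA for the larger datum, not a comparison between two given SOLA (SOLA are not known to be unique, and no comparison principle for SOLA in $\mathbb{R}^n$ is established). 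Without monotonicity (or uniqueness) the sequence $k\mapsto u_k^j$ has at best subsequential limits, and a subsequential limit of the iteration $u_{k+1}^j=T(u_k^j)$ need not be a fixed point, so the scheme does not close. A second structural gap: even granting a limit $u^j$ solving $-\mathcal{L}_\Phi u^j=P_{l,a,\beta}(u^j)+\mu_j$ in $\mathbb{R}^n$, Definition~\ref{defsola2} requires the final $u$ to come with an approximating sequence of \emph{weak solutions to Dirichlet problems on a bounded exhaustion} $\Omega_1\Subset\Omega_2\Subset\cdots$ with $L^\infty\cap L^1$ data; your construction never produces such bounded-domain weak solutions, so ``stability of SOLA under approximation'' does not by itself verify the definition.

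The paper avoids both difficulties by keeping all the nonlinear solvability on bounded domains: for $\mu_j=\mu\ast\rho_j$ it applies the machinery of Theorem~\ref{2hvMT1} (whose iteration uses Proposition~\ref{pro1} and Lemma~\ref{2hvTH3-B}, i.e.\ truncated potentials) on the balls $B_j(0)$, obtaining weak solutions $u_j$ of the Lane--Emden Dirichlet problem in $B_j(0)$ with the uniform bound $u_j\le 2c_pC_0\mathbf{W}^{4j}_{s,p}[\omega_{2,j}]\le 2c_pC_0\mathbf{W}_{s,p}[\omega_{2,j}]$, and only then lets $j\to\infty$, using Lemma~\ref{2hvTH3} for the $L^1(\mathbb{R}^n)$ bound of $P_{l,a,\beta}(4c_pC_0\mathbf{W}_{s,p}[\omega_{2,j}])$, Lemma~\ref{equi-integrable2} plus Vitali for convergence of the reaction terms, and the compactness/tail estimates of Theorem~\ref{thm.ex.rn} (as in Theorem~\ref{2hvMT4}) to verify Definition~\ref{defsola2}. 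Relatedly, your sketch of the exponential integrability over all of $\mathbb{R}^n$ never says where the hypothesis $l\beta>\frac{n(p-1)}{n-sp}$ enters: it is needed precisely for the far-field part, since outside $B_{2R}$ one only has the decay $\mathbf{W}_{s,p}[\omega_2](y)\le C|y|^{-\frac{n-sp}{p-1}}$ and $H_l(t)\approx t^l$ for small $t$, so the tail integral behaves like $\int_{|y|>2R}|y|^{-l\beta\frac{n-sp}{p-1}}\,dy$; this is the content of Lemma~\ref{2hvTH3} and cannot be obtained from the Adams--Hedberg-type local estimate alone. To repair your proof you would either have to transplant the iteration to the exhausting balls (i.e.\ reproduce the paper's argument) or prove a comparison/monotone scheme for SOLA in $\mathbb{R}^n$, which is not available in the paper.
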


\subsection{Novelties and techniques}
In this paper, we provide a new approach to obtain Wolff potential estimates for the nonlocal measure data problem \eqref{EQ} in the subquadratic case; the main novelty is especially related to the range $1 < p \le 2-s/n$. Here we briefly explain our approach, with emphasis on considerable differences compared to those available in the literature. 

In the case of local measure data problems like \eqref{2hvEQ5} with $1 < p \le 2-1/n$, solutions (in any sense) do not in general belong to the Sobolev space $W^{1,1}$. Thus, usual excess functionals are not available in this case. Moreover, it is well known that Sobolev--Poincar\'e type inequalities for the Sobolev space $W^{1,q}$ do not hold in general when $q \in (0,1)$, see for instance \cite{BK94}. This is a main difficulty in establishing comparison estimates between \eqref{2hvEQ5} and related homogeneous problems \cite{QH1,QH5,NP0}. 

On the other hand, in this paper we show a somewhat surprising fact that Sobolev--Poincar\'e type inequalities and compact embeddings for the space $W^{h,q}$, $h \in (0,1)$, continue to hold for $q \in (0,1)$, see Lemmas \ref{lemsobopoin} and \ref{lemcompact} below. This is a notable difference between classical and fractional Sobolev spaces. 
However, to our knowledge, such results in a precise form do not appear in the literature. We believe that they are of independent interest and can be applied to other topics in nonlocal problems.

We also emphasize that, in the proof of the Wolff potential estimates presented in Theorems \ref{thmupper1} and \ref{thmosc}, we employ new local and nonlocal excess functionals given in \eqref{loc.exc} and \eqref{excess.functional} below, respectively. In particular, our nonlocal excess functional is not comparable to the one considered in \cite{KuMiSi} and is motivated from the modified (local) excess functionals used to prove potential estimates for singular $p$-Laplace type equations \cite{DZ,NP,NP2}. 

With all these ingredients at hand, we can extend the basic comparison estimates and excess decay estimates in \cite{KuMiSi,KuMiSi3} to the case $1 < p \le 2-s/n$, thereby showing the existence and potential estimates for SOLA to \eqref{EQ} in this case as well. These results completely extend the Wolff potential estimates for local $p$-Laplace type equations, obtained in \cite{DM10,DM11,22KiMa1,22KiMa2,KM12,QH5,NP,NP2}, to corresponding nonlocal equations. Moreover, by applying global pointwise estimates for \eqref{EQ}, modifying the approaches in \cite{HV1,22PhVe} and then performing a delicate two-step approximation procedure, we finally show the existence and potential estimates for SOLA to \eqref{EQp}.

\section{Preliminaries}
\subsection{Fractional Sobolev spaces}
Let $U \subseteq \mathbb{R}^{n}$ be an open set. For $h\in(0,1)$ and $q \in (0,\infty)$, we say that a function $f$ belongs to the fractional Sobolev space $W^{h,q}(U)$ if and only if
\[
\|f\|_{W^{h,q}(U)}^q  \coloneqq \int_{U} |f|^q\,dx +\int_{U}\int_{U} \frac{|f(x)-f(y)|^q}{|x-y|^{n+hq}}\,dx\,dy<\infty.
\]
Note that when $q \in (0,1)$, $\|\cdot\|_{W^{h,q}(U)}$ is not a norm but a quasinorm.  
Nevertheless, we use this notation for any $q \in (0,\infty)$. Also, we say that a sequence $\{f_{j}\} \subset W^{h,q}(U)$ converges to $f$ in $W^{h,q}(U)$ if $f\in W^{h,q}(U)$ and $\|f_{j}-f\|_{W^{h,q}(U)} \rightarrow 0$. 

Embedding theorems and Sobolev--Poincar\'e type inequalities for the space $W^{h,q}$ with $q\geq 1$ can be found in \cite[Section 6]{DiPaVa1}, see also \cite[Section 4]{Co1}. Surprisingly, they continue to hold for the case $q<1$ as well, which is a difference between classical and fractional Sobolev spaces. 
To show this, we first recall an elementary inequality.
\begin{lemma} 
Let $q \in (0,1)$. There exists a constant $c(q)>0$ such that
\begin{equation}\label{elementaryineq}
\left||a|^{q-1}a-|b|^{q-1}b\right|\leq c(q)|a-b|^{q} \quad  \text{for any}\;\; a,b\in \mathbb{R}\setminus\{0\}.
\end{equation}
\end{lemma}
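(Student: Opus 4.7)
My plan is to view the left-hand side as the increment of the odd function $g(t) \coloneqq |t|^{q-1}t = \operatorname{sgn}(t)|t|^{q}$ and to split into cases according to the signs of $a$ and $b$. Since $g$ is odd, after possibly replacing $(a,b)$ by $(-a,-b)$, I would only need to handle two cases, and in each of them the estimate will reduce to a standard elementary one-variable inequality for $q\in(0,1)$.

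In the \emph{same-sign case} I may assume $a\geq b>0$. Writing $a=b+(a-b)$ and invoking the subadditivity $(x+y)^{q}\leq x^{q}+y^{q}$ on $[0,\infty)$ for $q\in(0,1)$, I obtain $a^{q}-b^{q}\leq (a-b)^{q}=|a-b|^{q}$, so the inequality holds with constant $1$. In the \emph{opposite-sign case} I may assume $a>0>b$; then the left-hand side equals $a^{q}+|b|^{q}$ while $|a-b|=a+|b|$. Setting $t\coloneqq a/(a+|b|)\in(0,1)$, the claim reduces to the sharp bound $t^{q}+(1-t)^{q}\leq 2^{1-q}$ on $[0,1]$, which follows from the symmetry and concavity of $t\mapsto t^{q}+(1-t)^{q}$ about $t=1/2$ (the interior critical point $t=1/2$ gives the maximum value $2\cdot(1/2)^{q}=2^{1-q}$).

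Combining the two cases will yield the claim with explicit constant $c(q)=2^{1-q}$. I do not foresee any genuine obstacle here: the entire argument rests on the subadditivity of $s\mapsto s^{q}$ on $[0,\infty)$ for $q\in(0,1)$, supplemented in the opposite-sign case by the sharp elementary bound $t^{q}+(1-t)^{q}\leq 2^{1-q}$ used to absorb the sign change.
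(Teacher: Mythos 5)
Your argument is correct and complete: the reduction by oddness to the same-sign and opposite-sign cases, the use of subadditivity of $t\mapsto t^{q}$ on $[0,\infty)$ in the first case, and the bound $t^{q}+(1-t)^{q}\leq 2^{1-q}$ in the second, together give \eqref{elementaryineq} with the (in fact sharp) constant $c(q)=2^{1-q}$, attained at $a=-b$. The paper itself states this lemma without proof, treating it as a recalled elementary inequality, so there is no proof in the paper to compare against; your write-up would serve as a valid self-contained justification.
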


Using this, we obtain the following:
\begin{lemma} \label{lemsobopoin}
Let $h \in (0,1)$, $q \in (0,n/h)$ and define $q^{*}_{h} \coloneqq nq/(n-hq)$. If $f\in W^{h,q}(\mathbb{R}^{n})$, then we have
\begin{equation}\label{sobo}
\left(\int_{\mathbb R^n} |f|^{q^{*}_{h}}\,dx\right)^{1/q^{*}_{h}}  \leq c \left(\int_{\mathbb R^n}\int_{\mathbb R^n} \frac{|f(x)-f(y)|^q}{|x-y|^{n+hq}}\,dx\,dy\right)^{1/q}
\end{equation}
for a constant $c=c(n,h,q)>0$.
In particular, if $f\in W^{h,q}(\mathbb R^n)$ satisfies $f=0$ a.e. in $\mathbb{R}^{n}\setminus B_r$, then we have 
\begin{equation}\label{sobo0}
\left(\fint_{B_r} |f|^{q^{*}_{h}}\,dx\right)^{1/q^{*}_{h}} \leq \frac{c  r^h}{(t-1)^{1/q}}\left(\int_{B_{tr}}\fint_{B_{tr}} \frac{|f(x)-f(y)|^q}{|x-y|^{n+hq}}\,dx\,dy\right)^{1/q}
\end{equation}
for any $t \in (1,2]$, where $c=c(n,h,q)>0$.
\end{lemma}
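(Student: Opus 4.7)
The plan is to split the range $q \in (0, n/h)$ into two regimes: $q \ge 1$, which is classical, and $q \in (0,1)$, which is the novel case and will be reduced to the exponent-one fractional Sobolev inequality via the elementary bound \eqref{elementaryineq}. For $q \ge 1$, inequality \eqref{sobo} is just the standard fractional Gagliardo embedding for $W^{h,q}(\mathbb{R}^{n})$ (see, e.g., \cite[Theorem~6.5]{DiPaVa1}), so I will focus on $q \in (0,1)$.

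For $q \in (0,1)$, I will set $g \coloneqq |f|^{q-1}f$ (with $g(x) \coloneqq 0$ when $f(x) = 0$). Applying \eqref{elementaryineq} pointwise with $a = f(x)$ and $b = f(y)$ gives the key bound $|g(x) - g(y)| \le c(q)\,|f(x)-f(y)|^{q}$. Dividing by $|x-y|^{n+hq}$ and integrating over $\mathbb{R}^{n}\times\mathbb{R}^{n}$ then yields $[g]_{W^{hq,1}(\mathbb{R}^{n})} \le c\,[f]_{W^{h,q}(\mathbb{R}^{n})}^{q}$. Since $h,q \in (0,1)$ forces $hq \in (0,1)$, and the hypothesis $q < n/h$ forces $hq < n$, the standard Gagliardo embedding $W^{hq,1}(\mathbb{R}^{n}) \hookrightarrow L^{n/(n-hq)}(\mathbb{R}^{n})$ applies to $g$ (the integrability exponent is $1 \ge 1$, so this is the classical regime). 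Using $|g| = |f|^{q}$ to rewrite $\|g\|_{L^{n/(n-hq)}(\mathbb{R}^{n})} = \|f\|_{L^{q^{*}_{h}}(\mathbb{R}^{n})}^{q}$ and taking a $q$-th root will produce \eqref{sobo}.

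For \eqref{sobo0}, I will extend $f$ by zero and apply \eqref{sobo}, reducing matters to bounding $[f]_{W^{h,q}(\mathbb{R}^{n})}^{q}$ by the localized right-hand side of \eqref{sobo0}. Because $f \equiv 0$ outside $B_{r}$, the full-space double integral decomposes into a near-field piece over $B_{2r}\times B_{2r}$ and a tail piece $2\int_{B_{r}}\int_{\mathbb{R}^{n}\setminus B_{2r}} |f(x)|^{q}|x-y|^{-n-hq}\,dy\,dx$. The near-field piece reduces, by symmetry and $f \equiv 0$ on $B_{2r}\setminus B_{r}$, to the integral over $B_{2r}\times B_{r}$. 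For the tail, I will combine the elementary estimate $\int_{\mathbb{R}^{n}\setminus B_{2r}}|x-y|^{-n-hq}\,dy \le c\,r^{-hq}$ (valid for $x \in B_{r}$) with the reverse bound $\int_{B_{2r}\setminus B_{r}}|x-y|^{-n-hq}\,dy \ge c\,r^{-hq}$; together with $f \equiv 0$ on $B_{2r}\setminus B_{r}$, these absorb the tail into $\int_{B_{r}}\int_{B_{2r}}|f(x)-f(y)|^{q}|x-y|^{-n-hq}\,dy\,dx$. Finally, the identity $1 - q/q^{*}_{h} = hq/n$ converts the volume factor $|B_{r}|$ into $r^{hq}$, supplying the $r^{h}$ prefactor after a $q$-th root.

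The main conceptual obstacle is simply that \eqref{sobo} holds at all for $q < 1$, even though the local analog $W^{1,q} \hookrightarrow L^{q^{*}_{1}}$ fails in that range. The mechanism is the nonlinear substitution $f \mapsto |f|^{q-1}f$, which is incompatible with the classical first-order seminorm but interacts cleanly with the fractional Gagliardo seminorm through \eqref{elementaryineq}; once this route to exponent one is identified, the remaining work is routine bookkeeping and a scaling argument.
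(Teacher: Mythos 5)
Your proposal is correct and follows essentially the same route as the paper: reduce the case $q\in(0,1)$ to the exponent-one Gagliardo embedding for $g=|f|^{q-1}f\in W^{hq,1}(\mathbb{R}^n)$ via \eqref{elementaryineq}, then recover \eqref{sobo} from $|g|=|f|^q$. Your more detailed localization argument for \eqref{sobo0} (zero extension, near-field/tail splitting, absorbing the tail via $\int_{B_{2r}\setminus B_r}|x-y|^{-n-hq}\,dy\ge c\,r^{-hq}$ for $x\in B_r$, and the exponent identity $1-q/q^{*}_{h}=hq/n$) is a valid filling-in of what the paper dismisses with ``in the same way.''
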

\begin{proof}
We may consider the case $q \in (0,1)$ only. Then note that $hq \in (0,1)$. Applying \eqref{elementaryineq} and the fractional Sobolev--Poincar\'e inequality \cite[Theorem 6.5]{DiPaVa1} to $|f|^{q-1}f \in W^{hq,1}(\mathbb{R}^{n})$, we have \eqref{sobo} as follows:
\begin{align*}
& \left(\int_{\mathbb{R}^n}|f|^{q^{*}_{h}}\,dx\right)^{1/q^{*}_{h}} = \left(\int_{\mathbb{R}^{n}}\left||f|^{q-1}f\right|^{n/(n-hq)}\,dx\right)^{(n-hq)/nq} \\
& \le c\left(\int_{\mathbb{R}^{n}}\int_{\mathbb{R}^{n}}\frac{\left||f(x)|^{q-1}f(x)-|f(y)|^{q-1}f(y)\right|}{|x-y|^{n+hq}}\,dx\,dy\right)^{1/q} \\
& \le c\left(\int_{\mathbb{R}^{n}}\int_{\mathbb{R}^{n}}\frac{|f(x)-f(y)|^{q}}{|x-y|^{n+hq}}\,dx\,dy\right)^{1/q}.
\end{align*}
In the same way, we can also obtain \eqref{sobo0} (see for instance \cite[Lemma 4.7 and Corollary 4.9]{Co1}).
\end{proof}

We also extend the compact embedding result for $W^{h,q}$ to the case $q<1$.
\begin{lemma}\label{lemcompact}
Let $U$ be a bounded Lipschitz domain, $h\in(0,1)$ and $q \in (0,n/h)$. Then for any $\tilde{q} \in [q,q^{*}_{h})$, the  embedding of $W^{h,q}(U)$ into $L^{\tilde{q}}(U)$ is compact.
\end{lemma}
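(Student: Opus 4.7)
The plan is to handle the familiar range $q \geq 1$ by invoking the classical compact embedding for fractional Sobolev spaces (e.g.\ \cite[Theorem~7.1]{DiPaVa1}), and to reduce the unfamiliar range $q \in (0,1)$ to the case of exponent one via the nonlinear substitution $f \mapsto |f|^{q-1}f$ that already drives the proof of Lemma~\ref{lemsobopoin}.

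Fix $q \in (0,1)$ and let $\{f_j\} \subset W^{h,q}(U)$ be a bounded sequence. Setting $g_j \coloneqq |f_j|^{q-1}f_j$, I would first combine \eqref{elementaryineq} with the identity $|g_j| = |f_j|^q$ to obtain an estimate of the form
\[
\|g_j\|_{L^1(U)} + [g_j]_{W^{hq,1}(U)} \leq c(q)\bigl(\|f_j\|_{L^q(U)}^q + [f_j]_{W^{h,q}(U)}^q\bigr),
\]
so that $\{g_j\}$ is a bounded sequence in $W^{hq,1}(U)$ with $hq \in (0,1)$. Since $U$ is a bounded Lipschitz domain, the classical compact embedding $W^{hq,1}(U) \hookrightarrow\hookrightarrow L^r(U)$ is available for every $r \in [1, n/(n-hq))$; extracting a subsequence and passing to a further subsequence if necessary, I would produce a limit $g \in L^r(U)$ with $g_j \to g$ both in $L^r(U)$ and a.e.\ in $U$, together with a dominating function $h \in L^r(U)$ satisfying $|g_j| \leq h$ a.e.

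Given $\tilde{q} \in [q, q^*_h)$, I would set $r \coloneqq \tilde{q}/q \in [1, n/(n-hq))$ and define $f \coloneqq |g|^{1/q-1}g$. Since $T(t) \coloneqq |t|^{q-1}t$ is a homeomorphism of $\mathbb{R}$ with continuous inverse $T^{-1}(t) = |t|^{1/q-1}t$, the a.e.\ convergence $g_j \to g$ upgrades to $f_j = T^{-1}(g_j) \to f$ a.e. The structural identity $|f_j|^{\tilde{q}} = |g_j|^{\tilde{q}/q} = |g_j|^r$ then yields the pointwise domination $|f_j|^{\tilde{q}} \leq h^r \in L^1(U)$; dominated convergence applied to $|f_j - f|^{\tilde{q}} \to 0$ a.e., combined with the elementary bound $|a-b|^{\tilde{q}} \leq c(|a|^{\tilde{q}} + |b|^{\tilde{q}})$, finally delivers $f_j \to f$ in $L^{\tilde{q}}(U)$.

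The main conceptual obstacle is that the standard proofs of Rellich--Kondrachov-type theorems for $W^{h,q}$ (extension operators, Fr\'echet--Kolmogorov criterion, averaging) are sensitive to the failure of $L^q$ being a Banach space when $q < 1$; the strategy above sidesteps this issue entirely by transferring the compactness question into the Banach-space setting $W^{hq,1}(U)$, where every ingredient is standard. The only genuinely nontrivial input is the quasi-Lipschitz estimate \eqref{elementaryineq}, which converts the $W^{h,q}$-seminorm of $f_j$ into the $W^{hq,1}$-seminorm of $g_j$ through the identity $|g_j| = |f_j|^q$, and an analogous bookkeeping is then used to translate $L^r$-convergence of $g_j$ back into $L^{\tilde{q}}$-convergence of $f_j$ via the relation $r = \tilde{q}/q$.
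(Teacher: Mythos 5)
Your proposal is correct and follows essentially the same route as the paper: reduce $q\in(0,1)$ to the Banach case via the substitution $g_j=|f_j|^{q-1}f_j$, which by \eqref{elementaryineq} is bounded in $W^{hq,1}(U)$, apply the classical compact embedding there, and transfer the limit back through the continuous inverse $t\mapsto|t|^{1/q-1}t$. The only (harmless) difference is in the final step: you extract an explicit $L^{r}$-dominating function for $\{g_j\}$ and apply dominated convergence to $|f_j-f|^{\tilde q}$, whereas the paper argues via a.e.\ convergence together with convergence of the $L^{qt}$-norms; both are standard ways to close the argument, and yours is if anything slightly more explicit.
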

\begin{proof}
The proof of the lemma in the case $q \ge 1$ can be found in \cite[Section 7]{DiPaVa1}; we thus consider the case $q \in (0,1)$ only. Assume that $\{f_j\}$ is bounded in $W^{h,q}(U)$. Then \eqref{elementaryineq} implies that $\{|f_j|^{q-1}f_j\}$ is bounded in $W^{hq,1}(U)$. Therefore, we can extract from $\{|f_j|^{q-1}f_j\}$ a subsequence, still denoted by $\{|f_j|^{q-1}f_j\}$, which converges to  a function $g$ in $L^{t}(U)$ for any $1 \le t < n/(n-hq)$. 
We now set $f \coloneqq |g|^{(1-q)/q}g$, i.e., $g=|f|^{q-1}f$. Then we see that $\{f_j\}$ converges to $f$ a.e. in $U$ and moreover
\[
\lim_{j\to\infty}\int_{U} |f_j|^{qt}\,dx= \int_{U} |f|^{qt}\,dx. 
\]
Hence, Lebesgue's dominated convergence theorem implies that $\{f_j\}$ converges to $f$ in $L^{\tilde q}(U)$  for any $q \leq \tilde q < nq/(n-hq) = q^{*}_{h}$.  
\end{proof}

The following inequality can be obtained by using H\"older's inequality, see for instance \cite[Lemma 4.6]{Co1}.
\begin{lemma}\label{lemembedding}
Let $\Omega'\subseteq\Omega\subset \mathbb R^n$ be two bounded open sets, $0<h<s<1$ and $0<q<p<\infty$. Then we have
\begin{align*}
& \left(\int_\Omega\int_{\Omega'}\frac{|f(x)-f(y)|^{q}}{|x-y|^{n+hq}}\,dx\,dy\right)^{1/q} \\
& \leq  c|\Omega'|^{\frac{p-q}{pq}}[\mathrm{diam}(\Omega)]^{s-h} \left(\int_\Omega\int_{\Omega'}\frac{|f(x)-f(y)|^{p}}{|x-y|^{n+sp}}\,dx\,dy\right)^{1/p}
\end{align*}
for some $c=c(n,s,h,p,q)>0$. In particular, $W^{s,p}(\Omega)\subset W^{h,q}(\Omega)$.
\end{lemma}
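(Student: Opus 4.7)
The plan is to apply Hölder's inequality with exponents $p/q > 1$ and its conjugate $p/(p-q)$. I would first split the integrand algebraically as
\[
\frac{|f(x)-f(y)|^{q}}{|x-y|^{n+hq}} = \frac{|f(x)-f(y)|^{q}}{|x-y|^{(n+sp)q/p}}\cdot |x-y|^{(n+sp)q/p - (n+hq)},
\]
where a quick exponent check gives $(n+sp)q/p - (n+hq) = -n(p-q)/p + (s-h)q$. Applying Hölder's inequality to this product, the first factor (raised to power $p/q$) recovers the Gagliardo $p$-seminorm integrand, and the ``remainder'' factor (raised to power $p/(p-q)$) carries a pure power of $|x-y|$, namely $|x-y|^{-n + (s-h)pq/(p-q)}$. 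Crucially, because $s>h$ and $p,q>0$, the exponent $\alpha \coloneqq (s-h)pq/(p-q)$ is strictly positive, so the remainder integrand is a locally integrable (indeed, fractionally Riesz-type) kernel.

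Next, I would estimate the remainder double integral using polar coordinates. For each fixed $y \in \Omega'$, the inner integral in $x$ is bounded by
\[
\int_\Omega |x-y|^{-n + \alpha}\,dx \le \int_{B_{\mathrm{diam}(\Omega)}(y)} |x-y|^{-n + \alpha}\,dx = \frac{c(n)}{\alpha}\,[\mathrm{diam}(\Omega)]^{\alpha},
\]
and then integrating over $y\in\Omega'$ produces an extra factor of $|\Omega'|$. Raising the result to the power $(p-q)/p$ and using $\alpha(p-q)/p = (s-h)q$ yields the constant $c\,|\Omega'|^{(p-q)/p}\,[\mathrm{diam}(\Omega)]^{(s-h)q}$, with $c=c(n,s,h,p,q)$. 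Taking the $q$-th root of the full Hölder estimate then produces exactly the inequality in the statement, with the desired exponents $(p-q)/(pq)$ on $|\Omega'|$ and $s-h$ on $\mathrm{diam}(\Omega)$.

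For the final ``in particular'' assertion, I would specialize to $\Omega' = \Omega$ to control the Gagliardo $(h,q)$-seminorm of $f$ over $\Omega\times\Omega$ by the Gagliardo $(s,p)$-seminorm over $\Omega\times\Omega$, and then combine with the elementary bounded-domain embedding $L^{p}(\Omega)\hookrightarrow L^{q}(\Omega)$ (again a consequence of Hölder) to conclude $W^{s,p}(\Omega)\subset W^{h,q}(\Omega)$.

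There is no genuinely hard step here: the whole argument is a bookkeeping exercise in Hölder's inequality plus one polar-coordinate integration. The only point requiring any care is verifying that the exponent $\alpha = (s-h)pq/(p-q)$ is positive so that the $|x-y|^{-n+\alpha}$ kernel is locally integrable near the diagonal, which is secured by the standing hypotheses $s>h$ and $0<q<p<\infty$. For the sake of completeness, I would also remark that no use is made of $q\geq 1$, so the inequality holds uniformly across the full range $q\in(0,p)$.
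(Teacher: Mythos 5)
Your proof is correct, and it is essentially the argument the paper relies on: the paper proves this lemma simply by invoking H\"older's inequality (citing the corresponding lemma of Cozzi), which is exactly the splitting and Riesz-kernel estimate you carry out, valid for all $q\in(0,p)$ since H\"older is applied with the exponents $p/q$ and $p/(p-q)$.
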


\subsection{Auxiliary results for potentials and capacities}

Let $\mu \in \mathcal{M}(\mathbb{R}^n)$. 
For $s>0$, $1<p<n/s$ and $0<T\leq\infty$, we define the {\it $T$-truncated Wolff potential with order $s$} of $\mu$ by
\begin{equation}\label{2hvEQ8}
{\bf W}^T_{s,p}[\mu](x)=\int_{0}^{T}\left[\frac{|\mu| (B_t(x))}{t^{n-sp}}\right]^{1/(p-1)}\frac{dt}{t},
\end{equation}
the {\it $T$-truncated Riesz potential with order $s$} of $\mu$  by
\begin{equation*}
{\bf I}^T_{s}[\mu](x)=\int_{0}^{T}\frac{|\mu|(B_t(x))}{t^{n-s}}\frac{dt}{t},
\end{equation*}
and, for $\eta\geq0$, the {\it $T$-truncated $\eta$-fractional maximal function (with order $s$)} of $\mu$ by
\begin{equation}\label{2hvEQ9}
{\bf M}^\eta_{s,T}[\mu](x)=\sup_{0 < t \le T} \frac{|\mu|(B_t(x))}{t^{n-s}h_\eta(t)},
\end{equation}
where $h_\eta(t)=(-\ln t)^{-\eta}\chi_{(0,2^{-1}]}(t)+(\ln 2)^{-\eta}\chi_{[2^{-1},\infty)}(t)$. When $\eta=0$, we have $h_\eta=1$ and in this case we denote by ${\bf M}_{s,T}[\mu]$ the corresponding {\it $T$-truncated fractional maximal function} of $\mu$. When $T=\infty$, we denote  by ${\bf W}_{s,p}[\mu]$ (resp. ${\bf I}_{s}[\mu]$, ${\bf M}_{s}^\eta[\mu]$)  the {\it ($\infty$-truncated) Wolff potential (resp. Riesz potential, $\eta$-fractional maximal function)} of $\mu$. 
When $\mu$ is defined only on an open subset $\Omega\subset\mathbb{R}^n$, we naturally extend it to $\R^n$ by letting $|\mu|(\R^n \setminus \Omega) = 0$.

The following lemma shows a relationship between fractional maximal functions and potentials, see \cite[Lemma 4.1]{KM12}.
\begin{lemma}\label{max.ftn.pot}
Let $\mu \in \mathcal{M}(\Omega)$. Let $\sigma \in (0,1)$, $s \in [0,n]$, $p>1$, and $B_{T}(x) \subset \Omega$. Then we have
\begin{equation*}
\left[\mathbf{M}_{s,\sigma T}[\mu](x)\right]^{1/(p-1)} \le \frac{\max\{\sigma^{(s-n)/(p-1)},1\}}{-\log\sigma}\mathbf{W}^{T}_{s/p,p}[\mu](x)
\end{equation*}
and
\begin{equation*}
\mathbf{M}_{s,\sigma T}[\mu](x) \le \frac{\max\{\sigma^{s-n},1\}}{-\log \sigma}\mathbf{I}^{T}_{s}[\mu](x).
\end{equation*}
\end{lemma}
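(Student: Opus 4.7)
The plan is to derive both inequalities from a single pointwise comparison between the integrand of the Wolff potential and the quantity $|\mu|(B_r(x))/r^{n-s}$ being estimated, using the monotonicity of $t\mapsto |\mu|(B_t(x))$ and a case split on the sign of $n-s$. Since $\mathbf{W}^T_{s/p,p}[\mu](x) = \int_0^T[|\mu|(B_t(x))/t^{n-s}]^{1/(p-1)}\,dt/t$ and, in the special case $p=2$, this reduces exactly to $\mathbf{I}^T_s[\mu](x)$, the second inequality will follow from the first by specialization. I therefore focus on the first inequality.

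Fix any $r\in(0,\sigma T]$, so $r/\sigma\in(0,T]$ and the interval $[r,r/\sigma]$ is admissible inside the defining integral of $\mathbf{W}^T_{s/p,p}[\mu](x)$. On this interval the monotonicity of $t\mapsto |\mu|(B_t(x))$ gives $|\mu|(B_t(x))\ge |\mu|(B_r(x))$, while for the denominator I split cases: if $n-s\ge 0$ then $t^{n-s}\le (r/\sigma)^{n-s}$ on $[r,r/\sigma]$, and if $n-s<0$ then $t^{n-s}\le r^{n-s}$ on the same interval. In both cases, after raising to the power $1/(p-1)>0$, this yields
\[
\left[\frac{|\mu|(B_t(x))}{t^{n-s}}\right]^{1/(p-1)}\ge \min\bigl\{\sigma^{(n-s)/(p-1)},\,1\bigr\}\left[\frac{|\mu|(B_r(x))}{r^{n-s}}\right]^{1/(p-1)}
\]
for all $t\in[r,r/\sigma]$.

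Integrating this inequality against $dt/t$ over $[r,r/\sigma]$, using $\int_r^{r/\sigma} dt/t = -\log\sigma$, and enlarging the domain of integration to $(0,T]$ gives
\[
\mathbf{W}^T_{s/p,p}[\mu](x) \ge \min\bigl\{\sigma^{(n-s)/(p-1)},\,1\bigr\}\,(-\log\sigma)\left[\frac{|\mu|(B_r(x))}{r^{n-s}}\right]^{1/(p-1)}.
\]
Rearranging and using the identity $\max\{\sigma^{(s-n)/(p-1)},1\} = 1/\min\{\sigma^{(n-s)/(p-1)},1\}$ (valid for $\sigma\in(0,1)$), I then take the supremum over $r\in(0,\sigma T]$ of the left-hand side, obtaining the first claimed inequality. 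Setting $p=2$ in this bound (so that the Wolff potential collapses to the Riesz potential and $1/(p-1)=1$) yields the second inequality.

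The proof has no serious obstacle; the only point requiring care is the sign of $n-s$, which is what produces the $\max\{\sigma^{(s-n)/(p-1)},1\}$ (respectively $\max\{\sigma^{s-n},1\}$) factor rather than a single power of $\sigma$. Everything else is a direct monotonicity argument followed by integration of $dt/t$.
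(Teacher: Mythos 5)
Your proof is correct and is essentially the standard argument: the paper itself does not prove this lemma but cites \cite[Lemma~4.1]{KM12}, whose proof is exactly your monotonicity-plus-integration over $[r,r/\sigma]$ device, with the sign split on $n-s$ producing the $\max\{\sigma^{(s-n)/(p-1)},1\}$ factor. The specialization $p=2$ to recover the Riesz-potential bound is also legitimate, since your derivation of the first inequality is uniform in $p>1$.
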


Let $P\in C([0,\infty))$ be a nondecreasing positive function and $s>0$. For each Borel set $E\subset\mathbb R^n$, we define the $(s,P)$-Orlicz--Bessel capacity by  
\begin{equation}\label{capa1}
	\mathrm{Cap}_{\mathbf{G}_s,P}(E) = \inf \left\{ \int_{\mathbb{R}^n} P(f)\,dx : \mathbf{G}_s \ast f \geq \chi _E,\ f \geq 0,\ P(f) \in {L^1}(\mathbb{R}^n) \right\},
\end{equation}
and the $(s,P)$-Orlicz--Riesz capacity by
\begin{equation}\label{capa2}
\mathrm{Cap}_{\mathbf{I}_s,P}(E) = \inf \left\{ \int_{\mathbb{R}^n} P(f)\,dx:\mathbf{I}_s \ast f \geq \chi _E,\ f \geq 0,\ P(f) \in {L^1}(\mathbb{R}^n) \right\},
\end{equation}
where $\mathbf{G}_{s}(x)=\mathcal{F}^{-1}((1+|\cdot|^2)^{-s/2})(x)$ and $\mathbf{I}_{s}(x)=(n-s)^{-1}|x|^{-(n-s)}$. For more details, see \cite[Section 2.6]{22AH}. When $P(t)=t^p$, we simply write $\mathrm{Cap}_{\mathbf{G}_s,p}=\mathrm{Cap}_{\mathbf{G}_s,P}$  
and 
$\mathrm{Cap}_{\mathbf{I}_s,p}=\mathrm{Cap}_{\mathbf{I}_s,P}$. We also define the Bessel potential with order $s$ of $\mu$ by $\mathbf{G}_{s}[\mu] \coloneqq \mathbf{G}_{s}\ast \mu$.

For $l\in\mathbb{N}$,  we consider the {\it $l$-truncated exponential function}
\begin{equation*}
H_{l}(t)=e^{t}-\sum_{j=0}^{l-1}\frac{t^j}{j!}.
\end{equation*}
Then, for $a>0$ and $\beta\geq 1$, we set
\begin{equation}
\label{2hvEQ11}
P_{l,a,\beta}(t)=H_l(a|t|^{\beta-1}t)
\end{equation}
and
\begin{equation*}
Q_{p}(t) = 
\begin{cases}
    \sum\limits_{q = l}^\infty  \frac{1}{q!}\left(\frac{t}{q}\right)^{\beta q/(p-1)} \quad&\text{if }\,p \ne 2, \\ 
    H_{l}(t^{\beta})&\text{if }\,p = 2,  
 \end{cases}
 \qquad t\ge 0.
\end{equation*}
As usual, the complementary function of $Q_p$ is defined by
\[
Q_{p}^{*}(t)=\sup \left\{t\tilde{t}-Q_{p}(\tilde{t}):\tilde{t}\geq 0\right\}, \qquad t \ge 0.
\]
We then define, for $s>0$, corresponding Bessel and Riesz capacities respectively by 
\begin{equation}\label{2hvEQ14}
\mathrm{Cap}_{{\mathbf{G}_{s} },Q^*_p}(E) = \inf \left\{ {\int_{\mathbb{R}^n} {Q^*_p(f)}\,dx :{\mathbf{G}_{s} }\ast f \geq {\chi _E},\ f \geq 0,\ Q^*_p(f) \in {L^1}({\mathbb{R}^n})} \right\}
\end{equation}
 and 
\begin{equation}\label{2hvEQ15}
\mathrm{Cap}_{{\mathbf{I}_{s} },Q^*_p}(E) = \inf \left\{ {\int_{\mathbb{R}^n} {Q^*_p(f)}\,dx :{\mathbf{I}_{s} }\ast f \geq {\chi _E},\ f \geq 0,\ Q^*_p(f) \in {L^1}({\mathbb{R}^n})} \right\}.
\end{equation}

The following two propositions are general versions of \cite[Theorem 2.3]{22PhVe}, whose proofs can be found in \cite[Theorems 2.1 and 2.2]{23VH}.
\begin{proposition} \label{241020147}
Let $0<s<1$, $1<p<n/s$, $\gamma>p-1$, and $\mu\in \mathcal{M}^+(\mathbb{R}^n)$. Then, the following statements are equivalent:
\begin{itemize}
 \item[(1)] The inequality 
\begin{equation}\label{241020141}
\mu(K)\leq C_1\mathrm{Cap}_{\mathbf{I}_{s p},\frac{\gamma}{\gamma-p+1}}(K)
\end{equation}
holds for any compact set $K\subset\mathbb{R}^n$, for some $C_1>0$.

\item[(2)] The inequality 
\begin{equation}\label{241020141*}
\int_{K}\left(\mathbf{W}_{s,p}[\mu](y)\right)^\gamma\,dy\leq C_2\mathrm{Cap}_{\mathbf{I}_{s p},\frac{\gamma}{\gamma-p+1}}(K)
\end{equation}
holds for any compact set $K\subset\mathbb{R}^n$, for some $C_2>0$.

\item[(3)] The inequality 
\begin{equation}\label{241020142}
\int_{\mathbb{R}^n}\left(\mathbf{W}_{s,p}[\chi_{B_t(x)}\mu](y)\right)^\gamma\,dy\leq C_{3}\mu(B_t(x))
\end{equation}
holds for any ball $B_t(x)\subset\mathbb{R}^n$, for some $C_3>0$.
\item[(4)] The inequality 
\begin{equation*}
\mathbf{W}_{s,p}\left[\left(\mathbf{W}_{s,p}[\mu]\right)^\gamma\right]\leq C_4 \mathbf{W}_{s,p}[\mu]<\infty \quad \text{a.e. in $\mathbb{R}^n$}
\end{equation*}
holds for some $C_4>0$.
\end{itemize} 
\end{proposition}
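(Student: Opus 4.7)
This proposition compiles four equivalent characterizations of measures $\mu$ in the supercritical regime $\gamma>p-1$ — a capacity majorization of $\mu$ itself, a capacity majorization of the level-set energy of $(\mathbf{W}_{s,p}[\mu])^\gamma$, a ball-testing condition, and a self-improving bound for the iterated Wolff potential. This is classical in the Phuc-Verbitsky / Cascante-Ortega-Verbitsky framework (which is why the authors cite \cite{23VH}). My plan is to prove the four conditions equivalent by a cyclic chain $(1) \Rightarrow (3) \Rightarrow (4) \Rightarrow (2) \Rightarrow (1)$, using Wolff's dyadic integral inequality as the central analytic engine.

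For $(1) \Rightarrow (3)$: by Maz'ya duality, the capacity condition (1) is equivalent to a trace-type inequality for the Riesz potential relative to $\mu$; applying this to the single-scale localization $\chi_{B_t(x)}\mu$ and dualizing through Wolff's nonlinear identity, which relates $\int (\mathbf{W}_{s,p}[\nu])^\gamma\,dx$ to a nonlinear Wolff energy of $\nu$, produces (3). For $(3) \Rightarrow (4)$: dyadically decompose the outer scales in $\mathbf{W}_{s,p}[(\mathbf{W}_{s,p}[\mu])^\gamma](x)$; Fubini converts the inner piece $\int_{B_r(x)}(\mathbf{W}_{s,p}[\mu])^\gamma\,dy$ into a double integral that splits into a local contribution controlled by $\mu(B_{2r}(x))$ via (3), plus an elementary tail. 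Summing the resulting geometric progression over scales reconstructs $\mathbf{W}_{s,p}[\mu](x)$ on the right, with the supercriticality $\gamma>p-1$ ensuring convergence of the geometric series.

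For $(4) \Rightarrow (2)$: given a compact $K$ and any admissible nonnegative $f$ with $\mathbf{I}_{sp} \ast f \geq \chi_K$, a Hedberg-type trick and Fubini yield
\begin{equation*}
\int_K (\mathbf{W}_{s,p}[\mu])^\gamma\,dx \;\leq\; \int f \cdot \bigl[\mathbf{I}_{sp} \ast (\mathbf{W}_{s,p}[\mu])^\gamma\bigr]\,dx \;\leq\; c\int f\,(\mathbf{W}_{s,p}[\mu])^{p-1}\,dx,
\end{equation*}
where the last step combines (4) with the standard comparison $\mathbf{I}_{sp}\ast\phi \leq c\,\mathbf{W}_{s,p}[\phi]^{p-1}$; H\"older's inequality and infimization over $f$ then deliver (2). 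Finally $(2) \Rightarrow (1)$ rests on the pointwise lower bound $\mathbf{W}_{s,p}[\mu](x)^{p-1} \geq c\,\mu(B_r(x))/r^{n-sp}$, valid for every $r>0$: integrating over a thin dilation $K_\epsilon = \{\mathrm{dist}(\cdot,K) \leq \epsilon\}$ gives $\mu(K) \leq c\int_{K_\epsilon}(\mathbf{W}_{s,p}[\mu])^\gamma\,dx \leq c\,\mathrm{Cap}_{\mathbf{I}_{sp},\gamma/(\gamma-p+1)}(K_\epsilon)$, and the outer-regularity limit $\epsilon \to 0$ produces (1).

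The main obstacle is the iterative step $(3) \Rightarrow (4)$: the nested Wolff potential requires careful scale bookkeeping, and the assumption $\gamma>p-1$ enters precisely as the convergence criterion for the resulting geometric series. The other implications lean on classical potential-theoretic duality (Maz'ya / Kerman-Sawyer) and well-established pointwise bounds in nonlinear potential theory, all of which are part of the standard toolkit in this area.
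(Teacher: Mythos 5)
Your cyclic scheme $(1)\Rightarrow(3)\Rightarrow(4)\Rightarrow(2)\Rightarrow(1)$ is indeed the classical Phuc--Verbitsky/Cascante--Ortega--Verbitsky architecture; note, however, that the paper does not prove this proposition at all but quotes it from \cite[Theorems~2.1 and 2.2]{23VH}, so the comparison is with that reference. As written, your argument has a genuine gap in $(4)\Rightarrow(2)$. The ``standard comparison'' $\mathbf{I}_{sp}\ast\phi\le c\,(\mathbf{W}_{s,p}[\phi])^{p-1}$ holds only for $p\ge2$: discretizing $\int_0^\infty\bigl(\phi(B_t(x))/t^{n-sp}\bigr)^{1/(p-1)}\,dt/t$ over dyadic scales reduces the question to comparing $\ell^1$ and $\ell^{1/(p-1)}$ norms, and for $1<p<2$ the inequality reverses, i.e.\ $\mathbf{W}_{s,p}[\phi]\le c\,(\mathbf{I}_{sp}\ast\phi)^{1/(p-1)}$ (the paper itself invokes the comparison only ``in the case $p\ge2$'', see the proof of Lemma~\ref{equi-integrable}). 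Since the proposition covers all $1<p<n/s$ and the whole point of this paper is the singular range, this step fails exactly where it is needed. Moreover, even for $p\ge2$ the concluding ``H\"older and infimize'' does not close: from $\int_K(\mathbf{W}_{s,p}[\mu])^\gamma\,dx\le c\int_{\mathbb{R}^n}f\,(\mathbf{W}_{s,p}[\mu])^{p-1}\,dx$, H\"older produces the factor $\bigl(\int_{\mathbb{R}^n}(\mathbf{W}_{s,p}[\mu])^\gamma\,dx\bigr)^{(p-1)/\gamma}$, a global quantity that cannot be absorbed into the left-hand side and is in fact infinite for every nontrivial finite $\mu$ whenever $p-1<\gamma\le n(p-1)/(n-sp)$, because $\mathbf{W}_{s,p}[\mu](x)\gtrsim|x|^{-(n-sp)/(p-1)}$ at infinity. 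The correct mechanism (visible in the paper's own Lemma~\ref{2hvTH1a}) is Young's inequality $\int f\,\mathbf{I}_{sp}[\chi_E\omega]\,dx\le\varepsilon\int(\mathbf{I}_{sp}[\chi_E\omega])^{\gamma/(p-1)}dx+C\int f^{\gamma/(\gamma-p+1)}dx$ combined with an integrated energy comparison between $\mathbf{I}_{sp}$ and $\mathbf{W}_{s,p}$ (\cite[Theorem~2.1]{HV1}), valid in both regimes of $p$ -- not a pointwise kernel comparison plus H\"older.

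The other implications are also thinner than advertised. In $(3)\Rightarrow(4)$, after the split $\mathbf{W}_{s,p}[\mu](y)\le c\,\mathbf{W}_{s,p}[\chi_{B_{2t}(x)}\mu](y)+c\,T_t(x)$ for $y\in B_t(x)$, with $T_t(x)=\int_t^\infty\bigl(\mu(B_r(x))/r^{n-sp}\bigr)^{1/(p-1)}\,dr/r$, the ``elementary tail'' contributes $\int_0^\infty t^{sp/(p-1)}T_t(x)^{\gamma/(p-1)}\,dt/t$, which is \emph{not} summable by a geometric-series argument: the trivial bounds $T_t\le\mathbf{W}_{s,p}[\mu](x)$ or $T_t\lesssim\mu(\mathbb{R}^n)^{1/(p-1)}t^{-(n-sp)/(p-1)}$ fail at $t\to\infty$ or $t\to0$ respectively. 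One needs the growth estimate $\mu(B_t(x))\le Ct^{\,n-sp\gamma/(\gamma-p+1)}$ (itself extracted from (1) or (3)) together with an integration by parts in $t$; that is where $\gamma>p-1$ actually enters. Similarly, in $(1)\Rightarrow(3)$ the ``dualization through Wolff's identity'' requires Maz'ya's capacitary strong-type inequality and, for $p>2$, a nontrivial theorem passing from the $\mathbf{I}_{sp}^{1/(p-1)}$-form of the testing condition to the $\mathbf{W}_{s,p}$-form (Wolff's inequality relates $\int(\mathbf{I}_{sp}\nu)^{p'}dx$ to $\int\mathbf{W}_{s,p}[\nu]\,d\nu$, not to $\int(\mathbf{W}_{s,p}[\nu])^\gamma dx$). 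So the skeleton matches the cited proof, but several of the steps, and most critically $(4)\Rightarrow(2)$ for $1<p<2$, would not go through as proposed.
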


\begin{proposition}\label{241020148} 
Let $0<s<1$, $1<p<n/s$, $\gamma>p-1$, and $\mu\in \mathcal{M}^+(\mathbb R^n)$ with $\mathrm{supp}\,\mu \subset B_{R}(0)$ for some $R>1$. Then, the following statements are equivalent:
\begin{itemize}
\item[(1)] The inequality 
\begin{equation}\label{241020144}
\mu(K)\leq C_1\mathrm{Cap}_{\mathbf{G}_{s p},\frac{\gamma}{\gamma-p+1}}(K)
\end{equation}
holds for any compact set $K\subset\mathbb{R}^n$ and for some $C_1=C_1(R)>0$.
\item[(2)] The inequality 
\begin{equation}\label{241020144*}
\int_{K}\left(\mathbf{W}^{4R}_{s,p}[\mu](y)\right)^\gamma\,dy\leq C_2\mathrm{Cap}_{\mathbf{G}_{s p},\frac{\gamma}{\gamma-p+1}}(K)
\end{equation}
holds for any compact set $K\subset\mathbb{R}^n$ and for some $C_2=C_2(R)>0$.
\item[(3)] The inequality 
\begin{equation}\label{241020145}
\int_{\mathbb{R}^n}\left(\mathbf{W}^{4R}_{s,p}[\chi_{B_t(x)}\mu](y)\right)^\gamma\,dy\leq C_3 \mu(B_t(x))
\end{equation}
holds for any ball $B_t(x)\subset\mathbb{R}^n$ and for some $C_3=C_3(R)>0$.
\item[(4)] The inequality 
\begin{equation}\label{241020146}
\mathbf{W}^{4R}_{s,p}\left[\left(\mathbf{W}^{4R}_{s,p}[\mu]\right)^\gamma\right]\leq C_4 \mathbf{W}^{4R}_{s,p}[\mu] \quad \text{a.e. in  $B_{2R}$}
\end{equation}
holds for some $C_4=C_4(R)>0$.
\end{itemize} 
\end{proposition}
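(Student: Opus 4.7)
The plan is to mirror the circular scheme $(1) \Rightarrow (2) \Rightarrow (3) \Rightarrow (4) \Rightarrow (1)$ used in Proposition~\ref{241020147}, adapting each step to account for the Bessel capacity (in place of the Riesz one) and the $4R$--truncation of the Wolff potential. The guiding principle is that, thanks to $\mathrm{supp}(\mu)\subset B_R(0)$ with $R>1$, two simplifications occur: first, for $y\in B_{2R}$ the Bessel kernel $\mathbf{G}_{sp}$ is comparable to the Riesz kernel $\mathbf{I}_{sp}$, so that $\mathrm{Cap}_{\mathbf{G}_{sp},\gamma/(\gamma-p+1)}$ and $\mathrm{Cap}_{\mathbf{I}_{sp},\gamma/(\gamma-p+1)}$ differ only by constants depending on $R$; second, for $t \gtrsim R$ the ball $B_t(y)$ already swallows $B_R(0)$, so the tail contribution to $\mathbf{W}_{s,p}[\mu](y)-\mathbf{W}^{4R}_{s,p}[\mu](y)$ is a bounded multiple of $\mu(\mathbb{R}^n)^{1/(p-1)}$.

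For $(1)\Rightarrow(2)$, I would apply the Hedberg--Wolff dual characterization of $\mathrm{Cap}_{\mathbf{G}_{sp},\gamma/(\gamma-p+1)}$ to express the capacity as a supremum over measures supported on $K$ subject to a Wolff normalization, then Fubini and \eqref{241020144} convert the resulting pointwise bound into the integral inequality \eqref{241020144*}. For $(2)\Rightarrow(3)$, I would substitute $\chi_{B_t(x)}\mu$ (which is still supported in $B_R(0)$) in \eqref{241020144*}, and use the universal estimate $\mathrm{Cap}_{\mathbf{G}_{sp},\gamma/(\gamma-p+1)}(\mathrm{supp}(\chi_{B_t(x)}\mu))\lesssim \mu(B_t(x))$ obtained by plugging the admissible function $\mathbf{G}_{sp}\ast\chi_{B_t(x)}\mu$ (after normalization) into the definition \eqref{capa1}. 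The implication $(3)\Rightarrow(4)$ is the Wolff-inequality / Muckenhoupt--Wheeden step: dyadic decomposition of the outer $t$-integral in $\mathbf{W}^{4R}_{s,p}\left[(\mathbf{W}^{4R}_{s,p}[\mu])^\gamma\right](x)$ and Fubini reduce the iterated potential to a sum of integrals of the form \eqref{241020145} over dyadic balls; summing the geometric series yields \eqref{241020146}.

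The closing step $(4)\Rightarrow(1)$ is the main technical point. Given a compact $K\subset\mathbb{R}^n$, I would exploit the Hedberg--Wolff duality that identifies $\mathrm{Cap}_{\mathbf{G}_{sp},\gamma/(\gamma-p+1)}(K)^{1/(\gamma/(\gamma-p+1))}$ with a supremum of $\nu(K)$ over measures $\nu$ concentrated on $K$ with bounded Wolff energy. Combining this with \eqref{241020146} applied to $\mu$, and with a standard truncation/exhaustion to reduce to compactly supported data, yields a testing inequality that upgrades to the capacitary bound \eqref{241020144}; the only place where the support condition $\mathrm{supp}(\mu)\subset B_R(0)$ really enters is to control the energy of test measures by an expression involving $\mathbf{W}^{4R}_{s,p}$ rather than $\mathbf{W}_{s,p}$.

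The main obstacle I anticipate is precisely this compatibility of the truncated Wolff potential with the Bessel (rather than Riesz) capacity: one must check that every comparison between the Hedberg--Wolff Riesz-type dual theory and its Bessel analog respects the $4R$ cutoff, so that constants depending only on $n,s,p,\gamma,R$ can be extracted. The condition $R>1$ is crucial here, since for $|x-y|>1$ the Bessel kernel decays exponentially while the Riesz kernel only decays polynomially, and without $R>1$ the comparison $\mathbf{G}_{sp}\asymp \mathbf{I}_{sp}$ on $B_{2R}$ loses its quantitative content. Once this bookkeeping is handled, the chain of implications follows by a direct adaptation of the proof of Proposition~\ref{241020147}.
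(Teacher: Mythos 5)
Before reviewing the substance, note that the paper does not prove Proposition~\ref{241020148} at all: it is imported from the literature, with the proof attributed to \cite[Theorems~2.1 and 2.2]{23VH} (a general version of \cite[Theorem~2.3]{22PhVe}). Your blind sketch correctly identifies the overall shape of such proofs (a cycle of implications, the comparability of $\mathbf{G}_{sp}$ and $\mathbf{I}_{sp}$ at scales up to $4R$, and the role of $\mathrm{supp}(\mu)\subset B_R(0)$), but as written it contains a genuine gap and otherwise compresses the hard steps into one-line claims.

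The concrete gap is in your step $(2)\Rightarrow(3)$. Inequality \eqref{241020144*} is a statement about the fixed measure $\mu$, so you cannot simply ``substitute $\chi_{B_t(x)}\mu$'' into it; and even using $\mathbf{W}^{4R}_{s,p}[\chi_{B_t(x)}\mu]\le \mathbf{W}^{4R}_{s,p}[\mu]$, \eqref{241020144*} only controls $\int_K(\cdots)\,dy$ by $\mathrm{Cap}_{\mathbf{G}_{sp},\frac{\gamma}{\gamma-p+1}}(K)$, whereas \eqref{241020145} demands a bound of the integral over all of $\mathbb{R}^n$ by $\mu(B_t(x))$. The ``universal estimate'' $\mathrm{Cap}_{\mathbf{G}_{sp},\frac{\gamma}{\gamma-p+1}}(\mathrm{supp}(\chi_{B_t(x)}\mu))\lesssim\mu(B_t(x))$ you invoke to bridge this is false: take $\mu$ to be Lebesgue measure restricted to a small ball $B_r\subset B_R(0)$; then the left-hand side is comparable to $r^{\,n-sp\gamma/(\gamma-p+1)}$ (or even bounded below by a positive constant, or a logarithmic quantity, in the critical/supercritical regimes), while the right-hand side is $c\,r^n$, so the inequality fails as $r\to0$, and it fails even more dramatically after multiplying $\mu$ by a small $\varepsilon$. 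Moreover $\mathbf{G}_{sp}\ast(\mathbf{G}_{sp}\ast\chi_{B_t(x)}\mu)$ need not dominate a fixed positive constant on $\mathrm{supp}(\chi_{B_t(x)}\mu)$, so the normalized function you propose is not admissible in \eqref{capa1}. The passage from a capacitary condition such as \eqref{241020144} or \eqref{241020144*} to the testing inequality \eqref{241020145} is precisely the deep Kerman--Sawyer/Maz'ya--Verbitsky-type trace theorem underlying \cite{22PhVe,23VH}, and it cannot be recovered by the substitution you describe. Similarly, your $(1)\Rightarrow(2)$ (``Fubini and \eqref{241020144} convert the pointwise bound into \eqref{241020144*}'') compresses the capacitary strong-type estimate for Wolff potentials into a single sentence. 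To make the proposal rigorous you would essentially have to reproduce the arguments of \cite[Theorems~2.1 and 2.2]{23VH}, which is exactly what the paper does by citation.
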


\begin{remark}\label{rmk26} In the two propositions above, an inspection of their proofs reveals that for each $i=1,2,3,4$, one can choose a small constant $C_i>0$ in such a way that all other constants $C_j$, $j=1,2,3,4$ with $j\neq i$, can be made sufficiently small. Specifically, for instance, when $i=2$, for any small $C_1, C_3, C_4>0$, there exists a corresponding $C_2>0$ such that (2) implies the others.
\end{remark}

To prove the existence of SOLA to nonlocal equations of Lane--Emden type, we need the following lemmas.

\begin{lemma}\label{apro-le} 
Let $\alpha\in (0,n)$, $\beta>1$ and let $\mathbf{P}$ be either $\mathbf{I}_{\alpha}$ or $\mathbf{G}_{\alpha}$. 
For $\mu\in \mathcal{M}^+(\mathbb{R}^n)$ and a sequence $\{\rho_j\}_{j=1}^{\infty}$ of standard mollifiers in $\mathbb{R}^n$, set $\mu_j=\mu*\rho_j$. Assume that there exists a constant $c>0$ such that
\begin{equation*}
\mu(K)\leq c \mathrm{Cap}_{\mathbf{P},\beta}(K)
\end{equation*}
holds for any compact set $K\subset\mathbb{R}^n$. Then there exists  a constant $c_0>0$, depending only on $c$, $n$, $\alpha$ and $\beta$, such that 
\begin{equation*}
\mu_j(K)\leq c_0 \mathrm{Cap}_{\mathbf{P},\beta}(K)
\end{equation*}
holds for any compact set $K\subset\mathbb{R}^n$.		
\end{lemma}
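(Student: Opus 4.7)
The plan is to reformulate the capacity inequality for $\mu$ as a dual trace inequality for the potential $\mathbf{P}\ast$, observe that this trace inequality is manifestly preserved by convolution with $\rho_{j}$ thanks to translation invariance of $\mathbf{P}\ast$, and then read off the capacity bound for $\mu_{j}$.

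First, I would invoke the classical Maz'ya--Adams trace theorem (cf.~\cite{22AH}): for $\mathbf{P}=\mathbf{I}_{\alpha}$ or $\mathbf{P}=\mathbf{G}_{\alpha}$ with $\alpha\in(0,n)$ and $\beta>1$, the hypothesis $\mu(K)\le c\,\mathrm{Cap}_{\mathbf{P},\beta}(K)$ for every compact $K\subset\mathbb{R}^n$ is equivalent, up to a constant depending only on $n,\alpha,\beta$, to the trace inequality
\[
\int_{\mathbb{R}^n}(\mathbf{P}\ast f)^{\beta}\,d\mu\le C_{1}\int_{\mathbb{R}^n}f^{\beta}\,dx \qquad \text{for every } f\ge 0.
\]
The nontrivial ``$\Rightarrow$'' direction is Maz'ya's inequality; the converse is elementary, since for any $f$ admissible for $K$ (namely, $\mathbf{P}\ast f\ge\chi_{K}$) one has $\mu(K)\le\int(\mathbf{P}\ast f)^{\beta}\,d\mu\le C_{1}\int f^{\beta}\,dx$, and taking the infimum over such $f$ yields $\mu(K)\le C_{1}\mathrm{Cap}_{\mathbf{P},\beta}(K)$.

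Next I would check that the trace inequality above persists under convolution, with the \emph{same} constant $C_{1}$. Given $f\ge 0$, Fubini and the substitution $z=x-y$ in the inner integral give
\[
\int(\mathbf{P}\ast f)^{\beta}\,d\mu_{j}=\int_{\mathbb{R}^n}\rho_{j}(z)\int_{\mathbb{R}^n}(\mathbf{P}\ast f)^{\beta}(y+z)\,d\mu(y)\,dz.
\]
Translation invariance of convolution yields $(\mathbf{P}\ast f)(y+z)=(\mathbf{P}\ast f_{z})(y)$, where $f_{z}(w):=f(w+z)$ satisfies $\|f_{z}\|_{L^{\beta}}=\|f\|_{L^{\beta}}$. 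Applying the trace inequality for $\mu$ to each $f_{z}$ and using $\int\rho_{j}\,dz=1$ yields the same trace inequality for $\mu_{j}$. The easy direction of Maz'ya--Adams then produces $\mu_{j}(K)\le C_{1}\mathrm{Cap}_{\mathbf{P},\beta}(K)$ for every compact $K$, so one may take $c_{0}=C_{1}=c_{0}(c,n,\alpha,\beta)$.

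The only substantive ingredient is the Maz'ya--Adams dual characterization used in the first step, which is classical for both the Riesz and the Bessel settings; once granted, everything else is a one-line Fubini computation exploiting translation invariance of $\mathbf{P}\ast$. The main obstacle is therefore locating and citing the appropriate form of the dual trace characterization that applies uniformly to both $\mathbf{P}=\mathbf{I}_{\alpha}$ and $\mathbf{P}=\mathbf{G}_{\alpha}$ in the full parameter regime of the lemma.
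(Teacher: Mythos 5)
Your argument is correct. Note that the paper itself gives no direct proof: it simply cites \cite{55Ph2} (Lemma~5.7 there) for the case $\mathbf{P}=\mathbf{G}_{1}$ and observes that the general case follows in the same way; what you have written out is essentially the standard proof underlying that reference, namely the Maz'ya--Adams equivalence between the capacitary condition $\mu(K)\le c\,\mathrm{Cap}_{\mathbf{P},\beta}(K)$ and the trace inequality $\int(\mathbf{P}\ast f)^{\beta}\,d\mu\le C_{1}\|f\|_{L^{\beta}}^{\beta}$ (the nontrivial direction being Maz'ya's capacitary strong-type inequality, valid for both the Riesz and Bessel kernels, see \cite{22AH}), combined with the observation that the trace inequality is preserved under mollification by translation invariance. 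Two minor points worth recording: in the direction ``capacity condition $\Rightarrow$ trace inequality'' the level sets $\{\mathbf{P}\ast f>t\}$ are open (lower semicontinuity of the potential of a nonnegative $f$), so the compact-set hypothesis passes to them by inner regularity of $\mu$ with no capacitability issues; and in the Riesz case with $\alpha\beta\ge n$ the capacity of every compact set vanishes, so the hypothesis forces $\mu\equiv0$ and the statement is trivial, which covers the full parameter range $\alpha\in(0,n)$, $\beta>1$ claimed in the lemma.
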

\begin{proof}
In the special case that $\mathbf{P}=\mathbf{G}_1$, the proof can be found in \cite[Lemma 5.7]{55Ph2}. In the same way, we can also obtain the lemma for general $\mathbf{P}$.
\end{proof}

\begin{lemma}\label{equi-integrable} 
Let $\mu$ and  $\mu_j$ be as in Lemma \ref{apro-le}, $0<s<1$, $1<p<n/s$ and $\gamma>p-1$. 
If $\left(\mathbf{W}_{s,p}[\mu]\right)^\gamma \in L^1_{\mathrm{loc}}(\mathbb{R}^n)$, then 
$\{(\mathbf{W}_{s,p}[\mu_j])^\gamma\}$ is equi-integrable in  $B_M(0)$ for any $M>1$.
\end{lemma}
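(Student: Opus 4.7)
The plan is to establish a $j$-uniform pointwise domination
\[
\mathbf{W}_{s,p}[\mu_j](x) \leq c\,\mathbf{W}_{s,p}[\mu](x), \qquad x \in \mathbb{R}^n,
\]
with $c = c(n,s,p) > 0$. Once this is secured, the hypothesis $(\mathbf{W}_{s,p}[\mu])^{\gamma} \in L^1_{\mathrm{loc}}(\mathbb{R}^n)$ immediately yields $(\mathbf{W}_{s,p}[\mu_j])^{\gamma} \leq c^{\gamma}(\mathbf{W}_{s,p}[\mu])^{\gamma} \in L^{1}(B_{M}(0))$, so the entire family is dominated by a single function in $L^{1}(B_{M}(0))$, and equi-integrability on $B_{M}(0)$ follows from the absolute continuity of the Lebesgue integral of the dominating function.

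To prove the pointwise bound, I would write $\epsilon_j$ for the support radius of $\rho_j$ and split the defining integral of $\mathbf{W}_{s,p}[\mu_j](x)$ at the scale $t = 2\epsilon_j$. On the outer piece $t \geq 2\epsilon_j$, Fubini's theorem combined with $\mathrm{supp}\,\rho_j \subset B_{\epsilon_j}(0)$ and nonnegativity of $\mu$ gives
\[
\mu_j(B_t(x)) \leq \mu(B_{t+\epsilon_j}(x)) \leq \mu(B_{2t}(x)),
\]
so a straightforward rescaling $t \mapsto 2t$ in the integral produces $\int_{2\epsilon_j}^\infty [\mu_j(B_t(x))/t^{n-sp}]^{1/(p-1)}\,dt/t \leq c\,\mathbf{W}_{s,p}[\mu](x)$.

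For the inner piece $t < 2\epsilon_j$, I would use the pointwise estimate $(\mu * \rho_j)(y) \leq \|\rho_j\|_{\infty}\mu(B_{\epsilon_j}(y))$ with $\|\rho_j\|_{\infty} \leq c\epsilon_j^{-n}$, together with the inclusion $B_{\epsilon_j}(y) \subset B_{3\epsilon_j}(x)$ valid for $y \in B_t(x) \subset B_{2\epsilon_j}(x)$, to conclude $\mu_j(B_t(x)) \leq c(t/\epsilon_j)^n \mu(B_{3\epsilon_j}(x))$. Because $n - sp > 0$ by \eqref{s.p.range}, the one-dimensional integral $\int_{0}^{2\epsilon_j} t^{sp/(p-1)-1}\,dt$ is finite, and a short computation yields the bound $c\,[\mu(B_{3\epsilon_j}(x))/\epsilon_j^{n-sp}]^{1/(p-1)}$ for the inner piece. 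Comparing this quantity with the contribution of the annulus $t \in [3\epsilon_j, 6\epsilon_j]$ to $\mathbf{W}_{s,p}[\mu](x)$, using $\mu(B_t(x)) \geq \mu(B_{3\epsilon_j}(x))$ and $t^{n-sp} \leq (6\epsilon_j)^{n-sp}$ on this range, controls it by $c\,\mathbf{W}_{s,p}[\mu](x)$ as well, completing the domination.

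The main technical step is the inner-piece estimate; however it is routine once the three ingredients above --- the $L^\infty$ bound on $\rho_j$, the translation inclusion for $B_{\epsilon_j}(y)$, and the comparison with an annular slice of $\mathbf{W}_{s,p}[\mu]$ --- are combined, and the only analytical condition needed beyond the standing assumptions is $n - sp > 0$ from \eqref{s.p.range}.
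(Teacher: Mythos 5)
Your argument is correct, and it goes by a genuinely different route than the paper. You prove the uniform pointwise domination $\mathbf{W}_{s,p}[\mu_j](x)\le c(n,s,p)\,\mathbf{W}_{s,p}[\mu](x)$ by splitting the defining integral at the mollification scale, using $\mu_j(B_t(x))\le\mu(B_{t+\epsilon_j}(x))\le\mu(B_{2t}(x))$ for $t\ge 2\epsilon_j$ and the density bound $\mu_j\le c\epsilon_j^{-n}\mu(B_{3\epsilon_j}(x))$ on $B_{2\epsilon_j}(x)$ for the small scales, the latter being absorbed into the annular slice $t\in[3\epsilon_j,6\epsilon_j]$ of $\mathbf{W}_{s,p}[\mu](x)$; since these steps never interchange the power $1/(p-1)$ with an integral, no convexity is needed and the bound holds uniformly for all $1<p<n/s$, after which equi-integrability on $B_M(0)$ is immediate from domination by the single function $c^{\gamma}(\mathbf{W}_{s,p}[\mu])^{\gamma}\in L^1(B_M(0))$. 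The paper instead first isolates the tail $\mathbf{W}_{s,p}[\chi_{\mathbb{R}^n\setminus B_{2M}}\mu_j]$ (bounded in $L^\infty(B_M)$), then for $1<p<2$ uses Jensen's inequality to get $\mathbf{W}^{4M}_{s,p}[\chi_{B_{2M}}\mu_j]\le\mathbf{W}^{4M}_{s,p}[\chi_{B_{4M}}\mu]*\rho_j$ and concludes from convergence of mollifications in $L^{\gamma}_{\mathrm{loc}}$, while for $p\ge2$ (where Jensen fails for the concave power $1/(p-1)$) it passes through the Riesz potential $\mathbf{I}_{sp}[\mu_j]\le\mathbf{I}_{sp}[\mu]*\rho_j$, a good-$\lambda$ comparison between Wolff and Riesz potentials from the cited literature, and a de la Vall\'ee Poussin function with the $\Delta_2$-condition. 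Your approach is more elementary, treats all exponents at once, and yields the stronger conclusion of pointwise domination rather than mere equi-integrability. One small correction: the finiteness of $\int_0^{2\epsilon_j}t^{sp/(p-1)-1}\,dt$ follows from $sp/(p-1)>0$, not from $n-sp>0$; the condition $n-sp>0$ is instead what you use (implicitly) when bounding $t^{n-sp}\le(6\epsilon_j)^{n-sp}$ on the comparison annulus, and it does hold under \eqref{s.p.range}, so this is only a matter of attribution, not a gap.
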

\begin{proof} 
Throughout the proof, all the balls are centered at the origin. 
We notice that, since $\{(\mathbf{W}_{s,p}[\chi_{\mathbb{R}^{n}\setminus B_{2M}}\mu_{j}])^\gamma\}$ is bounded in $L^\infty (B_{M})$, 
it suffices to show that $\{(\mathbf{W}_{s,p}^{4M}[\chi_{B_{2M}}\mu_{j}])^\gamma\}$ is equi-integrable in  $B_{M}$. 
When $1<p<2$, H\"older's inequality and Fubini's theorem imply 
\[ \mathbf{W}^{4M}_{s,p}[\chi_{B_{2M}}\mu_{j}] \le \mathbf{W}^{4M}_{s,p}[\chi_{B_{4M}}\mu]\ast \rho_{j}, \] 
from which the conclusion follows. We now consider the case $p \ge 2$. Since  
\[ \mathbf{I}_{sp}^{4M}[\chi_{B_{2M}}\mu_{j}]\leq \mathbf{I}_{sp}^{4M}[\chi_{B_{4M}}\mu]\ast\rho_{j} \le c\left(\mathbf{W}^{8M}_{s,p}[\chi_{B_{4M}}\mu]\right)^{p-1}\ast \rho_{j}, \] 
we have that $\{\mathbf{I}_{sp}^{4M}[\chi_{B_{2M}}\mu]*\rho_{j}\}$ is convergent locally in $L^{\gamma/(p-1)}(\mathbb{R}^n)$ and therefore $\{(\mathbf{I}_{sp}^{4M}[\chi_{B_{2M}}\mu_{j}])^{\gamma/(p-1)}\}$ is equi-integrable in $B_R$ for any $R>0$. 
Thus, in light of \cite[Proposition 1.27]{luigi}, we can find a nondecreasing function $\Phi: [0,\infty)\to [0,\infty)$ such that $\Phi(t)/t\to\infty$ as $t\to \infty$ and 
\begin{align*}
& \int_{0}^{\infty}\phi(t)\left|\left\{x\in B_{8M}:\left(\mathbf{I}_{sp}^{4M}[\chi_{B_{2M}}\mu_{j}]\right)^{\gamma/(p-1)}>t\right\}\right|\,dt \\
& =\int_{B_{8M}}\Phi\left(\left(\mathbf{I}_{sp}^{4M}[\chi_{B_{2M}}\mu_{j}](x)\right)^{\gamma/(p-1)}\right)\, dx
\leq 1,
\end{align*} 
where $\phi(t) = \Phi'(t)$. Moreover, we may assume the $\Delta_2$-condition: $\phi(2t)\leq C\phi(t)$ for all $t \ge 0$ and for some $C \ge 1$, see \cite{Meyer}. 
On the other hand, by \cite[Proposition 2.2]{22VHV} and Lemma \ref{max.ftn.pot}, there exist $c,\varepsilon_0,t_0>0$ such that 
\begin{align*}
&\left|\left\{x\in B_{8M}: \mathbf{W}_{s,p}^{4M}[\chi_{B_{2M}}\mu_{j}](x) >3t, \left(\mathbf{I}_{sp}^{4M}[\chi_{B_{2M}}\mu_{j}](x)\right)^{1/(p-1)}\leq \varepsilon t\right\}\right| \\
&\qquad \leq c\varepsilon 	\left|\left\{x\in B_{8M}: \mathbf{W}_{s,p}^{4M}[\chi_{B_{2M}}\mu_{j}](x)>t\right\}\right|
\end{align*}
for any $\varepsilon\in (0,\varepsilon_0)$ and $t>t_0$. Using the above two inequalities and the $\Delta_2$-condition of $\phi$, and then choosing $\varepsilon>0$ sufficiently small, we have 
\begin{align*}
&\int_{0}^{\infty}\phi(t)\left|\left\{x\in B_{8M}:\left(\mathbf{W}_{s,p}^{4M}[\chi_{B_{2M}}\mu_{j}](x)\right)^\gamma>t\right\}\right|\,dt\\
&\leq c\int_{0}^{\infty}\phi(t)\left|\left\{x\in B_{8M}: \left(\mathbf{W}_{s,p}^{4M}[\chi_{B_{2M}}\mu_{j}](x)\right)^\gamma>3^\gamma t\right\}\right|\,dt\\
&\leq c\int_{t_0^{1/\gamma}}^{\infty}\phi(t)\left|\left\{x\in  B_{8M}:\left(\mathbf{W}_{s,p}^{4M}[\chi_{B_{2M}}\mu_{j}]\right)^\gamma>3^\gamma t\right\}\right|\,dt+c\\
&\leq c\varepsilon \int_{0}^{\infty} \phi(t)\left|\left\{x\in  B_{8M}:\left(\mathbf{W}_{s,p}^{4M}[\chi_{B_{2M}}\mu_{j}]\right)^\gamma>t\right\}\right|\, dt\\
&\quad\; + c\int_{0}^{\infty}\phi(t)\left|\left\{x\in  B_{8M}:\left(\mathbf{I}_{sp}^{4M}[\chi_{B_{2M}}\mu_{j}]\right)^{\gamma/(p-1)}>\varepsilon^\gamma t\right\}\right|\,dt+c\\
&\leq \frac{1}{2}\int_{0}^{\infty} \phi(t)\left|\left\{x\in  B_{8M}:  \left(\mathbf{W}_{s,p}^{4M}[\chi_{B_{2M}}\mu_{j}]\right)^\gamma>t\right\}\right|\, dt+c,
\end{align*}
which implies that 
\begin{align*}
& \int_{B_{8M}}\Phi\left(\left(\mathbf{W}_{s,p}^{4M}[\chi_{B_{2M}}\mu_{j}]\right)^\gamma\right)\,dx \\
& = \int_{0}^{\infty}\phi(t)\left|\left\{x\in  B_{8M}:\left(\mathbf{W}_{s,p}^{4M}[\chi_{B_{2M}}\mu_{j}](x)\right)^\gamma>t\right\}\right|\,dt\leq c.
\end{align*}
Hence, $\{(\mathbf{W}^{4M}_{s,p}[\chi_{B_{2M}} \mu_{j}])^\gamma\}$ is equi-integrable in  $B_M$ for any $M>1$, as desired.
\end{proof}

\begin{lemma}\label{equi-integrable2}
Let $\Omega \subset \mathbb{R}^{n}$ be either a bounded domain or the whole $\mathbb{R}^{n}$. 
Let $\{\mu_{j}\} \subset \mathcal{M}(\Omega)$, $0 < R \le \infty$, $0<s<1$, $1<p<n/s$, $l \in \mathbb{N}$, $a>0$, $\beta\ge1$ and $l\beta > p-1$. 
If $\{P_{l,a,\beta}(c\mathbf{W}^{R}_{s,p}[\mu_{j}])\}$ is bounded in $L^{1}(\Omega)$ for some $c>0$, then 
$\left\{P_{l,a,\beta}(c'\mathbf{W}^{R}_{s,p}[\mu_{j}])\right\}$ is equi-integrable in $\Omega$ for any $c' \in (0,c)$.
\end{lemma}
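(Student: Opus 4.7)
The plan is to apply the de la Vall\'ee Poussin criterion after a convenient rescaling. First I would set
\[ T_j \coloneqq a\bigl(c\,\mathbf{W}^{R}_{s,p}[\mu_j]\bigr)^\beta \qquad \text{and} \qquad r \coloneqq (c'/c)^\beta \in (0,1), \]
so that $P_{l,a,\beta}(c\,\mathbf{W}^{R}_{s,p}[\mu_j]) = H_l(T_j)$ and $P_{l,a,\beta}(c'\,\mathbf{W}^{R}_{s,p}[\mu_j]) = H_l(rT_j)$. The hypothesis then reads $\sup_j \|H_l(T_j)\|_{L^1(\Omega)} < \infty$, and the goal is the equi-integrability of $\{H_l(rT_j)\}$ on $\Omega$.

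Next I would fix $\sigma \in (r,1)$ and construct a superlinear test function that routes the desired estimate back to the hypothesis. Since $H_l \colon [0,\infty) \to [0,\infty)$ is a strictly increasing continuous bijection (its derivative $H_l' = H_{l-1}$ is strictly positive on $(0,\infty)$, with $H_l(0)=0$ and $H_l(t)\to\infty$ as $t\to\infty$), I can set
\[ \Psi(y) \coloneqq H_l\!\left((\sigma/r)\,H_l^{-1}(y)\right), \qquad y \ge 0, \]
which is strictly increasing and, by construction, satisfies $\Psi(H_l(rt)) = H_l(\sigma t)$ for every $t \ge 0$. The identity $H_l(t) = e^t\bigl(1 - e^{-t}\sum_{j=0}^{l-1}t^j/j!\bigr)$ gives $H_l(t) \sim e^t$ at infinity, whence $H_l^{-1}(y) \sim \log y$ and $\Psi(y) \sim y^{\sigma/r}$ as $y\to\infty$; since $\sigma/r > 1$, this delivers the superlinear growth $\Psi(y)/y \to \infty$ needed by de la Vall\'ee Poussin.

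Combining these ingredients, since $H_l$ is increasing and $\sigma < 1$ we have the pointwise inequality $H_l(\sigma T_j) \le H_l(T_j)$, so the identity $\Psi(H_l(rT_j)) = H_l(\sigma T_j)$ and the hypothesis imply
\[ \sup_j \int_\Omega \Psi\bigl(H_l(rT_j)\bigr)\,dx \le \sup_j \int_\Omega H_l(T_j)\,dx < \infty. \]
The de la Vall\'ee Poussin criterion then yields the equi-integrability of $\{H_l(rT_j)\} = \{P_{l,a,\beta}(c'\,\mathbf{W}^{R}_{s,p}[\mu_j])\}$ in $\Omega$.

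The only step that requires a modicum of care is verifying $\Psi(y)/y \to \infty$, which comes down to the asymptotic analysis of $H_l^{-1}$ at infinity; this is straightforward. All remaining steps are pointwise monotonicity arguments, so the reasoning is insensitive to whether $\Omega$ is bounded or equal to $\mathbb{R}^n$.
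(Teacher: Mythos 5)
Your argument is correct, but it runs along a genuinely different route from the paper. You rescale to $T_j=a\bigl(c\,\mathbf{W}^{R}_{s,p}[\mu_j]\bigr)^{\beta}$, build the composite function $\Psi(y)=H_l\bigl((\sigma/r)H_l^{-1}(y)\bigr)$ with $r=(c'/c)^{\beta}<\sigma<1$, use the exact identity $\Psi(H_l(rT_j))=H_l(\sigma T_j)\le H_l(T_j)$ to get a uniform superlinear moment bound $\sup_j\int_\Omega\Psi(H_l(rT_j))\,dx<\infty$ (indeed $\Psi(y)\sim y^{\sigma/r}$ at infinity, and your monotonicity and asymptotic checks for $H_l$ and $H_l^{-1}$ are sound), and then invoke de la Vall\'ee Poussin. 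The paper instead proves the elementary pointwise inequality $P_{l,a,\beta}(c't)\le\varepsilon\,P_{l,a,\beta}(ct)+C(\varepsilon,c,c')$ for every $\varepsilon>0$ and all $t\ge0$ (exploiting that $e^{a(c')^{\beta}t^{\beta}}$ is negligible compared with $e^{ac^{\beta}t^{\beta}}$ for large $t$), and integrates it over an arbitrary set $E$ to get $\int_E P_{l,a,\beta}(c'\mathbf{W}^{R}_{s,p}[\mu_j])\,dx\le M\varepsilon+|E|\,C(\varepsilon,c,c')$; no auxiliary Orlicz-type function or compactness criterion is needed. The two proofs rest on the same underlying fact, but yours buys a bit more (a uniform $\Psi$-moment bound, i.e.\ boundedness in a power-type Orlicz class), while the paper's is shorter and more self-contained. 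One small caution: the textbook de la Vall\'ee Poussin equivalence is stated for finite measure spaces, whereas here $\Omega$ may be all of $\mathbb{R}^n$; to cover that case you should not quote the criterion as a black box but run the standard truncation estimate directly, namely $\int_E H_l(rT_j)\,dx\le K|E|+\bigl(\sup_{t\ge K}t/\Psi(t)\bigr)\int_\Omega\Psi(H_l(rT_j))\,dx$, which your moment bound supports and which yields exactly the uniform absolute continuity over sets of small Lebesgue measure that the paper's own proof delivers (and which is how the lemma is used). With that remark made explicit, your proof is complete.
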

\begin{proof} 
By the definition of $P_{l,a,\beta}$ given in \eqref{2hvEQ11}, we have for $\kappa=(c-c')/4$
\begin{align*}
P_{l,a,\beta}(c't) & \leq \exp\left(-\frac{1}{10^{2+\beta}}\kappa^\beta at_0^{\beta}\right)\exp\left((\kappa+c')^\beta a|t|^{\beta-1}t\right) \\
& \leq \exp\left(-\frac{1}{10^{2+\beta}}\kappa^\beta at_0^{\beta}\right)P_{l,a,\beta}(ct)
\end{align*}
whenever $t\geq t_0$, where $t_0$ is large enough. This implies 
\begin{equation*}
P_{l,a,\beta}(c't)\leq \varepsilon	P_{l,a,\beta}(ct)+C(\varepsilon,c,c')
\end{equation*}
for any $\varepsilon>0$, $c' \in (0,c)$ and $t>0$. Let
\begin{equation*}
\sup_j	\int_{\Omega}P_{l,a,\beta}\left(c\mathbf{W}^{R}_{s,p}[\mu_j]\right) dx = M.
\end{equation*}
Then for any $E\subset \Omega$, we have 
\begin{align*}
\int_{E}P_{l,a,\beta}\left(c'\mathbf{W}^{R}_{s,p}[\mu_j]\right) dx & \leq \varepsilon\int_{E}P_{l,a,\beta}\left(c\mathbf{W}^{R}_{s,p}[\mu_j]\right) dx+|E|C(\varepsilon,c,c') \\
& \leq M\varepsilon +|E|C(\varepsilon,c,c').
\end{align*}
This implies the equi-integrability of $P_{l,a,\beta}(c'\mathbf{W}^{R}_{s,p}[\mu_j])$ for any $c'<c$.
\end{proof}

\section{SOLA (Solutions Obtained as Limits of Approximations)}
Here we introduce the definition of SOLA. Let $P(u)$ be either the zero function, a power function $|u|^{\gamma-1}u$ or an exponential function $P_{l,a,\beta}(u)$ defined in \eqref{2hvEQ11}.

\begin{definition}
\label{defsola}
Let $\Omega \subset \mathbb{R}^{n}$ be a bounded domain, $\mu\in \mathcal{M}(\Omega)$ and let $-\mathcal{L}_{\Phi}$ be defined in \eqref{def.L} under assumptions \eqref{mono}, \eqref{c2} and \eqref{s.p.range}. We say that a function $u\in W^{h,q}(\Omega)$ for any $h$ and $q$ satisfying
\begin{equation}\label{SOLA1}
h\in (0,s) \quad \text{and} \quad p-1 \le q<\bar{q} \coloneqq \frac{n(p-1)}{n-s}
\end{equation}
 is a SOLA to \eqref{EQp}
if $u$ is a distributional solution to $-\mathcal{L}_\Phi u = P(u) + \mu$ in $\Omega$, that is, $P(u)\in L^1(\Omega)$ and 
\begin{equation}\label{SOLA2}
\int_{\mathbb{R}^n}\int_{\mathbb{R}^n}\Phi(u(x)-u(y))(\varphi(x)-\varphi(y))K(x,y)\,dx\,dy=\int_{\Omega}  P(u) \varphi\, dx + \int_{\Omega}\varphi\, d\mu
\end{equation}
holds whenever $\varphi\in C^{\infty}_{c}(\Omega)$, and $u=0$ in $\mathbb{R}^n\setminus\Omega$. Moreover, it has to satisfy the following approximation property: there exists a sequence of weak solutions $\{u_j\}_{j=1}^\infty$ to the approximate Dirichlet problems 
\begin{equation*}
\left\{
\begin{aligned}
-\mathcal{L}_\Phi u_j&= P(u_j)+ \mu_j &\text{in }&\Omega,\\
u_j&=0&\text{in }&\mathbb{R}^n\setminus\Omega
\end{aligned}
\right.
\end{equation*}
such that $u_j \to u$ a.e in $\Omega$, $u_j \to u$ locally in $L^{q}(\mathbb{R}^n)$ and $P(u_{j}) \rightarrow P(u)$ locally in $L^{1}(\mathbb{R}^{n})$. Here the sequence $\{\mu_j\}\subset L^{\infty}(\Omega)$ converges to $\mu$ weakly in the sense of measures in $\Omega$ and moreover satisfies 
\begin{equation}\label{limsupmuj}
\limsup_{j\to \infty}|\mu_j|(B)\leq |\mu|(\overline{B})
\end{equation}
whenever $B \subset \mathbb{R}^{n}$ is a ball. 
\end{definition}

\begin{remark}
We note that \cite{KuMiSi} considered SOLA under the assumption
\[
p > 2-\frac{s}{n} \quad \text{and} \quad \max\{p-1,1\}\leq q<\bar q.
\]
However, in contrast with the local case, we use the terminology SOLA for any $p>1$. This is because the strong convergence of $u_{j}$ in $W^{h,q}$ is not required, already in the case $p>2-s/n$. 
Thus, we can extend the definition of SOLA in \cite{KuMiSi} to the case $1<p\le 2-s/n$, by considering fractional Sobolev spaces $W^{h,q}$ with $q \in (0,1)$. 
\end{remark}

In this paper, we extend the existence result in \cite[Theorem 1.1]{KuMiSi} to the ranges of $p$ and $q$ stated in Definition \ref{defsola}.   
Furthermore, we also discuss SOLA in the whole  $\mathbb{R}^n$. Here we introduce the definition of SOLA in $\mathbb{R}^n$. 
\begin{definition}
\label{defsola2}
Let $\mu\in \mathcal{M}(\mathbb{R}^n)$ and let $-\mathcal{L}_{\Phi}$ be defined in \eqref{def.L} under assumptions \eqref{mono}, \eqref{c2} and \eqref{s.p.range}. We say that a function $u\in W^{h,q}_{\mathrm{loc}}(\mathbb{R}^n)$ for any $h$ and $q$ satisfying \eqref{SOLA1} is a SOLA to
\begin{equation}\label{EQpRn}
-\mathcal{L}_\Phi u=P(u)+ \mu  \quad \text{in}\;\; \mathbb{R}^n
\end{equation} 
if 
\begin{equation}\label{SOLA.tail}
\tail(u;0,r)<\infty \quad \text{for any }\; r \ge 1 
\end{equation} 
and  $u$ is a distributional solution to \eqref{EQpRn}, that is, $P(u)\in L^1_{\mathrm{loc}}(\mathbb{R}^n)$ and
\begin{equation}\label{SOLA2*}
\int_{\mathbb{R}^n}\int_{\mathbb{R}^n}\Phi(u(x)-u(y))(\varphi(x)-\varphi(y))K(x,y)\,dx\,dy=\int_{\mathbb{R}^n}  P(u) \varphi\, dx + \int_{\mathbb{R}^n}\varphi\,d\mu
\end{equation}
holds whenever $\varphi\in C_c^\infty(\mathbb{R}^n)$. Moreover, it has to satisfy the following approximation property: there exist a sequence of open sets $\{\Omega_j\}_{j=1}^\infty$ with $\Omega_1\Subset\Omega_2\Subset\cdots\Subset\Omega_j \Subset \cdots$ and $\cup_{j=1}^\infty\Omega_j=\mathbb{R}^n$ and a sequence of weak solutions $\{u_j\}_{j=1}^{\infty}$ to the approximate Dirichlet problems 
\begin{equation*}
\left\{
\begin{aligned}
-\mathcal{L}_\Phi u_j&= P(u_j) + \mu_{j} &\text{in }&\Omega_j,\\
u_j&= 0&\text{in }&\mathbb{R}^n\setminus\Omega_j
\end{aligned}
\right.
\end{equation*}
such that  $u_j \to u$ a.e. in $\mathbb{R}^n$, $u_j \to u$ locally in $L^{q}(\mathbb{R}^n)$ and $P(u_{j}) \rightarrow P(u)$ locally in $L^{1}(\mathbb{R}^{n})$. Here the sequence $\{\mu_j\}\subset L^{\infty}(\mathbb{R}^n)\cap L^1(\mathbb{R}^n)$ converges to $\mu$ weakly in the sense of measures in $B$ and satisfies \eqref{limsupmuj} whenever $B \subset \mathbb{R}^{n}$ is a ball. 
\end{definition}

\subsection{Existence of SOLA}

In this subsection, we prove the existence of SOLA to \eqref{EQ}. We start with the following a priori estimates. 
\begin{lemma} \label{es-L1}Let $\Omega \subset \mathbb{R}^{n}$ be a bounded domain, $\mu \in L^{\infty}(\Omega)$ and let $-\mathcal{L}_{\Phi}$ be defined in \eqref{def.L} under assumptions \eqref{mono}, \eqref{c2} and \eqref{s.p.range}. Let $u\in \mathbb{W}^{s,p}(\Omega)$ be the weak solution to \eqref{EQ}. Then we have the following:
\begin{itemize} 
\item[(1)]
There exists a constant $c=c(n,s,p,\Lambda)>0$ such that
\begin{equation}\label{z1}
\|u\|_{L^{q_0,\infty}(\mathbb{R}^{n})}\leq c[|\mu|(\Omega)]^{1/(p-1)}, \quad \text{where} \quad q_0=\frac{n(p-1)}{n-sp}.
\end{equation}
\item[(2)] There exists a constant $c=c(n,s,p,\Lambda)>0$ such that, for any $\xi>1$ and $d>0$,
\begin{equation}\label{z1'}
\int_{\mathbb{R}^n}\int_{\mathbb{R}^{n}} \frac{|u(x)-u(y)|^{p}}{(d+|u(x)|+|u(y)|)^\xi} \frac{dx\,dy}{|x-y|^{n+sp}}\leq  c\frac{d^{1-\xi}}{\xi-1}|\mu|(\Omega).
\end{equation}
\item[(3)] For any $q\in(0,\bar{q})$ and $h\in(0,s)$, where $\bar{q}$ is given in \eqref{SOLA1}, there exists a constant $c=c(n,s,p,\Lambda,h,q)>0$ such that
\begin{equation}\label{z2}
\left(\int_{B_{r}}\fint_{B_{r}}\frac{|u(x)-u(y)|^{q}}{|x-y|^{n+hq}}\,dx\,dy\right)^{1/q} \le cr^{-h}\left[\frac{|\mu|(\Omega)}{r^{n-sp}}\right]^{1/(p-1)}
\end{equation}
for any ball $B_{r} \subset \mathbb{R}^{n}$. Moreover, 
\begin{equation}\label{global}
\left(\int_{\mathbb{R}^n}\int_{\mathbb{R}^{n}}\frac{|u(x)-u(y)|^q}{|x-y|^{n+hq}}\,dx\,dy\right)^{1/q}\leq  c[\mathrm{diam}(\Omega)]^{\frac{n}{q}-h}\left(\frac{|\mu|(\Omega)}{[\mathrm{diam}(\Omega)]^{n-sp}}\right)^{1/(p-1)}.
\end{equation}
\end{itemize}
\end{lemma}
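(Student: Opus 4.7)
The plan is to prove the three estimates by nonlocal analogues of the classical Boccardo--Gallou\"et testing technique. In each part I test the weak formulation against a bounded monotone Lipschitz transformation $F(u)$ with $F(0)=0$; since $u=0$ a.e. outside $\Omega$, one has $F(u)\in\mathbb{W}^{s,p}(\Omega)\cap L^\infty(\mathbb{R}^n)$ with $F(u)=0$ outside $\Omega$, so density of $C^\infty_c(\Omega)$ together with $\mu\in L^1(\Omega)$ justifies the testing, the right-hand side being bounded by $\|F(u)\|_{L^\infty}|\mu|(\Omega)$.

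For (1) the test is $F(u)=T_k(u):=\max(-k,\min(k,u))$ with $k>0$. Since $T_k$ is $1$-Lipschitz and monotone, the pointwise inequality $\Phi(a-b)(T_k(a)-T_k(b))\ge\Lambda^{-1}|T_k(a)-T_k(b)|^p$ follows directly from \eqref{mono}. Combined with the lower bound in \eqref{c2}, this yields
\[
\int_{\mathbb{R}^n}\int_{\mathbb{R}^n}\frac{|T_k(u)(x)-T_k(u)(y)|^p}{|x-y|^{n+sp}}\,dxdy\le ck|\mu|(\Omega),
\]
whereupon Lemma~\ref{lemsobopoin} (applicable since $T_k(u)$ is supported in the bounded set $\Omega$) and Chebyshev's inequality deliver \eqref{z1}, with $q_0=n(p-1)/(n-sp)=(p-1)p^*_s/p$ and $p^*_s=np/(n-sp)$. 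For (2) I fix $d>0$ and $\xi>1$ and test with $g(u)$, where $g(t):=\int_0^t(d+|s|)^{-\xi}\,ds$; this $g$ is bounded by $d^{1-\xi}/(\xi-1)$. The pointwise inequality
\[
\Phi(a-b)\bigl(g(a)-g(b)\bigr)\ge \frac{\Lambda^{-1}|a-b|^p}{(d+|a|+|b|)^\xi}
\]
follows from $|\Phi(t)|\ge\Lambda^{-1}|t|^{p-1}$ (a consequence of \eqref{mono}) together with $|g(a)-g(b)|\ge |a-b|/(d+|a|+|b|)^\xi$, the latter obtained by bounding the integrand $(d+|s|)^{-\xi}$ from below by its value at the endpoint of largest modulus on the interval of integration. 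Combining with the lower bound in \eqref{c2} gives \eqref{z1'}.

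For (3), the key algebraic observation is $\bar q=q_0 p/(q_0+1)$, so $q<\bar q$ is equivalent to the existence of $\xi>1$ with $\gamma:=\xi q/(p-q)<q_0$. For such $\xi$, split
\[
\frac{|u(x)-u(y)|^q}{|x-y|^{n+hq}}=\left[\frac{|u(x)-u(y)|^p}{(d+|u(x)|+|u(y)|)^{\xi} |x-y|^{n+sp}}\right]^{q/p}\left[\frac{(d+|u(x)|+|u(y)|)^{\xi q/(p-q)}}{|x-y|^{n-\alpha}}\right]^{(p-q)/p}
\]
with $\alpha:=pq(s-h)/(p-q)>0$, and apply H\"older with exponents $p/q$ and $p/(p-q)$. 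The first factor integrates to a bound given by \eqref{z1'}; for the second, $\int_{B_r}|x-y|^{-(n-\alpha)}\,dy\le cr^\alpha$ together with the Marcinkiewicz consequence $\int_{B_r}|u|^\gamma\,dx\le cr^{n(1-\gamma/q_0)}|\mu|(\Omega)^{\gamma/(p-1)}$ of \eqref{z1} (valid exactly because $\gamma<q_0$) yields
\[
\int_{B_r}\int_{B_r}\frac{(d+|u(x)|+|u(y)|)^{\xi q/(p-q)}}{|x-y|^{n-\alpha}}\,dxdy\le cr^\alpha\bigl(d^\gamma r^n+r^{n(1-\gamma/q_0)}|\mu|(\Omega)^{\gamma/(p-1)}\bigr).
\]
The optimal choice $d=(|\mu|(\Omega)/r^{n-sp})^{1/(p-1)}$ equalizes the two terms inside the parenthesis, and the identity $n/q_0=(n-sp)/(p-1)$ makes the $r$- and $|\mu|(\Omega)$-exponents collapse to exactly those of \eqref{z2}. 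The global estimate \eqref{global} then follows from \eqref{z2} applied on a ball $B_R\supset\Omega$ with $R\sim\mathrm{diam}(\Omega)$: the diagonal integral over $B_R\times B_R$ is handled directly, while the tail $\int_{B_R}\int_{\mathbb{R}^n\setminus B_R}|u(x)|^q|x-y|^{-(n+hq)}\,dy\,dx$ reduces, via $u\equiv 0$ outside $\Omega$ and $\int_{\mathbb{R}^n\setminus B_R}|x-y|^{-(n+hq)}\,dy\le cR^{-hq}$ for $x\in\Omega$, to a Marcinkiewicz integral of $|u|^q$ on $B_R$.

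The main technical hurdles are (i) deriving the pointwise monotonicity inequalities for a generic $\Phi$ satisfying only the integrated condition \eqref{mono} (no structural formula or differentiability is assumed), which forces me to pass through $|\Phi(t)|\ge\Lambda^{-1}|t|^{p-1}$ together with the sign condition $\Phi(t)\,\mathrm{sgn}(t)\ge 0$; and (ii) the exponent bookkeeping in Part~(3), where the admissible range $q<\bar q$ emerges exactly from the existence of a valid $\xi$, and where the $r$-exponents collapse only after the precise scale-covariant choice of $d$.
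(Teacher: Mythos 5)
Your proposal is correct, but note that the paper does not actually prove this lemma: it simply cites \cite[Propositions~2.6--2.8]{Gk24} for \eqref{z1}, \eqref{z1'}, \eqref{global}, and observes that \eqref{z2} follows from an inspection of the proof of the last of these. So your argument is a self-contained reconstruction of the outsourced proof, and it follows exactly the route one would expect (and that the cited reference follows): nonlocal Boccardo--Gallou\"et testing. The details check out. For (1), the inequality $\Phi(a-b)(T_k(a)-T_k(b))\ge \Lambda^{-1}|T_k(a)-T_k(b)|^p$ is a correct consequence of $\Phi(t)\,\mathrm{sgn}(t)\ge\Lambda^{-1}|t|^{p-1}$ and the $1$-Lipschitz monotonicity of $T_k$, and combined with \eqref{c2}, Lemma~\ref{lemsobopoin} and Chebyshev it gives \eqref{z1} with the right exponent since $(p-1)p^{*}_{s}/p=q_0$ and $p^{*}_{s}/(pq_0)=1/(p-1)$. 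For (2), bounding $|g(a)-g(b)|$ below by $|a-b|(d+|a|+|b|)^{-\xi}$ and using $\|g\|_{\infty}\le d^{1-\xi}/(\xi-1)$ gives \eqref{z1'}. In (3), your algebra is right: $\bar q=q_0p/(q_0+1)$, so $q<\bar q$ is exactly the existence of an admissible $\xi>1$ with $\gamma=\xi q/(p-q)<q_0$; the splitting with $\alpha=pq(s-h)/(p-q)$ balances all exponents, and with $d=(|\mu|(\Omega)/r^{n-sp})^{1/(p-1)}$ the powers of $r$ and $|\mu|(\Omega)$ collapse to those in \eqref{z2}, independently of the particular $\xi$ chosen; the off-diagonal tail argument for \eqref{global} works once you take, say, $B_R=B_{2\mathrm{diam}(\Omega)}(x_0)$ with $x_0\in\Omega$, so that $\mathrm{dist}(x,\partial B_R)\ge R/2$ uniformly for $x\in\Omega$, which your ``$R\sim\mathrm{diam}(\Omega)$'' implicitly requires. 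The only point to tighten is the justification of the nonsmooth test functions $T_k(u)$ and $g(u)$: since no boundary regularity of $\Omega$ is assumed, density of $C^\infty_c(\Omega)$ in the class of $\mathbb{W}^{s,p}(\Omega)$-functions vanishing outside $\Omega$ is not automatic; it is cleaner to invoke the variational characterization of the weak solution (as recalled in the introduction, $u$ minimizes the associated functional, and $\mu\in W^{-s,p'}(\Omega)$), so that the weak identity holds for every competitor $\varphi\in\mathbb{W}^{s,p}(\Omega)$ with $\varphi=0$ a.e.\ outside $\Omega$, which covers $T_k(u)$ and $g(u)$ directly; the pairing on the right-hand side is then controlled by $\|F(u)\|_{L^\infty}|\mu|(\Omega)$ thanks to $\mu\in L^1(\Omega)$, as you say.
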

\begin{proof} 
Estimates \eqref{z1}, \eqref{z1'} and \eqref{global} are obtained in \cite[Propositions 2.5, 2.6 and 2.7]{Gk24}, respectively. Moreover, an inspection of the proof of \cite[Proposition 2.7]{Gk24} also gives \eqref{z2}. 
\end{proof}

The above lemma and the compactness result in Lemma \ref{lemcompact} imply the existence of SOLA. 

\begin{theorem}\label{thm.ex.bdd}
Let $\Omega \subset \mathbb{R}^{n}$ be a bounded domain, $\mu \in \mathcal{M}(\Omega)$ and let $-\mathcal{L}_{\Phi}$ be defined in \eqref{def.L} under assumptions \eqref{mono}, \eqref{c2} and \eqref{s.p.range}. Then there exists a SOLA $u$ to \eqref{EQ} satisfying \eqref{global}.
\end{theorem}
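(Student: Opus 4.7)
The plan is a standard approximation argument built on the uniform a priori estimates of Lemma~\ref{es-L1} and the fractional compactness afforded by Lemma~\ref{lemcompact}. First extend $\mu$ to $\R^n$ by setting it to zero outside $\Omega$ and regularize by mollification: for $j\in\mathbb{N}$ set $\mu_j := (\eta_j\mu) * \rho_{1/j}$, where $\{\rho_j\}$ is a sequence of standard mollifiers and $\{\eta_j\}\subset C_c^\infty(\Omega)$ are cutoffs converging pointwise to $1$ on $\Omega$, chosen so that $\mu_j \in C_c^\infty(\Omega)$. Routine arguments yield $\mu_j \to \mu$ weakly as measures in $\Omega$ and $\limsup_{j\to\infty}|\mu_j|(B) \le |\mu|(\overline B)$ for every ball $B\subset\R^n$, so that \eqref{limsupmuj} holds. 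Since $\mu_j\in L^\infty(\Omega)\subset W^{-s,p'}(\Omega)$, the direct method of the calculus of variations (see \cite[Theorem~2.3]{DicaKuPa2}) produces a weak solution $u_j \in \mathbb{W}^{s,p}(\Omega)$ to the regularized problem $-\mathcal{L}_\Phi u_j = \mu_j$ in $\Omega$ with $u_j=0$ in $\R^n\setminus\Omega$.

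Lemma~\ref{es-L1} applied to $u_j$, combined with the uniform bound $|\mu_j|(\Omega)\le|\mu|(\Omega)$, gives uniform bounds of $\{u_j\}$ in $L^{q_0,\infty}(\R^n)$ and, for every $h\in(0,s)$ and $q\in(0,\bar q)$, in $W^{h,q}(U)$ for every bounded $U\subset\R^n$, together with the refined energy estimate \eqref{z1'}. Fixing any such $h$ and $q\in[p-1,\bar q)$ and invoking Lemma~\ref{lemcompact} with a diagonal extraction over an exhausting sequence of balls, one obtains a (non-relabeled) subsequence and a limit $u \in W^{h,q}_{\mathrm{loc}}(\R^n)$ with $u_j\to u$ almost everywhere on $\R^n$ and locally in $L^{\tilde q}(\R^n)$ for every $\tilde q\in[q,q^*_h)$. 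By construction $u=0$ a.e.\ in $\R^n\setminus\Omega$, and $u$ inherits \eqref{global} by Fatou's lemma.

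The crux is passing to the limit in the weak formulation,
\begin{equation*}
\iint_{\R^n\times\R^n}\Phi(u_j(x)-u_j(y))(\varphi(x)-\varphi(y))K(x,y)\,dx\,dy \longrightarrow \iint_{\R^n\times\R^n}\Phi(u(x)-u(y))(\varphi(x)-\varphi(y))K(x,y)\,dx\,dy,
\end{equation*}
for every $\varphi \in C_c^\infty(\Omega)$. The integrand converges pointwise almost everywhere by continuity of $\Phi$, so by Vitali's theorem it suffices to establish equi-integrability and tightness on $\R^n\times\R^n$. Using $|\Phi(t)|\le\Lambda|t|^{p-1}$, the Lipschitz bound $|\varphi(x)-\varphi(y)|\le \min(\|\nabla\varphi\|_\infty|x-y|,2\|\varphi\|_\infty)$ and \eqref{c2}, the integrand is dominated on $\{|x-y|<1\}$ by $C\|\nabla\varphi\|_\infty|u_j(x)-u_j(y)|^{p-1}|x-y|^{1-n-sp}$, whose uniform $L^1$ bound is furnished by \eqref{z2} with $q=p-1$ and $h=(sp-1)/(p-1)<s$, while the finer equi-integrability follows from \eqref{z1'} via H\"older's inequality (with parameter $\xi>1$ close to $1$ and then $d\searrow 0$). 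On $\{|x-y|\ge 1\}$ the integrand is dominated by $C\|\varphi\|_\infty(|u_j(x)|^{p-1}+|u_j(y)|^{p-1})|x-y|^{-n-sp}$, which is uniformly integrable and tight thanks to $\mathrm{supp}\,\varphi\Subset\Omega$ and the $L^{q_0,\infty}$-bound \eqref{z1}. Letting $j\to\infty$, together with the weak convergence $\mu_j\to\mu$, identifies $u$ as a distributional solution to $-\mathcal{L}_\Phi u=\mu$ in $\Omega$, and all remaining items of Definition~\ref{defsola} hold by construction (the condition on $P(u_j)$ is vacuous since $P\equiv 0$).

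The principal obstacle is precisely the equi-integrability argument in the strongly singular case $1<p\le 2-s/n$, where the natural integrability exponent $q=p-1$ is less than $1$. Here the fractional Sobolev--Poincar\'e inequality of Lemma~\ref{lemsobopoin} and the compact embedding of Lemma~\ref{lemcompact}, both of which have no classical counterpart for $q<1$, underpin the a priori estimates of Lemma~\ref{es-L1} and make the limiting procedure possible.
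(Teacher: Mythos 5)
Your proposal follows essentially the same route as the paper: mollify $\mu$, solve the regularized Dirichlet problems by the direct method, use Lemma~\ref{es-L1} for uniform $W^{h,q}$ bounds (with $q$ possibly below $1$), extract a limit via the compact embedding of Lemma~\ref{lemcompact}, obtain \eqref{global} by Fatou, and then pass to the limit in the weak formulation. The only portion you work out beyond the paper is that last limit passage, which the paper delegates to \cite[pp.~1352--1354]{KuMiSi} (and which is carried out explicitly, in the whole-space setting, in the paper's proof of Theorem~\ref{thm.ex.rn}); your splitting into near-diagonal and off-diagonal regions and the treatment of the off-diagonal part via the $L^{q_{0},\infty}$ bound \eqref{z1} are consistent with that argument.

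Two details in your near-diagonal step need repair. First, the choice $h=(sp-1)/(p-1)$ is inadmissible when $sp\le 1$, since then $h\le 0$ lies outside the range of Lemma~\ref{es-L1}; in that regime any $h\in(0,s)$ dominates the kernel $|x-y|^{1-n-sp}$ on $\{|x-y|<1\}$, while for $sp>1$ one should take $h$ strictly between $(sp-1)/(p-1)$ and $s$. Second, and more substantively, the stated mechanism for equi-integrability does not work: in \eqref{z1'} the right-hand side scales like $d^{1-\xi}$, which blows up as $d\searrow 0$ for $\xi>1$, so ``H\"older with $\xi$ close to $1$ and then $d\searrow 0$'' yields no usable bound, and a uniform $L^{1}$ bound of the dominating functions by itself does not give the uniform absolute continuity that Vitali's theorem requires. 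The clean fix, using only tools you already invoke, is the one in the paper's proof of Theorem~\ref{thm.ex.rn}: choose $q=(p-1)(1+\varepsilon)<\bar q$ and $h\in(0,s)$ with $\varepsilon n+[p(s-h)+h-1](1+\varepsilon)\le 0$ (possible since $(sp-1)/(p-1)<s$), so that \eqref{z2} gives a uniform $L^{1+\varepsilon}$ bound for the near-diagonal integrands on the bounded region, hence equi-integrability there, after which Vitali's theorem completes the identification of $u$ as a distributional solution. With this correction your argument is complete and coincides in substance with the paper's.
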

\begin{proof} 
Consider $\mu_j= \mu\ast \rho_{j}$, where $\{\rho_{j}\}_{j=1}^{\infty}$ is a sequence of standard mollifiers, and let $\{u_j\}_{j=1}^{\infty}\subset \mathbb{W}^{s,p}(\Omega)$ be the sequence of weak solutions to
\begin{equation*}
\left\{
\begin{aligned}
-\mathcal{L}_{\Phi}u_j&=\mu_j&\text{in }& \Omega,\\
u_j&=0&\text{in }&\mathbb{R}^n\setminus \Omega.
\end{aligned}
\right.
\end{equation*}
Then $\{\mu_{j}\}$ converges to $\mu$ weakly in the sense of measures in $\mathbb{R}^n$ and satisfies \eqref{limsupmuj} for any ball $B \subset \mathbb R^n$. Now, for any $h \in (0,s)$ and $q \in (0,\bar{q})$, we use Lemma \ref{es-L1}, together with the fact that
\begin{equation}\label{hol.uj} 
\|u_j\|_{L^q(\Omega)} \le c(q,q_0)|\Omega|^{1/q-1/q_0}\|u_j\|_{L^{q_0,\infty}(\Omega)} 
\end{equation}
 and the construction of $\{\mu_{j}\}$, in order to have
\begin{align*}
\left(\int_{\Omega}|u_{j}|^{q}\,dx\right)^{1/q} + \left(\int_{\Omega}\int_{\Omega}\frac{|u_j(x)-u_j(y)|^q}{|x-y|^{n+hq}}\,dx\,dy\right)^{1/q}  & \leq c[|\mu_j|(\Omega)]^{1/(p-1)} \\
& \leq c[|\mu|(\Omega)]^{1/(p-1)}	
\end{align*}
whenever $j \in \mathbb{N}$, where $c=c(n,s,p,\Lambda, \Omega,h,q)>0$. Namely, $\{u_j\}$ is bounded in $W^{h,q}(\Omega)$.  Therefore, by Lemma \ref{lemcompact}, there exists $u\in W^{h,q}(\Omega)$ with $u = 0$ a.e. in $\mathbb R^n\setminus \Omega$ such that
\begin{equation*}
\left\{
\begin{aligned}
	&u_j \ \longrightarrow\ \ u\quad \textrm{in } L^{q}(\Omega),\\
	&u_j \ \longrightarrow\ \ u\quad \mathrm{a.e.\  in }\  \Omega
\end{aligned}
\right. 
\quad \text{as} \quad j\to \infty \ \ \text{(up to subsequences).}
\end{equation*}
Accordingly, Fatou's lemma implies
\[
\left(\int_{\Omega}|u|^{q}\,dx\right)^{1/q} +\left(\int_{\Omega}\int_{\Omega}\frac{|u(x)-u(y)|^q}{|x-y|^{n+hq}}\,dx\,dy\right)^{1/q}\leq c[|\mu|(\Omega)]^{1/(p-1)}.
\]
It remains to show that $u$ satisfies \eqref{SOLA2}. The argument is essentially identical to the one in \cite[Theorem 1.1]{KuMiSi}, see \cite[pp. 1352--1354]{KuMiSi} with $g_j=g=0$ 
and also the proof of Theorem \ref{thm.ex.rn} below. We therefore only highlight the point relevant to the setting of the present paper. 
Note that, in the proof of \cite[Theorem 1.1]{KuMiSi}, the convergence of the nonlocal term is achieved via an equi-integrability argument. More precisely, with the choices of $h \in (0,s)$ and $\varepsilon>0$ satisfying
\begin{equation}\label{existence.hq}
\varepsilon n + [p(s-h)+h-1](1+\varepsilon) \le 0 \quad \text{and} \quad q = (p-1)(1+\varepsilon) < \bar{q}, 
\end{equation}
where $\bar{q}$ is given in \eqref{SOLA1}, the argument solely relies on uniform bounds for 
\[ \frac{|u_j(x)-u_j(y)|}{|x-y|^h} \] 
in the corresponding weighted $L^q$-space over $\Omega\times\Omega$. Thus, by invoking the compactness result in Lemma \ref{lemcompact}, we can adapt the argument in \cite{KuMiSi} to the present range $q<1$. 
Hence we omit the details here. 
\end{proof}

\begin{theorem}\label{thm.ex.rn}
Let $\mu \in \mathcal{M}(\R^n)$ and let $-\mathcal{L}_{\Phi}$ be defined in \eqref{def.L} under assumptions \eqref{mono}, \eqref{c2} and \eqref{s.p.range}. Then there exists a SOLA $u$ to \eqref{EQ} with $\Omega=\mathbb{R}^{n}$.
\end{theorem}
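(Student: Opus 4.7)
The plan is to construct a SOLA on $\mathbb{R}^n$ by exhausting the whole space with an increasing sequence of bounded domains, solving the associated approximating Dirichlet problems with smooth right-hand sides, and extracting a limit by local compactness. First, set $\Omega_j := B_{2j}(0)$ and take $\mu_j := (\chi_{B_j}\mu) \ast \rho_j$ for a standard sequence of mollifiers $\{\rho_j\}$. Then $\mu_j \in L^\infty(\mathbb{R}^n) \cap L^1(\mathbb{R}^n)$ with support eventually inside $\Omega_j$, $\mu_j \rightharpoonup \mu$ weakly in the sense of measures, and $\limsup_{j\to\infty} |\mu_j|(B) \le |\mu|(\overline{B})$ for every ball $B$. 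Since $\mu_j \in W^{-s,p'}(\Omega_j)$, the direct method of the calculus of variations (cf.~\cite[Theorem~2.3]{DicaKuPa2}) produces a unique weak solution $u_j \in \mathbb{W}^{s,p}(\Omega_j)$ of $-\mathcal{L}_\Phi u_j = \mu_j$ in $\Omega_j$ with $u_j \equiv 0$ outside $\Omega_j$.

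Because $\mu$ is a finite Radon measure on $\mathbb{R}^n$, $|\mu_j|(\Omega_j) \le |\mu|(\mathbb{R}^n) =: M < \infty$ uniformly in $j$, so Lemma~\ref{es-L1}(1) gives the uniform estimate $\|u_j\|_{L^{q_0,\infty}(\mathbb{R}^n)} \le cM^{1/(p-1)}$ with $q_0 = n(p-1)/(n-sp)$. Combining \eqref{z2} with the Lorentz embedding $L^{q_0,\infty}(B_R) \hookrightarrow L^q(B_R)$ for $q < q_0$ (noting $\bar{q} < q_0$), we obtain, for every ball $B_R(x_0)\subset \mathbb{R}^n$, every $h \in (0,s)$, and every $q \in (0,\bar{q})$, a uniform bound $\|u_j\|_{W^{h,q}(B_R(x_0))} \le c(n,s,p,\Lambda,R,h,q,M)$ valid for all $j$ with $B_R(x_0)\subset\Omega_j$. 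Applying Lemma~\ref{lemcompact} on each $B_m(0)$ and extracting diagonally produces a subsequence (still denoted $\{u_j\}$) and a function $u \in W^{h,q}_{\mathrm{loc}}(\mathbb{R}^n)$ with $u_j \to u$ a.e.\ in $\mathbb{R}^n$ and in $L^q_{\mathrm{loc}}(\mathbb{R}^n)$ for every $q \in [p-1,\bar{q})$. Fatou's lemma then gives $u \in L^{q_0,\infty}(\mathbb{R}^n)$, hence $|u|^{p-1} \in L^{n/(n-sp),\infty}(\mathbb{R}^n)$; decomposing $\mathbb{R}^n\setminus B_r$ into dyadic annuli and pairing the weak-$L^{n/(n-sp)}$ bound on $|u|^{p-1}$ with the decay of $|y|^{-(n+sp)}$ yields $\tail(u;0,r) < \infty$ for every $r\ge 1$, which is \eqref{SOLA.tail}.

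The main technical obstacle is the passage to the limit in \eqref{SOLA2*}. Fix $\varphi \in C_c^\infty(\mathbb{R}^n)$ with $\mathrm{supp}(\varphi) \subset B_R$. For $j$ large enough $B_R \subset \Omega_j$, so $\varphi$ is admissible in the weak formulation for $u_j$, giving
\[ \int_{\mathbb{R}^n}\int_{\mathbb{R}^n}\Phi(u_j(x)-u_j(y))(\varphi(x)-\varphi(y))K(x,y)\,dxdy = \int_{\Omega_j}\varphi\,d\mu_j. \]
The right-hand side converges to $\int_{\mathbb{R}^n}\varphi\,d\mu$ by weak convergence of $\mu_j$. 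For the left-hand side, split the integration into the near-diagonal region $B_{2R}\times B_{2R}$ and its complement. On the near-diagonal piece, the a.e.\ convergence of $u_j$, the uniform local $W^{h,q}$-bounds, and estimate \eqref{z1'} applied with a fixed $\xi > p-1$ produce an equi-integrable majorant for $|\Phi(u_j(x)-u_j(y))(\varphi(x)-\varphi(y))K(x,y)|$, so Vitali's convergence theorem applies. On the complementary region, one always has $\varphi(y)=0$ or $\varphi(x)=0$, and $|x-y| \ge R$; the kernel is therefore comparable to $|x|^{-(n+sp)}+|y|^{-(n+sp)}$ outside $B_{2R}$, the uniform $L^{q_0,\infty}(\mathbb{R}^n)$ bound on $u_j$ provides the required integrable majorant (this is precisely where the tail-type estimate of Step~3 is needed), and Lebesgue's dominated convergence handles this piece. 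The resulting argument follows the scheme of \cite[pp.~1352--1354]{KuMiSi} essentially verbatim and, together with the construction above, verifies all requirements of Definition~\ref{defsola2}.
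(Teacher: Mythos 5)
Your construction is essentially the one in the paper: exhaust $\mathbb{R}^n$ by balls, mollify $\mu$, use the global estimates \eqref{z1}--\eqref{z2} of Lemma~\ref{es-L1} uniformly in $j$, extract a local limit through Lemma~\ref{lemcompact}, verify \eqref{SOLA.tail}, and pass to the limit in \eqref{SOLA2*} by a near-diagonal/far-field splitting as in \cite{KuMiSi}. Your tail argument (dyadic annuli paired with the uniform weak-$L^{n/(n-sp)}$ bound on $|u_j|^{p-1}$) is a legitimate alternative to the paper's weighted Marcinkiewicz computation and even gives a stronger decay than \eqref{uj.tail}; just be sure to record it for each $u_j$, uniformly in $j$, and not only for the limit $u$, because that uniformity is exactly what the limit passage requires.

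The one step that does not work as written is the far-field piece: you claim that ``the uniform $L^{q_0,\infty}(\mathbb{R}^n)$ bound on $u_j$ provides the required integrable majorant'' and then invoke dominated convergence. A uniform norm bound on a sequence does not furnish a single dominating function, so DCT is not literally applicable. What is both needed and available is quantitative: by the uniform tail bound (your dyadic estimate applied to $u_j$, or \eqref{uj.tail}), the contribution to the difference of the two weak formulations coming from $\{|y|>T\}$ (and symmetrically $\{|x|>T\}$) is bounded by $c\,(T^{-sp}+T^{-\sigma})$ uniformly in $j$ for some $\sigma>0$; the remaining bounded region is treated by equi-integrability plus Vitali (as in your near-diagonal step, where the paper obtains equi-integrability by showing the integrands are bounded in $L^{1+\varepsilon}$ using only the uniform $W^{h,q}$ bounds, rather than via \eqref{z1'}), and one concludes by letting first $j\to\infty$ and then $T\to\infty$. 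Equivalently, one may cite Vitali's theorem on a $\sigma$-finite measure space, checking equi-integrability together with tightness at infinity. With this replacement your argument is complete and coincides with the paper's proof.
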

\begin{proof} 
Consider $\mu_{j} = \mu \ast \rho_{j}$, where $\{\rho_{j}\}_{j=1}^{\infty}$ is a sequence of standard mollifiers, and let $\{u_j\}_{j=1}^{\infty} \subset \mathbb{W}^{s,p}(B_{j}(0))$ be the sequence of weak solutions to 
\begin{equation*}
\left\{
\begin{aligned}
-\mathcal{L}_\Phi u_j&=\mu_{j} &\text{in }&B_{j}(0),\\
u_j&=0&\text{in }&\mathbb{R}^n\setminus B_{j}(0).
\end{aligned}
\right.
\end{equation*}
Then $\{\mu_{j}\} \subset C^{\infty}_{c}(\mathbb{R}^{n})$ converges to $\mu$ weakly in the sense of measures in $B$ and satisfies \eqref{limsupmuj}
for every ball $B \subset \mathbb{R}^n$. 

We now fix any $r\geq 1$. Then for any $h \in (0,s)$ and $q \in (0,\bar{q})$, we have
\begin{equation}\label{eq:j_energy+}
\begin{aligned}
\left(\int_{B_{r}(0)}\fint_{B_{r}(0)}\frac{|u_{j}(x)-u_{j}(y)|^{q}}{|x-y|^{n+hq}}\,dx\,dy\right)^{1/q} \overset{\eqref{z2}}&{\le} \frac{c}{r^{h+n/q_{0}}}[|\mu_{j}|(B_{j}(0))]^{1/(p-1)} \\
& \le \frac{c}{r^{h+n/q_{0}}}[|\mu|(\mathbb{R}^{n})]^{1/(p-1)}
\end{aligned}
\end{equation}
and
\begin{equation}\label{eq:j_energy}
\left(\fint_{B_{r}(0)}|u_{j}|^{q}\,dx\right)^{1/q} \overset{\eqref{z1},\eqref{hol.uj}} \le \frac{c}{r^{n/q_{0}}}[|\mu_{j}|(B_{j}(0))]^{1/(p-1)} \le \frac{c}{r^{n/q_{0}}}[|\mu|(\mathbb{R}^{n})]^{1/(p-1)}
\end{equation}
whenever $j \ge r$, where $q_0$ is given in \eqref{z1} and $c=c(n,s,p,\Lambda,h,q)>0$. That is, $\{u_j\}_{j\ge r}$ is bounded in $W^{h,q}(B_r(0))$. 
Since $r \ge 1$ was arbitrary, by Lemma \ref{lemcompact}, there exists $u\in W^{h,q}_{\mathrm{loc}}(\mathbb R^n)$ such that
\begin{equation}\label{strong_converge}
\left\{
\begin{aligned}
	&u_j \ \longrightarrow\ \ u\quad \textrm{locally in } L^{q}(\mathbb R^n),\\
	&u_j \ \longrightarrow\ \ u\quad \mathrm{a.e.\  in }\  \mathbb R^n
\end{aligned}
\right.
\quad \text{as} \quad j\to \infty \ \ \text{(up to subsequences).}
\end{equation}
In particular, applying Fatou's lemma to \eqref{eq:j_energy+} and \eqref{eq:j_energy}, we also have
\begin{equation}\label{eq:Rn_energy}
 \left(\fint_{B_{r}(0)}|u|^{q}\,dx\right)^{1/q} + r^{h}\left(\int_{B_{r}(0)}\fint_{B_{r}(0)}\frac{|u(x)-u(y)|^{q}}{|x-y|^{n+hq}}\,dx\,dy\right)^{1/q} 
 \le \frac{c}{r^{n/q_{0}}}[|\mu|(\mathbb{R}^{n})]^{1/(p-1)}.
\end{equation}

Next, in order to show \eqref{SOLA.tail}, we consider the weighted Lebesgue and Marcinkiewicz spaces with the weight
\[ \omega(x) \coloneqq \frac{1}{(1+|x|)^{n+sp}}, \qquad x \in \mathbb{R}^{n}. \]
Observe that 
\begin{align*} 
\|f\|_{L^{p-1}(E,d\omega)} & \le c(p,q_{0})[\omega(E)]^{1/(p-1)-1/q_{0}}\|f\|_{L^{q_{0},\infty}(E,d\omega)} \\
& \le c(p,q_{0})[\omega(E)]^{1/(p-1)-1/q_{0}}\|f\|_{L^{q_{0},\infty}(E)}
\end{align*}
holds for any Borel set $E \subset \mathbb{R}^{n}$ and $f \in L^{q_{0},\infty}(E)$, where for the last inequality we have used the fact that $d\omega \le dx$. Using this inequality with $E= \mathbb{R}^{n}\setminus B_{r}(0)$ and $f=u_{j}$, we have
\begin{align*}
 \left(\int_{\mathbb{R}^{n}\setminus B_{r}(0)}\frac{|u_{j}(x)|^{p-1}}{(1+|x|)^{n+sp}}\,dx\right)^{1/(p-1)}
& = \|u_{j}\|_{L^{p-1}(\mathbb{R}^{n}\setminus B_{r}(0),d\omega)} \\
&  \le cr^{-sp\left(\frac{1}{p-1}-\frac{1}{q_{0}}\right)}\|u_{j}\|_{L^{q_{0},\infty}(\mathbb{R}^{n})}.
\end{align*}
Then \eqref{z1} along with an elementary manipulation gives 
\begin{equation}\label{uj.tail}
\int_{\mathbb{R}^{n}\setminus B_{r}(0)}\frac{|u_{j}(x)|^{p-1}}{|x|^{n+sp}}\,dx \le cr^{-(sp)^{2}/n}[|\mu_{j}|(B_{j})] \le cr^{-(sp)^{2}/n}[|\mu|(\mathbb{R}^{n})]
\end{equation}
for some $c=c(n,s,p,\Lambda)>0$, whenever $r \ge 1$ and $j \in \mathbb{N}$. Hence, Fatou's lemma implies
\begin{equation}\label{u.tail}
\int_{\mathbb{R}^{n}\setminus B_{r}(0)}\frac{|u(x)|^{p-1}}{|x|^{n+sp}}\,dx \le cr^{-(sp)^{2}/n}[|\mu|(\mathbb{R}^{n})],
\end{equation}
which in particular yields \eqref{SOLA.tail}.

It remains to show that $u$ satisfies \eqref{SOLA2*} with $P(\cdot)\equiv0$; we follow the argument in \cite[Section 4.2]{KuMiSi}. Fix  $\varphi\in C_c^\infty(\mathbb{R}^n)$ and $T_0>0$ such that $\mathrm{supp}\, \varphi\subset B_{T_{0}} = B_{T_0}(0)$. Then
\begin{equation*}
\int_{\mathbb{R}^n}\int_{\mathbb{R}^n}\Phi(u_j(x)-u_j(y))(\varphi(x)-\varphi(y))K(x,y)\, dx\,dy =  \int_{\mathbb{R}^n}\varphi\,d\mu_{j}
\end{equation*} 
for all $j>2T_0$. 
By the weak convergence of $\mu_{j}$, we have 
\begin{equation*}
\lim_{j\to \infty}\int_{\mathbb{R}^n}\varphi \,d\mu_j = \int_{\mathbb{R}^n}\varphi \,d\mu.
\end{equation*}
Now, denoting
\[ \psi_{j}(x,y) = [\Phi(u_{j}(x)-u_{j}(y))-\Phi(u(x)-u(y))](\varphi(x)-\varphi(y))K(x,y), \]
it suffices to show that
\begin{equation}\label{es16}
\lim_{j\to \infty}\int_{\mathbb{R}^n}\int_{\mathbb{R}^n}\psi_{j}(x,y)\,dxdy = 0.
\end{equation}
Let $T>2T_0$. Then, with $B_{T} = B_{T}(0)$ and $B_{T_{0}} = B_{T_{0}}(0)$, we split
\begin{align*}
\int_{\mathbb{R}^n}\int_{\mathbb{R}^n} \psi_{j}(x,y)\,dx\,dy 
& = \int_{B_{T_0}}\int_{B_{T_0}}(\cdots) + \int_{\mathbb{R}^n\setminus B_{T}}\int_{B_{T_0}}(\cdots) + \int_{ B_{T_0}}\int_{\mathbb{R}^n\setminus B_{T}}(\cdots) \\
& \eqqcolon I_{1,j,T}+I_{2,j,T}+I_{3,j,T}.
\end{align*}
For $I_{1,j,T}$, 
we choose $h \in (0,s)$ and $\varepsilon>0$ such that \eqref{existence.hq} holds. Then, since
\begin{align*}
\left|\psi_{j}(x,y)\right|^{1+\varepsilon} &\leq c\frac{\left[(|u(x)-u(y)|^{p-1}+|u_j(x)-u_j(y)|^{p-1})|\varphi(x)-\varphi(y)|\right]^{1+\varepsilon}}{|x-y|^{(n+sp)(1+\varepsilon)}} \\
&\leq c\|D\varphi\|_{L^{\infty}}^{1+\varepsilon}\frac{\left[|u(x)-u(y)|^{p-1}+|u_j(x)-u_j(y)|^{p-1}\right]^{1+\varepsilon}}{|x-y|^{n+h(p-1)(1+\varepsilon)}|x-y|^{\varepsilon n + [p(s-h)+h-1](1+\varepsilon)}} \\
&\leq c(T_{0},\varphi) \left[\frac{|u(x)-u(y)|^{(p-1)(1+\varepsilon)}}{|x-y|^{n+h(p-1)(1+\varepsilon)}} + \frac{|u_j(x)-u_j(y)|^{(p-1)(1+\varepsilon)}}{|x-y|^{n+h(p-1)(1+\varepsilon)}} \right]
\end{align*}
for any $x,y\in B_{T_{0}}$, it follows from \eqref{eq:j_energy+} and \eqref{eq:Rn_energy}  that $\{\psi_{j}\}$ is equi-bounded in $L^{1+\varepsilon}(B_{T_{0}}\times B_{T_{0}})$ and hence $\{\psi_{j}\}$ is equi-integrable in $B_{T_{0}}\times B_{T_{0}}$. In turn, Vitali's convergence theorem with \eqref{strong_converge} implies that $|I_{1,j,T}|\to 0$ as $j\to \infty$.
As for $I_{2,j,T}$ and $I_{3,j,T}$, we have from \eqref{eq:j_energy}, \eqref{eq:Rn_energy}, \eqref{uj.tail} and \eqref{u.tail} that for $j>T$,
\begin{align*}
& |I_{2,j,T}|+|I_{3,j,T}| \\
&\leq c(T_0,\varphi) \int_{\mathbb{R}^n\setminus B_{T}}\int_{B_{T_0}} \left(|u_j(x)|^{p-1}+|u(x)|^{p-1}+ |u_j(y)|^{p-1}+|u(y)|^{p-1}\right)\frac{dx\,dy}{|y|^{n+sp}}\\
&\leq c(T_0,\varphi)\bigg( T^{-sp} \int_{ B_{T_0}} \left[|u_j(x)|^{p-1}+|u(x)|^{p-1}\right]\, dx \\
&\qquad \qquad \qquad +  \int_{ \R^n\setminus B_{T}} \left[|u_j(y)|^{p-1}+|u(y)|^{p-1}\right]\, \frac{dy}{|y|^{n+sp}} \bigg)\\
&\leq c(T_0,\varphi,|\mu|(\mathbb{R}^{n}))\left(T^{-sp} + T^{-(sp)^{2}/n}\right)  \ \longrightarrow\ 0 \quad  \text{as}\ \ T\ \to\ \infty.
\end{align*}
Consequently, we obtain \eqref{es16} and the proof is complete.
\end{proof}

\subsection{A comparison principle for SOLA}
We end this section with a comparison principle. This is the point where the monotonicity property \eqref{mono2} is used.

\begin{proposition}\label{pro1} Let $\Omega \subset \mathbb{R}^{n}$ be a bounded domain, $-\mathcal{L}_{\Phi}$ be defined in \eqref{def.L} under assumptions \eqref{mono}--\eqref{s.p.range}, and $u_1$ be a SOLA to \eqref{EQ} with $\mu=\mu_1\in \mathcal{M}(\Omega)$. For any $\mu_2\in \mathcal{M}(\Omega)$ satisfying $\mu_{2}-\mu_{1} \in \mathcal{M}^{+}(\Omega)$, there exists a SOLA $u_2$ to \eqref{EQ} with $\mu=\mu_2$ satisfying 
\[
u_1\leq u_2 \quad \text{a.e. in}\;\; \Omega.
\]
\end{proposition}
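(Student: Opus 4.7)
The strategy is to build $u_2$ as the limit of weak solutions whose data are obtained by adding a nonnegative mollification of $\mu_2-\mu_1$ to the approximating data of $u_1$; this makes a weak comparison principle applicable at the approximating level.

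Let $\{u_{1,j}\}$, with $\{\mu_{1,j}\}\subset L^\infty(\Omega)$, be the approximating sequence for $u_1$ from Definition~\ref{defsola}. Set $\nu\coloneqq\mu_2-\mu_1\in\mathcal{M}^+(\Omega)$, fix standard mollifiers $\{\rho_j\}$, and define $\nu_j\coloneqq\nu\ast\rho_j\geq 0$ and $\mu_{2,j}\coloneqq\mu_{1,j}+\nu_j\in L^\infty(\Omega)$. Let $u_{2,j}\in\mathbb{W}^{s,p}(\Omega)$ be the unique weak solution to $-\mathcal{L}_\Phi u_{2,j}=\mu_{2,j}$ in $\Omega$ with $u_{2,j}=0$ outside $\Omega$, whose existence follows from the direct method under~\eqref{mono}--\eqref{s.p.range}. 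The function $\varphi\coloneqq(u_{1,j}-u_{2,j})_+\in\mathbb{W}^{s,p}(\Omega)$ vanishes outside $\Omega$, so it tests the difference of the weak formulations for $u_{1,j}$ and $u_{2,j}$. A case analysis based on the signs of $(u_{1,j}-u_{2,j})(x)$ and $(u_{1,j}-u_{2,j})(y)$, combined with the nondecreasing nature of $\Phi$ (implicit in~\eqref{mono}, as required for the direct method), yields the pointwise inequality
\[
\bigl[\Phi(u_{1,j}(x)-u_{1,j}(y))-\Phi(u_{2,j}(x)-u_{2,j}(y))\bigr]\bigl[\varphi(x)-\varphi(y)\bigr]\geq 0.
\]
Integrating against $K$ and using $\mu_{1,j}-\mu_{2,j}=-\nu_j\leq 0$ gives
\[
0\leq\int_\Omega\varphi\,d(\mu_{1,j}-\mu_{2,j})=-\int_\Omega\varphi\nu_j\,dx\leq 0,
\]
so $\varphi\equiv 0$, i.e., $u_{1,j}\leq u_{2,j}$ a.e.

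Compactness is routine. Since $|\mu_{2,j}|(\Omega)\leq|\mu_{1,j}|(\Omega)+\nu_j(\Omega)$ is uniformly bounded, Lemma~\ref{es-L1} makes $\{u_{2,j}\}$ bounded in $W^{h,q}(\Omega)$ for every $h\in(0,s)$ and $q\in(0,\bar{q})$, and Lemma~\ref{lemcompact} produces a subsequence with $u_{2,j}\to u_2$ a.e.\ and in $L^q(\Omega)$ for some $u_2\in W^{h,q}(\Omega)$ vanishing outside $\Omega$. Passing to the pointwise limit in $u_{1,j}\leq u_{2,j}$ yields $u_1\leq u_2$ a.e. The distributional identity $-\mathcal{L}_\Phi u_2=\mu_2$ is obtained by passing to the limit in the weak formulation for $u_{2,j}$, repeating verbatim the equi-integrability argument in the proof of Theorem~\ref{thm.ex.bdd}.

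I expect the principal obstacle to lie in verifying the limsup condition~\eqref{limsupmuj} of Definition~\ref{defsola} for $\{\mu_{2,j}\}$: the direct estimate gives only $\limsup_j|\mu_{2,j}|(B)\leq|\mu_1|(\overline{B})+\nu(\overline{B})$, which can strictly exceed $|\mu_2|(\overline{B})$ when $\mu_1$ carries negative mass overlapping with $\nu$ (e.g.\ $\mu_1=-\delta_{x_0}$, $\mu_2=0$). This is circumvented by choosing $\mu_{1,j}=\mu_1\ast\rho_j$ from the outset for the approximating sequence of $u_1$---a choice compatible with the constructive proof of Theorem~\ref{thm.ex.bdd}---so that $\mu_{2,j}=\mu_2\ast\rho_j$ and the limsup condition reduces to the standard mollification estimate for $\mu_2$. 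Since SOLA is not known to be unique, some care is needed to ensure that this re-approximation still compares with the specific $u_1$ of the hypothesis, which is achieved by noting that the comparison $u_{1,j}\leq u_{2,j}$ and the resulting $u_1\leq u_2$ transfer through the subsequence extraction regardless of the specific approximating sequence used for $u_1$.
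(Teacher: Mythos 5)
Your core construction coincides with the paper's own proof: keep the approximating pairs $(u_{1,j},\mu_{1,j})$ attached to $u_1$ by Definition~\ref{defsola}, add a nonnegative mollification of $\nu=\mu_2-\mu_1$ (the paper additionally inserts a cutoff so that the added term lies in $C_c^\infty(\Omega)$), solve the approximate Dirichlet problems, test the difference of the two weak formulations with $(u_{1,j}-u_{2,j})_+$, and pass to the limit via Lemma~\ref{es-L1}, Lemma~\ref{lemcompact} and the argument of Theorem~\ref{thm.ex.bdd}. Your sign argument for the comparison step is, if anything, cleaner than the paper's (which invokes a strong monotonicity bound of the form $(\Phi(a)-\Phi(b))(a-b)\ge \Lambda^{-1}(|a|+|b|)^{p-2}|a-b|^2$); note, however, that both your step and the paper's use monotonicity of $\Phi$, which is not literally implied by \eqref{mono}, and to pass from the vanishing of the nonnegative integrand to $\varphi\equiv 0$ you need strict monotonicity (as in the model nonlinearity) -- a caveat you share with the paper, so I do not count it against you.

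The genuine flaw is the ``circumvention'' in your last paragraph. If you replace the given approximating sequence by $\mu_{1,j}=\mu_1\ast\rho_j$, the corresponding weak solutions converge (up to a subsequence) only to \emph{some} SOLA $\tilde u_1$ with datum $\mu_1$; since uniqueness of SOLA is not known, there is no reason to have $\tilde u_1=u_1$, so the comparison $u_{1,j}\le u_{2,j}$ yields $\tilde u_1\le u_2$ rather than $u_1\le u_2$. Your closing sentence -- that the comparison ``transfers through the subsequence extraction regardless of the specific approximating sequence used for $u_1$'' -- asserts exactly what fails: the a.e.\ limit of the re-approximated $u_{1,j}$ is no longer tied to the given $u_1$. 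The correct course is your first construction, i.e.\ keeping the approximating sequence that comes with $u_1$, which is what the paper does. The obstruction you identify concerning \eqref{limsupmuj} for $\mu_{2,j}=\mu_{1,j}+\nu_j$ with genuinely signed $\mu_1$ is a fair observation, but it is present verbatim in the paper's proof and is left unaddressed there; it disappears in the situations where the proposition is actually applied (nonnegative $\mu_1$ with nonnegative approximations, in which case $\limsup_j|\mu_{2,j}|(B)\le \mu_1(\overline B)+\nu(\overline B)=|\mu_2|(\overline B)$), so flagging it is appropriate, but ``fixing'' it by re-approximating $\mu_1$ is not an available move.
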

\begin{proof} Let $\{u_{1,j}\} \subset \mathbb{W}^{s,p}(\Omega)$ be a sequence of weak solutions to \eqref{EQ} with datum $\mu=\mu_{1,j}\in C_c^\infty(\Omega)$ such that $u_{1,j}\to u_1$ in $L^{q}(\Omega)$  for any $q \in (0,\bar{q})$, where $\bar{q}$ is given in \eqref{SOLA1},  
and $\mu_{1,j}\rightharpoonup\mu_1$ in $\mathcal{M}(\Omega)$. We consider $\mu_{2,j} \coloneqq \phi_j*(\eta_{1/j}(\mu_2-\mu_1))+\mu_{1,j}\in  C_c^\infty(\Omega)$ for $j \in \mathbb{N}$, where $\phi_j$ is a standard mollifier with $\mathrm{supp}\,\phi_j \subset B_{1/(4j)}(0)$ and $\eta_{j}$ is a smooth function satisfying $\eta_{j} \equiv 1$ in $\{x\in \Omega:\mathrm{dist}(x,\partial\Omega)>2/j\}$ and $\eta_{j}\equiv0$ in $\{x\in \Omega:\mathrm{dist}(x,\partial\Omega)<1/j\}$. Clearly, $\mu_{2,j}\geq \mu_{1,j}$ for any $j$ and $\mu_{2,j} \rightharpoonup \mu_2$ in $\mathcal{M}(\Omega)$. Let $u_{2,j}\in \mathbb{W}^{s,p}(\Omega)$ be the weak solution to \eqref{EQ} with datum $\mu=\mu_{2,j}$. Then, it suffices to show $u_{1,j}\leq u_{2,j}$ a.e. in $\Omega$. Indeed, by choosing 
$\varphi=(u_{1,j}-u_{2,j})_+ \coloneqq \max\{u_{1,j}-u_{2,j},0\}$  
as a test function in \eqref{EQ}, we have 
\begin{align*}
& 0  \leq  \int_{\Omega}\varphi\, d(\mu_{2,j}-\mu_{1,j})\\
&=
\int_{\mathbb{R}^n}\int_{\mathbb{R}^n}[\Phi(u_{2,j}(x)-u_{2,j}(y))-\Phi(u_{1,j}(x)-u_{1,j}(y))](\varphi(x)-\varphi(y))K(x,y)\,dx\,dy\\
& = -
\int_{\{u_{1,j}\ge u_{2,j}\}}\int_{\{u_{1,j} \ge u_{2,j}\}}[\Phi(u_{2,j}(x)-u_{2,j}(y))-\Phi(u_{1,j}(x)-u_{1,j}(y))] \\
& \hspace{4.75cm} \cdot [u_{2,j}(x)-u_{2,j}(y) - (u_{1,j}(x)-u_{1,j}(y))] K(x,y)\,dx\,dy\\
&\quad\; +
\int_{\{u_{1,j} < u_{2,j}\}}\int_{\{u_{1,j} \ge  u_{2,j}\}}[\Phi (u_{2,j}(x)-u_{2,j}(y))-\Phi (u_{1,j}(x)-u_{1,j}(y))]\\
& \hspace{7cm}\cdot (u_{1,j}(x)-u_{2,j}(x))K(x,y)\,dx\,dy\\
&\quad\; -
\int_{\{u_{1,j} \ge  u_{2,j}\}}\int_{\{u_{1,j} < u_{2,j}\}}[\Phi (u_{2,j}(x)-u_{2,j}(y))-\Phi (u_{1,j}(x)-u_{1,j}(y))]\\
& \hspace{7cm}\cdot (u_{1,j}(y)-u_{2,j}(y))K(x,y)\,dx\,dy\\
& \eqqcolon - I_1+ I_2 - I_3.
\end{align*}
It follows from \eqref{mono2} that
\begin{equation*}
\Phi (u_{2,j}(x)-u_{2,j}(y))-\Phi (u_{1,j}(x)-u_{1,j}(y))< 0
\end{equation*}
provided $(u_{2,j}(x)-u_{2,j}(y))-(u_{1,j}(x)-u_{1,j}(y)) < 0$, which implies $I_2\leq 0$ and $I_3 \geq 0$. Hence $I_1 \le 0$. Then, again by \eqref{mono2},  $u_{2,j}(x)-u_{2,j}(y) = u_{1,j}(x)-u_{1,j}(y)$ for a.e. $(x,y)\in\{u_{1,j}\ge u_{2,j}\}\times \{u_{1,j}\ge u_{2,j}\}$. That is, $u_{1,j}-u_{2,j}$ is constant a.e. in $\{u_{1,j}\ge u_{2,j}\}$. Since $u_{1,j} = u_{2,j}$ a.e. in $\R^n\setminus \Omega$, we deduce that $(u_{1,j}-u_{2,j})_+=0$ a.e. in $\R^n$.
\end{proof}

\section{Wolff potential estimates for nonlocal equations}

\subsection{Regularity for homogeneous equations}
Here we recall various regularity results for the homogeneous equation 
\begin{equation}\label{homoeq}
-\mathcal{L}_{\Phi}v = 0 \quad \text{in}\;\; \Omega.
\end{equation}
Such results were first obtained in \cite{DicaKuPa1,DicaKuPa2} (see also \cite{Co1}) and later modified in \cite{KuMiSi} to match the setting of measure data problems.

We start with a local sup-estimate and a Caccioppoli type estimate. 
The following two lemmas are proved in \cite[Corollary~2.1 and Lemma~2.3]{KuMiSi} for $q=1$, and the proof for general $q$ is essentially the same. 
\begin{lemma}\label{lem.bdd.L1}
Let $p \in (1,\infty)$, $s \in (0,1)$ and let $v \in \mathbb{W}^{s,p}(\Omega)$ be a weak solution to \eqref{homoeq} under assumptions \eqref{mono} and \eqref{c2}. Then for any $B_{r} \subset \Omega$ and $k\in\mathbb{R}$, we have
\begin{equation*}
\sup_{B_{\sigma r}}|v-k| \le \frac{c}{(1-\sigma)^{np'/q}}\left[\left(\fint_{B_{r}}|v-k|^{q}\,dx\right)^{1/q} + \tail(v-k;r/2)\right]
\end{equation*}
whenever $q \in (0,p]$ and $\sigma \in (0,1)$, where $c= c(n,s,p,\Lambda,q)>0$. 
\end{lemma}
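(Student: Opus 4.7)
The plan is to transport the De Giorgi/Moser iteration used for the $q=1$ case in \cite[Corollary~2.1]{KuMiSi} to an arbitrary starting exponent $q \in (0,p)$. The scheme rests on three classical ingredients, all of which remain available for general $q$: a Caccioppoli--type inequality for the truncations $(v-k)_{\pm}$ on concentric balls $B_{\rho}\Subset B_{R}\Subset\Omega$ with an additive nonlocal tail remainder; the fractional Sobolev--Poincar\'e embedding, which upgrades the $W^{s,p}$ Gagliardo seminorm of the truncation to higher integrability; and a level--set iteration on a shrinking family $B_{\rho_{j}}\downarrow B_{\sigma r}$ with increasing cutoffs $k_{j}\uparrow k_{\infty}$. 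Carrying out the iteration with exponent $q$ in place of $1$, and bookkeeping the dependence of the iteration constants on $q$, reproduces the $(1-\sigma)^{-np'/q}$ prefactor together with the constant $c=c(n,s,p,\Lambda,q)$.

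For the $q\in(0,1)$ regime an equivalent and shorter presentation is to bootstrap directly from the $q=1$ version. Setting $\psi(\tau)=\sup_{B_{\tau r}}|v-k|$ and fixing $1/2\leq\tau_{1}<\tau_{2}\leq 1$, the $q=1$ estimate on the pair $(B_{\tau_{1}r},B_{\tau_{2}r})$ combined with the pointwise interpolation $|v-k|\leq\psi(\tau_{2})^{1-q}|v-k|^{q}$ and a standard comparison of tails at radii $\tau_{2}r/2$ and $r/2$ delivers an inequality of the form
\begin{equation*}
\psi(\tau_{1}) \le \frac{c}{(\tau_{2}-\tau_{1})^{np'}}\psi(\tau_{2})^{1-q}N^{q} + c\tail(v-k;r/2),
\end{equation*}
where $N=(\fint_{B_{r}}|v-k|^{q}\,dx)^{1/q}$. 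Young's inequality with conjugate exponents $1/(1-q)$ and $1/q$ converts $\psi(\tau_{2})^{1-q}N^{q}$ into $\varepsilon\psi(\tau_{2})+c_{\varepsilon}N$ with $c_{\varepsilon}\sim\varepsilon^{-(1-q)/q}$; choosing $\varepsilon\sim(\tau_{2}-\tau_{1})^{np'}$ pushes the $\psi(\tau_{2})$--coefficient below $1/2$ and promotes the coefficient of $N$ to $c'/(\tau_{2}-\tau_{1})^{np'/q}$. Giaquinta's absorption lemma then closes the chain and yields the claimed bound. The case $\sigma<1/2$ is recovered by first applying the $q$--version on $B_{r/2}$.

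The principal obstacle I anticipate is conceptual rather than computational: the range $q<1$ lies outside classical Sobolev theory, so the underlying Caccioppoli--Sobolev machinery must be validated for the quasinorm $W^{h,q}$. This is precisely what Lemmas~\ref{lemsobopoin} and~\ref{lemcompact} of the paper supply; once they are in hand, neither route requires structural modification. The only remaining care concerns quantifying the $q$--dependence of the iteration constants as $q\downarrow 0$ and the controlled passage between tail quantities at slightly different radii, both of which are routine.
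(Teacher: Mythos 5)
Your proposal is sound and, in substance, does what the paper intends: the paper offers no argument of its own for this lemma, merely citing \cite[Corollary~2.1]{KuMiSi} for $q=1$ and asserting that other values of $q$ follow ``in the same way,'' which is your first route (rerun the De Giorgi scheme built on the Caccioppoli estimate with tail and the fractional Sobolev inequality, the latter being available for $q<1$ by Lemma~\ref{lemsobopoin}). Your second route -- postprocessing the $q=1$ estimate on pairs of concentric balls via the pointwise interpolation $|v-k|\le \psi(\tau_2)^{1-q}|v-k|^{q}$, Young's inequality, and the absorption lemma of Giusti -- is the standard, cleaner way to realize this for $q\in(0,1)$, and your bookkeeping of the prefactor $(1-\sigma)^{-np'/q}$ is correct. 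Two points should be nailed down. First, the comparison of $\tail(v-k;\tau_2 r/2)$ with $\tail(v-k;r/2)$ leaves an annulus contribution controlled by $\left(\fint_{B_{r/2}}|v-k|^{p-1}\,dx\right)^{1/(p-1)}$; when $p-1\le q$ this is bounded by $N$ via H\"older, but when $p-1>q$ it must be treated by the same interpolation--Young step (it is of the form $\psi(\tau_2)^{1-q/(p-1)}N^{q/(p-1)}$), so your displayed inequality should carry this additional, equally absorbable term. Second, the shortcut only covers $q\in(0,1)$: for $q\in[1,p)$, H\"older from the $q=1$ case yields only the weaker prefactor $(1-\sigma)^{-np'}$, so to get the stated $(1-\sigma)^{-np'/q}$ one must either rerun the iteration as in your first route or use a covering argument on balls of radius comparable to $(1-\sigma)r$; for the applications in this paper the exact power of $(1-\sigma)$ is immaterial, so this is a cosmetic rather than substantive issue.
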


\begin{lemma}\label{lem.Caccio.Lq}
Let $p \in (1,\infty)$, $s \in (0,1)$ and let $v \in \mathbb{W}^{s,p}(\Omega)$ be a weak solution to \eqref{homoeq} under assumptions \eqref{mono} and \eqref{c2}. Then for any $B_{r} \subset \Omega$ and $k \in \mathbb{R}$, we have 
\begin{align*}
& \left(\int_{B_{\sigma r}}\fint_{B_{\sigma r}}\frac{|v(x)-v(y)|^{q}}{|x-y|^{n+hq}}\,dx\,dy\right)^{1/q} \\
& \le \frac{c}{(1-\sigma)^{\theta}r^{h}}\left[ \left(\fint_{B_{r}}|v-k|^{q}\,dx\right)^{1/q} + \tail(v-k;r/2) \right]
\end{align*}
whenever $h \in (0,s)$, $q \in (0,p]$ and $\sigma \in [1/2,1)$, where $c= c(n,s,p,\Lambda,h,q)$ and  $\theta = \theta(n,p)$ are positive constants.
\end{lemma}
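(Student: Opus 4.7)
My plan is to reduce the desired $L^q$-Caccioppoli estimate to the standard $L^p$-Caccioppoli for the homogeneous equation $-\mathcal{L}_\Phi v=0$ (which is classical, see \cite{DicaKuPa1,DicaKuPa2,Co1}), then pass from the $(s,p)$-Gagliardo seminorm to the $(h,q)$-Gagliardo seminorm using Lemma~\ref{lemembedding}, and finally convert the $L^p$-mass that sits on the right-hand side into an $L^q$-mass by interpolating with the sup-estimate from Lemma~\ref{lem.bdd.L1}. The starting point is the known bound, valid for any $1/2 \le \sigma_1 < \sigma_2 < 1$,
\[
\int_{B_{\sigma_1 r}}\fint_{B_{\sigma_1 r}}\frac{|v(x)-v(y)|^p}{|x-y|^{n+sp}}\,dxdy \le \frac{c}{(\sigma_2-\sigma_1)^{\theta_0 p} r^{sp}}\left[\fint_{B_{\sigma_2 r}}|v-k|^p\,dx + \tail(v-k;r/2)^p\right],
\]
which is the standard fractional Caccioppoli estimate.

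Next I apply Lemma~\ref{lemembedding} on the ball $B_{\sigma_1 r}$ with exponents $(s,p)$ on the right and $(h,q)$ on the left. A direct computation of the powers of $|B_{\sigma_1 r}|$ shows that the volume factors cancel against the internal averaging, leaving only a factor $c r^{s-h}$, so that
\[
\left(\int_{B_{\sigma_1 r}}\fint_{B_{\sigma_1 r}}\frac{|v(x)-v(y)|^q}{|x-y|^{n+hq}}\,dxdy\right)^{1/q} \le c r^{s-h}\left(\int_{B_{\sigma_1 r}}\fint_{B_{\sigma_1 r}}\frac{|v(x)-v(y)|^p}{|x-y|^{n+sp}}\,dxdy\right)^{1/p}.
\]
Combining with the previous display already gives the claimed inequality but with an $L^p$-mass $(\fint_{B_{\sigma_2 r}}|v-k|^p)^{1/p}$ on the right.

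To replace the $L^p$-mass by an $L^q$-mass I use the elementary bound
\[
\left(\fint_{B_{\sigma_2 r}}|v-k|^p\,dx\right)^{1/p} \le \left(\sup_{B_{\sigma_2 r}}|v-k|\right)^{(p-q)/p}\left(\fint_{B_{\sigma_2 r}}|v-k|^q\,dx\right)^{1/p},
\]
together with Young's inequality with dual exponents $p/(p-q)$ and $p/q$ (which sum is $1$) to obtain
\[
\left(\fint_{B_{\sigma_2 r}}|v-k|^p\,dx\right)^{1/p} \le \varepsilon \sup_{B_{\sigma_2 r}}|v-k| + c(\varepsilon)\left(\fint_{B_{\sigma_2 r}}|v-k|^q\,dx\right)^{1/q}.
\]
Then I apply Lemma~\ref{lem.bdd.L1} at radii $\sigma_2 r < \sigma_3 r \le r$ to bound $\sup_{B_{\sigma_2 r}}|v-k|$ by the $L^q$-mean over $B_{\sigma_3 r}$ plus $\tail(v-k;\sigma_3 r/2)$.

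Finally, I pick nested radii $\sigma r < \sigma_1 r < \sigma_2 r < \sigma_3 r < r$ equispaced in $[\sigma,1]$ (so that each gap is of order $(1-\sigma)$) and absorb the $\varepsilon$-term either via a small-$\varepsilon$ choice or a standard iteration (hole-filling) lemma of Giaquinta--Giusti type to collapse the nested dependencies into a single power $(1-\sigma)^{-\theta q}$ with $\theta=\theta(n,p)>0$; the tail term is monotone in the radius and so gets bounded by $\tail(v-k;r/2)$ in the end. The main technical bookkeeping concerns matching the powers of $(1-\sigma)^{-1}$ arising from the $L^p$-Caccioppoli, from Lemma~\ref{lem.bdd.L1}, and from Young's inequality; this is routine once one fixes the geometric chain of radii, and no new ideas beyond the sup-estimate, the $L^p$-Caccioppoli and Lemma~\ref{lemembedding} are required. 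Note that the full range $q \in (0,p)$ is automatic here because both Lemma~\ref{lemembedding} and Lemma~\ref{lem.bdd.L1}, as stated in this paper, already cover $q<1$; this is precisely where the fractional Sobolev machinery developed in Section~2.1 pays off compared to the classical setting.
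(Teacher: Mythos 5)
Your argument is correct and follows essentially the route the paper intends (the paper gives no proof, only pointing to the $q=1$ case in \cite{KuMiSi} and noting that other values of $q$ are obtained in the same way): use Lemma~\ref{lemembedding} to pass from the $(h,q)$- to the $(s,p)$-Gagliardo seminorm, apply the standard fractional $p$-Caccioppoli inequality, and then use the sup-estimate of Lemma~\ref{lem.bdd.L1} to trade the resulting $L^{p}$-mass for the $L^{q}$-mean plus the tail. Two small remarks: the concluding Young/hole-filling absorption is unnecessary, since once $\sup_{B_{\sigma_{2}r}}|v-k|$ is bounded via Lemma~\ref{lem.bdd.L1} (applied with outer ball $B_{r}$, so the tail is already at radius $r/2$) the right-hand side is in final form with no left-hand quantity to reabsorb; and with this bookkeeping the power of $(1-\sigma)^{-1}$ inherits a $q$-dependence through the factor $(1-\sigma)^{-np'/q}$ in Lemma~\ref{lem.bdd.L1}, so strictly one gets $\theta=\theta(n,p,q)$ rather than $\theta(n,p)$ — a caveat that applies equally to the paper's own statement.
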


We further note an oscillation estimate for $v$, which in turn implies local H\"older regularity of $v$, see for instance \cite[Theorem 1.2]{DicaKuPa2}. In particular, using Lemma \ref{lem.bdd.L1}, we can obtain the following:
\begin{lemma}\label{lem.hol}
Let $p \in (1,\infty)$, $s \in (0,1)$ and let $v \in \mathbb{W}^{s,p}(\Omega)$ be a weak solution to \eqref{homoeq} under assumptions \eqref{mono} and \eqref{c2}. 
Then $v$ is locally H\"older continuous in $\Omega$. 
In particular, there exists $\alpha_{0} = \alpha_{0}(n,s,p,\Lambda) \in (0,\min\{sp/(p-1),1\})$ such that, for any $B_{2r} \subset \Omega$ and $k\in\mathbb{R}$, we have
\begin{equation*}
\osc_{B_{\rho}}v \le c\left(\frac{\rho}{r}\right)^{\alpha_{0}}\left[ \left(\fint_{B_{2r}}|v-k|^{q}\,dx\right)^{1/q} + \tail(v-k;r/2) \right]
\end{equation*}
whenever $q \in (0,p]$ and $\rho \in (0, r]$, where $c = c(n,s,p,\Lambda,q)>0$.
\end{lemma}

One of the main features in \cite{KuMiSi,KuMiSi3} is to consider the nonlocal excess functional
\begin{equation}\label{excess1}
\left(\fint_{B_{r}(x_{0})}|f-(f)_{B_r(x_0)}|^{\max\{p-1,1\}}\,dx\right)^{1/\max\{p-1,1\}} + \tail(f-(f)_{B_r(x_0)};x_{0},r),
\end{equation}
where the first term is the traditional (local) excess functional and the second one is concerned with long-range interactions. 
However, 
the above excess functional cannot be directly employed in our setting,
since SOLA to \eqref{EQ} may not be locally integrable when $1<p \le 2n/(n+s)$. 
In the local case, modified excess functionals of the form
\begin{equation}\label{loc.exc}
A(f;x_{0},r) \coloneqq \inf_{k \in \mathbb{R}}\left(\fint_{B_{r}(x_{0})}|f-k|^{p-1}\,dx\right)^{1/(p-1)}
\end{equation}
have been considered in \cite{DZ,NP,NP2} to obtain potential estimates for \eqref{2hvEQ5}. 
In this point of view, we consider the following modified nonlocal excess functional:
\begin{equation}\label{excess.functional}
E(f;x_{0},r) \coloneqq \inf_{k \in \mathbb{R}}\left[\left(\fint_{B_{r}(x_{0})}|f-k|^{p-1}\,dx\right)^{1/(p-1)} + \tail(f-k;x_{0},r)\right].
\end{equation}
We simply write $A(f;x_{0},r)=A(f;r)$ and $E(f;x_{0},r)=E(f;r)$ when the point $x_{0}$ is not important or clear from the context. 
It is obvious that
\[ A(f;x_{0},r) \le E(f;x_{0},r). \]
We also note that when $p \ge 2$, the new excess functional  \eqref{excess.functional} is comparable to the previous one given in \eqref{excess1}, i.e.,
\begin{equation}\label{exc.equiv}
E(f;x_{0},r) \approx \left(\fint_{B_{r}(x_{0})}|f-(f)_{B_{r}(x_{0})}|^{p-1}\,dx\right)^{1/(p-1)} + \tail(f-(f)_{B_{r}(x_{0})};x_{0},r).
\end{equation} 
However, when $p<2$, our excess functional \eqref{excess.functional} is not comparable to \eqref{excess1}.

If $f \in L^{p-1}(B_{r})$, then there exists a number $\mathcal{P}_{B_{r}}(f) \in \mathbb{R}$ satisfying
\begin{equation}\label{def.P}
A(f;r) = \left(\fint_{B_{r}}|f-\mathcal{P}_{B_{r}}(f)|^{p-1}\,dx\right)^{1/(p-1)}.
\end{equation}
Such a number is not uniquely determined in general, but we use any possible value of it. Here we note that
\begin{equation}\label{av.min}
\begin{aligned}
& |\mathcal{P}_{B_{r}}(f) - k_{0}| = \left(\fint_{B_{r}}|\mathcal{P}_{B_{r}}(f) - k_{0}|^{p-1}\,dx\right)^{1/(p-1)}  \\
& \le c\left(\fint_{B_{r}}|\mathcal{P}_{B_{r}}(f)-f|^{p-1}\,dx\right)^{1/(p-1)} + c\left(\fint_{B_{r}}|f-k_{0}|^{p-1}\,dx\right)^{1/(p-1)}  \\
& \le c\left(\fint_{B_{r}}|f-k_{0}|^{p-1}\,dx\right)^{1/(p-1)}
\end{aligned}
\end{equation}
holds for any $k_{0} \in \mathbb{R}$, where $c=c(p)>0$. 
Similarly, if $f \in L^{p-1}(B_{r})$ satisfies $\tail(f;r) < \infty$, then there exists a number $\mathcal{Q}_{B_{r}}(f) \in \mathbb{R}$ satisfying
\begin{equation}\label{def:Q}
 E(f;r) = \left(\fint_{B_{r}}|f-\mathcal{Q}_{B_{r}}(f)|^{p-1}\,dx\right)^{1/(p-1)} + \tail(f-\mathcal{Q}_{B_{r}}(f);r). 
 \end{equation}

Here we collect a few elementary properties of $E(\cdot)$.
\begin{lemma}\label{exc.property}
Let $p \in (1,\infty)$, $s \in (0,1)$ and $B_{r} = B_{r}(x_0) \subset \mathbb{R}^n$. Assume that $f,g \in L^{p-1}(B_r)$ satisfy $\tail(f;r), \tail(g;r) < \infty$. Then there exists a constant $c=c(n,s,p)>0$ such that the following properties hold:
\begin{itemize}
\item [(1)] 
$E(f+g;r) \le cE(f;r) + cE(g;r)$.
\item [(2)]
$E(f;\sigma r) \le c\sigma^{-n/(p-1)}E(f;r)$ for any $\sigma \in (0,1)$.
\end{itemize}
\end{lemma}
\begin{proof}
(1) Let $\mathcal{Q}_{B_r}(f)$ and $\mathcal{Q}_{B_r}(g)$ be two numbers satisfying \eqref{def:Q} for $f$ and $g$, respectively. Then we estimate
\begin{align*}
 E(f+g;r) 
& \le \left(\fint_{B_{r}}|f+g-\mathcal{Q}_{B_{r}}(f)-\mathcal{Q}_{B_{r}}(g)|^{p-1}\,dx\right)^{1/(p-1)} \\
&\quad\; + \tail(f+g-\mathcal{Q}_{B_{r}}(f)-\mathcal{Q}_{B_{r}}(g);r) \\
& \le c\left(\fint_{B_{r}}|f-\mathcal{Q}_{B_{r}}(f)|^{p-1}\,dx\right)^{1/(p-1)} + c\left(\fint_{B_{r}}|g-\mathcal{Q}_{B_{r}}(g)|^{p-1}\,dx\right)^{1/(p-1)} \\
& \quad\; + c\tail(f-\mathcal{Q}_{B_{r}}(f);r) + c\tail(g-\mathcal{Q}_{B_{r}}(g);r) \\
& = cE(f;r) + cE(g;r).
\end{align*}
(2) Let $k \in \mathbb{R}$. We obviously have
\[ \left(\fint_{B_{\sigma r}}|f-k|^{p-1}\,dx\right)^{1/(p-1)} \le \sigma^{-n/(p-1)}\left(\fint_{B_{r}}|f-k|^{p-1}\,dx\right)^{1/(p-1)}. \]
As for the tail term, we get
\begin{align*}
\tail(f-k;\sigma r) & \le c\sigma^{sp/(p-1)}\left(r^{sp}\int_{B_{r}\setminus B_{\sigma r}}\frac{|f(y)-k|^{p-1}}{|y-x_0|^{n+sp}}\,dy \right)^{1/(p-1)} \\
& \quad + c\sigma^{sp/(p-1)}\left(r^{sp}\int_{\mathbb{R}^n \setminus B_{r}}\frac{|f(y)-k|^{p-1}}{|y-x_0|^{n+sp}}\,dy\right)^{1/(p-1)} \\
& \le c\sigma^{-n/(p-1)}\left[\left(\fint_{B_r}|f(y)-k|^{p-1}\,dy\right)^{1/(p-1)} + \tail(f-k;r)\right].
\end{align*}
Combining these two inequalities and then taking infimum with respect to $k \in \mathbb{R}$, we obtain the conclusion. 
\end{proof}

We now obtain an excess decay estimate for \eqref{homoeq}, which will play a key role in the proof of potential estimates.  The following lemma is analogous to \cite[Theorem 8.11]{KuMiSi3}; in particular, when $p \ge 2$, it follows from \cite[Theorem 8.11]{KuMiSi3} via \eqref{exc.equiv}. 
However, when $1<p<2$, it cannot be directly obtained from \cite[Theorem 8.11]{KuMiSi3}.
\begin{lemma}\label{thm.ed.v}
Let $p \in (1,\infty)$, $s \in (0,1)$ and let $v \in \mathbb{W}^{s,p}(\Omega)$ be a weak solution to \eqref{homoeq} under assumptions \eqref{mono} and \eqref{c2}. Then we have 
\begin{equation*}
E(v;\rho) \le c\left(\frac{\rho}{r}\right)^{\alpha_{0}}E(v;r)
\end{equation*}
whenever $B_r = B_{r}(x_0) \subset \Omega$ and $0< \rho \le r$, where $\alpha_{0}$ is the exponent given in Lemma \ref{lem.hol} and $c= c(n,s,p,\Lambda)>0$.
\end{lemma}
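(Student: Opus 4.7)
The plan is to reduce everything to the local H\"older estimate in Lemma~\ref{lem.hol}, combined with a careful dyadic decomposition of the nonlocal tail. We may assume $\rho \le r/4$; the remaining range $\rho \in (r/4,r]$ is trivial since $E(v;\rho) \le c E(v;r)$ by basic manipulations using $\fint_{B_\rho} \le c \fint_{B_r}$ and splitting the tail at radius $r$.

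First, I would pick the near-minimizer $k_r = \mathcal{Q}_{B_r}(v)$ so that $E(v;r) = \bigl(\fint_{B_r}|v-k_r|^{p-1}\,dx\bigr)^{1/(p-1)} + \tail(v-k_r;r)$. Because $-\mathcal{L}_\Phi$ depends only on differences, $v-k_r$ is again a weak solution to \eqref{homoeq}, and Lemma~\ref{lem.bdd.L1} gives $\sup_{B_{r/2}}|v-k_r| \le cE(v;r)$ after absorbing $\tail(v-k_r;r/4)$ into $E(v;r)$ by splitting the defining integral over $B_r\setminus B_{r/4}$ and $\mathbb{R}^n\setminus B_r$. In particular, $v$ is continuous and Lemma~\ref{lem.hol} (applied at scale $r/2$, with $k=k_r$ and $q=p-1$) yields the scale-invariant oscillation decay
\begin{equation*}
\osc_{B_t}v \le c(t/r)^{\alpha_0} E(v;r) \qquad \text{for all } 0<t\le r/2.
\end{equation*}

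Next, for the left-hand side, I take $k_\rho = v(x_0)$ as a competitor in the definition of $E(v;\rho)$. The local part is immediate:
\begin{equation*}
\biggl(\fint_{B_\rho}|v-v(x_0)|^{p-1}\,dx\biggr)^{1/(p-1)} \le \osc_{B_\rho}v \le c(\rho/r)^{\alpha_0}E(v;r).
\end{equation*}
For $\tail(v-v(x_0);\rho)$, I split $\mathbb{R}^n\setminus B_\rho$ into $B_{r/2}\setminus B_\rho$ and $\mathbb{R}^n\setminus B_{r/2}$. On the inner shell, the oscillation decay applied pointwise with $t=|y-x_0|$ gives $|v(y)-v(x_0)|^{p-1} \le c(|y-x_0|/r)^{\alpha_0(p-1)}E(v;r)^{p-1}$; integrating against $|y-x_0|^{-n-sp}$ in spherical coordinates and multiplying by $\rho^{sp}$ produces $c(\rho/r)^{\alpha_0(p-1)}E(v;r)^{p-1}$, since the crucial bound $\alpha_0 < sp/(p-1)$ from Lemma~\ref{lem.hol} ensures the radial integral is of order $\rho^{\alpha_0(p-1)-sp}$.

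On the outer region $\mathbb{R}^n\setminus B_{r/2}$, I use $|v-v(x_0)|^{p-1}\le c|v-k_r|^{p-1} + c|k_r-v(x_0)|^{p-1}$. The first piece contributes $c(\rho/r)^{sp}\tail(v-k_r;r/2)^{p-1}\le c(\rho/r)^{sp}E(v;r)^{p-1}$. For the second, since $|v(x_0)-k_r| \le \sup_{B_{r/2}}|v-k_r| \le cE(v;r)$ by Lemma~\ref{lem.bdd.L1}, direct integration gives the same bound $c(\rho/r)^{sp}E(v;r)^{p-1}$. Taking the $1/(p-1)$ power yields a factor $(\rho/r)^{sp/(p-1)}$, which is dominated by $(\rho/r)^{\alpha_0}$ because $\alpha_0<sp/(p-1)$. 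Summing the two regimes, $\tail(v-v(x_0);\rho) \le c(\rho/r)^{\alpha_0}E(v;r)$, and combining with the local bound finishes the proof.

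The main obstacle, and the reason one cannot simply quote \cite{KuMiSi3} when $p<2$, is that the excess functional $E(\cdot)$ uses the $L^{p-1}$-norm rather than the mean $(f)_{B_r}$, so the equivalence \eqref{exc.equiv} is unavailable. The argument above handles this by always comparing to the near-minimizer $k_r$ and exploiting the fact that $v - k_r$ is itself a solution; the truly delicate step is the outer-tail estimate, where one must both control the affine offset $|v(x_0)-k_r|$ via sup-bounds and use strictly the inequality $\alpha_0<sp/(p-1)$ so that $(\rho/r)^{sp/(p-1)}$ beats the target exponent $(\rho/r)^{\alpha_0}$.
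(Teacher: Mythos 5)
Your argument is correct and follows essentially the same route as the paper: reduce to the oscillation decay of Lemma~\ref{lem.hol}, split the tail into an inner shell handled by that decay together with $\alpha_{0}<sp/(p-1)$, and an outer region handled via the near-minimizer $\mathcal{Q}_{B_{r}}(v)$ and the sup-bound of Lemma~\ref{lem.bdd.L1}. The only (cosmetic) difference is that you take the pointwise value $v(x_{0})$ of the continuous representative as competitor in $E(v;\rho)$, while the paper uses the mean $(v)_{B_{\rho}}$; both choices lead to the same estimates.
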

\begin{proof}
We may assume $\rho \le r/4$ without loss of generality. 
Recalling the definition of $E(\cdot)$ given in \eqref{excess.functional}, we have
\begin{equation}\label{ed.v.start}
E(v;\rho) \le \left(\fint_{B_{\rho}}|v-(v)_{B_{\rho}}|^{p-1}\,dx\right)^{1/(p-1)} + \tail(v-(v)_{B_{\rho}};\rho),
\end{equation}
and we estimate each term in the right-hand side. 
Observe that Lemma \ref{lem.hol} implies
\begin{equation}\label{ed.osc.est}
\osc_{B_{t}}v \le c\left(\frac{t}{r}\right)^{\alpha_{0}}E(v;r)
\end{equation}
for any $t \in [\rho,r/4]$.  
In particular,
\begin{equation}\label{ed.av.est} 
 \left(\fint_{B_{\rho}}|v-(v)_{B_{\rho}}|^{p-1}\,dx\right)^{1/(p-1)} 
 \le \osc_{B_{\rho}}v \overset{\eqref{ed.osc.est}}{\le} c\left(\frac{\rho}{r}\right)^{\alpha_{0}}E(v;r). 
\end{equation}
In order to estimate the tail term, we split the integral as follows:
\begin{align*}
& [\tail(v-(v)_{B_{\rho}};\rho)]^{p-1} \\
&= \rho^{sp}\int_{\mathbb{R}^{n}\setminus B_{\rho}}\frac{|v(x)-(v)_{B_{\rho}}|^{p-1}}{|x-x_{0}|^{n+sp}}\,dx \\
& = \rho^{sp}\int_{\mathbb{R}^{n}\setminus B_{r/4}}\frac{|v(x)-(v)_{B_{\rho}}|^{p-1}}{|x-x_{0}|^{n+sp}}\,dx + \rho^{sp}\int_{B_{r/4}\setminus B_{\rho}}\frac{|v(x)-(v)_{B_{\rho}}|^{p-1}}{|x-x_{0}|^{n+sp}}\,dx \\
& \eqqcolon I_{1} + I_{2}.
\end{align*}
Let $\mathcal Q_{B_r}(v)\in \R$ be a number satisfying \eqref{def:Q} with $f=v$. Then Lemma \ref{lem.bdd.L1} implies
\begin{equation*}
\left|(v)_{B_{\rho}}-\mathcal{Q}_{B_{r}}(v)\right| \le \fint_{B_{\rho}}|v-\mathcal{Q}_{B_{r}}(v)|\,dx  \le \sup_{B_{r/2}}|v-\mathcal{Q}_{B_{r}}(v)| \le cE(v;r).
\end{equation*}
Using this, we estimate $I_{1}$ as
\begin{align*}
I_{1} & \le c \rho^{sp}\int_{\mathbb{R}^{n}\setminus B_{r/4}}\frac{|v(x)-\mathcal{Q}_{B_{r}}(v)|^{p-1}}{|x-x_{0}|^{n+sp}}\,dx + c\left(\frac{\rho}{r}\right)^{sp}\left|(v)_{B_{\rho}}-\mathcal{Q}_{B_{r}}(v)\right|^{p-1}  \\
& \le c\rho^{sp}\int_{\mathbb{R}^{n}\setminus B_{r}}\frac{|v(x)-\mathcal{Q}_{B_{r}}(v)|^{p-1}}{|x-x_{0}|^{n+sp}}\,dx + c\left(\frac{\rho}{r}\right)^{sp}\fint_{B_{r}}|v-\mathcal{Q}_{B_{r}}(v)|^{p-1}\,dx \\
& \quad\, + c\left(\frac{\rho}{r}\right)^{sp}[E(v;r)]^{p-1} \\
& \le c\left(\frac{\rho}{r}\right)^{sp}[E(v;r)]^{p-1}.
\end{align*}
As for $I_{2}$, we get
\begin{align*}
I_{2}  \le c\int_{\rho}^{r/4}\left(\frac{\rho}{t}\right)^{sp}\left(\osc_{B_{t}}v\right)^{p-1}\frac{dt}{t} 
\overset{\eqref{ed.osc.est}} & {\le} c[E(v;r)]^{p-1}\int_{\rho}^{r/4}\left(\frac{\rho}{t}\right)^{sp}\left(\frac{t}{r}\right)^{\alpha_{0}(p-1)}\frac{dt}{t} \\
& \le \frac{c}{sp-\alpha_{0}(p-1)}\left(\frac{\rho}{r}\right)^{\alpha_{0}(p-1)}[E(v;r)]^{p-1},
\end{align*}
where we have also used the fact that $\alpha_{0} < sp/(p-1)$. 
The above two displays imply
\begin{equation*}
\tail(v-(v)_{B_{\rho}};\rho) \le c\left(\frac{\rho}{r}\right)^{\alpha_0}E(v;r).
\end{equation*}
Connecting this last estimate and \eqref{ed.av.est} to \eqref{ed.v.start}, we obtain the desired estimate.
\end{proof}

\subsection{Comparison estimates} 
Here we derive some comparison estimates. We fix any ball $B_{2r}=B_{2r}(x_0)\subset \mathbb{R}^{n}$, and consider the weak solution $u\in \mathbb{W}^{s,p}(B_{2r})$ to the Dirichlet problem 
\begin{equation*}
\left\{
\begin{aligned}
 -\mathcal{L}_{\Phi}u&=\mu\in L^\infty(\mathbb{R}^n)&\text{in }& B_{2r},\\
 u&=0 &\text{in }&\mathbb{R}^n\setminus B_{2r}.
\end{aligned}
\right.
\end{equation*}
In order to take advantage of symmetry, we consider the operator $-\tilde{\mathcal{L}}_{u}$ defined by
\begin{equation*}
\langle-\tilde{\mathcal{L}}_{u}w,\varphi\rangle \coloneqq \int_{\mathbb{R}^n}\int_{\mathbb{R}^n}|w(x)-w(y)|^{p-2}(w(x)-w(y))(\varphi(x)-\varphi(y))\tilde{K}_{u,\Phi}(x,y) \,dx\,dy,
\end{equation*} 
where
\begin{equation*}
\tilde{K}_{u,\Phi}(x,y) \coloneqq \tfrac{1}{2}\left(\bar{K}_{u,\Phi}(x,y)+\bar{K}_{u,\Phi}(y,x)\right)
\end{equation*}
and 
\begin{equation*}
\bar{K}_{u,\Phi}(x,y) \coloneqq 
\begin{cases}
 \dfrac{\Phi(u(x)-u(y))K(x,y)}{|u(x)-u(y)|^{p-2}(u(x)-u(y))}&\text{ if }  x\not =y ~\text{and}~u(x)\not= u(y), \\ 
 |x-y|^{-n-sp} & \text{ if }  x\not= y~\text{and}~ u(x)=u(y). \\ 
\end{cases}
\end{equation*} 
We then see that $\Lambda^{-2}|x-y|^{-(n+sp)} \leq \tilde{K}_{u,\Phi}(x,y) = \tilde{K}_{u,\Phi}(y,x) \leq \Lambda^{2}|x-y|^{-(n+sp)}$ holds for all $(x,y) \in \mathbb{R}^n \times \mathbb{R}^n$ with $x \neq y$, and that $u$ solves 
\begin{equation*}
\left\{
\begin{aligned}
-\tilde{\mathcal{L}}_{u}u&=\mu\in L^\infty(\mathbb{R}^n)&\text{in }& B_{2r},\\
u&=0 &\text{in }&\mathbb{R}^n\setminus B_{2r},
\end{aligned}
\right.
\end{equation*}
We next consider the weak solution $v\in \mathbb{W}^{s,p}(B_{r})$ to the following Dirichlet problem: 
\begin{equation*}
\left\{
\begin{aligned}
-\tilde{\mathcal{L}}_{u}v&=0&\text{in }& B_{r},\\
v&=u&\text{in }&\mathbb{R}^n\setminus B_{r},
\end{aligned}
\right.
\end{equation*}
and set 
\[ w \coloneqq u-v. \]
Under the above setting, we have the following lemma, see \cite[Lemma 8.4.1]{KuMiSi3}. 
\begin{lemma} There exists a constant $c=c(n,s,p,\Lambda)>0$ such that, for any $\xi>1$ and $d>0$,
\begin{equation*}
 \int_{\mathbb{R}^n}\int_{\mathbb{R}^{n}} \frac{(|u(x)-u(y)|+|v(x)-v(y)|)^{p-2}|w(x)-w(y)|^2}{(d+|w(x)|+|w(y)|)^\xi} \frac{dx\,dy}{|x-y|^{n+sp}}\leq  c\frac{d^{1-\xi}}{\xi-1}|\mu|(B_r).
\end{equation*}
\end{lemma}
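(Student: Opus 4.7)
The plan is to test the difference of the equations $-\tilde{\mathcal{L}}_{u}u = \mu$ in $B_{2r}$ and $-\tilde{\mathcal{L}}_{u}v = 0$ in $B_{r}$ with a bounded, odd, nondecreasing Lipschitz function of $w$ whose derivative encodes the weight appearing on the left-hand side. Concretely, I would set
$$ g(t) \coloneqq \int_{0}^{t}\frac{ds}{(d+|s|)^{\xi}}, \qquad t \in \mathbb{R}, $$
so that $g'(t) = (d+|t|)^{-\xi}$ away from $0$ and $|g(t)| \leq d^{1-\xi}/(\xi-1)$ for every $t$. Since $w = u-v$ lies in $\mathbb{W}^{s,p}(B_{r})$ and vanishes outside $B_{r}$, and $g$ is globally Lipschitz with $g(0)=0$, the composition $\varphi \coloneqq g(w)$ shares these properties and is an admissible test function for both equations.

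Testing and subtracting, and writing $a \coloneqq u(x) - u(y)$, $b \coloneqq v(x) - v(y)$ so that $a-b = w(x)-w(y)$, one obtains the identity
$$ I \coloneqq \iint_{\mathbb{R}^{n}\times\mathbb{R}^{n}}\bigl[|a|^{p-2}a - |b|^{p-2}b\bigr]\bigl(g(w(x))-g(w(y))\bigr)\tilde{K}_{u,\Phi}(x,y)\,dxdy = \int_{B_{r}} g(w)\,d\mu. $$
The core of the argument is a pointwise lower bound for the integrand of $I$. Since $g$ is nondecreasing and $g'(s) \geq (d+|w(x)|+|w(y)|)^{-\xi}$ on the segment joining $w(y)$ and $w(x)$, a direct integration gives
$$ \bigl(g(w(x))-g(w(y))\bigr)(a-b) \geq \frac{(a-b)^{2}}{(d+|w(x)|+|w(y)|)^{\xi}}, $$
with $g(w(x))-g(w(y))$ sharing the sign of $a-b$. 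Combining this with the standard vector inequality
$$ \bigl[|a|^{p-2}a - |b|^{p-2}b\bigr](a-b) \geq c_{p}(|a|+|b|)^{p-2}(a-b)^{2}, \qquad p>1, $$
and noting that $|a|^{p-2}a - |b|^{p-2}b$ also has the sign of $a-b$, one concludes
$$ \bigl[|a|^{p-2}a - |b|^{p-2}b\bigr]\bigl(g(w(x))-g(w(y))\bigr) \geq c_{p}\,\frac{(|a|+|b|)^{p-2}(a-b)^{2}}{(d+|w(x)|+|w(y)|)^{\xi}}. $$
Invoking the ellipticity bound $\tilde{K}_{u,\Phi}(x,y) \geq \Lambda^{-2}|x-y|^{-(n+sp)}$ then produces the desired lower bound for $I$.

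For the right-hand side, the uniform estimate $|g| \leq d^{1-\xi}/(\xi-1)$ yields
$$ \left|\int_{B_{r}}g(w)\,d\mu\right| \leq \frac{d^{1-\xi}}{\xi-1}\,|\mu|(B_{r}), $$
and the claim follows after substituting $a-b = w(x)-w(y)$ and $a,b$ in terms of $u,v$. The main delicate point I anticipate is the apparent degeneracy of the factor $(|a|+|b|)^{p-2}$ when $1<p<2$ and $|a|+|b|$ vanishes: this however is harmless, since on the set $\{a=b=0\}$ both sides of the monotonicity inequality are zero and the integrand of $I$ vanishes by convention. A secondary, routine technicality is the verification that $g(w)$ is admissible in the weak formulation of $-\tilde{\mathcal{L}}_{u}$, which reduces to a standard chain-rule estimate for Lipschitz compositions in $\mathbb{W}^{s,p}$.
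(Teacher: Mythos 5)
Your proof is correct: testing the difference of the two equations with $\varphi=g(w)$, $g(t)=\int_0^t (d+|s|)^{-\xi}\,ds$, using the sign/monotonicity of $t\mapsto|t|^{p-2}t$ and of $g$, and the bound $|g|\le d^{1-\xi}/(\xi-1)$ is exactly the standard argument behind this estimate. The paper itself gives no proof but cites \cite[Lemma~8.4.1]{KuMiSi3}, whose proof proceeds by essentially the same truncation-type test function, so your approach coincides with the paper's.
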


Using this lemma, we can obtain a preliminary version of comparison estimate in Lemma \ref{basic.comp.2} below. The estimate is essentially the same as the one given in \cite[Lemma 8.4.4]{KuMiSi3} (see also \cite[Lemma 3.4]{KuMiSi}) except the ranges of $q$ and $p$. Note that, in  both \cite[Lemma 8.4.4]{KuMiSi3} and \cite[Lemma 3.4]{KuMiSi}, $q$ is assumed to satisfy
\begin{equation*}
q\in [1,\bar q),  \qquad \bar{q} \coloneqq \min\left\{\frac{n(p-1)}{n-s},p\right\}.
\end{equation*}
This range is sensible since the papers \cite{KuMiSi,KuMiSi3} primarily assume that $p>2-s/n$, and the restriction $q\ge 1$ is used in order to apply the well-known fractional Sobolev--Poincar\'e inequality with $q\ge1$. However, by using the fractional Sobolev--Poincar\'e inequality in Lemma \ref{lemsobopoin} for $q\in(0,1)$, the proof of \cite[Lemma 8.4.4]{KuMiSi3} carries over to the case $q \in (0,1)$. Consequently, we can extend the admissible ranges of $q$ and $p$.

\begin{lemma}\label{basic.comp.2}
Assume that $1 < p < 2$. Then for any $q \in (0,\bar{q})$, where $\bar{q}$ is given in \eqref{SOLA1}, there exists $h_{0} = h_{0}(n,s,p,q) \in (0,s)$ such that if $h \in (h_{0},s)$, then 
\begin{equation}\label{u-v}
\begin{aligned}
& \left(\int_{B_{tr}}\fint_{B_{tr}}\frac{|w(x)-w(y)|^{q}}{|x-y|^{n+hq}}\,dx\,dy\right)^{1/q}  \\
& \le \frac{c}{(t-1)^{1/[(p-1)q]}r^{h}}\left[\frac{|\mu|(B_{r})}{r^{n-sp}}\right]^{1/(p-1)} \\
& \quad\; + \frac{c}{(t-1)^{1/q}r^{h(p-1)}}\left(\int_{B_{tr}}\fint_{B_{tr}}\frac{|u(x)-u(y)|^{q}}{|x-y|^{n+hq}}\,dx\,dy\right)^{(2-p)/q}\left[\frac{|\mu|(B_{r})}{r^{n-sp}}\right]
\end{aligned}
\end{equation}
holds whenever $t \in (1, 2]$, where $c=c(n,s,p,\Lambda,h,q)>0$.
\end{lemma}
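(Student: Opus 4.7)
The plan is to imitate the proof of \cite[Lemma~8.4.4]{KuMiSi3} (also carried out in \cite[Lemma~3.4]{KuMiSi}), whose original scheme handles the case $q\geq 1$, and to push the same argument into the subunitary range $q\in(0,1)$ by replacing the use of the classical fractional Sobolev--Poincar\'e inequality by its extension to exponents below one, Lemma~\ref{lemsobopoin}. The engine is the weighted-energy estimate stated just above,
\[
I(d,\xi):=\int_{\R^n}\int_{\R^n}\frac{D(x,y)^{p-2}\,|w(x)-w(y)|^2}{(d+|w(x)|+|w(y)|)^\xi\,|x-y|^{n+sp}}\,dxdy \;\le\; \frac{c\,d^{1-\xi}}{\xi-1}\,|\mu|(B_r),
\]
with $D(x,y)=|u(x)-u(y)|+|v(x)-v(y)|$. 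Since $p<2$ the weight $D^{p-2}$ is singular rather than degenerate, and it is this singularity that forces the appearance of the second, $u$-dependent summand on the right-hand side of \eqref{u-v}.

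First I would factor the target integrand as
\[
\frac{|w(x)-w(y)|^q}{|x-y|^{n+hq}}
=\left[\frac{D(x,y)^{p-2}\,|w(x)-w(y)|^2}{(d+|w(x)|+|w(y)|)^\xi\,|x-y|^{n+sp}}\right]^{q/2}\Psi(x,y),
\]
where $\Psi$ collects the residual powers of $D$, $(d+|w|+|w|)$, and $|x-y|$, and apply H\"older's inequality with the conjugate pair $(2/q,\,2/(2-q))$; the first factor is then controlled by $I(d,\xi)^{q/2}$. The leftover integral $\int\int\Psi^{\,2/(2-q)}$ is treated by inserting the pointwise inequality $D\le 2|u(x)-u(y)|+|w(x)-w(y)|$, splitting it into a ``pure-$u$'' piece and a ``pure-$w$'' piece. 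A carefully calibrated second H\"older step applied to the $u$-piece---where the lower bound $h>h_0$ enters to keep the exponent on $|x-y|^{-1}$ strictly below $n$, so that integration over $B_{2r}\times B_{2r}$ converges---yields exactly the $u$-seminorm summand on the right-hand side of \eqref{u-v}. The pure-$w$ piece, in turn, is reshaped into a fractional Gagliardo seminorm of $w$ at some subunitary exponent and controlled via Lemma~\ref{lemsobopoin}. Finally, choosing $d$ proportional to $r^{sp/(p-1)}[|\mu|(B_r)]^{1/(p-1)}$ and $\xi>1$ slightly above~$1$ converts $I(d,\xi)^{q/2}$ into the first summand of \eqref{u-v} and lets one absorb the remaining $w$-seminorm contribution into the left-hand side.

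The main technical difficulty lies in this absorption step in the quasinorm range $q\in(0,1)$: since the Gagliardo functional $[\,\cdot\,]_{W^{h,q}}$ is then only a quasi-seminorm, one has to track the quasi-subadditivity constants produced by each triangle-type inequality (they do not affect the algebra, but must be bookkept in the final Young-inequality absorption). The threshold $h_0=h_0(n,s,p,q)$ is tuned precisely so that two competing structural constraints produced by the H\"older steps---namely the $|x-y|$-integrability of the $u$-piece over $B_{2r}\times B_{2r}$, and the subcriticality condition $\tilde q<n/h$ required in Lemma~\ref{lemsobopoin} when reshaping the $w$-piece---are both satisfied; once $h$ exceeds this threshold, the remainder of the argument is a direct transcription of the proof of \cite[Lemma~8.4.4]{KuMiSi3}.
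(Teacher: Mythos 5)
Your proposal matches the paper's proof, which is given in a single line: the argument of \cite[Lemma~8.4.4]{KuMiSi3} is repeated verbatim, with the classical fractional Sobolev inequality replaced by \eqref{sobo0} of Lemma~\ref{lemsobopoin} so that exponents below one are admissible --- exactly your plan, including the weighted energy lemma, the H\"older step with exponents $2/q$ and $2/(2-q)$, the splitting $D\le 2|u(x)-u(y)|+|w(x)-w(y)|$, the Sobolev inequality applied to $w$ (which vanishes outside $B_{r}$), and the final absorption. The only harmless inaccuracies in your sketch are bookkeeping details that would get fixed in transcription: the comparison parameter should be $d\sim\left[|\mu|(B_{r})/r^{n-sp}\right]^{1/(p-1)}$, and the threshold $h_{0}$ arises from requiring the Lebesgue exponent $\xi q/(p-q)$ carried by the $(d+|w(x)|+|w(y)|)$-factor to stay below the Sobolev exponent $nq/(n-hq)$ of $w$ (which is also exactly where $q<\bar{q}$ is used), rather than from diagonal integrability of the $u$-piece, whose kernel exponent is $n+hq$ by construction.
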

\begin{proof}
The proof proceeds along the lines of \cite[Lemma 3.4]{KuMiSi} and \cite[Lemma 8.4.4]{KuMiSi3}, with the help of the fractional Sobolev--Poincar\'e inequality \eqref{sobo0}.
\end{proof}

In order to handle the second term appearing in the right-hand side of \eqref{u-v}, we establish a Caccioppoli type estimate for $u$. 
The case $2-s/n < p < 2$ was treated in \cite[Lemma 3.5]{KuMiSi}, but we need to modify the approach for lower values of $p$. 
\begin{lemma}
Assume that $1 < p < 2$. Then for any $q \in (0,\bar{q})$, where $\bar{q}$ is given in \eqref{SOLA1}, there exists $h_{0} = h_{0}(n,s,p,q) \in (0,s)$ such that if $h \in (h_{0},s)$, then
\begin{equation*}
\left(\int_{B_{2r}}\fint_{B_{2r}}\frac{|u(x)-u(y)|^{q}}{|x-y|^{n+hq}}\,dx\,dy\right)^{1/q} \le \frac{c}{r^{h}}\left\{ E(u;4r) + \left[\frac{|\mu|(B_{4r})}{r^{n-sp}}\right]^{1/(p-1)} \right\}.
\end{equation*}
holds for a constant $c = c(n,s,p,\Lambda,h,q)>0$. 
\end{lemma}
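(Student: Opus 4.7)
The plan is to upgrade Lemma \ref{basic.comp.2} by replacing the factor $U^{2-p}$ on its right-hand side with $E(u;x_0,2r)^{2-p}$, via a triangle decomposition $u=v+w$, Caccioppoli estimates for $v$, and Young's inequality for absorption. Abbreviate $M := [|\mu|(B_r)/r^{n-sp}]^{1/(p-1)}$ and write $U, V, W$ for $\bigl(\int_{B_{2r}}\fint_{B_{2r}}\frac{|f(x)-f(y)|^q}{|x-y|^{n+hq}}\,dxdy\bigr)^{1/q}$ with $f \in \{u, v, w\}$. Since $u=v+w$ and $2-p\in(0,1)$, the elementary inequality $(a+b)^{2-p}\le c(a^{2-p}+b^{2-p})$ gives $U^{2-p}\le c(V^{2-p}+W^{2-p})$.

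Substituting into Lemma \ref{basic.comp.2} produces a term $cW^{2-p}M^{p-1}/r^{h(p-1)}$, which Young's inequality with exponents $1/(2-p)$ and $1/(p-1)$ dominates by $\tfrac{1}{2}W + CM/r^h$; absorbing the former into the left-hand side yields
\begin{equation*}
W \le \frac{cM}{r^h} + \frac{cV^{2-p}M^{p-1}}{r^{h(p-1)}}.
\end{equation*}
The crux is the auxiliary bound $V \le c r^{-h} E(u;x_0,2r) + c W + c r^{-h} M$. Given this, $V^{2-p} \le c r^{-h(2-p)}E^{2-p} + c W^{2-p} + c r^{-h(2-p)}M^{2-p}$ by subadditivity; multiplying by $M^{p-1}/r^{h(p-1)}$ and using $r^{-h(2-p)-h(p-1)}=r^{-h}$ and $M^{2-p+p-1}=M$ produces $cE^{2-p}M^{p-1}/r^h + cW^{2-p}M^{p-1}/r^{h(p-1)} + cM/r^h$, and one more Young/absorption step on the $W^{2-p}$ piece delivers the target inequality.

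To prove the auxiliary bound, select $k := \mathcal{Q}_{B_{2r}}(u)$ from \eqref{def:Q} and apply Caccioppoli (Lemma \ref{lem.Caccio.Lq}) to $v-k$ on $B_{r/2}$; this is legitimate since $v$ solves the homogeneous equation $-\tilde{\mathcal{L}}_u v=0$ on $B_r$. The resulting right-hand side features $\bigl(\fint_{B_r}|v-k|^q\bigr)^{1/q}$ and $\tail(v-k;x_0,r/2)$. Using $v=u-w$ on $B_r$ together with the $L^{p-1}$ instance of the local boundedness Lemma \ref{lem.bdd.L1}, the first quantity is dominated by $cE(u;x_0,2r)+c\bigl(\fint_{B_r}|w|^{p-1}\bigr)^{1/(p-1)}$. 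The tail is decomposed across $B_r\setminus B_{r/2}$, $B_{2r}\setminus B_r$, and $\mathbb{R}^n\setminus B_{2r}$; the outermost piece equals $c\tail(u-k;x_0,2r)\le cE$ by virtue of $v=u$ outside $B_r$ and $u=0$ outside $B_{2r}$, while the two inner pieces are absorbed into the $L^{p-1}$ average. The $L^{p-1}$-norm of $w$ on $B_r$ is then converted into $c r^h W$ via the fractional Friedrichs inequality, a direct consequence of Lemma \ref{lemsobopoin} applied to the function $w$ (which vanishes outside $B_r$), combined with H\"older's inequality to pass from the critical exponent $q^{*}_{h}$ to $p-1$. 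The contribution to $V^q$ from the region $B_{2r}^2\setminus B_{r/2}^2$ is handled by further splitting: on $(B_{2r}\setminus B_r)^2$ it equals the corresponding part of $U^q$, and on regions touching $B_r\setminus B_{r/2}$ the H\"older decay of $v$ with exponent $\alpha_0>h$ from Lemma \ref{lem.hol} makes the singular kernel integrable, producing contributions dominated by $cU^q$ and $cW^q$ that are absorbed via a final Young step.

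The principal obstacle is the bridging from $B_{r/2}$ (where Caccioppoli applies directly) to $B_{2r}$ (the target scale): the cross double integrals near $\partial B_r$ pick up terms that must be dominated by the natural quantities $U$ and $W$, and this requires both the H\"older exponent $\alpha_0>h$ for the homogeneous solution and the new fractional Sobolev/Friedrichs machinery in the range $q<1$ (Lemma \ref{lemsobopoin}) --- the central new ingredient of the paper that was unavailable to \cite{KuMiSi} and makes the argument work uniformly in the strongly singular regime $1<p\le 2-s/n$.
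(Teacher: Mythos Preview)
Your plan to replace the factor $U^{2-p}$ by $E^{2-p}$ via the decomposition $U\le c(V+W)$ together with an auxiliary bound on $V$ is natural, but the execution has a genuine gap at exactly the point you flag as the ``principal obstacle.'' The Caccioppoli inequality of Lemma~\ref{lem.Caccio.Lq} applies to $v$ only on balls compactly contained in $B_r$ (the domain where $v$ solves the homogeneous equation), so it yields at most $F(v;\rho)$ for $\rho<r$, while you need $V=F(v;2r)$. Your proposed treatment of the cross region $B_{2r}^2\setminus B_{r/2}^2$ does not work: the H\"older estimate of Lemma~\ref{lem.hol} is an \emph{interior} estimate and gives no control on $|v(x)-v(y)|$ when $x\in B_r$, $y\in B_{2r}\setminus B_r$ with $|x-y|$ small (and the inequality $\alpha_0>h$ is not guaranteed anyway). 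The only generic bound on that piece is the triangle inequality $|v(x)-v(y)|\le |u(x)-u(y)|+|w(x)-w(y)|$, which reintroduces a full $U$-term on the right. But then your ``final Young step'' cannot close: Young on $U^{2-p}M^{p-1}$ absorbs into $U$, not into $W$, and $U$ never appears on the left-hand side of the inequality you are proving.

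The paper circumvents this by never attempting to control $V$ on $B_{2r}$. Instead it introduces a one-parameter family $v_\sigma$ (solving the homogeneous problem on balls of variable radius, $1\le\sigma'<\sigma\le 2$) and targets $F(u;\sigma'r)$ directly. The triangle inequality $F(u;\sigma'r)\le cF(v_\sigma;\sigma'r)+cF(u-v_\sigma;\sigma'r)$, the Caccioppoli estimate for $v_\sigma$ passing from $B_{\sigma'r}$ to $B_{\sigma r}$, and Lemma~\ref{basic.comp.2} applied to $u-v_\sigma$ combine to give
\[
F(u;\sigma'r)\le \tfrac12 F(u;\sigma r)+\frac{c}{(\sigma-\sigma')^{\theta/(p-1)}r^h}\Big\{E(u;2r)+\Big[\tfrac{|\mu|(B_r)}{r^{n-sp}}\Big]^{1/(p-1)}\Big\}
\]
after one Young step. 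The standard iteration lemma (\cite[Lemma~6.1]{Giu}) then eliminates the $F(u;\sigma r)$ term, yielding $F(u;r)\le cr^{-h}\{E(u;2r)+M\}$; substituting this back into Lemma~\ref{basic.comp.2} finishes the proof. The key device you are missing is precisely this $\sigma$-iteration, which replaces the uncontrollable cross-region estimate by an absorbable self-referential term in $F(u;\cdot)$.
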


\begin{proof} 
We may consider the case $q \ge p-1$ only, since the estimate for lower values of $q$ follows from  Lemma \ref{lemembedding}. 
For $\varphi \in W^{h,q}(B_{\rho})$ with $\rho \in (0,4r)$, we denote
\begin{equation*}
F(\varphi;\rho) \coloneqq \left(\int_{B_{\rho}}\fint_{B_{\rho}}\frac{|\varphi(x)-\varphi(y)|^{q}}{|x-y|^{n+hq}}\,dx\,dy\right)^{1/q}.
\end{equation*}
For $2 \le \sigma' < \sigma \le 4$, we consider the weak solution $\tilde{v} = \tilde{v}_{\sigma,\sigma'} \in \mathbb{W}^{s,p}(B_{(\sigma+\sigma')r/2})$ to
\begin{equation*}
\left\{
\begin{aligned}
-\tilde{\mathcal{L}}_{u}\tilde{v} & = 0 & \text{ in } & B_{(\sigma + \sigma')r/2}, \\
\tilde{v} & = u & \text{ in } & \mathbb{R}^{n}\setminus B_{(\sigma+\sigma')r/2}.
\end{aligned}
\right.
\end{equation*}
We start with the obvious inequality 
\begin{equation}\label{F.first}
F(u;\sigma'r) \le cF(\tilde{v};\sigma'r) + cF(u-\tilde{v};\sigma'r).
\end{equation}
Lemma \ref{basic.comp.2}, with the choice $t = 2\sigma/(\sigma+\sigma') \in (1,2]$, directly implies
\begin{equation}\label{F.u-v}
\begin{aligned}
 F(u-\tilde{v};\sigma' r) & \le F(u-\tilde{v};\sigma r) \\
& \le \frac{c}{(\sigma-\sigma')^{1/[(p-1)q]}r^{h}}\left[\frac{|\mu|(B_{4r})}{r^{n-sp}}\right]^{1/(p-1)} \\
& \quad\; + \frac{c}{(\sigma-\sigma')^{1/q}r^{h(p-1)}}[F(u;\sigma r)]^{2-p}\left[\frac{|\mu|(B_{4r})}{r^{n-sp}}\right].
\end{aligned}
\end{equation}
We next apply Lemma \ref{lem.Caccio.Lq} to $\tilde{v}$ in the concentric balls $B_{\sigma'r} \subset B_{(\sigma+\sigma')r/2}$, which gives
\begin{align*}
F(\tilde{v};\sigma'r) 
 \le \frac{cr^{-h}}{(\sigma-\sigma')^{\theta}}\left[\left(\fint_{B_{(\sigma+\sigma')r/2}}|\tilde{v}-k|^{p-1}\,dx\right)^{1/(p-1)} + \tail(\tilde{v}-k;(\sigma+\sigma') r/4)\right]
\end{align*}
for any $k \in \mathbb{R}$. To proceed, we estimate the first term in the right-hand side as
\begin{align*}
& \left(\fint_{B_{(\sigma+\sigma')r/2}}|\tilde{v}-k|^{p-1}\,dx\right)^{1/(p-1)} \\
& \le c\left(\fint_{B_{(\sigma+\sigma')r/2}}|u-\tilde{v}|^{p-1}\,dx\right)^{1/(p-1)} + c\left(\fint_{B_{(\sigma+\sigma')r/2}}|u-k|^{p-1}\,dx\right)^{1/(p-1)} \\
& \le \frac{cr^{h}}{(\sigma-\sigma')^{1/q}}F(u-\tilde{v};\sigma r) + c\left(\fint_{B_{4r}}|u-k|^{p-1}\,dx\right)^{1/(p-1)},
\end{align*}
where we have used \eqref{sobo0} (with the choice $t=2\sigma/(\sigma+\sigma')$) for the last inequality.
As for the second term, we use the facts that $\tilde{v}=u$ in $\mathbb{R}^n \setminus B_{(\sigma+\sigma')r/2}$ and that $\sigma+\sigma \ge 4$, and once again \eqref{sobo0} in order to get
\begin{align*}
& \tail(\tilde{v}-k;(\sigma+\sigma')r/4) \\
& \le c\tail(\tilde{v}-u;(\sigma+\sigma') r/4) + c\tail(u-k;(\sigma+\sigma') r/4) \\
& \le c\left(\fint_{B_{(\sigma+\sigma')r/2}}|\tilde{v}-u|^{p-1}\,dx\right)^{1/(p-1)} + c\tail(u-k;r) \\
& \le \frac{cr^{h}}{(\sigma-\sigma')^{1/q}}F(u-\tilde{v};\sigma r) + c\left(\fint_{B_{4r}}|u-k|^{p-1}\,dx\right)^{1/(p-1)} + c\tail(u-k;4r).
\end{align*}
Note that, in the last inequality, the term $\tail(u-k;r)$ has been estimated by an elementary calculation similar to that in the proof of Lemma \ref{exc.property}. 
Combining the above three displays and then taking the infimum with respect to $k$, we obtain
\begin{equation}\label{Fv}
F(\tilde{v};\sigma'r) \le \frac{c}{(\sigma-\sigma')^{\theta+1/q}}\left[r^{-h}E(u;4r)+F(u-\tilde{v};\sigma r)\right].
\end{equation}
In turn, connecting \eqref{F.u-v} and \eqref{Fv} to \eqref{F.first}, we arrive at
\begin{align*}
F(u;\sigma'r) & \le \frac{cr^{-h}}{(\sigma-\sigma')^{\theta +p'/q}}\left\{ E(u;4r) + \left[\frac{|\mu|(B_{4r})}{r^{n-sp}}\right]^{1/(p-1)} \right\} \\
& \quad\; + \frac{cr^{-h(p-1)}}{(\sigma-\sigma')^{\theta+2/q}}[F(u;\sigma r)]^{2-p}\left[\frac{|\mu|(B_{4r})}{r^{n-sp}}\right].
\end{align*}
We apply Young's inequality to the last term in the right-hand side, with conjugate exponents $1/(2-p)$ and $1/(p-1)$, in order to see that
\begin{equation*}
F(u;\sigma'r) \le \frac{1}{2}F(u;\sigma r) + \frac{cr^{-h}}{(\sigma-\sigma')^{(\theta+2/q)/(p-1)}}\left\{ E(u;4r) + \left[\frac{|\mu|(B_{4r})}{r^{n-sp}}\right]^{1/(p-1)} \right\}
\end{equation*}
holds for some $c= c(n,s,p,\Lambda,q)$, whenever $2 \le \sigma' \le \sigma \le 4$. Then, applying a standard iteration lemma (see for instance \cite[Lemma~6.1]{Giu}) to the above display, we can drop the first term in the right-hand side. Recalling the definition of $F(\cdot)$, we conclude with the desired estimate.
\end{proof}

The above two lemmas, along with Lemma \ref{lemsobopoin}, imply the following:

\begin{lemma}\label{final.comp}
Assume that $1<p<2$. Then for any $q \in (0,\bar{q})$, where $\bar{q}$ is given in \eqref{SOLA1}, there exists $h_{0} = h_{0}(n,s,p,q) \in (0,s)$ such that if $h \in (h_{0},s)$, then 
\begin{align*}
& \left(\int_{B_{2r}}\fint_{B_{2r}}\frac{|w(x)-w(y)|^q}{|x-y|^{n+hq}}\,dx\,dy\right)^{1/q} \\
& \le \frac{c}{r^h}\left[\frac{|\mu|(B_{4r})}{r^{n-sp}}\right]^{1/(p-1)} + \frac{c}{r^{h}}[E(u;4r)]^{2-p}\left[\frac{|\mu|(B_{4r})}{r^{n-sp}}\right]
\end{align*}
holds for a constant $c=c(n,s,p,\Lambda,h,q)>0$. 
Moreover, for any $\tilde{q}\in(0, q_{0})$, where $q_{0}$ is given in \eqref{z1}, there exists a constant $c = c(n,s,p,\tilde{q},\Lambda)>0$ such that 
\begin{equation*}
\left(\fint_{B_{r}}|w|^{\tilde{q}}\,dx\right)^{1/\tilde{q}} \le c\left[\frac{|\mu|(B_{4r})}{r^{n-sp}}\right]^{1/(p-1)} + c[E(u;4r)]^{2-p}\left[\frac{|\mu|(B_{4r})}{r^{n-sp}}\right].
\end{equation*}
\end{lemma}

\subsection{Pointwise and oscillation estimates for SOLA} 
We first obtain an excess decay estimate for $u$.
\begin{lemma}\label{excess.decay}
Let $u$ be a SOLA to \eqref{EQ}. Then there exist positive constants $c$ and $\tau$, both depending only on $n$, $s$, $p$ and $\Lambda$, such that
\begin{equation}\label{ed.u}
E(u;\sigma\rho) \le c\sigma^{\alpha_{0}}E(u;\rho) + c\sigma^{-\tau}\left[ \frac{|\mu|(B_{\rho})}{\rho^{n-sp}} \right]^{1/(p-1)}
\end{equation}
holds whenever $B_{\sigma\rho} \subset B_{\rho} \subset \Omega$ are concentric balls with $\sigma \in (0,1)$, where $\alpha_{0}$ is the exponent given in Lemma \ref{lem.hol}.
\end{lemma}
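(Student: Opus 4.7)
The plan is to prove the estimate first when $u$ is a weak solution with $\mu \in L^{\infty}$ and then pass to SOLA by approximation. Along the approximating sequence $\{u_{j}\}$ from Definition~\ref{defsola}, the left-hand side $E(u;\sigma\rho)$ is lower semicontinuous (strong local $L^{p-1}$ convergence handles the average, Fatou handles the tail), while \eqref{limsupmuj} controls the measure on the right-hand side. Thus we may assume $u$ is a bounded-data weak solution. Fix concentric balls $B_{\sigma\rho} \subset B_{\rho} \subset \Omega$; the case $\sigma > 1/4$ is trivial by using $k = \mathcal{Q}_{B_{\rho}}(u)$ as a test value in the infimum defining $E(u;\sigma\rho)$ and absorbing the resulting $\sigma^{-n/(p-1)}$ factor into the constant, so we may assume $\sigma \le 1/4$. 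Following the setup preceding Lemma~\ref{basic.comp.2}, introduce the $\tilde{\mathcal{L}}_{u}$-harmonic replacement
\[ -\tilde{\mathcal{L}}_{u}v = 0 \text{ in } B_{\rho/2}(x_{0}), \qquad v = u \text{ in } \mathbb{R}^{n}\setminus B_{\rho/2}(x_{0}). \]

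Next, I sandwich $E(u;\sigma\rho)$ between the excess of $v$ and a comparison error. Using $k = \mathcal{Q}_{B_{\sigma\rho}}(v)$ from \eqref{def:Q} as a test value in $E(u;\sigma\rho)$ and exploiting $u \equiv v$ on $\mathbb{R}^{n}\setminus B_{\rho/2}$ to bound $\tail(u-v;\sigma\rho)$ by $c\sigma^{-n/(p-1)}\bigl(\fint_{B_{\rho/2}}|u-v|^{p-1}\bigr)^{1/(p-1)}$ yields
\[ E(u;\sigma\rho) \le cE(v;\sigma\rho) + c\sigma^{-n/(p-1)}\left(\fint_{B_{\rho/2}}|u-v|^{p-1}\,dx\right)^{1/(p-1)}. \]
Lemma~\ref{thm.ed.v} gives $E(v;\sigma\rho) \le c\sigma^{\alpha_{0}}E(v;\rho/4)$, and a symmetric computation with $k = \mathcal{Q}_{B_{\rho}}(u)$, splitting the tail of $v - \mathcal{Q}_{B_{\rho}}(u)$ across $B_{\rho/2}\setminus B_{\rho/4}$ and $\mathbb{R}^{n}\setminus B_{\rho/2}$, produces
\[ E(v;\rho/4) \le cE(u;\rho) + c\left(\fint_{B_{\rho/2}}|u-v|^{p-1}\,dx\right)^{1/(p-1)}. \]
Chaining these three displays and using $\sigma^{\alpha_{0}} \le \sigma^{-n/(p-1)}$ reduces everything to estimating the $L^{p-1}$ norm of $u - v$ on $B_{\rho/2}$.

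For $p \ge 2$, the comparison estimates already developed in \cite{KuMiSi,KuMiSi3} (directly applicable thanks to the equivalence \eqref{exc.equiv}) bound this quantity by $c[|\mu|(B_{\rho})/\rho^{n-sp}]^{1/(p-1)}$, giving \eqref{ed.u} with $\tau = n/(p-1)$. For $1 < p < 2$, Lemma~\ref{final.comp} applied with $\tilde{q} = p-1 \in (0,q_{0})$ and radius $\rho/2$ yields
\[ \left(\fint_{B_{\rho/2}}|u-v|^{p-1}\,dx\right)^{1/(p-1)} \le cM^{1/(p-1)} + c[E(u;\rho)]^{2-p}M, \qquad M \coloneqq \frac{|\mu|(B_{\rho/2})}{(\rho/2)^{n-sp}}. \]
Young's inequality with conjugate exponents $1/(2-p)$ and $1/(p-1)$ gives $[E(u;\rho)]^{2-p}M \le \varepsilon E(u;\rho) + c\varepsilon^{-(2-p)/(p-1)}M^{1/(p-1)}$. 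Choosing $\varepsilon = \sigma^{\alpha_{0}+n/(p-1)}$ allows the $\varepsilon E(u;\rho)$ contribution to be absorbed into the leading $c\sigma^{\alpha_{0}}E(u;\rho)$, while $M^{1/(p-1)} \le c[|\mu|(B_{\rho})/\rho^{n-sp}]^{1/(p-1)}$; the remaining factor collects to $c\sigma^{-\tau}$ with $\tau = \tau(n,s,p) > 0$ determined by the Young exponents, proving \eqref{ed.u}.

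The main obstacle is the nonlinear comparison term $[E(u;\rho)]^{2-p}M$ appearing in Lemma~\ref{final.comp}: it is peculiar to the subquadratic range and absent in the superquadratic theory, and one must reabsorb the power $E(u;\rho)^{2-p}$ against the homogeneous decay factor $\sigma^{\alpha_{0}}$ through a Young-type balance, accepting a worse but harmless $\sigma^{-\tau}$ prefactor on the measure term. The redesigned excess functional \eqref{excess.functional} at the $L^{p-1}$ scale is crucial for this strategy: when $1 < p \le 2n/(n+s)$ SOLA need not be locally integrable, so the classical excess \eqref{excess1} employed in \cite{KuMiSi} is simply unavailable in the strongly singular range.
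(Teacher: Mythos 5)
Your proposal is correct and follows essentially the same route as the paper's proof: harmonic replacement with respect to $\tilde{\mathcal{L}}_{u}$ in $B_{\rho/2}$, the quasi-triangle inequality for $E$, the homogeneous excess decay of Lemma~\ref{thm.ed.v}, the comparison bound of Lemma~\ref{final.comp}, and Young's inequality with $\varepsilon=\sigma^{\alpha_{0}+n/(p-1)}$ to absorb the singular term $[E(u;\rho)]^{2-p}$. The only difference is that you sketch the passage from the approximating weak solutions $u_{j}$ to the SOLA more briefly than the paper's Step~2 (which proves $E(u_{j};t)\to E(u;t)$ via careful elementary inequalities), but since only one-sided comparisons of the excesses and \eqref{limsupmuj} are needed, your version of this step is adequate.
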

\begin{proof}
Note that \eqref{ed.u} for $p \ge 2$ follows from \cite[Lemma~8.5.2]{KuMiSi3} via \eqref{exc.equiv}; we thus consider the case $1<p<2$ only. Moreover, without loss of generality, we may assume that $\sigma \in (0,1/4]$.

\textit{Step 1.}  
Let $\{u_{j}\}$ be an approximating sequence for $u$ with corresponding source terms $\mu_{j}$, as described in Definition \ref{defsola}. We consider the weak solution $v_{j}$ to
\begin{equation*}
\left\{
\begin{aligned}
-\tilde{\mathcal{L}}_{u_j}v_{j} & = 0 & \text{in } & B_{\rho/4}, \\
v_{j} & = u_{j} & \text{in } & \mathbb{R}^{n}\setminus B_{\rho/4}.
\end{aligned}
\right.
\end{equation*}
Since $v_{j} = u_{j}$ in $\mathbb{R}^{n}\setminus B_{\rho/4}$, 
we have for any $t \le \rho/4$
\begin{equation*}
E(u_{j}-v_{j};t) \le c\left(\frac{\rho}{t}\right)^{n/(p-1)}\left(\fint_{B_{\rho/4}}|u_{j}-v_{j}|^{p-1}\,dx\right)^{1/(p-1)}.
\end{equation*}
Using this inequality, along with Lemmas \ref{exc.property} and \ref{thm.ed.v}, we get
\begin{align*}
E(u_{j};\sigma \rho) & \le cE(v_{j};\sigma \rho) + cE(u_{j}-v_{j}; \sigma\rho) \\
& \le cE(v_{j};\sigma\rho) + c\sigma^{-n/(p-1)}\left(\fint_{B_{\rho/4}}|u_{j}-v_{j}|^{p-1}\,dx\right)^{1/(p-1)} \\
& \le c\sigma^{\alpha_{0}}E(v_{j};\rho/4) + c\sigma^{-n/(p-1)}\left(\fint_{B_{\rho/4}}|u_{j}-v_{j}|^{p-1}\,dx\right)^{1/(p-1)} \\
& \le c\sigma^{\alpha_{0}}E(u_{j};\rho/4) + c\sigma^{-n/(p-1)}\left(\fint_{B_{\rho/4}}|u_{j}-v_{j}|^{p-1}\,dx\right)^{1/(p-1)}.
\end{align*}
In order to estimate the last term in the right-hand side, 
we use Lemma \ref{final.comp} and Young's inequality with conjugate exponents $1/(2-p)$ and $1/(p-1)$, which give
\begin{align*}
\left(\fint_{B_{\rho/4}}|u_{j}-v_{j}|^{p-1}\,dx\right)^{1/(p-1)} & \le c\left[\frac{|\mu_{j}|(B_{\rho})}{\rho^{n-sp}}\right]^{1/(p-1)} + c[E(u_{j};\rho)]^{2-p}\left[\frac{|\mu_{j}|(B_{\rho})}{\rho^{n-sp}}\right] \\
& \le \varepsilon E(u_{j};\rho) + c_{\varepsilon}\left[\frac{|\mu_{j}|(B_{\rho})}{\rho^{n-sp}}\right]^{1/(p-1)}
\end{align*}
for any $\varepsilon \in (0,1)$. Choosing $\varepsilon = \sigma^{\alpha_{0} + n/(p-1)}$, we obtain \eqref{ed.u} for $u_{j}$.

\textit{Step 2.} We now show that 
\begin{equation}\label{exc.conv}
\lim_{j\to\infty} E(u_{j};t) = E(u;t)
\end{equation}
for any ball $B_{t} \subset \Omega$. 
Since
\begin{equation*}
E(u_{j};t) \le \left(\fint_{B_{t}}|u_{j}-\mathcal{Q}_{B_{t}}(u)|^{p-1}\,dx\right)^{1/(p-1)} + \tail(u_{j}-\mathcal{Q}_{B_{t}}(u);t),
\end{equation*}
we have
\[
\limsup_{j\to\infty}E(u_{j};t) \le E(u;t). 
\]
On the other hand, using the inequalities $(a+b)^\beta \le a^\beta +b^\beta$ and 
$(a+b)^{\tilde{\beta}} - a^{\tilde{\beta}} = \int_{a}^{a+b}\tilde{\beta} t^{\tilde{\beta}-1}\,dx \le \tilde{\beta}(a+b)^{\tilde{\beta}-1}b$
for any $a,b>0$, $\beta\in(0,1)$ and $\tilde{\beta}>1$, we get
\begin{align*}
& E(u;t) \le \left(\fint_{B_{t}}|u-\mathcal{Q}_{B_{t}}(u_{j})|^{p-1}\,dx\right)^{1/(p-1)} + \tail(u-\mathcal{Q}_{B_{t}}(u_{j});t) \\
& \le \left(\fint_{B_{t}}|u_{j}-\mathcal{Q}_{B_{t}}(u_{j})|^{p-1}\,dx + \fint_{B_{t}}|u-u_{j}|^{p-1}\,dx \right)^{1/(p-1)} \\
& \quad\; + \left([\tail(u_{j}-\mathcal{Q}_{B_{t}}(u_{j});t)]^{p-1} + [\tail(u-u_{j};t)]^{p-1}  \right)^{1/(p-1)}\\
 & \le \left(\fint_{B_{t}}|u_{j}-\mathcal{Q}_{B_{t}}(u_{j})|^{p-1}\,dx\right)^{1/(p-1)} + [\tail(u_{j}-\mathcal{Q}_{B_{t}}(u_{j});t)]\\
& \quad\;  + \frac{1}{p-1}\left(\fint_{B_{t}}|u_{j}-\mathcal{Q}_{B_{t}}(u_{j})|^{p-1}\,dx + \fint_{B_{t}}|u-u_{j}|^{p-1}\,dx\right)^{\frac{2-p}{p-1}}\fint_{B_{t}}|u-u_{j}|^{p-1}\,dx \\
& \quad\; + \frac{1}{p-1}\left([\tail(u_{j}-\mathcal{Q}_{B_{t}}(u_{j});t)]^{p-1} + [\tail(u-u_{j};t)]^{p-1}  \right)^{\frac{2-p}{p-1}}[\tail(u-u_{j};t)].
\end{align*}
Letting $j\to\infty$ in the last display, we obtain
\[
 \liminf_{j\to\infty}E(u_{j};t) \ge E(u;t). 
\]
We thus have \eqref{exc.conv}, and the proof of \eqref{ed.u} is complete.
\end{proof}

Now, we are ready to prove Theorem \ref{thmosc}. 
The theorem is in fact related to pointwise estimates for certain fractional sharp maximal functions of $u$. Here we define the {\it $R$-truncated nonlocal fractional sharp maximal function (with order $\alpha$)} of $u$ by
\begin{equation*}
\mathbf{N}^{\sharp}_{\alpha,R}[u](x) \coloneqq \sup_{0<r\le R}r^{-\alpha}E(u;x,r).
\end{equation*}

\begin{proof}[Proof of Theorem \ref{thmosc}] 
\textit{Step 1.} 
Let us first show that
\begin{equation}\label{nonlocal.max.est}
\mathbf{N}^{\sharp}_{\alpha,R}[u](x) \le c\left[\mathbf{M}_{sp-\alpha(p-1),R}[\mu](x)\right]^{1/(p-1)} + cR^{-\alpha}E(u;x,R)
\end{equation}
holds for any $0 \le \alpha < \alpha_{0}$ and any ball $B_{R}(x) \subset \Omega$, where $c=c(n,s,p,\Lambda,\alpha)>0$.

We fix any radius $\rho \in (0,R]$. Then, with $\sigma \in (0,1/2)$ being a free parameter to be chosen in a few lines, we apply Lemma \ref{excess.decay} to the concentric balls $B_{\sigma \rho}(x) \subset B_{\rho}(x)$.
Multiplying both sides of the resulting inequality by $(\sigma\rho)^{-\alpha}$, we have 
\begin{align*}
(\sigma\rho)^{-\alpha}E(u;x,\sigma\rho) & \le c_{*}\sigma^{\alpha_{0}-\alpha}\rho^{-\alpha}E(u;x,\rho) + c\sigma^{-\tau-\alpha}\rho^{-\alpha}\left[\frac{|\mu|(B_{\rho}(x))}{\rho^{n-sp}}\right]^{1/(p-1)} \\
& = c_{*}\sigma^{\alpha_{0}-\alpha}\rho^{-\alpha}E(u;x,\rho) + c\sigma^{-\tau-\alpha}\left[\frac{|\mu|(B_{\rho}(x))}{\rho^{n-sp+\alpha(p-1)}}\right]^{1/(p-1)},
\end{align*}
where $c$, $c_{*}$ and $\tau$ are positive constants depending only on $n$, $s$, $p$ and $\Lambda$. 
We now choose $\sigma = \sigma(n,s,p,\Lambda,\alpha)>0$ so small that 
$c_{*}\sigma^{\alpha_{0}-\alpha} = 1/2$, 
which yields
\begin{align*}
(\sigma\rho)^{-\alpha}E(u;x,\sigma\rho) & \le \frac{1}{2}\rho^{-\alpha}E(u;x,\rho) + c\left[\frac{|\mu|(B_{\rho}(x))}{\rho^{n-sp+\alpha(p-1)}}\right]^{1/(p-1)} \\
& \le \frac{1}{2}\mathbf{N}^{\sharp}_{\alpha,R}[u](x) + c\left[\mathbf{M}_{sp-\alpha(p-1),R}[\mu](x)\right]^{1/(p-1)}.
\end{align*}
Since $\rho \in (0, R]$ was arbitrary, we have
\begin{equation*}
\sup_{0 < r \le \sigma R}r^{-\alpha}E(u;x,r) \le \frac{1}{2}\mathbf{N}^{\sharp}_{\alpha,R}[u](x) + c\left[\mathbf{M}_{sp-\alpha(p-1),R}[\mu](x)\right]^{1/(p-1)}.
\end{equation*}
On the other hand, Lemma \ref{exc.property} (2) implies
\[ \sup_{\sigma R < r \le R}r^{-\alpha}E(u;x,r) \le c\sigma^{-\alpha-n/(p-1)}R^{-\alpha}E(u;x,R). \]
From the last two displays, we conclude with
\[ \mathbf{N}^{\sharp}_{\alpha,R}[u](x) \le \frac{1}{2}\mathbf{N}^{\sharp}_{\alpha,R}[u](x) + cR^{-\alpha}E(u;x,R) + c\left[\mathbf{M}_{sp-\alpha(p-1),R}[\mu](x)\right]^{1/(p-1)}, \]
which leads to \eqref{nonlocal.max.est}.

\textit{Step 2.} Let $B_{R} = B_{R}(x_{0}) \subset \Omega$ and $x,y \in B_{R/8}$ be as in the statement of the theorem; let $r$ be a radius such that $r \le R/8$. 
We start by integrating \eqref{ed.u} with respect to Haar measure and then making an elementary manipulation, to get
\[ \int_{\rho}^{r}E(u;x,\sigma t)\frac{dt}{t} \le c_{**}\sigma^{\alpha_{0}}\int_{\rho}^{r}E(u;x,t)\frac{dt}{t} + c\sigma^{-\tau}\int_{\rho}^{r}\left[\frac{|\mu|(B_{t}(x))}{t^{n-sp}}\right]^{1/(p-1)}\frac{dt}{t} \]
for any $\rho \in (0,r]$, where $c$ and $c_{**}$ are positive constants depending only on $n$, $s$, $p$ and $\Lambda$. 
Taking the constant $\sigma = \sigma(n,s,p,\Lambda)>0$ so small that
$c_{**}\sigma^{\alpha_{0}} = 1/2$,
and then changing variables, the above inequality becomes
\begin{equation*} 
\int_{\sigma\rho}^{\sigma r}E(u;x,t)\frac{dt}{t} \le \frac{1}{2}\int_{\rho}^{r}E(u;x,t)\frac{dt}{t} + c\int_{\rho}^{r}\left[\frac{|\mu|(B_{t}(x))}{t^{n-sp}}\right]^{1/(p-1)}\frac{dt}{t}.
\end{equation*}
We thus have
\begin{align*}
\int_{\sigma\rho}^{r}E(u;x,t)\frac{dt}{t} 
\le \frac{1}{2}\int_{\sigma\rho}^{r}E(u;x,t)\frac{dt}{t} 
+ \int_{\sigma r}^{r}E(u;x,t)\frac{dt}{t} + c\int_{\rho}^{r}\left[\frac{|\mu|(B_{t}(x))}{t^{n-sp}}\right]^{1/(p-1)}\frac{dt}{t}
\end{align*}
and, after reabsorbing terms, 
\begin{equation*}
\int_{\sigma\rho}^{r}E(u;x,t)\frac{dt}{t} \le 2\int_{\sigma r}^{r}E(u;x,t)\frac{dt}{t} + 2c\int_{\rho}^{r}\left[\frac{|\mu|(B_{t}(x))}{t^{n-sp}}\right]^{1/(p-1)}\frac{dt}{t}.
\end{equation*}
Using Lemma \ref{exc.property} (2) and a few elementary manipulations, we arrive at
\begin{equation}\label{int.result}
\int_{\rho}^{r}E(u;x,t)\frac{dt}{t} \le cE(u;x,r) + c\int_{\rho}^{r}\left[\frac{|\mu|(B_{t}(x))}{t^{n-sp}}\right]^{1/(p-1)}\frac{dt}{t}
\end{equation}
for some $c=c(n,s,p,\Lambda)>0$.

We now fix any two radii $\rho,\tilde{\rho}$ satisfying $0 < \tilde{\rho} \le \rho/2 \le r/8$, and then choose $k\in\mathbb{N}$ and $\theta \in (1/4,1/2]$ such that $\tilde{\rho} = \theta^{k}\rho$. 
We recall \eqref{def.P}. Then, since
\begin{align*} 
\big| \mathcal{P}_{B_{\theta^{j}\rho}(x)}(u) - \mathcal{P}_{B_{\theta^{j+1}\rho}(x)}(u) \big| \overset{\eqref{av.min}}&{\le} c\left(\fint_{B_{\theta^{j+1}\rho}(x)}\big|u-\mathcal{P}_{B_{\theta^{j}\rho}(x)}(u)\big|^{p-1}\,dx\right)^{1/(p-1)} \\
 & \le c\theta^{-n/(p-1)}A(u;x,\theta^{j}\rho),
\end{align*}
we have
\begin{align*}
\big| \mathcal{P}_{B_{\rho}(x)}(u) - \mathcal{P}_{B_{\tilde{\rho}}(x)}(u) \big| & \le \sum_{j=0}^{k-1}\big| \mathcal{P}_{B_{\theta^{j}\rho}(x)}(u) - \mathcal{P}_{B_{\theta^{j+1}\rho}(x)}(u) \big| \\
& \le c\theta^{-n/(p-1)}\sum_{j=0}^{k-1}A(u;x,\theta^{j}\rho)
\le c \theta^{-n/(p-1)}\sum_{j=0}^{k-1}E(u;x,\theta^{j}\rho).
\end{align*}
We also recall the elementary inequalities
\begin{align*}
\sum_{j=0}^{k-1}E(u;x,\theta^{j}\rho) & = \frac{1}{\log(1/\theta)}\sum_{j=0}^{k-1}\int_{\theta^{j}\rho}^{\theta^{j-1}}E(u;x,\theta^{j}\rho)\frac{dt}{t} \\
& \le c\sum_{j=0}^{k-1}\int_{\theta^{j}\rho}^{\theta^{j-1}\rho}E(u;x,t)\frac{dt}{t} \le c\int_{\tilde{\rho}}^{\rho/\theta}E(u;x,t)\frac{dt}{t}
\end{align*}
and
\begin{equation*}
\int_{\rho}^{r}\left[\frac{|\mu|(B_{t}(x))}{t^{n-sp}}\right]^{1/(p-1)}\frac{dt}{t}  \le r^{\alpha}\int_{\rho}^{r}\left[\frac{|\mu|(B_{t}(x))}{t^{n-sp+\alpha(p-1)}}\right]^{1/(p-1)}\frac{dt}{t} 
 \le r^{\alpha}\mathbf{W}^{R}_{s-\alpha(p-1)/p,p}[\mu](x).
\end{equation*}
Then, using \eqref{int.result}, we have
\begin{equation}\label{ave.cauchy}
\big| \mathcal{P}_{B_{\rho}(x)}(u) - \mathcal{P}_{B_{\tilde{\rho}}(x)}(u) \big| \le c\int_{\tilde{\rho}}^{\rho/\theta}E(u;x,t)\frac{dt}{t}
\end{equation}
and so
\begin{equation}\label{ave.pot}
    \big|\mathcal{P}_{B_{\rho}(x)}(u)-\mathcal{P}_{B_{\tilde{\rho}}(x)}(u)\big| \le cE(u;x,r) + cr^{\alpha}\mathbf{W}^{R}_{s-\alpha(p-1)/p,p}[\mu](x).
\end{equation}
Here we note that, 
by the absolute continuity of the integral, \eqref{ave.cauchy} implies that $\{\mathcal{P}_{B_{\rho}(x)}(u)\}$ is a Cauchy net. In turn, the limit 
\[ u(x) \coloneqq \lim_{\rho\rightarrow0}\mathcal{P}_{B_{\rho}(x)}(u) \] 
exists and therefore defines the precise representative of $u$ at $x$. Now we let $\tilde{\rho}\rightarrow 0$ in \eqref{ave.pot} and take $\rho = r/4$ in order to have
\begin{equation*}
\big|\mathcal{P}_{B_{r/4}(x)}(u) - u(x) \big| \le cE(u;x,r) + cr^{\alpha}\mathbf{W}^{R}_{s-\alpha(p-1)/p,p}[\mu](x).
\end{equation*}
Combining this with the elementary estimate
\begin{equation*}
\big| \mathcal{P}_{B_{r}(x)}(u) - \mathcal{P}_{B_{r/4}(x)}(u) \big| \overset{\eqref{av.min}}{\le} cA(u;x,r) \le cE(u;x,r),
\end{equation*}
we arrive at
\[ \left|u(x) - \mathcal{P}_{B_{r}(x)}(u) \right| \le cE(u;x,r) + cr^{\alpha}\mathbf{W}^{R}_{s-\alpha(p-1)/p,p}[\mu](x). \]
Writing this estimate with $y$ instead of $x$, i.e.,
\[ \left|u(y) - \mathcal{P}_{B_{r}(y)}(u) \right| \le cE(u;y,r) + cr^{\alpha}\mathbf{W}^{R}_{s-\alpha(p-1)/p,p}[\mu](y) \]
and merging the last two displays, we obtain
\begin{align*}
|u(x)-u(y)| & \le c\left|\mathcal{P}_{B_{r}(x)}(u)-\mathcal{P}_{B_{r}(y)}(u)\right| + cE(u;x,r) + cE(u;y,r) \\
& \quad + cr^{\alpha}\left[\mathbf{W}^{R}_{s-\alpha(p-1)/p,p}[\mu](x) + \mathbf{W}^{R}_{s-\alpha(p-1)/p,p}[\mu](y)\right].
\end{align*}
We now take $r = |x-y|/2$. Then, since $B_{r}(y) \subset B_{3r}(x)$, we have
\begin{align*}
& \left|\mathcal{P}_{B_{r}(x)}(u)-\mathcal{P}_{B_{r}(y)}(u)\right|  \le \left|\mathcal{P}_{B_{r}(x)}(u)-\mathcal{P}_{B_{3r}(x)}(u)\right| + \left|\mathcal{P}_{B_{3r}(x)}(u)-\mathcal{P}_{B_{r}(y)}(u)\right|  \\
\overset{\eqref{av.min}}&{\le} c\left(\fint_{B_{r}(x)}\left|u-\mathcal{P}_{B_{3r}(x)}(u)\right|^{p-1}\,d\tilde{x}\right)^{1/(p-1)} 
+ c\left(\fint_{B_{r}(y)}\left|u-\mathcal{P}_{B_{3r}(x)}\right|^{p-1}\,d\tilde{x}\right)^{1/(p-1)}  \\
& \le cA(u;x,3r) \le cE(u;x,3r)
\end{align*}
and
\begin{equation*}
E(u;x,r) + E(u;y,r) \le cE(u;x,3r).
\end{equation*}
Therefore, we obtain
\begin{equation*}
|u(x)-u(y)|  \le cr^{\alpha}\left[\mathbf{W}^{R}_{s-\alpha(p-1)/p,p}[\mu](x) + \mathbf{W}^{R}_{s-\alpha(p-1)/p,p}[\mu](y)\right] + cE(u;x,3r).
\end{equation*}
To estimate the second term in the right-hand side, we proceed as
\begin{equation*} 
\begin{aligned}
E(u;x,3r) & \le cr^{\alpha}\mathbf{N}^{\sharp}_{\alpha,R/2}[u](x) \\
 \overset{\eqref{nonlocal.max.est}}&{\le} cr^{\alpha}\left\{\left[\mathbf{M}_{sp-\alpha(p-1),R/2}[\mu](x)\right]^{1/(p-1)} + R^{-\alpha}E(u;x,R/2) \right\}.
\end{aligned}
\end{equation*}
Here we have
\begin{equation*}
E(u;x,R/2)  \le cE(u;x_{0},R) 
 \le c\left(\fint_{B_{R}(x_{0})}|u|^{p-1}\,d\tilde{x}\right)^{1/(p-1)} + c\tail(u;x_{0},R)
\end{equation*}
and, by Lemma \ref{max.ftn.pot},
\begin{equation*}
\left[\mathbf{M}_{sp-\alpha(p-1),R/2}[\mu](x)\right]^{1/(p-1)} \le c\mathbf{W}^{R}_{s-\alpha(p-1)/p,p}[\mu](x).
\end{equation*}
Combining the above four displays, we conclude with \eqref{univ.wolff}. 
\end{proof}

\begin{proof}[Proof of Theorem \ref{thmupper1}]
We revisit the proof of Theorem \ref{thmosc}, with $r$ being replaced by $R$.
Letting $\rho = R/4$ and $\tilde{\rho} = \theta^{k}\rho$, we obtain
\begin{align*}
\hspace{-1mm} \left(\fint_{B_{\tilde{\rho}}(x)}|u|^{p-1}\,d\tilde{x}\right)^{1/(p-1)} 
& \le c\left(\fint_{B_{\tilde{\rho}}(x)}\big|u-\mathcal{P}_{B_{\tilde{\rho}}(x)}(u)\big|^{p-1}\,d\tilde{x}\right)^{1/(p-1)} + c\big|\mathcal{P}_{B_{\tilde{\rho}}(x)}(u)\big| \\
& \le cE(u;x,\tilde{\rho}) + c\big|\mathcal{P}_{B_{\tilde{\rho}}(x)}(u) - \mathcal{P}_{B_{R/4}(x)}(u)\big| + c\big|\mathcal{P}_{B_{R/4}(x)}(u)\big| \\
\overset{\eqref{ave.cauchy}}&{\le} cE(u;x,R) + c\int_{\tilde{\rho}}^{R}E(u;x,t)\,\frac{dt}{t} + c\big|\mathcal{P}_{B_{R/4}(x)}(u)\big| \\
& \le c\mathbf{W}^{R}_{s,p}[\mu](x) + c\left(\fint_{B_{R}(x)}|u|^{p-1}\,d\tilde{x}\right)^{1/(p-1)} + c\tail(u;x,R),
\end{align*}
where for the last inequality, we have used \eqref{int.result} as well as the facts that
\begin{equation*}
\int_{\tilde{\rho}}^{R}\left[\frac{|\mu|(B_{t}(x))}{t^{n-sp}}\right]^{1/(p-1)}\,\frac{dt}{t} \le \mathbf{W}^{R}_{s,p}[\mu](x)
\end{equation*}
and that
\begin{equation*}
\big|\mathcal{P}_{B_{R/4}(x)}(u)\big| \overset{\eqref{av.min}} {\le} c\left(\fint_{B_{R/4}(x)}|u|^{p-1}\,d\tilde{x}\right)^{1/(p-1)} \le c\left(\fint_{B_{R}(x)}|u|^{p-1}\,d\tilde{x}\right)^{1/(p-1)}.
\end{equation*}
Since $k$ was arbitrary, we deduce
\begin{align*} 
\left(\fint_{B_{r}(x)}|u|^{p-1}\,d\tilde{x}\right)^{1/(p-1)} 
 \le c\mathbf{W}^{R}_{s,p}[\mu](x) + c\left(\fint_{B_{R}(x)}|u|^{p-1}\,d\tilde{x}\right)^{1/(p-1)} + c\tail(u;x,R)
\end{align*}
for any $r \in (0, R]$, and the proof of Theorem \ref{thmupper1} is complete.
\end{proof}

\begin{remark}
In the above proof, we actually proved that
\begin{equation*}
\left(\mathbf{M}_{R}[|u|^{p-1}](x)\right)^{1/(p-1)} \le c\mathbf{W}^{R}_{s,p}[\mu](x) + c\left(\fint_{B_{R}(x)}|u|^{p-1}\,d\tilde{x}\right)^{1/(p-1)} + c\tail(u;x,R)
\end{equation*}
holds whenever $B_{R}(x) \subset \Omega$ and the right-hand side is finite, where $\mathbf{M}_{R}$ is the $R$-truncated Hardy--Littlewood maximal operator and $c=c(n,s,p,\Lambda)>0$.
\end{remark}

\subsection{Global pointwise estimates for nonnegative SOLA} 
\begin{proof}[Proof of Corollary \ref{2hvTH4}] 
The lower bound in \eqref{global.wolff} directly follows from Theorem \ref{thmlower}. We now prove the upper bound. Fix any $x\in\Omega$, and set  $B=B_{R_0}(x)$ with $R_0=\mathrm{diam}(\Omega)$. 
Let $\{u_{j}\}$ be an approximating sequence for $u$ with corresponding source terms $\mu_{j}$, as described in Definition \ref{defsola}. We consider the weak solution $\tilde{u}_{j}$ to
\begin{equation*}
\left\{
\begin{aligned}
-\mathcal{L}_\Phi \tilde{u}_{j} &=\mu_j &\text{in } & B,\\
\tilde{u}_{j}&=0&\text{in }&\mathbb{R}^n\setminus B.
\end{aligned}
\right.
\end{equation*}
Since $0 = u_j \le \tilde{u}_j$ a.e. in $\mathbb{R}^n \setminus \Omega$, we can take $\varphi = (u_{j} - \tilde{u}_{j})_{+}$ as a test function in the equations satisfied by $u_j$ and $\tilde{u}_j$. Then, arguing exactly as in the proof of Proposition \ref{pro1}, with $\mu_{1,j} = \mu_{2,j} = \mu_j$, we see that $\{\tilde{u}_j\}$ converges to a SOLA $\tilde{u}$ to
\begin{equation*}
\left\{
\begin{aligned}
-\mathcal{L}_\Phi \tilde{u} &=\mu &\text{in } & B,\\
\tilde{u} &=0&\text{in }&\mathbb{R}^n\setminus B.
\end{aligned}
\right.
\end{equation*}
satisfying $u \leq \tilde{u}$ a.e. in $\Omega$. 
Also, Theorem \ref{thmupper1} implies
\begin{equation}\label{es1}
\tilde{u}(x)\leq c{\bf W}_{s,p}^{R_0}[\mu](x) + c\left(\fint_{B}\tilde{u}^{p-1}\,dy\right)^{1/(p-1)}
\end{equation}
for some $c=c(n,s,p,\Lambda)>0$. 
On the other hand, by \eqref{global} and \eqref{sobo0},
\begin{equation*}
\left(\fint_{B}\tilde{u}^{p-1}\,dy\right)^{1/(p-1)}\leq c\left[\frac{\mu(B)}{R_0^{n-sp}}\right]^{1/(p-1)}.
\end{equation*}
Combining this and \eqref{es1}, we get 
\begin{equation*}
u(x)\leq \tilde{u}(x) \leq c{\bf W}_{s,p}^{R_0}[\mu](x) + c\left[\frac{\mu(B)}{R_0^{n-sp}}\right]^{1/(p-1)}\leq c{\bf W}_{s,p}^{2R_0}[\mu](x)
\end{equation*}
for a.e. $x\in \Omega$, and the proof is complete. 
\end{proof}

\begin{proof}[Proof of Corollary \ref{2hvTH4-}] 
As described in Definition \ref{defsola2}, there exist a sequence of open subsets $\{\Omega_j\}_{j=1} ^\infty$ in $\mathbb{R}^n$ and a sequence of weak solutions $\{u_j\}_{j=1}^{\infty} \subset \mathbb{W}^{s,p}(\Omega_{j})$ to the Dirichlet problems 
\begin{equation*}
\left\{
\begin{aligned}
-\mathcal{L}_\Phi u_j&=\mu_{j} &\text{in }&\Omega_j,\\
u_j&=0&\text{in }&\mathbb{R}^n\setminus\Omega_j,
\end{aligned}
\right.
\end{equation*}
such that $\{u_j\}$ converges to $u$ a.e in $\mathbb{R}^n$ and locally in $L^q(\mathbb{R}^n)$ for any $q \in (0,\bar{q})$. Here the sequence $\{\mu_j\}_{j=1}^\infty\subset L^{\infty}(\mathbb{R}^n) \cap L^{1}(\mathbb{R}^{n})$ converges to $\mu$ in  $\mathbb{R}^n$ weakly in the sense of measure with \eqref{limsupmuj}. 
By Corollary \ref{2hvTH4}, we have 
\begin{equation*}
\frac{1}{C_{0}}{\bf W}_{s,p}^{\mathrm{dist}(x,\partial\Omega_j)/8}[\mu_j](x) \leq u_j(x) \leq C_{0}{\bf W}_{s,p}[\mu_{j}](x) \quad \text{for a.e. } x\in \Omega_j,
\end{equation*}  
which implies  for any $j>j_1>j_0$
\begin{equation*}
\frac{1}{C_{0}}{\bf W}_{s,p}^{\mathrm{dist}(\Omega_{j_0},\partial\Omega_{j_1})/8}[\mu_j](x) \leq u_j(x) \leq C_{0}{\bf W}_{s,p}[\mu_{j}](x) \quad \text{for a.e. } x\in \Omega_{j_0}. 
\end{equation*}  
Letting $j\to\infty$ and recalling the convergence properties of $\{u_j\}$ and $\{\mu_j\}$, we get 
\begin{equation*}
\frac{1}{C_{0}}{\bf W}_{s,p}^{\mathrm{dist}(\Omega_{j_0},\partial\Omega_{j_1})/8}[\mu](x) \leq u(x) \leq C_{0}{\bf W}_{s,p}[\mu](x) \quad \text{for a.e. } x\in \Omega_{j_0}. 
\end{equation*}  
Finally, letting $j_1\to \infty$ and then $j_0\to \infty$, we conclude with \eqref{belowesRn}. 
\end{proof}

\section{Nonlocal equations of Lane--Emden type}

\subsection{Auxiliary lemmas}
We obtain pointwise estimates for a sequence of functions satisfying recurrence inequalities involving Wolff potentials. We start with the case of power function $P(u)=u^\gamma$.

\begin{lemma}\label{th1a} Let $\gamma>p-1$, $C_{*}>0$ and $\mu\in \mathcal{M}^+(\mathbb R^n)$ with $\mathrm{supp}\,\mu \subset B_{R}(0)$ for some $R>1$. 
There exists a small constant $\delta>0$, depending only on $n$, $s$, $p$, $\gamma$, $C_{*}$ and $R$, such that if the inequality 
\begin{equation}\label{th1a-2}
\mu(K)\leq \delta\mathrm{Cap}_{\mathbf{G}_{s p},\frac{\gamma}{\gamma-p+1}}(K)
\end{equation}
holds for any compact set $K\subset\mathbb{R}^n$, then the following holds:
\begin{itemize}
\item[(1)] There exists a constant $C=C(n,s,p,\gamma,R,C_{*})>0$ such that
\begin{equation}\label{th1a-3}
\int_{K}\left({\bf W}_{s ,p}^{R}[\mu ](x)\right)^{\gamma}\,dx \le C\mathrm{Cap}_{\mathbf{G}_{sp},\frac{\gamma}{\gamma-p+1}}(K)
\end{equation}
for any compact set $K \subset \mathbb{R}^{n}$.
\item[(2)] For any $k \in \mathbb{N}, i$f $\{u_m\}_{m=0}^{k}$ is a sequence of nonnegative measurable functions in $\mathbb{R}^n$ that satisfies 
\begin{equation}\label{th1a-1}
\begin{aligned}
u_{m} \in L^{\gamma}_{\mathrm{loc}}(\mathbb{R}^{n}) \ \ \text{and} \ \ u_{m + 1}& \leq C_{*} {\bf W}_{s ,p}^{R}[u_{m}^{\gamma} + \mu]\ \ \ \text{for all}\ \  0 \le m \le k-1,\\
u_0 & \leq C_{*} {\bf W}_{s ,p}^{R}[\mu ],
\end{aligned}
\end{equation}
then 
\begin{equation}\label{th1a-4}
u_{k} \le \frac{\gamma \max\left\{2^{\frac{2-p}{p-1}},1\right\}}{\gamma-p+1} C_{*} {\bf W}_{s,p}^{R}[\mu ] \ \ \  \text{a.e. in } B_{2R},
\end{equation}
hence $u_{k} \in L^{\gamma}_{\mathrm{loc}}(\mathbb{R}^{n})$. 
\end{itemize}
\end{lemma}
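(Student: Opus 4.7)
The plan is to derive both parts from the four-way equivalence in Proposition~\ref{241020148} combined with the flexibility from Remark~\ref{rmk26}, which lets the constants $C_1,\dots,C_4$ there be made simultaneously small once $\delta$ is taken small. Part (1) comes almost for free: the hypothesis \eqref{th1a-2} is exactly condition (1) of Proposition~\ref{241020148} with $C_1=\delta$, so condition (2) gives
\[
\int_K \big({\bf W}^{4R}_{s,p}[\mu]\big)^{\gamma}\,dy \le C_2\, \mathrm{Cap}_{\mathbf{G}_{sp},\gamma/(\gamma-p+1)}(K)
\]
for any compact $K\subset\mathbb{R}^n$ with $C_2$ as small as needed, and since ${\bf W}^R_{s,p}[\mu]\le {\bf W}^{4R}_{s,p}[\mu]$ pointwise this already yields \eqref{th1a-3}.

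For part (2), I would argue by induction on $m\in\{0,1,\dots,k\}$ that
\[
u_m \le A\, C_{*}\, {\bf W}^R_{s,p}[\mu] \quad \text{a.e.\ in } B_{2R}, \qquad A \coloneqq \frac{\gamma c_p}{\gamma-p+1},\ c_p \coloneqq \max\{2^{(2-p)/(p-1)},1\}.
\]
The base case $m=0$ is immediate because $A\ge c_p\ge 1$. For the inductive step, the elementary inequality $(a+b)^{1/(p-1)}\le c_p\big(a^{1/(p-1)}+b^{1/(p-1)}\big)$, applied pointwise inside the Wolff integral, yields $u_{m+1} \le c_p C_{*}\big({\bf W}^R_{s,p}[u_m^{\gamma}] + {\bf W}^R_{s,p}[\mu]\big)$. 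Using $u_m^\gamma \le (AC_*)^\gamma ({\bf W}^R_{s,p}[\mu])^\gamma$ from the inductive hypothesis and the homogeneity ${\bf W}^R_{s,p}[cf]=c^{1/(p-1)}{\bf W}^R_{s,p}[f]$, I will get
\[
{\bf W}^R_{s,p}[u_m^{\gamma}] \le (AC_{*})^{\gamma/(p-1)}\,{\bf W}^R_{s,p}\big[({\bf W}^R_{s,p}[\mu])^{\gamma}\big].
\]
The pivotal step is then the self-improving bound ${\bf W}^R_{s,p}\big[({\bf W}^R_{s,p}[\mu])^{\gamma}\big] \le c_4\, {\bf W}^R_{s,p}[\mu]$ a.e.\ in $B_{2R}$, extracted from Proposition~\ref{241020148}(4) (with the radius issue handled below). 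Plugging back, $u_{m+1}\le c_p\big[(AC_{*})^{\gamma/(p-1)}c_4+1\big]C_{*}{\bf W}^R_{s,p}[\mu]$, so I will choose $\delta$ (hence $c_4$ via Remark~\ref{rmk26}) so small that
\[
c_4 \le \frac{p-1}{(\gamma-p+1)\,(AC_{*})^{\gamma/(p-1)}},
\]
which is exactly the condition $c_p\big[(AC_*)^{\gamma/(p-1)}c_4+1\big]\le A$ and closes the induction. This computation is also what pinpoints the specific constant $A=\gamma c_p/(\gamma-p+1)$ in \eqref{th1a-4}.

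The hard part will be the self-improving inequality on $B_{2R}$, because Proposition~\ref{241020148}(4) is formulated with truncation radius $4R$, while the lemma insists on radius $R$. The support condition $\mathrm{supp}(\mu)\subset B_R(0)$ is what bridges this gap: for $x\in B_{2R}$ and $t\in(R,4R)$ one has $[\mu(B_t(x))/t^{n-sp}]^{1/(p-1)} \le [\mu(\mathbb{R}^n)/R^{n-sp}]^{1/(p-1)}$, and the capacity hypothesis \eqref{th1a-2} with $\delta$ small forces this additional tail to be controlled by ${\bf W}^R_{s,p}[\mu]$ after the capacity-to-potential conversion of Proposition~\ref{241020148}. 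It is precisely here that the confinement of the conclusion to $B_{2R}$, rather than all of $\mathbb{R}^n$, becomes essential, and where the constant $c_4$ acquires its $R$-dependence.
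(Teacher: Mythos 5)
Your argument is essentially the paper's own proof. Part \textbf{(1)} is identical: \eqref{th1a-2} is \eqref{241020144}, so \eqref{241020144*} of Proposition~\ref{241020148} gives \eqref{th1a-3} after the trivial monotonicity $\mathbf{W}^{R}_{s,p}[\mu]\le \mathbf{W}^{4R}_{s,p}[\mu]$. Part \textbf{(2)} is also the same induction: the paper runs the recursion $c_{m+1}=C_{*}\max\{2^{(2-p)/(p-1)},1\}\,(c_m^{\gamma/(p-1)}C_4+1)$, $c_0=C_{*}$, and closes it under the smallness condition \eqref{pf22} on $C_4$, which after unwinding your constant $A$ is exactly your requirement $c_4\le (p-1)(\gamma-p+1)^{-1}(AC_{*})^{-\gamma/(p-1)}$; the smallness of $C_4$ is obtained from Remark~\ref{rmk26}, as you propose.

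The only point where you go beyond the paper is the ``hard part'' you isolate, and the bridge you sketch there does not work as stated. You claim that, for $x\in B_{2R}$, the annular piece $\int_R^{4R}\bigl[\mu(B_t(x))/t^{n-sp}\bigr]^{1/(p-1)}\,dt/t$ of $\mathbf{W}^{4R}_{s,p}[\mu](x)$ is ``controlled by $\mathbf{W}^{R}_{s,p}[\mu](x)$'' once $\delta$ is small. No choice of $\delta$ can make this pointwise claim true on all of $B_{2R}$: if, say, $\mathrm{supp}\,\mu\subset B_{R/10}(0)$ and $x\in B_{2R}$ satisfies $\mathrm{dist}(x,\mathrm{supp}\,\mu)\ge R$, then $\mathbf{W}^{R}_{s,p}[\mu](x)=0$ while $\mathbf{W}^{4R}_{s,p}[\mu](x)>0$, and for the same reason the $R$-truncated self-improving inequality $\mathbf{W}^{R}_{s,p}\bigl[(\mathbf{W}^{R}_{s,p}[\mu])^{\gamma}\bigr]\le c_4\,\mathbf{W}^{R}_{s,p}[\mu]$ fails at such points (the left-hand side is positive there). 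The comparison $\mathbf{W}^{4R}_{s,p}[\mu]\le c\,\mathbf{W}^{R}_{s,p}[\mu]$ is available only at points where $B_R(x)$ captures a fixed portion of the total mass, e.g. where $B_R(x)\supset\mathrm{supp}\,\mu$, which is precisely the situation in the application (Theorem~\ref{2hvMT3}, with $R=2\,\mathrm{diam}(\Omega)$ and $x\in\Omega$). The paper does not attempt your reduction at all: it simply applies \eqref{241020146} as though it were stated with truncation radius $R$ (the inner replacement $\mathbf{W}^{R}\le\mathbf{W}^{4R}$ is harmless by monotonicity of the truncated potential in the measure; the replacement on the right-hand side is left tacit). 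So you correctly located a real subtlety, but the justification you propose is not valid in the generality you claim; a clean fix is to carry $\mathbf{W}^{4R}_{s,p}$ through the recursion (adjusting the constant in \eqref{th1a-4}), or to restrict the pointwise conclusion to points where the support condition genuinely yields the comparison.
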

\begin{proof} Assertion (1) follows from Proposition \ref{241020148}; note that \eqref{241020144} is satisfied by \eqref{th1a-2}. Hence, \eqref{241020144*} implies \eqref{th1a-3}. 

We now prove (2). Set $c_0=C_{*}$, then we have from \eqref{241020146} and \eqref{th1a-1} that 
\begin{align*}
u_{1} & \leq C_{*}{\bf W}^{R}_{s ,p}[u_0^\gamma+ \mu ]\\
& \leq  C_{*} \max\left\{2^{\frac{2-p}{p-1}},1\right\} \left({\bf W}^{R}_{s ,p}[u_0^\gamma]+ {\bf W}^{R}_{s ,p}[\mu ] \right)\\
& \leq  C_{*} \max\left\{2^{\frac{2-p}{p-1}},1\right\} \left(c_0^{\gamma/(p-1)}{\bf W}^{R}_{s ,p}\left[\left({\bf W}^{R}_{s ,p}[\mu]\right)^\gamma\right]+ {\bf W}^{R}_{s ,p}[\mu ] \right)\\
&\leq c_1{\bf W}^{R}_{s ,p}[\mu], \quad \text{where}\quad c_1=C_{*}\max\left\{2^{\frac{2-p}{p-1}},1\right\} \left(c_0^{\gamma/(p-1)}C_4+1\right)
\end{align*}
and that, for $m=1,2,\dots, k-1$,
\[
u_{m+1}\leq c_{m+1}{\bf W}^{R}_{s ,p}[\mu],\quad \text{where}\quad c_{m+1}=C_{*}\max\left\{2^{\frac{2-p}{p-1}},1\right\}   \left(c_m^{\gamma/(p-1)}C_4+1\right).
\]
Therefore, if $C_4>0$ satisfies 
\begin{equation}\label{pf22}
C_4\leq \left(\frac{\gamma-p+1}{\gamma C_{*}\max\left\{2^{\frac{2-p}{p-1}},1\right\} }\right)^{\gamma/(p-1)}\left(\frac{p-1}{\gamma-p+1}\right).
\end{equation}
then 
\[
c_m\leq \frac{\gamma \max\left\{2^{\frac{2-p}{p-1}},1\right\}}{\gamma-p+1}C_{*} \quad \text{for all } m \in \{0\}\cup\mathbb{N}.
\]
In view of Remark \ref{rmk26}, we can assure that \eqref{pf22} holds by taking $\delta>0$ sufficiently small in \eqref{th1a-2}, which eventually leads to \eqref{th1a-4}.
\end{proof}

\begin{lemma}\label{th1b} 
Let $\mu\in \mathcal{M}^+(\mathbb{R}^n)$ and $\gamma>p-1$, and assume that there exists a constant $\delta>0$ such that the inequality 
\begin{equation*}
\mu(K)\leq \delta\mathrm{Cap}_{\mathbf{I}_{s p},\frac{\gamma}{\gamma-p+1}}(K)
\end{equation*}
holds for any compact set $K\subset\mathbb{R}^n$. Then there exists a constant $C=C(\delta)>0$ such that
\begin{equation*}
 \int_{\mathbb{R}^{n}}\left(\mathbf{W}_{s,p}[\mu](x)\right)^{\gamma}\,dx \le C\mu(\mathbb{R}^{n}).
\end{equation*}
\end{lemma}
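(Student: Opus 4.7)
The plan is to derive this estimate directly from Proposition~\ref{241020147}, viewing the hypothesis as condition (1) of that proposition with $C_1 = \delta$, and then passing condition (3) to the limit as the truncating ball exhausts $\mathbb{R}^n$.

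More precisely, first I would invoke the implication $(1) \Longrightarrow (3)$ of Proposition~\ref{241020147}: the capacitary hypothesis
\[
\mu(K) \le \delta\,\mathrm{Cap}_{\mathbf{I}_{sp},\frac{\gamma}{\gamma-p+1}}(K)
\]
for every compact $K \subset \mathbb{R}^n$ yields a constant $C_3 = C_3(\delta,n,s,p,\gamma)>0$ such that
\[
\int_{\mathbb{R}^n}\!\bigl(\mathbf{W}_{s,p}[\chi_{B_t(x)}\mu](y)\bigr)^{\gamma}\,dy \le C_3\,\mu(B_t(x))
\]
for every ball $B_t(x) \subset \mathbb{R}^n$.

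Next, I would specialize to $x=0$ and let $t \to \infty$. On the right-hand side, $\mu(B_t(0)) \uparrow \mu(\mathbb{R}^n)$ monotonically. On the left-hand side, I would use the pointwise monotone convergence
\[
\mathbf{W}_{s,p}[\chi_{B_t(0)}\mu](y) = \int_{0}^{\infty}\!\left[\frac{\mu(B_r(y)\cap B_t(0))}{r^{n-sp}}\right]^{\frac{1}{p-1}}\frac{dr}{r} \;\uparrow\; \mathbf{W}_{s,p}[\mu](y) \qquad \text{as } t \to \infty,
\]
which follows from the monotone convergence theorem applied to the inner measure $\mu(B_r(y)\cap B_t(0)) \uparrow \mu(B_r(y))$. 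A second application of monotone convergence (on the outer integral in $y$) then gives
\[
\int_{\mathbb{R}^n}\bigl(\mathbf{W}_{s,p}[\mu](y)\bigr)^{\gamma}\,dy \le C_3\,\mu(\mathbb{R}^n),
\]
which is the desired bound with $C := C_3$.

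There is really no obstacle here beyond bookkeeping; the whole content is the implication $(1)\Longrightarrow(3)$ of Proposition~\ref{241020147}, together with a harmless monotone limit. The one point worth a brief sentence is ensuring that the constant $C_3$ from Proposition~\ref{241020147} genuinely depends only on $\delta$ (and the fixed parameters $n,s,p,\gamma$), which is clear from the proposition's statement since $C_3$ is produced from $C_1 = \delta$.
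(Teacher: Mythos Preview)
Your proof is correct and follows essentially the same approach as the paper: invoke the implication $(1)\Longrightarrow(3)$ of Proposition~\ref{241020147} to obtain the estimate on balls, then let the ball exhaust $\mathbb{R}^n$ via monotone convergence. The paper's version is slightly more terse but identical in content.
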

\begin{proof}
By (1) and (3) of Proposition \ref{241020147}, we have
\[ \int_{\mathbb{R}^{n}}\left(\mathbf{W}_{s,p}[\chi_{B_{t}(y)}\mu](x)\right)^{\gamma}\,dx \le C\mu(B_{t}(y)) \le C\mu(\mathbb{R}^{n}) \]
for any ball $B_{t}(y) \subset \mathbb{R}^{n}$, where $C=C(\delta)>0$. Then an application of the monotone convergence theorem leads to the conclusion.
\end{proof}

We next consider the case of exponential function $P=P_{l,a,\beta}$.

\begin{lemma}\label{2hvTH3-B}
Let $a,R,C_{*}>0$, $l\in \mathbb{N}$,  $\beta\geq 1$ with $l\beta >p-1$, $\mu\in \mathcal{M}^+(\mathbb{R}^n)$.  
There exists a small constant $\delta>0$, depending only on $n$, $s$, $p$, $l$, $a$, $\beta$, $C_{*}$ and $R$, such that if 
\begin{equation}\label{2hv13062}
\left\|{\bf M}_{sp,R}^{\frac{(p-1)(\beta-1)}{\beta}}[\mu]\right\|_{L^\infty(\mathbb{R}^n)} \le \delta,
\end{equation}
then the following holds:
\begin{itemize}
\item[(1)] For any $M>0$, there exists a constant $C=C(n,s,p,\beta,M)>0$ such that
\begin{equation*} 
\int_{B_{M}(0)}P_{l,a,\beta}\left( 4c_p C_{*} {\bf W}^R_{s,p}[\omega_{1}] \right)\,dx \le C, \ \ \text{where} \ \
\omega_{1} = \delta\left\|{\bf M}_{sp,R}^{\frac{(p-1)(\beta-1)}{\beta}} [1] \right\|^{-1}_{L^\infty(\mathbb{R}^n)} + \mu.
 \end{equation*}
\item[(2)] For any $k \in \mathbb{N}$, if $\{u_{m}\}_{m=0}^{k}$ is a sequence of nonnegative measurable functions in $\mathbb{R}^{n}$ that satisfies
\begin{equation*} 
\begin{aligned}
P_{l,a,\beta}(u_{m}) \in L^{1}_{\mathrm{loc}}(\mathbb{R}^{n}) \ \ \text{and} \ \ u_{m + 1}& \leq C_{*} {\bf W}^R_{s,p}[P_{l,a,\beta}(u_{m}) + \mu ] \ \ \text{for all}\ \  0 \le m \le k-1,\\
u_0 & \leq C_{*} {\bf W}^R_{s,p}[\mu ],
\end{aligned}
\end{equation*}
then
\begin{equation*}
u_{k} \le 2c_p C_{*} {\bf W}^R_{s,p}[\omega_{1}], \quad \text{hence} \quad P_{l,a,\beta}(u_{k}) \in L^{1}_{\mathrm{loc}}(\mathbb{R}^{n}). 
\end{equation*}
\end{itemize}
\end{lemma}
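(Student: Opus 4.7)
The proof parallels Lemma~\ref{th1a}, with the power nonlinearity replaced by the exponential $P_{l,a,\beta}$. The core ingredients are: (i) the quasi-subadditivity $\mathbf{W}_{s,p}^{R}[f+g]\le c_{p}(\mathbf{W}_{s,p}^{R}[f]+\mathbf{W}_{s,p}^{R}[g])$ with $c_{p}=\max\{1,4^{(2-p)/(p-1)}\}$; (ii) the observation that the auxiliary measure $\omega_{1}$ inherits the smallness of $\mu$,
\[ \left\|\mathbf{M}_{sp,R}^{(p-1)(\beta-1)/\beta}[\omega_{1}]\right\|_{L^{\infty}(\mathbb{R}^{n})}\le 2\delta, \]
by the very choice of the additive constant in its definition; and (iii) an Orlicz self-improving estimate of Wolff type, asserting that once $\delta$ is small enough,
\[ \mathbf{W}_{s,p}^{R}\!\left[P_{l,a,\beta}\!\bigl(2c_{p}C_{*}\mathbf{W}_{s,p}^{R}[\omega_{1}]\bigr)\right]\le \mathbf{W}_{s,p}^{R}[\omega_{1}] \quad \text{a.e. in } \mathbb{R}^{n}. \]
This is the natural exponential analogue of Proposition~\ref{241020148}\,(4).

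\textbf{Plan for (1).} After verifying the bound in (ii), I expand
\[ P_{l,a,\beta}\!\bigl(4c_{p}C_{*}\mathbf{W}_{s,p}^{R}[\omega_{1}]\bigr)=\sum_{j\ge l}\frac{a^{j}(4c_{p}C_{*})^{\beta j}}{j!}\bigl(\mathbf{W}_{s,p}^{R}[\omega_{1}]\bigr)^{\beta j} \]
and bound each integral $\int_{B_{M}(0)}(\mathbf{W}_{s,p}^{R}[\omega_{1}])^{\beta j}\,dx$ using Proposition~\ref{241020148} together with Lemma~\ref{max.ftn.pot}: the smallness-in-maximal-function condition, via Lemma~\ref{max.ftn.pot}, yields $\omega_{1}(K)\le c\delta^{1/(p-1)}\mathrm{Cap}_{\mathbf{G}_{sp},\beta j/(\beta j-p+1)}(K)$ for every $j\ge l$, so the argument of Lemma~\ref{th1a}\,(1) applies term by term. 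Summing in $j$, the factorials $1/j!$ dominate the power combinatorics provided $\delta$ is taken small enough; this is essentially an Adams--Moser--Trudinger--type estimate for Wolff potentials, standard in this circle of ideas.

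\textbf{Plan for (2).} I would induct on $m$ with the ansatz $u_{m}\le 2c_{p}C_{*}\mathbf{W}_{s,p}^{R}[\omega_{1}]$. The base case $m=0$ is immediate since $\mu\le \omega_{1}$ and $c_{p}\ge 1$. For the inductive step, the monotonicity of $P_{l,a,\beta}$, the recurrence, and ingredient (i) give
\[ u_{m+1}\le c_{p}C_{*}\mathbf{W}_{s,p}^{R}\!\left[P_{l,a,\beta}\!\bigl(2c_{p}C_{*}\mathbf{W}_{s,p}^{R}[\omega_{1}]\bigr)\right]+c_{p}C_{*}\mathbf{W}_{s,p}^{R}[\mu], \]
and since $\mathbf{W}_{s,p}^{R}[\mu]\le \mathbf{W}_{s,p}^{R}[\omega_{1}]$, ingredient (iii) closes the induction. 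As $u_{k}$ is thereby pointwise controlled by $\mathbf{W}_{s,p}^{R}[\omega_{1}]$, part (1) applied with the strictly larger constant $4c_{p}C_{*}$ then yields $P_{l,a,\beta}(u_{k})\in L^{1}_{\mathrm{loc}}(\mathbb{R}^{n})$, as claimed.

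\textbf{Main obstacle.} The crux is the Orlicz self-improving inequality (iii). In the power-function setting it is precisely Proposition~\ref{241020148}\,(4); its exponential counterpart is obtained, via a Wolff--Muckenhoupt comparison of potentials, from a ball-testing bound of the form
\[ \int_{B_{t}(x)} P_{l,a,\beta}\!\bigl(2c_{p}C_{*}\mathbf{W}_{s,p}^{R}[\omega_{1}]\bigr)\,dy\le \omega_{1}(B_{t}(x)) \quad \text{for all balls } B_{t}(x)\subset \mathbb{R}^{n}, \]
which in turn rests on the $(sp,Q_{p}^{*})$-Orlicz--Bessel capacitary framework of Section~2 together with a localization reducing to balls of diameter at most $R$. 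Establishing this testing inequality under the smallness assumption (via a suitable dyadic/Hedberg--Wolff argument adapted from \cite{HV1,55Ph2,22PhVe}) is the main technical task; once it is in hand, the rest is essentially the bookkeeping of Lemma~\ref{th1a} via Remark~\ref{rmk26}, with the constant $\delta$ absorbed into the smallness required to make the factor $C\delta'$ in (iii) less than one.
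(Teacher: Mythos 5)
Your overall skeleton coincides with the paper's: the observation that \eqref{2hv13062} forces $\left\|\mathbf{M}^{(p-1)(\beta-1)/\beta}_{sp,R}[\omega_{1}]\right\|_{L^{\infty}(\mathbb{R}^{n})}\le 2\delta$ by the calibration of the constant in $\omega_{1}$, the induction in $m$ with the quasi-subadditivity constant $c_{p}$, and (implicitly) the localization $\mathbf{W}^{R}_{s,p}[\omega_{1}]=\mathbf{W}^{R}_{s,p}[\chi_{B_{2M}}\omega_{1}]$ on $B_{M}$ for $M>R$. But the paper's proof is short precisely because both substantive ingredients are imported from \cite{HV1}: for (1) it bounds $P_{l,a,\beta}$ by an exponential, inserts the ratio with $\big\|\mathbf{M}^{(p-1)(\beta-1)/\beta}_{sp,R}[\omega_{1}]\big\|^{\beta/(p-1)}$, and invokes the exponential-integrability estimate for Wolff potentials under the log-weighted maximal-function smallness (\cite[Theorem~2.2]{HV1}); part (2) is quoted verbatim from \cite[Theorem~2.5]{HV1}. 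Your proposal attempts to rebuild these two ingredients from the power-case machinery of the paper, and it is exactly there that it does not close.

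Concretely: for (2), the Orlicz self-improving inequality (your item (iii)) \emph{is} the content of the statement. The reduction to a ball-testing bound is fine — for balls of radius at most $R$, a bound $\nu(B_{t}(x))\le\omega_{1}(B_{t}(x))$ gives $\mathbf{W}^{R}_{s,p}[\nu]\le\mathbf{W}^{R}_{s,p}[\omega_{1}]$ directly from the definition of the truncated potential, no further comparison theorem needed — but you never prove the testing inequality; you explicitly defer it as "the main technical task", so the induction is not actually closed. For (1), the term-by-term plan has specific defects: Lemma~\ref{max.ftn.pot} controls maximal functions by potentials and does not produce the measure--capacity inequalities $\omega_{1}(K)\le c\delta^{1/(p-1)}\mathrm{Cap}_{\mathbf{G}_{sp},\beta j/(\beta j-p+1)}(K)$ you assert; Proposition~\ref{241020148} requires a compactly supported measure, whereas $\omega_{1}$ contains the constant background term, so one must first localize to $\chi_{B_{2M}}\omega_{1}$ (as the paper does); and, most importantly, the constants in Proposition~\ref{241020148} and Lemma~\ref{th1a}(1) applied with $\gamma=\beta j$ are not tracked in $j$, so the claim that "the factorials dominate the power combinatorics" is unsupported. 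Getting $\int_{B_{M}}(\mathbf{W}^{R}_{s,p}[\omega_{1}])^{\beta j}\,dx$ to grow slowly enough in $j$, uniformly under \eqref{2hv13062}, is precisely the Moser--Trudinger-type content of \cite[Theorem~2.2]{HV1}; your argument presupposes it rather than proves it. Either supply these quantitative estimates or, as the paper does, cite \cite[Theorems~2.2 and~2.5]{HV1}.
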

\begin{proof}
The proof of (2) can be found in \cite[Theorem~2.5]{HV1}, so we give the proof of (1) only. 
It suffices to show the inequality for $M>R$. Note that \eqref{2hv13062} implies
\begin{equation*}
\left\|\mathbf{M}^{\frac{(p-1)(\beta-1)}{\beta}}_{sp,R}[\omega_{1}]\right\|_{L^{\infty}(\mathbb{R}^{n})} \le 2\delta.
\end{equation*}
We then estimate
\begin{align*} 
P_{l,a,\beta}(4c_{p}C_{*}\mathbf{W}^{R}_{s,p}[\omega_{1}]) & = H_{l}\left(a(4c_{p}C_{*})^{\beta}(\mathbf{W}^{R}_{s,p}[\omega_{1}])^{\beta}\right)  \le \exp\left(a(4c_{p}C_{*})^{\beta}(\mathbf{W}^{R}_{s,p}[\omega_{1}])^{\beta}\right) \\
& \le \exp\Bigg(a(4c_{p}C_{*})^{\beta}(2\delta)^{\beta/(p-1)}\frac{(\mathbf{W}^{R}_{s,p}[\omega_{1}])^{\beta}}{\big\|\mathbf{M}^{(p-1)(\beta-1)/\beta}_{sp,R}[\omega_{1}]\big\|_{L^{\infty}(\mathbb{R}^{n})}^{\beta/(p-1)}}\Bigg).
\end{align*}
Observe that $\mathbf{W}^{R}_{s,p}[\omega_{1}](x) = \mathbf{W}^{R}_{s,p}[\chi_{B_{2M}(0)}\omega_{1}](x)$ for $x \in B_{M}(0)$. Then, in light of \cite[Theorem 2.2]{HV1}, we can choose $\delta>0$ so small that
\begin{equation*}
\fint_{B_{M}(0)}\exp\Bigg(a(4c_{p}C_{*})^{\beta}(2\delta)^{\beta/(p-1)}\frac{(\mathbf{W}^{R}_{s,p}[\chi_{B_{2M}(0)}\omega_{1}])^{\beta}}{\big\|\mathbf{M}^{(p-1)(\beta-1)/\beta}_{sp,R}[\chi_{B_{2M}(0)}\omega_{1}]\big\|_{L^{\infty}(\mathbb{R}^{n})}^{\beta/(p-1)}}\Bigg)\,dx \le c(M).
\end{equation*}
The last two displays imply the desired conclusion. 
\end{proof}

\begin{lemma}\label{2hvTH3} 
Let  $l\in \mathbb{N}$, $a,C_{*}>0$, $\beta\geq 1$ with $l\beta\geq \frac{n(p-1)}{n-sp}$. Let $\mu\in \mathcal M^+(\mathbb R^n)$ satisfy $\mathrm{supp}\,\mu \subset B_{R}(0)$ for some $R>1$ and 
\[ \left\|\mathbf{M}_{sp}^{\frac{(p-1)(\beta-1)}{\beta}}[\chi_{B_{R}}]\right\|_{L^{\infty}(\mathbb{R}^{n})}^{-1}|B_{R}| + \mu(\mathbb{R}^{n}) \le T. \]
There exists a small constant $\delta>0$, depending only on $n$, $s$, $p$, $l$, $a$, $\beta$, $C_{*}$ and $R$, such that if 
\begin{equation}\label{2hv13061}
\left\|{\bf M}_{sp}^{\frac{(p-1)(\beta-1)}{\beta}}[\mu]\right\|_{L^\infty(\mathbb{R}^n)} \le \delta,
\end{equation}
then there exists a constant $C=C(n,s,p,\beta,R,C_{*}, T)>0$ such that
\begin{equation*}
\int_{\mathbb{R}^{n}}P_{l,a,\beta}\left(4c_{p}C_{*}{\bf W}_{s,p}[\omega_{2}] \right)\,dx \le C, \ \ \text{where} \ \
\omega_{2}=\delta\left\|{\bf M}_{sp}^{\frac{(p-1)(\beta-1)}{\beta}} [\chi_{B_{R}}] \right\|^{-1}_{{L^\infty }(\mathbb{R}^{n})}\chi_{B_{R}} + \mu.
\end{equation*}
\end{lemma}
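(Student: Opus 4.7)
The plan is to split $\mathbb{R}^n$ into a large ball $B_{3R}$ and its complement. The near-field integral reduces, up to a bounded multiplicative constant, to the truncated-potential case already treated in Lemma~\ref{2hvTH3-B}, while the far-field integrability comes from polynomial decay of the Wolff potential combined with the assumption $l\beta>n(p-1)/(n-sp)$.

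For $x\in B_{3R}$, since $\omega_{2}$ is supported in $B_{R}$ with total mass at most $2T$ and every $t\ge 4R$ satisfies $B_{t}(x)\supset B_{R}\supset \mathrm{supp}(\omega_{2})$, I immediately get
\[
\mathbf{W}_{s,p}[\omega_{2}](x)\le \mathbf{W}_{s,p}^{4R}[\omega_{2}](x)+C_{1},\qquad C_{1}=C_{1}(n,s,p,R,T).
\]
Using $(a+b)^{\beta}\le 2^{\beta}(a^{\beta}+b^{\beta})$ for $\beta\ge 1$ and monotonicity of the exponential,
\[
P_{l,a,\beta}\!\left(4c_{p}C_{*}\mathbf{W}_{s,p}[\omega_{2}](x)\right)\le \exp\!\left(a(8c_{p}C_{*}C_{1})^{\beta}\right)\exp\!\left(a\bigl(8c_{p}C_{*}\mathbf{W}_{s,p}^{4R}[\omega_{2}](x)\bigr)^{\beta}\right).
\]
By subadditivity of $\mathbf{M}^{(p-1)(\beta-1)/\beta}_{sp}$ and the definition of $\omega_{2}$, the hypothesis \eqref{2hv13061} yields $\|\mathbf{M}^{(p-1)(\beta-1)/\beta}_{sp,4R}[\omega_{2}]\|_{L^{\infty}}\le \|\mathbf{M}^{(p-1)(\beta-1)/\beta}_{sp}[\omega_{2}]\|_{L^{\infty}}\le 2\delta$. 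Choosing $\delta$ small enough, I can then apply \cite[Theorem~2.2]{HV1}, exactly as in the proof of Lemma~\ref{2hvTH3-B}, to bound the integral of the second exponential over $B_{3R}$ by a constant depending only on $n,s,p,\beta,R,C_{*},T$.

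For $|x|>3R$, the support condition on $\omega_{2}$ forces $B_{t}(x)\cap B_{R}=\emptyset$ whenever $t<|x|-R$, hence
\[
\mathbf{W}_{s,p}[\omega_{2}](x)=\int_{|x|-R}^{\infty}\left[\frac{\omega_{2}(B_{t}(x))}{t^{n-sp}}\right]^{1/(p-1)}\frac{dt}{t}\le c\,T^{1/(p-1)}|x|^{-(n-sp)/(p-1)}.
\]
In particular this expression is bounded, and the Taylor expansion $P_{l,a,\beta}(t)=\sum_{j\ge l}(at^{\beta})^{j}/j!$ gives $P_{l,a,\beta}(4c_{p}C_{*}\mathbf{W}_{s,p}[\omega_{2}](x))\le C|x|^{-l\beta(n-sp)/(p-1)}$ for $|x|>3R$, which is integrable at infinity precisely because $l\beta(n-sp)/(p-1)>n$. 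Summing the two contributions concludes the proof.

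The main obstacle is controlling the dependence of the near-field constant on the free parameters: the factor $(8c_{p}C_{*})^{\beta}$ multiplying $\mathbf{W}_{s,p}^{4R}[\omega_{2}]$ inside the exponential must be absorbed by the smallness of $\delta$ when invoking \cite[Theorem~2.2]{HV1}, and this is the reason the threshold $\delta$ produced by the argument depends on $C_{*}$, $a$, $\beta$ and $R$ in addition to $n,s,p,l$.
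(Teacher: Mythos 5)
Your proposal is correct and follows essentially the same route as the paper: a near-field/far-field split, with the ball handled by reducing to the truncated-potential exponential bound of Lemma~\ref{2hvTH3-B} (via \cite[Theorem~2.2]{HV1}, after absorbing the enlarged constant into the smallness of $\delta$) and the exterior handled by the decay $\mathbf{W}_{s,p}[\omega_{2}](x)\le CT^{1/(p-1)}|x|^{-(n-sp)/(p-1)}$ together with $H_{l}(t)\lesssim t^{l}$ for bounded arguments and $l\beta(n-sp)/(p-1)>n$. Your explicit decomposition $\mathbf{W}_{s,p}[\omega_{2}]\le \mathbf{W}^{4R}_{s,p}[\omega_{2}]+C_{1}$ on the near field merely spells out a detail the paper leaves implicit when it says ``similarly to the proof of Lemma~\ref{2hvTH3-B}''.
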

\begin{proof}
We first split 
\[ \int_{\mathbb{R}^{n}}P_{l,a,\beta}(4c_{p}C_{*}\mathbf{W}_{s,p}[\omega_{2}])\,dx  = \int_{B_{2R}(0)}(\cdots)\,dx + \int_{\mathbb{R}^{n}\setminus B_{2R}(0)}(\cdots)\,dx \eqqcolon I_{1} + I_{2}. \]
Similarly to the proof of Lemma \ref{2hvTH3-B}, we can obtain $I_{1} \le C(n,s,p,\beta)$ by choosing $\delta$ sufficiently small. As for $I_{2}$, observe that
\begin{equation*}
\mathbf{W}_{s,p}[\omega_{2}](y) = \int_{|y|/2}^{\infty}\left[\frac{\omega_{2}(B_{t}(y))}{t^{n-sp}}\right]^{1/(p-1)}\frac{dt}{t} \le cT^{\frac{1}{p-1}}\int_{|y|/2}^{\infty}t^{-\frac{n-sp}{p-1}}\frac{dt}{t} \le C|y|^{-\frac{n-sp}{p-1}}
\end{equation*}
for any $y \in \mathbb{R}^{n}\setminus B_{2R}(0)$. We thus have
\begin{equation*}
I_{2} \le \int_{\mathbb{R}^{n}\setminus B_{2R}(0)}H_{l}\left(C|y|^{-\frac{\beta(n-sp)}{p-1}}\right)\,dy \le C\int_{\mathbb{R}^{n}\setminus B_{2R}(0)}|y|^{-l\beta\frac{n-sp}{p-1}}\,dy \le CR^{n-l\beta\frac{n-sp}{p-1}},
\end{equation*}
which leads to the conclusion.
\end{proof}

Finally, we obtain relations between  pointwise estimates and capacities conditions.

\begin{lemma}\label{2hvTH1a} Let  $\gamma>p-1$  and $\mu\in \mathcal{M}^+(\mathbb{R}^n)$.
\begin{itemize}	
\item[(1)]  Let $0<R\le \infty$. Let $u$ be a nonnegative measurable function in $\mathbb{R}^n$ such that $u^\gamma$ is locally integrable in $\mathbb{R}^n$ and
\begin{equation}\label{242}
u(x)\geq c {\bf W}^R_{s,p}[u^\gamma+\mu](x) \quad \text{for a.e. } x\in \mathbb{R}^n
\end{equation}
for some $c>0$. Then the following holds: 
\begin{itemize}
\item[(i)] If $R<\infty$, then there exists a constant $C_1>0$, depending only on $n$, $s$, $p$, $\gamma$, $R$ and $c$, such that
	\begin{equation}
	\label{2hv06064b}\int_E u^\gamma\, dx+\mu(E)  \le C_1\mathrm{Cap}_{{\mathbf{G}_{s p}},\frac{\gamma}{\gamma-p+1}}(E) \quad \text{for any Borel set }E\subset \mathbb{R}^n.
	\end{equation} 
\item[(ii)] If $R=\infty$, then there exists a constant $C_2>0$, depending only on $n$, $s$, $p$, $\gamma$ and $c$, such that
	\begin{equation}
	\label{2hv06065b}\int_E u^\gamma\,dx+\mu(E)  \le C_2\mathrm{Cap}_{{\mathbf{I}_{sp}},\frac{\gamma}{\gamma-p+1}}(E) \quad \text{for any Borel set } E\subset \mathbb{R}^n.
	\end{equation} 
\end{itemize}
	
\item[(2)] Let $\Omega \subset \mathbb{R}^{n}$ be a bounded domain, $\mu\in \mathcal{M}^+(\Omega)$ and $\varepsilon_{0}\in (0,1)$. Let $u$ be a nonnegative measurable function in $\Omega$ such that  $u^\gamma$ is locally integrable in  $\Omega$ and 
\begin{equation}\label{2hv25032b}
u(x)\geq c{\bf W}^{\varepsilon_{0} \mathrm{dist}(x,\partial\Omega)}_{s,p}[u^\gamma+\mu](x) \quad \text{for a.e. } x\in \Omega
\end{equation}
for some $c>0$. Then for any compact set $K\subset \Omega$, there exists a constant $C_3>0$, depending only on $n$, $s$, $p$, $\gamma$, $\varepsilon_{0}$, $\mathrm{dist}(K,\partial\Omega)$ and $c$, such that 
\begin{equation}\label{2hv08063b}
\int_E {u^\gamma\,dx}  + \mu (E) \leq C_3 \mathrm{Cap}_{{\mathbf{G}_{s p}},\frac{\gamma}{\gamma-p+1}}(E) \quad \text{for any Borel set } E\subset K.
\end{equation}	
\end{itemize}
\end{lemma}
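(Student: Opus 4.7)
The plan is to verify condition~(4) of the equivalences in Propositions~\ref{241020147} and \ref{241020148} for the auxiliary measure $\nu := u^{\gamma}\,dx + \mu$, and then to read off the claimed estimates as the corresponding condition~(1) in those propositions. The common algebraic step is this: raising the pointwise hypothesis on $u$ to the $\gamma$-th power (and using $\nu \ge u^{\gamma}\,dx$) gives the measure inequality $\nu \ge c^{\gamma}(\mathbf{W}^{R}_{s,p}[\nu])^{\gamma}\,dx$. Applying $\mathbf{W}^{R}_{s,p}$ (monotone in its argument measure) to both sides yields
\[
\mathbf{W}^{R}_{s,p}\bigl[(\mathbf{W}^{R}_{s,p}[\nu])^{\gamma}\bigr] \le c^{-\gamma/(p-1)}\mathbf{W}^{R}_{s,p}[\nu] \le c^{-\gamma/(p-1)-1}u <\infty \quad \text{a.e.,}
\]
which is precisely the pointwise content of condition~(4).

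For Part~(1)(ii) ($R=\infty$) this is literally condition~(4) of Proposition~\ref{241020147} applied to $\nu$, and the equivalence with condition~(1) immediately gives $\nu(K) \le C\,\mathrm{Cap}_{\mathbf{I}_{sp},\gamma/(\gamma-p+1)}(K)$ for every compact $K\subset\mathbb{R}^{n}$, from which \eqref{2hv06065b} follows for every Borel set by standard regularity.

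For Part~(1)(i), $R<\infty$, I would use Proposition~\ref{241020148} in the same way, with the caveat that the proposition is formulated for compactly supported measures and uses the $4R_{0}$-truncated Wolff potential (where $R_{0}$ is the support radius). I would therefore truncate $\nu$ to $\nu_{R_{0}} := \chi_{B_{R_{0}}(0)}\nu$ for $R_{0} > \max\{R,1\}$, use the monotonicity $\mathbf{W}^{R}_{s,p}[\nu_{R_{0}}]\le\mathbf{W}^{R}_{s,p}[\nu]$ to retain the pointwise hypothesis for $\nu_{R_{0}}$, and then absorb the tail part of $\mathbf{W}^{4R_{0}}_{s,p}-\mathbf{W}^{R}_{s,p}$ (which is controlled by $\nu(B_{R_{0}})^{1/(p-1)}$ times a constant depending on $R,R_{0}$) into the verification of condition~(4) for $\nu_{R_{0}}$. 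The flexibility of Remark~\ref{rmk26} ensures the resulting constants cooperate, and the equivalence in Proposition~\ref{241020148} then yields $\nu_{R_{0}}(E) \le C\,\mathrm{Cap}_{\mathbf{G}_{sp},\gamma/(\gamma-p+1)}(E)$; choosing $R_{0}$ large enough to contain the Borel set $E$ gives \eqref{2hv06064b}.

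Part~(2) is handled by localization. Fix a compact $K\subset\Omega$ and set $r_{K} := \varepsilon_{0}\,\mathrm{dist}(K,\partial\Omega)>0$. For each $x\in K$, the ball $B_{r_{K}}(x)$ lies in $\Omega$, so $\mathbf{W}^{r_{K}}_{s,p}[\nu](x) = \mathbf{W}^{r_{K}}_{s,p}[\chi_{\tilde K}\nu](x)$, where $\tilde K := \{y : \mathrm{dist}(y,K)\le r_{K}\}$ is a compact subset of $\Omega$. Since $\varepsilon_{0}\,\mathrm{dist}(x,\partial\Omega) \ge r_{K}$ on $K$, the hypothesis \eqref{2hv25032b} gives $u \ge c\,\mathbf{W}^{r_{K}}_{s,p}[\chi_{\tilde K}\nu]$ on $K$. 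The measure $\chi_{\tilde K}\nu$ is compactly supported with finite total mass, so I would run the argument of Part~(1)(i) with $r_{K}$ in place of $R$ and $\chi_{\tilde K}\nu$ in place of $\nu_{R_{0}}$, to deduce \eqref{2hv08063b}. The main obstacle across all three parts is bookkeeping the mismatch between the truncation radius in the hypothesis and the one demanded by Proposition~\ref{241020148}; conceptually, however, the whole proof is a clean application of the equivalence $(4)\Leftrightarrow(1)$ in Propositions~\ref{241020147}/\ref{241020148}.
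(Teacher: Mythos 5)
Your key observation---that the hypothesis self-improves, via monotonicity and $(p-1)$-homogeneity of the Wolff potential, to $\mathbf{W}^{R}_{s,p}\bigl[(\mathbf{W}^{R}_{s,p}[\nu])^{\gamma}\bigr]\le c^{-\gamma/(p-1)}\mathbf{W}^{R}_{s,p}[\nu]$ for $\nu=u^{\gamma}\,dx+\mu$---is correct, but the reduction to condition (4) of Propositions~\ref{241020147}/\ref{241020148} has genuine gaps. First, in this paper $\mathcal{M}^{+}(\mathbb{R}^{n})$ means nonnegative Radon measures of \emph{finite total mass}, while the lemma only assumes $u^{\gamma}\in L^{1}_{\mathrm{loc}}$; in the supercritical range $\gamma>n(p-1)/(n-sp)$ there are admissible $u$ (e.g. $u(x)\approx A(1+|x|)^{-sp/(\gamma-p+1)}$ with $A$ small, $\mu=0$) with $\int_{\mathbb{R}^{n}}u^{\gamma}\,dx=\infty$, and for $R<\infty$ even small constants $u$ are admissible. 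So $\nu$ lies outside the scope of the propositions as stated, and the natural fix by truncation fails because condition (4) is not stable under restriction: from $\chi_{B_{R_{0}}}\nu\le\nu$ one only gets $\mathbf{W}\bigl[(\mathbf{W}[\chi_{B_{R_{0}}}\nu])^{\gamma}\bigr]\le C\,\mathbf{W}[\nu]$, and $\mathbf{W}[\nu]$ admits no upper bound by $C\,\mathbf{W}[\chi_{B_{R_{0}}}\nu]$. Second, for the same reason the ``absorb the tail'' step in your (1)(i) and (2) does not produce condition (4) of Proposition~\ref{241020148} for $\nu_{R_{0}}$: your chain bounds the iterated potential by $C\mathbf{W}^{R}_{s,p}[\nu]$, i.e. by $u/c$, plus constants involving $\nu(B_{R_{0}})^{1/(p-1)}$, and the hypothesis is only a \emph{lower} bound on $u$, so there is no way back to $C_{4}\mathbf{W}^{4R_{0}}_{s,p}[\nu_{R_{0}}]$. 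Absorbing the additive constants costs factors of $\nu(B_{R_{0}})=\int_{B_{R_{0}}}u^{\gamma}dx+\mu(B_{R_{0}})$, and the constants of Proposition~\ref{241020148} depend on the support radius, so with ``$R_{0}$ large enough to contain $E$'' your final constant depends on $E$ and on local norms of $u$, whereas \eqref{2hv06064b} and \eqref{2hv08063b} require a single constant, uniform over all Borel sets, depending only on $n,s,p,\gamma,R$ (resp. $\varepsilon_{0},\mathrm{dist}(K,\partial\Omega)$) and $c$.

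For comparison, the paper does not pass through the propositions at all: setting $d\omega=u^{\gamma}dx+d\mu$, it proves the testing inequality $\int_{\mathbb{R}^{n}}\bigl(c\,\mathbf{W}^{R}_{s,p}[\chi_{E}\omega]\bigr)^{\gamma}dx\le C\,\omega(E)$ for \emph{every} Borel set $E$ directly, using the weighted centered maximal function $M_{\omega}$ (its boundedness on $L^{t}(d\omega)$ together with the pointwise bound $(M_{\omega}\chi_{E})^{\gamma/(p-1)}(\mathbf{W}^{R}_{s,p}[\omega])^{\gamma}\ge(\mathbf{W}^{R}_{s,p}[\chi_{E}\omega])^{\gamma}$), then converts the Wolff potential into $\mathbf{G}_{sp}$ or $\mathbf{I}_{sp}$ via \cite[Theorem~2.1]{HV1} and concludes by a Fubini--Young duality argument against admissible functions in the definition of the Orlicz capacity; part (2) is localized by the elementary observation that $\mathbf{W}^{\varepsilon_{0}r_{K}/8}_{s,p}[\chi_{E}\omega]$ vanishes near $\partial\Omega$ when $E\subset K$. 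This route needs no finiteness of $\omega$, no compact support, and yields constants with exactly the stated dependencies. If you want to salvage your approach you would essentially have to prove this testing inequality anyway (it is the quantitative content of conditions (2)--(3) of the propositions), at which point the detour through condition (4) becomes unnecessary.
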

\begin{proof} (1) Let us set 
\[ d\omega \coloneqq  u^\gamma\,dx+d\mu. \]
Then \eqref{242} directly implies
\[ \left(c {\bf W}^R_{s,p}[\omega]\right)^\gamma\,dx = \left(c {\bf W}^R_{s,p}[u^\gamma+\mu]\right)^\gamma\,dx\leq u^{\gamma}\, dx \leq d\omega. \]
In addition, considering the centered maximal function with the weight $\omega$ denoted by
\[ {\bf M}_\omega f(x) \coloneqq \sup_{t>0} \frac{1}{\omega(B_t(x))}\int_{B_t(x)}|f|\, d\omega, \] 
we have
\[ \int_{\mathbb{R}^n}\left({\bf M}_\omega\chi_E\right)^{\gamma/(p-1)}\left(c {\bf W}^R_{s,p}[\omega]\right)^\gamma\,dx  \leq  \int_{\mathbb{R}^n}\left({\bf M}_\omega\chi_E\right)^{\gamma/(p-1)} d\omega \leq  c_1 \omega(E) \]
for any Borel set $E\subset \mathbb{R}^n$, 
where $c_1 = c_1(n,p,\gamma)>0$. Note that, in the second inequality, we have used the fact that the maximal operator ${\bf M}_\omega$ is bounded on $L^{t}(\mathbb{R}^n,d\omega)$ for any $t \in (1,\infty)$. Moreover, since 
\[ \left({\bf M}_\omega\chi_E\right)^{\gamma/(p-1)}\left({\bf W}^R_{s,p}[\omega]\right)^\gamma\geq \left({\bf W}^R_{s,p}[\chi_E\omega]\right)^\gamma, \]
we obtain
\[ \int_{\mathbb{R}^n}\left(c {\bf W}^R_{s,p}[\chi_E\omega]\right)^\gamma\, dx \leq c_1\omega(E) \quad \text{for any Borel set } E\subset \mathbb{R}^n. \]
From this last inequality and \cite[Theorem 2.1]{HV1}, we see that
\begin{equation}\label{247}
\left\{
\begin{aligned}
c^\gamma \int_{\mathbb{R}^n} \left({\bf G}_{sp}[\chi_E\omega]\right)^{\gamma/(p-1)}\, dx & \leq c_2 \omega(E) \quad \text{if}\;\; 0<R<\infty, \\
c^\gamma \int_{\mathbb{R}^n} \left({\bf I}_{sp}[\chi_E\omega]\right)^{\gamma/(p-1)}\, dx & \leq c_3 \omega(E) \quad \text{if}\;\; R=\infty
\end{aligned}
\right.
\end{equation}
holds for some positive constants $c$, $c_{2}$ and $c_{3}$, all depending only on $n$, $s$, $p$ and $\gamma$. 
We now consider any $f\in L^{\gamma/(\gamma-p+1)}(\mathbb{R}^{n})$ with $f\geq0$ and ${\bf G}_{sp}*f\geq \chi_E$. In the case $0<R<\infty$, by Fubini's theorem and Young's inequality, we get  
\begin{align*}
\omega(E) &\leq \int_{\mathbb{R}^n}({\bf G}_{sp} *f)\chi_E \, d\omega= \int_{\mathbb{R}^n}({\bf G}_{sp} [\chi_E\omega])f\, dx\\
&\leq  \frac{c^\gamma}{2c_2} \int_{\mathbb{R}^n}({\bf G}_{sp} [\chi_E\omega])^{\gamma/(p-1)}\, dx+c_4 \int_{\mathbb{R}^n} f^{\gamma/(\gamma-p+1)}\, dx.
\end{align*}
Consequently, this together with \eqref{247} and the definition of Orlicz--Bessel capacity gives \eqref{2hv06064b}. In the case $R=\infty$, we can also show \eqref{2hv06065b} by a similar argument.

\medskip

\noindent (2) Let us set $r_K \coloneqq \mathrm{dist}(K,\partial\Omega)$ and $\Omega_K \coloneqq \{x\in\Omega : \mathrm{dist}(x,K)<r_K/2\}$. We follow the same argument as in (1). Since 
\[ \left(c {\bf W}^{\varepsilon_{0} \mathrm{dist}(x,\partial\Omega)}_{s,p}[\omega]\right)^\gamma\,dx \leq  d\omega \quad \text{in } \Omega \]
by  \eqref{2hv25032b}, we have  
\[ \int_{\Omega}\left(c {\bf W}^{\varepsilon_{0} \mathrm{dist}(x,\partial\Omega)}_{s,p}[\chi_E\omega]\right)^\gamma\, dx   \leq  \int_{\Omega}\left({\bf M}_\omega\chi_E\right)^{\gamma/(p-1)} d\omega \leq  c_4 \omega(E) \]  
for any Borel set $E \subset K$. 
Note that if $x\in\Omega$ satisfies $\mathrm{dist}(x,\partial\Omega)\leq r_K/8$, then $B_t(x)\subset \Omega\setminus \Omega_K$ for all $t\in (0,r_K/8)$, and so ${\bf W}^{\varepsilon_{0} r_{K}/8}_{s,p}[\chi_E\omega](x)=0$. From this observation, we have 
\[ {\bf W}^{\varepsilon_{0} r_K/8}_{s,p}[\chi_E\omega](x) \leq {\bf W}^{\varepsilon_{0} \mathrm{dist}(x,\partial\Omega)}_{s,p}[\chi_E\omega](x) \quad \text{for a.e. }  x\in\Omega, \]
hence
\[ \int_{\mathbb{R}^n} \left(c{\bf W}^{\varepsilon_{0} r_K/8}_{s,p}[\chi_E\omega]\right)^\gamma\, dx \leq c_4\omega(E) \quad \text{for any Borel set } E\subset K.  \]
Therefore, in the same way as in (1), we obtain \eqref{2hv08063b}.
\end{proof}

The next lemma can be found in \cite[Theorem~2.7]{HV1}. Recall that $\mathrm{Cap}_{\mathbf{G}_{sp},Q^{\ast}_{p}}$ and $\mathrm{Cap}_{\mathbf{I}_{sp},Q^{\ast}_{p}}$ have been defined in \eqref{2hvEQ14} and \eqref{2hvEQ15}, respectively.

\begin{lemma}\label{2hvTH1} 
Let  $a>0$, $l\in \mathbb{N}$, $\beta\geq 1$ with $l\beta>p-1$, and $\mu\in \mathcal{M}^+(\mathbb{R}^n)$.
\begin{itemize}
\item[(1)] Let $0<R\le \infty$. Let $u$ be a nonnegative measurable function in $\mathbb{R}^n$ such that $P_{l,a,\beta}(u)$ is locally integrable in $\mathbb{R}^n$ and
\begin{equation*}
u(x)\geq c {\bf W}^R_{s,p}[P_{l,a,\beta}(u)+\mu](x) \quad \text{for a.e. }  x\in \mathbb{R}^n
\end{equation*}
for some $c>0$. Then the following holds:
\begin{itemize}
\item[(i)]
If $R<\infty$, then there exists a constant $C_1>0$, depending only on $n$, $s$, $p$, $l$, $a$, $\beta$, $R$ and $c$, such that
\begin{equation*}
\int_E P_{l,a,\beta}(u)\,dx+\mu(E)  \le C_1\mathrm{Cap}_{{\mathbf{G}_{sp}},Q_p^*}(E) \quad \text{for any Borel set }E\subset \mathbb{R}^n.
\end{equation*} 
\item[(ii)]
If $R=\infty$, then there exists a constant $C_2>0$, depending only on $n$, $s$, $p$, $l$, $a$, $\beta$ and $c$, such that
\begin{equation*}
\int_E P_{l,a,\beta}(u)\,dx+\mu(E)  \le C_2\mathrm{Cap}_{{\mathbf{I}_{sp}},Q_p^*}(E) \quad \text{for any Borel set } E\subset \mathbb{R}^n.
\end{equation*} 
\end{itemize}
\item[(2)] Let $\Omega \subset \mathbb{R}^{n}$ be a bounded domain, $\mu\in \mathcal{M}^+(\Omega)$ and $\varepsilon_{0}\in (0,1)$. Let $u$ be a nonnegative measurable function in $\Omega$ such that  $P_{l,a,\beta}(u)$ is locally integrable in  $\Omega$ and 
\begin{equation*}
u(x)\geq c{\bf W}^{\varepsilon_{0} \mathrm{dist}(x,\partial\Omega)}_{s,p}[P_{l,a,\beta}(u)+\mu](x) \quad \text{for a.e. }  x\in \Omega,
\end{equation*}
for some $c>0$. Then for any compact set $K\subset \Omega$, there exists a constant $C_3>0$, depending only on $n$, $s$, $p$, $l$, $a$, $\beta$, $ \varepsilon_{0}$, $\mathrm{dist}(K,\partial\Omega)$ and $c$, such that 
\begin{equation*}
\int_E P_{l,a,\beta}(u)\,dx  + \mu (E) \leq C_3 \mathrm{Cap}_{{\mathbf{G}_{sp}},Q_p^*}(E) \quad \text{for any Borel set } E\subset K.
\end{equation*}
\end{itemize}
\end{lemma}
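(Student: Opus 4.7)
The plan is to follow the template of Lemma~\ref{2hvTH1a}, with $t^\gamma$ replaced by $P_{l,a,\beta}$ and the dual Lebesgue exponent replaced by the complementary Orlicz function $Q_p^*$. Set $d\omega \coloneqq P_{l,a,\beta}(u)\,dx + d\mu$. Since $P_{l,a,\beta}$ is monotone nondecreasing, the hypothesis gives $P_{l,a,\beta}(c\,\mathbf{W}^R_{s,p}[\omega])\,dx \leq d\omega$.

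The first step is to upgrade this pointwise inequality into the Orlicz capacitary inequality
\begin{equation*}
\int_{\mathbb{R}^n} Q_p\bigl(c'\,\mathbf{W}^R_{s,p}[\chi_E\omega]\bigr)\,dx \leq C\,\omega(E)\qquad\text{for any Borel }E\subset\mathbb{R}^n.
\end{equation*}
I would establish this by a weighted-maximal-function argument: test against powers of $M_\omega\chi_E$ for each Taylor term $q\geq l$ of $P_{l,a,\beta}$, use the pointwise bound $\mathbf{W}^R_{s,p}[\chi_E\omega]\leq (M_\omega\chi_E)^{1/(p-1)}\,\mathbf{W}^R_{s,p}[\omega]$ together with the $L^\tau(d\omega)$-boundedness of $M_\omega$, and then re-sum. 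The factor $q^{\beta q/(p-1)}$ in the denominator of $Q_p$ is precisely what is needed to absorb the $q$-dependence of the operator norm of $M_\omega$ on $L^{\beta q/(p-1)}(d\omega)$ so that the resulting series converges; this is the content of \cite[Theorem~2.7]{HV1} and transfers directly to the nonlocal Wolff setting.

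With this Orlicz capacitary inequality in hand, the second step is standard Orlicz duality. For Part~\textbf{(1)(ii)}, for any admissible $f\geq 0$ with $\mathbf{I}_{sp}\ast f\geq \chi_E$, Fubini's theorem and a Wolff-to-Riesz comparison yield
\begin{equation*}
\omega(E) \leq \int_{\mathbb{R}^n} f\,\mathbf{I}_{sp}[\chi_E\omega]\,dx \leq \int_{\mathbb{R}^n} Q_p\bigl(\lambda\,\mathbf{I}_{sp}[\chi_E\omega]\bigr)\,dx + \int_{\mathbb{R}^n} Q_p^*(f/\lambda)\,dx
\end{equation*}
by Young's inequality for the complementary pair $(Q_p,Q_p^*)$. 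Choosing $\lambda$ so that the first right-hand term is absorbed via the capacitary inequality from the first step, and then taking the infimum over $f$, gives $\omega(E)\leq C\,\mathrm{Cap}_{\mathbf{I}_{sp},Q_p^*}(E)$. Part~\textbf{(1)(i)} is identical with $\mathbf{G}_{sp}$ in place of $\mathbf{I}_{sp}$, using $R<\infty$ to truncate the long-range contributions, while Part~\textbf{(2)} follows by localization near a compact $K\Subset\Omega$: for $x$ close to $\partial\Omega$ the truncated Wolff potential of any measure supported on $E\subset K$ vanishes, so one may replace $\varepsilon_0\mathrm{dist}(x,\partial\Omega)$ by a fixed radius depending only on $\mathrm{dist}(K,\partial\Omega)$ and then reduce to Part~\textbf{(1)(i)}.

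The main technical obstacle is the first step: unlike the polynomial case, one cannot multiply $P_{l,a,\beta}$ through a power of $M_\omega\chi_E$ and still obtain an expression of the form $P_{l,a,\beta}(\text{const}\cdot\mathbf{W}^R_{s,p}[\chi_E\omega])$. Instead one must work term-by-term in the Taylor expansion of $P_{l,a,\beta}$ and re-sum carefully, which is exactly why the Orlicz function $Q_p$ (rather than $P_{l,a,\beta}$ itself) appears on the left of the capacitary inequality, and correspondingly why the complementary function $Q_p^*$ appears in the capacity on the right.
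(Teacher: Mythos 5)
The paper itself does not prove this lemma: it is quoted directly from \cite[Theorem~2.7]{HV1}, the statement being purely potential-theoretic (only Wolff potentials and Orlicz capacities enter, not the equation), so the local proof there transfers verbatim to $\mathbf{W}^{R}_{s,p}$. Your outline therefore supplies more detail than the paper does, and its architecture is the right one: it is the proof of the power-case Lemma~\ref{2hvTH1a} (set $d\omega = P_{l,a,\beta}(u)\,dx+d\mu$, use $P_{l,a,\beta}(c\mathbf{W}^{R}_{s,p}[\omega])\,dx\le d\omega$, pass to $\chi_E\omega$ via $\mathbf{W}^{R}_{s,p}[\chi_E\omega]\le (M_\omega\chi_E)^{1/(p-1)}\mathbf{W}^{R}_{s,p}[\omega]$ and the $L^\tau(d\omega)$-boundedness of $M_\omega$, dualize with Young's inequality for the pair $(Q_p,Q_p^*)$, and localize part (2) exactly as in Lemma~\ref{2hvTH1a}(2)), run termwise on the Taylor expansion of $P_{l,a,\beta}$ as in \cite{HV1}. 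Since in the end you, like the paper, lean on \cite[Theorem~2.7]{HV1} for the heavy lifting, the conclusion is not in doubt.

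Two points in your justification of Step 1 are inaccurate, however, and should be corrected if the sketch is meant to stand on its own. First, the factor $q^{\beta q/(p-1)}$ in $Q_p$ is not there to absorb the operator norm of $M_\omega$ on $L^{\beta q/(p-1)}(d\omega)$: those exponents satisfy $\beta q/(p-1)\ge \beta l/(p-1)>1$ and increase to infinity, so the maximal-function constants are uniformly bounded in $q$. The normalization $(t/q)^{\beta q/(p-1)}$ compensates the exponent-dependence of the Wolff-to-Riesz/Bessel comparison (the step corresponding to \cite[Theorem~2.1]{HV1}, i.e.\ passing from $\int(\mathbf{W}^{R}_{s,p}[\chi_E\omega])^{\beta q}\,dx$ to $\int(\mathbf{G}_{sp}[\chi_E\omega])^{\beta q/(p-1)}\,dx$ or its Riesz analogue); this is confirmed by the fact that for $p=2$, where Wolff and Riesz potentials coincide, $Q_p$ collapses to $H_l(t^\beta)$ with no $q$-normalization. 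Second, the re-summation cannot proceed as you describe: bounding each Taylor term's contribution by a constant times $\omega(E)$ and summing over $q\ge l$ diverges. Convergence comes from evaluating $Q_p$ at a sufficiently small multiple of the potential (your $c'$, or the parameter $\lambda$ in the Young-inequality step), which turns the termwise bounds into a geometric series with ratio essentially $(c'/c)^{\beta}$; this smallness must then be traced through the duality step, where the capacity is defined with $Q_p^*(f)$ rather than $Q_p^*(f/\lambda)$. Both issues are precisely what \cite[Theorem~2.7]{HV1} settles, which is the result the paper invokes without proof.
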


\subsection{Proof of the existence results}

Now, we shall prove our main results.

\begin{proof}[Proof of Theorem \ref{2hvMT3}] 
(1) Assume that \eqref{2hvMT3-2} admits a nonnegative SOLA $u$. Then, by Corollary \ref{2hvTH4}, there holds 
\[ u(x)\geq \frac{1}{C_{0}}{\bf W}^{\mathrm{dist}(x,\partial\Omega)/8}_{s,p}[u^{\gamma}+\mu](x) \quad \text{for a.e. } x\in  \Omega. \]
Hence,  we achieve \eqref{2hvMT3-4} from Lemma \ref{2hvTH1a} (2).

\medskip
\noindent (2) Assume that \eqref{2hvMT3-1} holds with $\delta=\tilde{\delta}>0$ which will be determined in a few lines.

\textit{Step 1.} We first consider the case $0 \le \mu \in L^{\infty}(\Omega)$. Let $u_0$ be the weak solution to 
\begin{equation*}
\left\{
\begin{aligned}
- \mathcal{L}_\Phi u_0& =  \mu&\text{in }&\Omega,  \\ 
u_0& =  0 & \text{in }& \mathbb{R}^n\setminus\Omega.
\end{aligned}
\right.
\end{equation*}
Then Proposition \ref{pro1} and Corollary \ref{2hvTH4} imply
\[ 0 \le u_{0} \leq  C_{0} {\bf W}^{R}_{s,p}[\mu] \quad \text{a.e. in }\Omega, \quad \text{where} \quad R=2\mathrm{diam}(\Omega). \] 
In particular, since $\mu \in L^{\infty}(\Omega)$, this inequality implies $u_{0} \in L^{\infty}(\mathbb{R}^{n})$. 
We inductively find a sequence $\{u_{m}\}_{m=1}^{\infty} \subset \mathbb{W}^{s,p}(\Omega) \cap L^{\infty}(\mathbb{R}^{n})$ of weak solutions to
\begin{equation*}
\left\{
\begin{aligned}
- \mathcal{L}_\Phi u_{m}&=\mu_m &\text{in }&\Omega,\\
u_{m}&=0&\text{in }&\mathbb R^n\setminus\Omega,
\end{aligned}
\right. 
\quad\text{where}\quad \mu_m \coloneqq u_{m-1}^\gamma+ \mu.
\end{equation*}
Note that $\{u_{m}\}$ is nondecreasing by Proposition \ref{pro1}. Also, again by Corollary \ref{2hvTH4}, we have for any $m \in \mathbb{N}$
\[ u_{m} \leq C_{0} {\bf W}^{R}_{s,p}[u_{m-1}^\gamma+\mu] \quad \text{a.e. in } \Omega. \]
We now fix $\tilde{\delta}>0$, depending only on $n$, $s$, $p$, $\gamma$, $C_{0}$ and $R$, in a way such that \eqref{th1a-2} holds with $\delta=\tilde{\delta}$ in the case $C_{*}=C_{0}$. Then Lemma \ref{th1a} implies that, for any $m \in \mathbb{N}$,
\begin{equation}\label{2hv1335}
u_{m} \le \frac{\gamma \max\left\{2^{\frac{2-p}{p-1}},1\right\}}{\gamma-p+1}C_{0}{\bf W}_{s,p}^{R}[\mu] \quad \text{a.e. in } \Omega. 
\end{equation}
Thus, by the monotone convergence theorem, there exists a measurable function $u$ such that $u_m\to u$ a.e. in $\mathbb R^n$; in particular, $u$ satisfies \eqref{2hvMT3-3} and so $0 \le u \in L^{\infty}(\mathbb{R}^{n})$.  
From \eqref{2hv1335} and Lebesgue's dominated convergence theorem, it follows that $u_m^\gamma\to u^\gamma$ in $L^{1}(\Omega)$. 
Accordingly, using \eqref{global} for any $h$ and $q$ satisfying \eqref{SOLA1}, we have that
\begin{align*}
\left(\int_{\mathbb{R}^{n}}\int_{\Omega}\frac{|u_m(x)-u_m(y)|^{q}}{|x-y|^{n+hq}}\,dx\,dy\right)^{1/q} & \leq c\left(\int_\Omega u_{m-1}^\gamma\,dx + \mu(\Omega)\right)^{1/(p-1)} \\
& \le c\left(\int_{\Omega}u^{\gamma}\,dx + \mu(\Omega)\right)^{1/(p-1)}
\end{align*}
holds whenever $m \in \mathbb{N}$, where $c>0$ is independent of $m$. 
In turn, Fatou's lemma and Lemma \ref{lemcompact} imply that $u \in W^{h,q}(\Omega)$ and $u_{m} \to u$ locally in $L^{q}(\mathbb{R}^{n})$ (up to subsequences). 
Then a similar argument as in the proof of Theorem \ref{thm.ex.bdd} shows that $u$ satisfies \eqref{SOLA2}. 
Moreover, by using the standard energy estimate
\begin{align*} 
\int_{\mathbb{R}^{n}}\int_{\Omega}\frac{|u_{m}(x)-u_{m}(y)|^{p}}{|x-y|^{n+sp}}\,dx\,dy & \le c\int_{\Omega}(u_{m-1}^{\gamma} + \mu)u_{m}\,dx \\
\overset{\eqref{2hv1335}} & {\le} c\int_{\Omega}\left[(\mathbf{W}^{R}_{s,p}[\mu])^{\gamma+1} +  (\mathbf{W}^{R}_{s,p}[\mu])\mu\right]\,dx
\end{align*}
and Fatou's lemma, we deduce that $u \in \mathbb{W}^{s,p}(\Omega) \cap L^{\infty}(\mathbb{R}^{n})$ is a nonnegative weak solution to \eqref{2hvMT3-2}.

\textit{Step 2.} We now consider any $\mu\in \mathcal{M}^{+}(\Omega)$. Assume that \eqref{2hvMT3-1} holds with $\delta=\tilde{\delta}/c_0$, where $\tilde{\delta}$ and $c_{0}$ are the constants determined in \textit{Step 1} and Lemma \ref{apro-le}, respectively. Then, with $\{\rho_{j}\}_{j=1}^{\infty}$ being a sequence of standard mollifiers, $\mu_{j}=\mu*\rho_{j}\in L^\infty(\Omega)$ satisfies  
\begin{equation}\label{muj_Bessel}
\mu_{j}(K)\leq \tilde{\delta}\mathrm{Cap}_{\mathbf{G}_{sp},\frac{\gamma}{\gamma-p+1}}(K)
\end{equation} 
for any compact set $K\subset\mathbb{R}^n$.
Thus, applying the conclusion of \textit{Step 1} to $\mu=\mu_{j}$, we find a sequence of nonnegative weak solutions $\{u_{j}\} \subset \mathbb{W}^{s,p}(\Omega)\cap L^\infty(\mathbb{R}^n)$ to 
\begin{equation*}
\left\{
\begin{aligned}
- \mathcal{L}_\Phi u_{j}&=u_{j}^\gamma+\mu_{j} &\text{in }&\Omega,\\
u_{j}&=0&\text{in }&\mathbb{R}^{n}\setminus\Omega
\end{aligned}
\right. 
\end{equation*}
satisfying, for any $j \in \mathbb{N}$, 
\begin{equation}\label{PF30}
u_{j}\leq \frac{\gamma \max\left\{2^{\frac{2-p}{p-1}},1\right\}}{\gamma-p+1}C_{0}\mathbf{W}_{s,p}^{R}[\mu_{j}] \quad \text{a.e. in }\Omega. 
\end{equation}
Since \eqref{muj_Bessel} holds for any $j \in \mathbb{N}$, it follows from Lemma \ref{th1a} (1) and \eqref{PF30} that $\{(\mathbf{W}_{s,p}^{R}[\mu_{j}])^{\gamma}\}$ is bounded in $L^{1}(\Omega)$ and so is $\{u_{j}^{\gamma}\}$. 
Using this fact and \eqref{global}, we get
\[ \left(\int_{\mathbb{R}^{n}}\int_{\Omega}\frac{|u_{j}(x)-u_{j}(y)|^{q}}{|x-y|^{n+hq}}\,dx\,dy\right)^{1/q} \leq c\left(\int_{\Omega}u_{j}^{\gamma}\,dx + \mu_{j}(\Omega)\right)^{1/(p-1)} \le c \]
for any $j\in \mathbb{N}$, where $c>0$ is independent of $j$. 
Therefore, by Lemma \ref{lemcompact}, there exists $u \in W^{h,q}(\Omega)$ such that $u_{j}\to u$  a.e. in $\Omega$ and locally in $L^{q}(\mathbb{R}^{n})$ (up to subsequences). 
Moreover, since $\mathbf{W}^{R}_{s,p}[\mu] \in L^{\gamma}(\Omega)$ by Proposition \ref{241020148}, it follows from Lemma \ref{equi-integrable} and \eqref{PF30} that $\{(\mathbf{W}_{s,p}^R[\mu_{j}])^\gamma\}$ is equi-integrable in $\Omega$ and so is $\{u_{j}^{\gamma}\}$. Hence, Vitali's convergence theorem implies $u_{j}^{\gamma} \to u^{\gamma}$ in $L^{1}(\Omega)$ (up to subsequences). 
Then a similar argument as in the proof of Theorem \ref{thm.ex.bdd} shows that $u$ satisfies \eqref{SOLA2}, hence $u$ is a nonnegative SOLA to \eqref{2hvMT3-2}. 
Also, \eqref{2hvMT3-3} follows by letting $j\rightarrow\infty$ in \eqref{PF30}.
\end{proof}

\begin{proof}[Proof of Theorem \ref{2hvMT4}]
(1) Assume that \eqref{2hvMT4-2} admits a nonnegative SOLA $u$. Then, by Corollary \ref{2hvTH4}, there holds 
\[
u(x)\geq \frac{1}{C_{0}}{\bf W}_{s,p}[u^\gamma+\mu](x) \quad \text{for a.e. } x\in  \mathbb{R}^n.
\]
Hence,  we achieve \eqref{2hvMT4-4} from Lemma \ref{2hvTH1a} (1). 

\medskip

\noindent (2) Assume that \eqref{2hvMT4-1} holds for some $\delta>0$, and consider $\mu_{j} = \mu \ast \rho_{j}$ for a sequence $\{\rho_{j}\}_{j=1}^{\infty}$ of standard mollifiers. 
In view of Propositions \ref{241020147} and \ref{241020148}, 
in particular \eqref{241020141}, \eqref{241020142}, \eqref{241020145} and \eqref{241020144},
we can choose $\delta>0$ so small that \eqref{2hvMT3-1} with $\mu$ replaced by $\mu_{j}$ holds for any $j \in \mathbb{N}$ (recall Remark \ref{rmk26} and Lemma \ref{apro-le}). 
Thus, applying the conclusion of \textit{Step 1} in the proof of Theorem \ref{2hvMT3} (2), 
along with \eqref{241020141} and \eqref{241020141*} in Proposition \ref{241020147}, we find a sequence $\{u_{j}\}_{j=1}^{\infty}$ of nonnegative weak solutions to
\begin{equation*}
\left\{
\begin{aligned}
- \mathcal{L}_\Phi u_{j} & =  u_{j}^{\gamma} + \mu_{j} &\text{in }&B_{j}(0),  \\ 
u_{j} & =  0  &\text{in }& \mathbb{R}^{n}\setminus B_{j}(0)
\end{aligned}
\right.
\end{equation*}
satisfying, for any $j \in \mathbb{N}$,
\begin{equation}\label{PF40}
u_{j} \leq \frac{\gamma \max\left\{2^{\frac{2-p}{p-1}},1\right\}}{\gamma-p+1}C_{0}{\bf W}^{4j}_{s,p}[\mu_{j}] \le \frac{\gamma \max\left\{2^{\frac{2-p}{p-1}},1\right\}}{\gamma-p+1}C_{0}\mathbf{W}_{s,p}[\mu_{j}] \;\; \text{a.e. in } \mathbb R^{n}. 
\end{equation}
Let us fix any $r \ge 1$. Since \eqref{2hvMT3-1} with $\mu$ replaced by $\mu_{j}$ holds for any $j \in \mathbb{N}$, it follows from Lemma \ref{th1b} and \eqref{PF40} that $\{(\mathbf{W}_{s,p}[\mu_{j}])^{\gamma}\}$ is bounded in $L^{1}(\mathbb{R}^{n})$ and so is $\{u_{j}^{\gamma}\}$. 
Using this fact along with \eqref{z2} and \eqref{uj.tail}, we have
\begin{align*}
&\left(\int_{B_{r}(0)}\fint_{B_{r}(0)}\frac{|u_{j}(x)-u_{j}(y)|^{q}}{|x-y|^{n+hq}}\,dx\,dy\right)^{1/q} \\
& \le \frac{c}{r^{h+n/\bar{q}}}\left(\int_{B_{j}(0)}u_{j}^{\gamma}\,dx + \mu_{j}(B_{j}(0))\right)^{1/(p-1)} \le c
\end{align*}
for any $h$ and $q$ satisfying \eqref{SOLA1} and
\begin{equation}\label{tail.uniform}
\int_{\R^n \setminus B_r(0)}\frac{|u_j(x)|^{p-1}}{|x|^{n+sp}}\,dx \le cr^{-(sp)^2/n}\left(\int_{B_j(0)}u_j^{\gamma}\,dx + \mu_j(B_j(0)) \right) \le c
\end{equation}
whenever $j \ge r$, where $c>0$ is independent of $j$. 
Namely, for any $r \ge 1$, $\{u_{j}\}_{j \ge r}$ is bounded in $W^{h,q}(B_{r}(0))$ and $\{\tail(u_j;0,r)\}_{j\ge r}$ is bounded. Therefore, in light of Lemma \ref{lemcompact}, there exists $u \in W^{h,q}_{\mathrm{loc}}(\mathbb{R}^{n})$ such that $u_{j} \rightarrow u$ a.e. in $\mathbb{R}^n$ and locally in $L^{q}(\mathbb{R}^n)$ (up to subsequences); in particular, \eqref{tail.uniform} and Fatou's lemma imply \eqref{SOLA.tail}.  
Moreover, since $\mathbf{W}_{s,p}[\mu] \in L^{\gamma}_{\mathrm{loc}}(\mathbb{R}^{n})$ by Proposition \ref{241020147}, it follows from Lemma \ref{equi-integrable} and \eqref{PF40} that $\{(\mathbf{W}_{s,p}[\mu_{j}])^{\gamma}\}$ is equi-integrable in $B_{M}(0)$ for any $M>1$, and so is $\{u_{j}^{\gamma}\}$. Hence, Vitali's convergence theorem implies $u_{j}^{\gamma} \rightarrow u^{\gamma}$ locally in $L^{1}(\mathbb{R}^{n})$ (up to subsequences). 
Similarly to the proof of Theorem \ref{thm.ex.rn}, we see that $u$ satisfies \eqref{SOLA2*}, hence it is a nonnegative SOLA to  \eqref{EQpRn}. Also, letting $j \to \infty$ in \eqref{PF40} yields \eqref{2hvMT4-3}, which together with Corollary \ref{2hvTH4-} (applied to $u-\inf_{\R^n}u$) leads to
\begin{align*}
u(x) \le c{\bf W}_{s,p}[\mu](x) \le c{\bf W}_{s,p}[ u^{\gamma} + \mu ](x) \le  c\left(u(x) - \inf_{\R^n}u\right) \quad \text{for a.e. } x \in \R^n.
\end{align*}
Thus we have $\inf_{\mathbb{R}^n}u = 0$, thereby concluding that $u$ is a nonnegative SOLA to \eqref{2hvMT4-2} satisfying \eqref{2hvMT4-3}.
\end{proof}

\begin{proof}[Proof of Theorem \ref{2hvMT1}]
(1) Assume that \eqref{2hvMT1a} admits a nonnegative SOLA $u$. Then, by Corollary \ref{2hvTH4}, there holds 
\[
u(x)\geq \frac{1}{C_{0}}{\bf W}^{\mathrm{dist}(x,\partial\Omega)/8}_{s,p}[P_{l,a,\beta}(u)+\mu](x) \quad \text{for a.e. } x\in  \Omega.
\]
Hence,  we achieve \eqref{2hvMT1c} from Lemma \ref{2hvTH1} (2).

\medskip

\noindent (2) Assume that \eqref{2hvMT3-1-} holds with $\delta>0$ which will be determined in a few lines.

\textit{Step 1.} We first consider the case $0 \le \mu \in L^{\infty}(\Omega)$.
Let $u_0$ be the weak solution to 
\begin{equation*}
\left\{
\begin{aligned}
- \mathcal{L}_\Phi u_0& = \mu&\text{in }&\Omega,  \\ 
u_0& = 0 & \text{in }& \mathbb{R}^n\setminus\Omega.
\end{aligned}
\right.
\end{equation*}
Then by Proposition \ref{pro1} and Corollary \ref{2hvTH4}, we have that 
\[
0 \le u_0 \leq  C_{0} {\bf W}^{R}_{s,p}[\mu] \quad \text{a.e. in }\Omega, \quad \text{where} \quad R = 2\mathrm{diam}(\Omega). 
\]
In particular, since $\mu \in L^{\infty}(\Omega)$, this inequality implies $u_{0} \in L^{\infty}(\mathbb{R}^{n})$. 
We inductively find a sequence $\{u_{m}\}_{m=1}^{\infty} \in \mathbb{W}^{s,p}(\Omega) \cap L^{\infty}(\mathbb{R}^{n})$ of weak solutions to
\begin{equation*}
\left\{
\begin{aligned}
- \mathcal{L}_\Phi u_{m}&=\mu_m &\text{in }&\Omega,\\
u_{m}&=0&\text{in }&\mathbb R^n\setminus\Omega,
\end{aligned}
\right.
\quad\text{where}\quad \mu_m \coloneqq P_{l,a,\beta}(u_{m-1})+ \mu.
\end{equation*}
Note that $\{u_{m}\}$ is nondecreasing by Proposition \ref{pro1}. Also, again by Corollary \ref{2hvTH4}, we have for any $m\in\mathbb{N}$
\[ u_{m} \le C_{0}\mathbf{W}^{R}_{s,p}[P_{l,a,\beta}(u_{m-1})+\mu] \quad \text{a.e. in }\Omega. \]
We now fix the constant $\delta>0$, depending only on $n$, $s$, $p$, $l$, $a$, $\beta$, $C_{0}$ and $R$, in a way that \eqref{2hv13062} holds with $C_{*}=C_{0}$. 
Then, an application of Lemma \ref{2hvTH3-B} gives
\begin{equation}\label{2hv1335'}
{u_m} \le 2c_pC_{0} {\bf W}_{s ,p}^R[\omega_{1}] \quad \text{a.e. in } \Omega 
\end{equation}
whenever $m \in \mathbb{N}$, where
\[ \omega_{1} = \delta\left\|{\bf M}_{sp,R}^{\frac{(p-1)(\beta-1)}{\beta}} [1] \right\|_{{L^\infty }(\mathbb{R}^n)}^{-1}+\mu. \]  
Thus, by the monotone convergence theorem, there exists a measurable function $u$ such that $u_{m} \rightarrow u$ a.e. in $\mathbb{R}^{n}$; in particular, $u$ satisfies \eqref{2hvMT1b} and so $0 \le u \in L^{\infty}(\mathbb{R}^{n})$. 
From \eqref{2hv1335'} and Lebesgue's dominated convergence theorem, it follows that $P_{l,a,\beta}(u_{m})\to P_{l,a,\beta}(u)$ in $L^{1}(\Omega)$. 
Moreover, applying \eqref{global} for any $h$ and $q$ satisfying \eqref{SOLA1}, we have that
\begin{align*}
\left(\int_{\mathbb{R}^{n}}\int_{\Omega}\frac{|u_{m}(x)-u_{m}(y)|^{q}}{|x-y|^{n+hq}}\,dx\,dy\right)^{1/q} & \le c\left(\int_{\Omega}P_{l,a,\beta}(u_{m-1})\,dx + \mu(\Omega)\right)^{1/(p-1)} \\
& \le c\left(\int_{\Omega}P_{l,a,\beta}(u)\,dx + \mu(\Omega)\right)^{1/(p-1)}
\end{align*}
holds whenever $m \in \mathbb{N}$, where $c>0$ is independent of $m$. In turn, Fatou's lemma and Lemma \ref{lemcompact} imply that $u \in W^{h,q}(\Omega)$ and $u_{m}\rightarrow u$ locally in $L^{q}(\mathbb{R}^{n})$ (up to subsequences). 
Then, similarly to \textit{Step 1} in the proof of Theorem \ref{2hvMT3} (2), we see that $u \in \mathbb{W}^{s,p}(\Omega) \cap L^{\infty}(\mathbb{R}^{n})$ and it is a weak solution to \eqref{2hvMT1a}.

\textit{Step 2.} We now consider any $\mu\in \mathcal{M}^{+}(\Omega)$ and assume that \eqref{2hvMT3-1-} holds with the constant $\delta>0$ determined in \textit{Step 1}. Then, with $\{\rho_{j}\}_{j=1}^{\infty}$ being a sequence of standard mollifiers, $\mu_{j} = \mu \ast \rho_{j} \in L^{\infty}(\Omega)$ satisfies
\begin{equation*}
\left\|{\bf M}_{sp,R}^{\frac{{(p - 1)(\beta  - 1)}}{\beta }}[\mu_{j} ]\right\|_{L^\infty({\mathbb{R}^n})} \le  \left\|{\bf M}_{sp,R}^{\frac{{(p - 1)(\beta  - 1)}}{\beta }}[\mu ]\right\|_{L^\infty({\mathbb{R}^n})} \le \delta
\end{equation*}
for any $j \in \mathbb{N}$. Thus, applying the conclusion of \textit{Step 1} to $\mu=\mu_{j}$, we find a sequence of nonnegative weak solutions $\{u_{j}\} \subset \mathbb{W}^{s,p}(\Omega) \cap L^{\infty}(\mathbb{R}^{n})$ to 
\begin{equation*}
\left\{
\begin{aligned}
- \mathcal{L}_\Phi u_{j}&= P_{l,a,\beta}(u_{j})+\mu_{j} &\text{in }&\Omega,\\
u_{j}&=0&\text{in }&\mathbb R^n\setminus\Omega
\end{aligned}
\right. 
\end{equation*}
satisfying for any $j \in \mathbb{N}$
\begin{equation}\label{PF50}
u_{j}\leq 2c_{p}C_{0}\mathbf{W}_{s,p}^R[\omega_{1,j}] \quad \text{a.e. in } \Omega, 
\end{equation}
where
\[  \omega_{1,j} = \delta\left\|{\bf M}_{ sp,R}^{\frac{(p-1)(\beta-1)}{\beta}} [1] \right\|_{{L^\infty }(\mathbb{R}^n)}^{-1}+\mu_{j}. \]
Moreover, since $\{P_{l,a,\beta}(4c_{p}C_{0}\mathbf{W}_{s,p}^R[\omega_{1,j}])\}$ is bounded in $L^{1}(\Omega)$ by Lemma \ref{2hvTH3-B} (1), it follows from Lemma \ref{equi-integrable2} and \eqref{PF50} that $\{P_{l,a,\beta}(2c_{p}C_{0}\mathbf{W}^{R}_{s,p}[\omega_{1,j}])\}$ is equi-integrable in $\Omega$ and so is $\{P_{l,a,\beta}(u_{j})\}$. Similarly to \textit{Step 2} in the proof of Theorem \ref{2hvMT3} (2), we conclude that $\{u_{j}\}$ converges (up to subsequences) to a nonnegative SOLA $u$ to \eqref{2hvMT1a} which satisfies \eqref{2hvMT1b}. 
\end{proof}

\begin{proof}[Proof of Theorem \ref{2hvMT2}]
(1) Assume that \eqref{2hvMT2a} admits a nonnegative SOLA $u$. Then, by Corollary \ref{2hvTH4}, there holds 
\[
u(x)\geq \frac{1}{C_{0}}{\bf W}_{s,p}[P_{l,a,\beta}(u)+\mu](x) \quad \text{for a.e. } x\in  \mathbb{R}^n.
\]
Hence,  we achieve \eqref{2hvMT2c} from Lemma \ref{2hvTH1} (1). 

\medskip

\noindent (2) 
Assume that \eqref{2hvMT3-1-+} holds with $\delta>0$, which is the minimum of the two constants determined in \eqref{2hvMT3-1-} and \eqref{2hv13061} (with $C_{*}=C_{0}$). Consider $\mu_{j} = \mu \ast \rho_{j}$ for a sequence $\{\rho_{j}\}_{j=1}^{\infty}$ of standard mollifiers; note that $\mu_{j}(\mathbb{R}^{n}) = \mu(\mathbb{R}^{n})$. 
Then \eqref{2hvMT3-1-} with $\mu$ replaced by $\mu_{j}$ holds for any $j \in \mathbb{N}$. 
Thus, applying the conclusion of \textit{Step 2} in the proof of Theorem \ref{2hvMT1} (2), we find a sequence $\{u_{j}\}_{j=1}^{\infty}$ of nonnegative weak solutions to
\begin{equation*}
\left\{
\begin{aligned}
- \mathcal{L}_\Phi u_{j}& =  P_{l,a,\beta}(u_{j})+\mu_{j} &\text{in }&B_{j}(0),  \\ 
u_{j}& =  0 & \text{in }& \mathbb{R}^n\setminus B_{j}(0)
\end{aligned}
\right.
\end{equation*}
satisfying for any $j \in \mathbb{N}$
\begin{equation}\label{PF40-}
u_{j} \leq 2c_{p}C_{0}{\bf W}^{4j}_{s ,p}[\omega_{2,j}] \le 2c_{p}C_{0}\mathbf{W}_{s,p}[\omega_{2,j}] \quad \text{a.e. in } \mathbb{R}^{n},
\end{equation} 
where
\[ \omega_{2,j} = \delta\left\|{\bf M}_{s p}^{\frac{(p-1)(\beta-1)}{\beta}}[\chi_{B_R}]\right\|_{L^\infty(\mathbb{R}^n)}^{-1}\chi_{B_R}+\mu_{j}. \]
Since $\{P_{l,a,\beta}(4c_{p}C_{0}\mathbf{W}_{s,p}[\omega_{2,j}])\}$ is bounded in $L^{1}(\mathbb{R}^{n})$ by Lemma \ref{2hvTH3}, it follows from Lemma \ref{equi-integrable2} and \eqref{PF40-} that $\{P_{l,a,\beta}(2c_{p}C_{0}\mathbf{W}_{s,p}[\omega_{2,j}])\}$ is equi-integrable in $\mathbb{R}^{n}$ and so is $\{P_{l,a,\beta}(u_{j})\}$. Similarly to the proof of Theorem \ref{2hvMT4} (2), this time using \eqref{PF40-}, we conclude that $\{u_j\}$ converges (up to subsequences) to a nonnegative SOLA $u$ to \eqref{2hvMT2a} which satisfies \eqref{2hvMT2b}.
\end{proof}

\section*{Acknowledgments} 

The authors wish to thank the referee for a careful reading of an earlier version of the paper and for several helpful comments that led to an improved version.

\end{document}